\definecolor{ggreen}{rgb}{0.0, 0.5, 0.0}
\definecolor{yyellow}{rgb}{1.0, 0.77, 0.05}
\DeclareMathSymbol{\shortminus}{\mathbin}{AMSa}{"39}
\newcommand{\U}{\mathsf{U}}
\newcommand{\sfA}{\mathsf{A}}
\newcommand{\sfP}{\mathsf{P}}
\newcommand{\sfQ}{\mathsf{Q}}
\newcommand{\AAA}{\mathsf{AA}}
\newcommand{\LA}{\mathsf{LA}}
\newcommand{\LCA}{\mathsf{LCA}}
\newcommand{\LBA}{\mathsf{LBA}}
\newcommand{\DY}{\mathsf{DY}}
\newcommand{\g}{\mathfrak{g}}
\renewcommand{\b}{\mathfrak{b}}
\renewcommand{\d}{\mathfrak{d}}
\newcommand{\B}{\mathcal{B}}
\newcommand{\G}{\mathcal{G}}
\newcommand{\T}{\mathcal{T}}
\newcommand{\Hom}{\mathsf{Hom}}
\newcommand{\End}{\mathsf{End}}
\newcommand{\ten}{\otimes}
\newcommand{\id}{\mathsf{id}}
\newcommand{\Obj}{\mathsf{Obj}}
\newcommand{\C}{\mathscr{C}}
\renewcommand{\P}{\mathscr{P}}
\newcommand{\K}{\mathbb{K}}
\newcommand{\op}{\mathsf{op}}
\newcommand{\Vect}{\mathsf{Vect}}
\newcommand{\QUE}{\mathsf{QUE}}
\newcommand{\TopFree}{\mathsf{TopFree}}
\DeclareFontFamily{U}{wncy}{}
\DeclareFontShape{U}{wncy}{m}{n}{<->wncyr10}{}
\DeclareSymbolFont{mcy}{U}{wncy}{m}{n}
\DeclareMathSymbol{\RuK}{\mathord}{mcy}{"4B}
\DeclareMathSymbol{\Run}{\mathord}{mcy}{"6E}
\DeclareMathSymbol{\RuZ}{\mathord}{mcy}{"5A}
\DeclareMathSymbol{\Rua}{\mathord}{mcy}{"61}
\DeclareMathSymbol{\RuD}{\mathord}{mcy}{"44}
\DeclareSymbolFont{rsfs}{U}{rsfs}{m}{n}
\DeclareSymbolFontAlphabet{\mathscrsfs}{rsfs}
\newcommand{\mosaic}[2]{\mathfrak{M}_{#1,#2}}
\newcommand{\loom}[2]{\mathfrak{L}_{#1,#2}}
\newcommand{\grid}[2]{\G_{#1,#2}}
\newcommand{\Neg}{\mathsf{Neg}}
\newcommand{\cob}{\alpha}
\newcommand{\bra}{\beta}
\newcommand{\gri}[2]{\draw[step=1cm,black,very thick] (0,0) grid (#2,-#1);}
\newcommand{\ver}[2]{\draw[red, very thick] (#2-0.5,-#1+1)--(#2-0.5,-#1);}
\newcommand{\cro}[2]{\draw[red, very thick] (#2-0.5,-#1+1)--(#2-0.5,-#1); \draw[red, very thick] (#2-1,-#1+0.5)--(#2,-#1+0.5); }
\newcommand{\hor}[2]{\draw[red, very thick] (#2-1,-#1+0.5)--(#2,-#1+0.5);}
\newcommand{\tbra}[2]{\draw[red, very thick] (#2-0.5,-#1+1)--(#2-0.5,-#1); \draw[red, very thick] (#2-1,0.5-#1)--(#2-0.5,0.5-#1);}
\newcommand{\tcobra}[2]{\draw[red, very thick] (#2-1,-#1+0.5)--(#2,-#1+0.5); \draw[red, very thick] (#2-0.5,-#1+1)--(#2-0.5,-#1+0.5);}
\newcommand{\lcro}[2]{\draw[red, very thick] (#2-0.5,-#1+1)--(#2-0.5,-#1); \draw[red, very thick] (#2-1,-#1+0.5)--(#2,-#1+0.5); \draw[blue, very thick] (#2-0.3,-#1+1)--(#2-0.3,-#1); \draw[yyellow, very thick] (#2-1,-#1+0.3)--(#2,-#1+0.3); }
\newcommand{\lver}[2]{\draw[red, very thick] (#2-0.5,-#1+1)--(#2-0.5,-#1); \draw[blue, very thick] (#2-0.3,-#1+1)--(#2-0.3,-#1); }
\newcommand{\lhor}[2]{\draw[red, very thick] (#2-1,-#1+0.5)--(#2,-#1+0.5); \draw[yyellow, very thick] (#2-1,-#1+0.3)--(#2,-#1+0.3); }
\newcommand{\lmua}[2]{\draw[red, very thick] (#2-1,-#1+0.5) arc (-90:0:0.5); \draw[blue, very thick] (#2-0.15,-#1+1)--(#2-0.15, -#1);\draw[red, very thick] (#2-0.35,-#1+1)--(#2-0.35, -#1);}
\newcommand{\lmub}[2]{\draw[red, very thick] (#2-1+0.25,-#1+0.5) arc (-90:0:0.5); \draw[blue, very thick] (#2-0.5,-#1+1)--(#2-0.5, -#1); \draw[red,very thick] (#2-0.75,-#1+1)--(#2-0.75, -#1); \draw[red,very thick] (#2-1+0.25,-#1+0.5)--(#2-1,-#1+0.5);}
\newcommand{\dela}[2]{\draw[red, very thick] (#2-1,-#1+0.5) arc (-90:0:0.5); \draw[yyellow, very thick] (#2-1, -#1+0.15)--(#2, -#1+0.15); \draw[red, very thick] (#2-1, -#1+0.35)--(#2, -#1+0.35);}
\newcommand{\delb}[2]{\draw[red, very thick] (#2-1,-#1+0.25) arc (-90:0:0.5); \draw[red, very thick] (#2-1, -#1+0.75)--(#2, -#1+0.75); \draw[yyellow, very thick] (#2-1, -#1+0.5)--(#2, -#1+0.5); \draw[red, very thick] (#2-0.5,-#1+1)--(#2-0.5,-#1+0.75);}
\newcommand{\bpdver}[2]{\draw[violet, very thick] (#2-0.5,-#1+1)--(#2-0.5,-#1);}
\newcommand{\bpdcro}[2]{\draw[violet, very thick] (#2-0.5,-#1+1)--(#2-0.5,-#1); \draw[violet, very thick] (#2-1,-#1+0.5)--(#2,-#1+0.5); }
\newcommand{\bpdhor}[2]{\draw[violet, very thick] (#2-1,-#1+0.5)--(#2,-#1+0.5);}
\newcommand{\bpdone}[2]{\draw[violet, very thick] (#2-0.5,-#1+1)--(#2-0.5,-#1+0.5); \draw[violet, very thick] (#2-1,0.5-#1)--(#2-0.5,0.5-#1);}
\newcommand{\bpdtwo}[2]{\draw[violet, very thick] (#2-0.5,-#1+0.5)--(#2,-#1+0.5); \draw[violet, very thick] (#2-0.5,-#1)--(#2-0.5,-#1+0.5);}
\newcommand{\FM}[2]{F_{#1,#2}}
\newcommand{\stirling}[2]{{#1\brace #2}}
\newcommand{\countingone}{\zeta} 
\newcommand{\countingtwo}{\xi} 
\numberwithin{equation}{section}
\theoremstyle{plain}  
\newtheorem{theorem}{Theorem}[section] 
\newtheorem{proposition}[theorem]{Proposition} 
\newtheorem{corollary}[theorem]{Corollary} 
\newtheorem{lemma}[theorem]{Lemma}               
\newtheorem{definition}[theorem]{Definition}
\newtheorem{example}[theorem]{Example}
\newtheorem{remark}[theorem]{Remark}   
\newtheorem{notation}[theorem]{Notation}  
\newtheorem{conjecture}[theorem]{Conjecture}  
\title {On the universal Drinfeld--Yetter algebra}
\author{Andrea Rivezzi}
\address{Dipartimento di Matematica e Applicazioni, Università degli studi di Milano-Bicocca, Via R.Cozzi 55, 20125 Milano, Italy}
\email{\href{mailto:a.rivezzi@campus.unimib.it}{a.rivezzi@campus.unimib.it}}
\keywords{Lie bialgebras, Drinfeld--Yetter modules, Etingof--Kazhdan Quantization, PROPs, Symmetric groups}
\subjclass[2020]{05A05, 05E10, 05B45, 17B62, 18M85}
\begin{document}

\begin{abstract}
	The universal Drinfeld--Yetter algebra is an associative algebra whose 
	co--Hochschild cohomology controls the existence of quantization functors of Lie bialgebras,
	such as the renowned one due to Etingof and Kazhdan. It was initially introduced by Enriquez and later re-interpreted by Appel and Toledano Laredo as an algebra of endomorphisms in the colored PROP of a Drinfeld--Yetter module over a Lie bialgebra. In this paper, we provide an explicit formula for its structure constants in terms of certain diagrams, which we term Drinfeld--Yetter looms.
\end{abstract}	 


\maketitle\thispagestyle{empty}
\setcounter{tocdepth}{1}
\tableofcontents

\section{Introduction}

\subsection{}
In the renowned series of articles \cite{etingof1996quantization, etingof1998quantization,etingof2000quantization,etingof2000quantizationb,etingof2008quantization}, Etingof and Kazhdan constructed a quantization functor of Lie bialgebras, and provided a positive answer to several problems posed by Drinfeld in \cite{drinfeld1992some}.
In \cite{enriquez2001quantization,enriquezonsome,enriquez2005cohomological}, Enriquez proposed an alternative construction of the Etingof--Kazhdan quantization functor, based on cohomological techniques. Specifically, he introduced a family of {\em universal} algebras
$\{\mathcal{U}^n\}_{n\geqslant 0}$, and relied on their co--Hochschild cohomology to prove the existence of a quantization functor. Moreover, by a rigidity argument, he proved that the Etingof--Kazhdan functor could also be obtained in this way. The algebras $\{\mathcal{U}^n\}_{n\geqslant 0}$ are universal in that they are equipped with canonical morphisms of algebras $\mathcal{U}^n\to (\U(\g))^{\ten n}$ for {\em any} finite--dimensional quasitriangular Lie bialgebra $\g$. In particular, the co--Hochschild complex of $\{\mathcal{U}^n\}_{n\geqslant 0}$ shall be thought of as a {\em universal} analogue of the co-Hochschild complex of $\g$. 

The definition of $\mathcal{U}^n$ is based on an explicit formula of their product given in \cite[App. C]{enriquez2001quantization}. Unfortunately, such formula appears to be incorrect 
in that it does not satisfy the above universal property. In Example \ref{counterexample} we show it by direct inspection for $n=1$ and $\g=\mathfrak{sl}_2$.

In \cite[Sec.~5]{appeluniqueness} Appel and Toledano Laredo provided a new realization of Enriquez's algebras, which readily satisfy the required universal property. Roughly, they 
introduced a colored PROP $\DY^n$ generated by a Lie bialgebra object $[\b]$ and 
$n$ Drinfeld--Yetter $[\b]$--modules $[V_1], \dots, [V_n]$. Then, 
$\mathcal{U}^n$ \footnote{For a better clarity, in the main body of the paper we denote, as in the original articles, Enriquez's algebras by $\mathcal{U} \mathfrak{S}_{univ}^n$ and Appel--Toledano Laredo algebras by $\mathfrak{U}_\DY^n$} is defined to be the algebra of endomorphism of $[V_1]\ten\cdots\ten[V_n]$
in $\DY^n$. Their definition simplifies significantly Enriquez's argument, and becomes a fundamental ingredient in their proof of the monodromy theorem for the Casimir connection
in \cite{appel-toledano-24}.

\subsection{}
While, on one side, this construction clarified the conceptual origin of $\mathcal{U}^n$,
on the other it did not provide an explicit formula of its product, akin to the one originally given by Enriquez. For $n=1$, the morphisms in $\DY^1$ are generated by the bracket and the cobracket on $[\b]$, the action and the coaction on $[V_1]$. These are commonly represented by diagrams 
satisfying a long list of relations. For instance, action and coaction satisfy, respectively,
\begin{equation*}
	\begin{tikzpicture}
		\draw[-, very thick,ggreen] (-0.25,0)--(1.5,0);
		\draw (1,0) arc (0:90:0.75);
		\draw(0.25,0.75)--(0,0.75);
		\draw (0,0.75)--(-0.25,1);
		\draw (0,0.75)--(-0.25,0.5);
		\node at (2,0.5) {$=$};
		\draw[-, very thick,ggreen] (-0.25+2.75,0)--(1.5+2.75,0);
		\draw (1+2.75,0) arc (0:90:0.75);
		\draw(0.25+2.75,0.75)--(-0.25+2.75,0.75);
		\draw (1+2.55,0) arc (0:90:0.55);
		\draw(0.5+2.55,0.55)--(0+2.5,0.55);
		\node at (2+2.75,0.5) {$-$};
		\draw[-, very thick,ggreen] (-0.25+2.75+3,0)--(1.5+2.75+3,0);
		\draw (1+2.75+3+0.2,0) arc (0:90:0.75);
		\draw(0.45+2.55+3,0.55)--(2.55+3,0.75);
		\draw (2.75+3+0.25,0.75)--(2.75+3+0.45,0.75);
		\draw (2.75+3+0.25,0.55)--(2.75+3+0.45,0.55);
		\draw (1+2.55+3+0.2,0) arc (0:90:0.55);
		\draw(0.25+2.75+3,0.75)--(2.55+3,0.55);
	\end{tikzpicture}
\end{equation*}
\begin{equation*}
	\begin{tikzpicture}
		\draw[-, very thick,ggreen] (0,0)--(1.5,0);
		\draw (1,0.75) arc (90:180:0.75);
		\draw(1,0.75)--(1.25,0.75);
		\draw(1.25,0.75)--(1.5,1);
		\draw(1.25,0.75)--(1.5,0.5);
		\node at (2.3,0.5) {$=$};
		\draw[-, very thick,ggreen] (0+2.75,0)--(1.5+2.95,0);
		\draw (1+2.75,0.75) arc (90:180:0.75);
		\draw(1+2.75,0.75)--(1.25+2.75,0.75)--(1.5+2.95,0.55);
		\draw (1+2.75,0.55) arc (90:180:0.55);
		\draw(1+2.75,0.55)--(1.5+2.5,0.55)--(1.5+2.95,0.75);
		\node at (2+3.2,0.5) {$-$};
		\draw[-, very thick,ggreen] (0+2.75+3,0)--(1.5+2.75+3,0);
		\draw (1+2.75+3,0.75) arc (90:180:0.75);
		\draw(1+2.75+3,0.75)--(1.25+2.75+3,0.75)--(1.5+2.75+3,0.75);
		\draw (1+2.75+3,0.55) arc (90:180:0.55);
		\draw(1+2.75+3,0.55)--(1.5+2.5+3,0.55)--(1.5+2.75+3,0.55);
	\end{tikzpicture}
\end{equation*}
and they obey the following commutation rule
\begin{equation*}
	\begin{tikzpicture}
		\draw[-, very thick,ggreen] (0+0.5,0)--(2.5+0.5,0);
		\draw (1+0.5,0) arc (0:90:0.75);
		\draw(0.25+0.5,0.75)--(0+0.5,0.75);
		\draw (2.25+0.5,0.75) arc (90:180:0.75);
		\draw(2.25+0.5,0.75)--(2.5+0.5,0.75);
		\node at (3.5+0.5,0.3) {$=$};
		\draw[-, very thick,ggreen] (5,0)--(6.25,0);
		\draw (1+5,0) arc (0:90:0.75);
		\draw(0.25+5,0.75)--(0+5,0.75);
		\draw (1+5,0.75) arc (90:180:0.75);
		\draw(1+5,0.75)--(1.25+5,0.75);
		\node at (1.8+5,0.4) {$+$}; 
		\draw[-, very thick,ggreen] (2.25+5,0)--(3.5+5,0);
		\draw (3.25+5,0.75) arc (90:180:0.75); 
		\draw(3.25+5,0.75)--(3.5+5,0.75);
		\draw (2.8+5,0.6) arc (-300:-260:1); 
		\node at (4+5,0.4) {$-$}; 
		\draw[-, very thick,ggreen] (4.5+5,0)--(6+5,0); 
		\draw (5.5+5,0) arc (0:90:0.75);
		\draw(4.75+5,0.75)--(4.5+5,0.75);
		\draw (5.75+5,0.75) arc (90:131:0.75); 
		\draw (5.75+5,0.75)--(6+5,0.75);
	\end{tikzpicture}
\end{equation*}
By convention, we read the diagrams from left to right.

\subsection{}
The elements of $\mathcal{U}=\mathcal{U}^1$ are linear combination of diagrams of the form 
\vspace{0.3 cm}
\begin{equation*}
\label{eq:generic-element}
\begin{tikzpicture} 
\draw[-, ggreen,very thick] (0,0)--(3.8,0);
\draw[very thick] (1,0.75) arc (90:180:0.75);
\draw[very thick] (1,0.75)--(1.2,0.75);
\draw[black] (1.2,0.3) rectangle (2.6,1);
\node at (1.9, 0.65) {$\phi$};
\draw[very thick] (1.6+1,0.75)--(1.8+1,0.75);
\draw[very thick] (2.55+1,0) arc (0:90:0.75);
\node at (2.55+1,-0.3) {$m$};
\node at (0.3,-0.3) {$n$};
\end{tikzpicture}
\end{equation*}
The diagram above is to be understood as an endomorphism of
$[V_1]$, given by $n$ iterated coactions, a universal morphism $\phi\colon [\b]^{\ten n}\to[\b]^{\ten m}$, and finally $m$ iterated actions. 
Relying on the diagrammatic relation above, it is natural to impose on $\End_{\DY^1}([V_1])$ a \emph{normal ordering}, featuring no bracket nor cobracket, and where coactions preceed actions.
By relying on such normal ordering, Appel and Toledano Laredo proved in \cite[Prop.~5.10]{appeluniqueness} that, as a vector space,
\[ \mathcal{U} \simeq \bigoplus_{n \geqslant 0} \K[\mathfrak{S}_n]\]
where $\mathfrak{S}_n$ is the symmetric group (cf.~\cite[App. C - Rmk. 12]{enriquez2001quantization}). Under this isomorphism, a permutation $\sigma\in\mathfrak{S}_n$
corresponds to the endomorphism
\vspace{0.3 cm}
\begin{equation*}
	\begin{tikzpicture} 
		\draw[-, very thick,ggreen] (0,0)--(2.8,0);
		\draw[very thick] (1,0.75) arc (90:180:0.75);
		\draw[very thick]  (1,0.75)--(1.2,0.75);
		\node[shape=circle,draw,inner sep=1pt] (char) at (1.4,0.75) {$\sigma$}; 
		\draw[very thick]  (1.6,0.75)--(1.8,0.75);
		\draw[very thick]  (2.55,0) arc (0:90:0.75);
		\node at (2.55,-0.3) {$n$};
		\node at (0.3,-0.3) {$n$};
	\end{tikzpicture}
\end{equation*}
By applying the same procedure to the product of two permutations
$\sigma\in\mathfrak{S}_n$ and $\tau\in\mathfrak{S}_m$, one readily sees that 
\begin{equation}
\label{eq:product-constants}
\begin{tikzpicture}[scale=1] \draw[-, very thick,ggreen] (0,0)--(5.8,0);
\draw[very thick]  (1,0.75) arc (90:180:0.75);
\draw[very thick] (1,0.75)--(1.2,0.75);
\node[shape=circle,draw,inner sep=1pt] (char) at (1.4,0.75) {$\sigma$}; 
\draw[very thick] (1.6,0.75)--(1.8,0.75);
\draw[very thick]  (2.55,0) arc (0:90:0.75);
\node at (2.55,-0.3) {$n$};
\node at (0.3,-0.3) {$n$};
\draw[very thick]  (4,0.75) arc (90:180:0.75);
\draw[very thick] (4,0.75)--(4.2,0.75);
\node[shape=circle,draw,inner sep=1pt] (char) at (4.4,0.75) {$\tau$}; 
\draw[very thick] (4.6,0.75)--(4.8,0.75);
\draw[very thick]  (5.55,0) arc (0:90:0.75);
\node at (5.55,-0.3) {$m$};
\node at (3.3,-0.3) {$m$};
\node at (7,0.25) {$=$ $\mathlarger{\sum_{\pi \in \mathfrak{S}_{n+m}}} c_{\sigma, \tau}^\pi$};
\begin{scope}[shift={(8.25,0)}]
\draw[-, very thick,ggreen] (0,0)--(2.8,0);
\draw[very thick]  (1,0.75) arc (90:180:0.75);
\draw[very thick] (1,0.75)--(1.2,0.75);
\node[shape=circle,draw,inner sep=1pt] (char) at (1.4,0.75) {$\pi$}; 
\draw[very thick] (1.6,0.75)--(1.8,0.75);
\draw[very thick]  (2.55,0) arc (0:90:0.75);
\node at (2.55,-0.3) {$n+m$};
\node at (0.3,-0.3) {$n+m$};
\end{scope}
\end{tikzpicture}
\end{equation}
with integral structure constants $c_{\sigma,\tau}^\pi\in\mathbb{Z}$. 
The computation of the integers $c_{\sigma,\tau}^\pi$ is computationally quite challenging. By applying a normal ordering algorithm, the number of terms necessary to express 
$c_{\sigma,\tau}^\pi$ appears to be growing exponentially (see also Appendix \ref{Appendix}).

\subsection{}
In this paper, we provide an explicit expression of the structure constants $c_{\sigma,\tau}^\pi\in\mathbb{Z}$ in terms of new combinatorial objects, which we term
\emph{Drinfeld--Yetter looms}. These are certain colored string diagrams, which are constructed as follows. We first considers the set of admissible tiles
\begin{center}
\begin{tikzpicture}[scale=0.6]
\node at (0,0) {$\mathcal{T}_{\loom{n}{m}}= \Big\{$};
\draw[black,very thick] (1.5,0.5) rectangle (2.5,-0.5);
\lcro{0.5}{2.5}
\node at (2.75,-0.5) {,};
\draw[black,very thick] (3,0.5) rectangle (4,-0.5);
\lmua{0.5}{4}
\node at (4.25,-0.5) {,};
\draw[black,very thick] (4.5,0.5) rectangle (5.5,-0.5);
\lmub{0.5}{5.5}
\node at (5.75,-0.5) {,};
\draw[black,very thick] (6,0.5) rectangle (7,-0.5);
\dela{0.5}{7}
\node at (7.25,-0.5) {,};
\draw[black,very thick] (7.5,0.5) rectangle (8.5,-0.5);
\delb{0.5}{8.5}
\node at (8.75,-0.5) {,};
\draw[black,very thick] (9,0.5) rectangle (10,-0.5);
\lhor{0.5}{10}
\node at (10.25,-0.5) {,};
\draw[black,very thick] (10.5,0.5) rectangle (11.5,-0.5);
\lver{0.5}{11.5}
\node at (11.75,-0.5) {,};
\draw[black,very thick] (12,0.5) rectangle (13,-0.5);
\node at (13.5,-0) {\Big\}.};
\end{tikzpicture}
\end{center}
Then, we define $\loom{n}{m}$ as the set of all the possible fillings of the empty grid
with $n$ rows and $m$ columns with the elements of $\mathcal{T}_{\loom{n}{m}}$
such that the resulting picture is a continuous string diagram.
Any $L \in \loom{n}{m}$, $\sigma \in \mathfrak{S}_n$, $\tau \in \mathfrak{S}_m$ can naturally be  associated to a permutation $\tilde{\gamma}(\sigma, L, \tau) \in \mathfrak{S}_{n+m}$, {\em e.g.},
\begin{center}
	\begin{tikzpicture}
		\gri{2}{3} \ver{1}{3} \cro{1}{1} \ver{2}{3}
		\draw[red, very thick] (2-1,-1+0.5) arc (-90:0:0.5); 
		\draw[red, very thick] (2-0.7,-1+1)--(2-0.7, -2);
		\draw[red, very thick] (1-1,-2+0.5) arc (-90:0:0.5);
		\draw[red, very thick] (1-1, -2+0.35)--(3, -2+0.35);
		\draw[red, very thick] (2.5,-2)--(2.5,-2.25);
		\draw[red, very thick] (1.3,-2)--(1.3,-2.25);
		\draw[red, very thick] (1.3,0)--(1.3,0.25)--(0.3,0.75)--(0.3,1);
		\draw[red, very thick] (1.5,0)--(1.5,0.25)--(0.5,0.75)--(0.5,1);
		\draw[red, very thick] (2.5,0)--(2.5,0.25)--(1.5,0.75)--(1.5,1);
		\draw[red, very thick] (0.5,0)--(0.5,0.25)--(2.5,0.75)--(2.5,1);
		\draw[red, very thick] (3,-1.65)--(3.25,-1.65);
		\draw[red, very thick] (0,-0.5)--(-0.25,-0.5)--(-0.75,-1.55)--(-1,-1.55);
		\draw[red, very thick] (0,-1.5)--(-0.25,-1.5)--(-0.75,-0.4)--(-1,-0.4);
		\draw[red, very thick] (0,-1.65)--(-0.25,-1.65)--(-0.75,-0.6)--(-1,-0.6);
		\node at (-1.15,-0.35) {\footnotesize{1}};
		\node at (-1.15,-0.65) {\footnotesize{2}};
		\node at (-1.15,-1.5) {\footnotesize{3}};
		\node at (1.3,-2.45) {\footnotesize{4}};
		\node at (2.5,-2.45) {\footnotesize{5}};
		\node at (0.3,1.15)  {\footnotesize{1}};
		\node at (0.5,1.15)  {\footnotesize{2}};
		\node at (1.5,1.15)  {\footnotesize{3}};
		\node at (2.5,1.15)  {\footnotesize{4}};
		\node at (3.4,-1.65)  {\footnotesize{5}};
	\end{tikzpicture}
\end{center}
corresponds to the permutation $(14)(253) \in \mathfrak{S}_5$. Relying on the number of occurrencies of certain specific tiles, we introduce the notion of a \emph{positive} (resp. \emph{negative}) loom.

The main result of this paper (see Theorem \ref{maintheorem} and Corollary \ref{maincor})
is an explicit expression of the structure constants of $\mathcal{U}$ in terms of cardinalities of
Drinfeld--Yetter looms. Specifically, 
\[
c_{\sigma,\tau}^\pi=P_{n,m}^{\sigma,\tau,\pi} - N_{n,m}^{\sigma,\tau,\pi}
\]
where $P_{n,m}^{\sigma,\tau,\pi}$ (resp. $N_{n,m}^{\sigma,\tau,\pi}$) denotes the number of
 positive (resp. negative) looms corresponding to $\pi\in\mathfrak{S}_{n+m}$.

\subsection{}
The proof relies on an intermediate combinatorial object, which we refer to as a \emph{Drinfeld--Yetter mosaic}. Similarly to the case of a loom, a mosaic is defined as an admissible filling of an $n\times m$ grid with the tiles  
\begin{equation*}
\begin{tikzpicture}[scale=0.6]
\node at (0.2,0) {$\mathcal{T}_\mathfrak{M}= \Big\{$};
\draw[black,very thick] (1.5,0.5) rectangle (2.5,-0.5);
\draw[red,very thick] (2,0.5)--(2,-0.5);
\draw[red,very thick] (1.5,0)--(2.5,0);
\node at (2.75,-0.5) {,};
\draw[black,very thick] (3,0.5) rectangle (4,-0.5);
\draw[red,very thick] (3,0)--(3.5,0);
\draw[red,very thick] (3.5,0.5)--(3.5,-0.5);
\node at (4.25,-0.5) {,};
\draw[black,very thick] (4.5,0.5) rectangle (5.5,-0.5);
\draw[red,very thick] (5,0.5)--(5,0);
\draw[red,very thick] (4.5,0)--(5.5,0);
\node at (5.75,-0.5) {,};
\draw[black,very thick] (6,0.5) rectangle (7,-0.5);
\draw[red,very thick] (6,0)--(7,0);
\node at (7.25,-0.5) {,};
\draw[black,very thick] (7.5,0.5) rectangle (8.5,-0.5);
\draw[red,very thick] (8,0.5)--(8,-0.5);
\node at (8.75,-0.5) {,};
\draw[black,very thick] (9,0.5) rectangle (10,-0.5);
\node at (10.5,-0) {\Big\}.};
\end{tikzpicture}
\end{equation*}
To each mosaic, we associate a morphism in $\DY^1$. This yields a combinatorial description 
the commutation rule between $n$ iterated actions and $m$ iterated coactions, that is
\begin{equation*}
\begin{tikzpicture} 
\draw[-, very thick,ggreen] (0,0)--(2.5,0);
\draw[very thick]  (1,0) arc (0:90:0.75);
\draw[very thick] (0.25,0.75)--(0,0.75);
\draw[very thick]  (2.25,0.75) arc (90:180:0.75);
\draw[very thick] (2.25,0.75)--(2.5,0.75);
\node at (1.65,-0.3) {$m$};
\node at (0.9,-0.3) {$n$}; 
\node at (5,0.2) {$= \mathlarger{\sum_{M \in \mosaic{n}{m}} (-1)^{\cob(M)} \varphi_{n,m}(M)}$};
\end{tikzpicture}
\end{equation*} 
Here, $\varphi_{n,m}(M)$ is the endomorphism associated to $M$, and $\cob(M)$ is the number of certain distinguished tiles appearing in $M$, see Proposition \ref{claim} for more details. 
Finally, we prove in Proposition \ref{proposition-partition-drinfeld-yetter-tableaux} that looms naturally appear as a refinement of mosaics, thus yielding the desired result.

\subsection{}
Our results lay the ground for a deeper study of the universal Drinfeld--Yetter algebra in relation with the combinatorics of the symmetric groups. As we show in the Appendix, the combinatorics of looms and mosaics, although quite complicated and challenging, appear to be controlled by a relatively small family of permutations, which for small $n$ and $m$ shows already interesting properties.

\subsection{Outline}
This paper is structured as follows. In \S \ref{section-one} we first recall the definitions of Lie bialgebras and Drinfeld--Yetter modules, and we sketch the main idea of the Etingof--Kazhdan quantization of Lie bialgebras. We then introduce the PROPic counterparts of such algebraic structures. \S \ref{section-universal-algebrass} is devoted to the Enriquez and Appel--Toledano Laredo universal algebras, with a particular focus on the algebra structure of $\mathcal{U}$. We present its vector space structure, and we exhibit an algorithm to compute the multiplication of two elements of the standard basis, following the ideas of Appel and Toledano Laredo \cite{appeluniqueness}. In \S \ref{section-two} we define the sets of Drinfeld--Yetter mosaics and of Drinfeld--Yetter looms and show their main properties. In \S \ref{section-explicit-formula} we then present the main result, that is a formula expressing the multiplication of $\mathcal{U}$ in terms of Drinfeld--Yetter looms. 
Finally, in the Appendix we collect some explicit computations and conjectures.
\subsection{Aknowledgments}
The author would like to thank A. Appel, M. Bordemann, and F. Ciliegi for numerous useful discussions. \\ 
This publication is based upon work from \emph{COST Action CaLISTA CA21109} supported by
COST (European Cooperation in Science and Technology). www.cost.eu. \\
The author is a member of the \emph{Gruppo Nazionale per le Strutture Algebriche, Geometriche e le loro Applicazioni} (GNSAGA) of the \emph{Istituto Nazionale di Alta Matematica} (INdAM).

 \subsection{Conventions and notations}
In this paper $\K$ always denotes a field of characteristics zero. If $A$ is a set, we denote by $|A|$ its cardinality and by $\P(A)$ its power set. 
\section{Lie bialgebras and PROPs}
\label{section-one}
\subsection{Lie bialgebras}
The following definition is standard, see e.g. \cite{majidfoundations}.
\begin{definition}
A Lie bialgebra is a triple $( \b, [ \cdot, \cdot], \delta )$, where $\b$ is a vector space and
\[ [\cdot , \cdot] : \b \ten \b \to \b \qquad \text{and} \qquad \delta : \b \to \b \ten \b \]
 are linear maps  (called respectively the Lie bracket and the Lie cobracket) satisfying the following conditions for any $x,y,z \in \b$:
\begin{itemize}
\item[(i)] (antisymmetry of the Lie bracket)
\begin{equation}
\label{eq:antis-bracket}
[x,y] = -[y,x]
\end{equation}
\item[(ii)] (Jacobi rule):
\begin{equation}
\label{eq:jacobi-rule}
[x,[y,z]] + [y,[z,x]] + [z,[x,y]] =0
\end{equation}
\item[(iii)] (antisymmetry of the Lie cobracket):
\begin{equation}
\label{eq:antis-cobracket}
\delta = - \tau \circ \delta
\end{equation}
\item[(vi)] (coJacobi rule):
\begin{equation}
\label{eq:cojacobi-rule}
(\sigma + \sigma^2 + \id_{\b,\b,\b})\circ (\id_\b \ten \delta) \circ \delta=0
\end{equation}
\item[(v)] (cocycle condition):
\begin{equation}
\label{eq:cocycle}
 \delta([x,y]) = [x \ten 1 + 1 \ten x , \delta(y)] - [y \ten 1 + 1 \ten y, \delta(x)]
\end{equation}
where $\tau: \b \ten \b \to \b \ten \b$ is the map permuting two variables and $\sigma : \b \ten \b \ten \b \to \b \ten \b \ten \b$ is the map ciclically permuting three variables.
\end{itemize}
\end{definition}
Conditions $(i)-(ii)$ mean that the pair $(\b, [\cdot, \cdot])$ is a Lie algebra, whence conditions $(iii)-(iv)$ mean that the pair $(\b, \delta)$ is a Lie coalgebra. Condition $(v)$ represents the compatibility between the Lie bracket and the Lie cobracket. A morphism of Lie bialgebras is a linear map that is both a morphism of Lie algebras and Lie coalgebras. The class of all Lie bialgebras together with the class of all morphisms of Lie bialgebras form a symmetric braided monoidal category, where the constraints are the same of the category of vector spaces.

\subsection{Drinfeld--Yetter modules}
The appropriate notion of a module of a Lie bialgebra is the following:
\begin{definition}
Let $(\b,[\cdot , \cdot],\delta)$ be a Lie bialgebra. A Drinfeld--Yetter module over $\b$ is a triple $(V, \pi, \pi^*)$, where $V$ is a vector space and 
\[ \pi : \b \ten V \to V \qquad \text{and} \qquad \pi^* : V \to \b \ten V \]
are linear maps (called respectively the action and the coaction) satisfying the following conditions:
\begin{itemize}
\item[(i)] (Lie action):
\begin{equation}
\label{eq:lie-action}
\pi \circ ([\cdot, \cdot] \ten \id_V) = \pi \circ (\id_\b \ten \pi) - \pi \circ (\id_\b \ten \pi) \circ (\tau \ten \id_V)
\end{equation}
\item[(ii)] (Lie coaction):
\begin{equation}
\label{eq:lie-coaction}
(\delta \ten \id_V) \circ \pi^* = (\tau \ten \id_V) \circ (\id_\b \ten \pi^*) \circ \pi^* -  (\id_\b \ten \pi^*) \circ \pi^* 
\end{equation}
\item[(iii)] (Drinfeld--Yetter relation):
\begin{equation}
\label{drinfeld-yetter-relation}
\pi^* \circ \pi = (\id_\b \ten \pi)\circ (\tau \ten \id) \circ (\id_\b \ten \pi^*) + ([\cdot, \cdot] \ten \id_V)\circ (\id_\b \ten \pi^*) - (\id_\b \ten \pi) \circ (\delta \ten \id_V).
\end{equation}
\end{itemize}
\end{definition}
Condition $(i)$ means that the pair $(V, \pi)$ is a (left) Lie $\b$--module, whence condition $(ii)$ means that the pair $(V, \pi^*)$ is a (right) Lie $\b$--comodule. Finally, condition $(iii)$ is the compatibility relation between the Lie action and the Lie coaction. A morphism of Drinfeld--Yetter $\b$--modules is a linear map which is both a morphism of Lie modules and of Lie comodules. The class of all Drinfeld--Yetter $\b$--modules together with the class of all morphisms of Drinfeld--Yetter $\b$--modules form a symmetric braided monoidal category, where the constraints are the same of the category of vector spaces. We shall denote it by $\DY_\b$. In particular, it can be shown that $\DY_\b$ is monoidally equivalent to the category of all equicontinuous $\d_\b$--modules, where $\d_\b$ is the Drinfeld double of $\b$, see \cite[7.3]{etingof1996quantization} and \cite[2.2]{appeluniqueness} for more details. 
\subsection{Quantization of Lie bialgebras}
In this Section we briefly recall the Etingof--Kazhdan quantization of Lie bialgebras. The reader can find more details in \cite{etingof1996quantization} \cite{etingof1998quantization} \cite{etingof2002lecture} \cite{appel20182}.
\begin{definition}
Let $(\b, [\cdot, \cdot], \delta)$ be a Lie bialgebra and $\U(\b)$ be its universal enveloping algebra. A topological Hopf algebra $H$ is said to be a quantization of $\b$ if there exists an isomorphism of Hopf algebras $ H / \hbar \cdot H \cong \U(\b)$
such that, for any $x \in \b$ and any lifting $\tilde{x} \in H$ of $x$ one has 
\[ \delta(x) = \frac{\Delta(\tilde{x}) - \Delta^\op(\tilde{x})}{\hbar} \ \mod \hbar\]
where $\Delta$ (resp. $\Delta^\op$) is the comultiplication (resp. coopposite comultiplication) of $H$.
\end{definition}
In the article \cite{drinfeld1992some}, Drinfeld announced a number of open problems in quantum group theory. Among them, there was the problem of finding a universal quantization functor assigning to any Lie bialgebra a topological Hopf algebra quantizing it.
This (and many others) problem of Drinfeld was solved by Etingof and Kazhdan in the articles \cite{etingof1996quantization} \cite{etingof1998quantization}, and further developed in \cite{etingof2000quantization} \cite{etingof2000quantizationb} \cite{etingof2008quantization}. We now sketch --very briefly-- the idea of the Etingof--Kazhdan quantization construction. Fix a formal parameter $\hbar$ and a Drinfeld associator $\Phi$ (see \cite{drinfeld1990quasitriangular} \cite{drinfeld1989quasi} \cite{bordemann2023gentle} for more details on Drinfeld associators). Then we may consider the \emph{deformed} category $\DY_\b^\Phi$, where the objects are topologically free Drinfeld--Yetter $\b$--modules, and the associativity and commutativity constraints are respectively 
\[ \Phi(\hbar t_{U,V}, \hbar t_{V,W}) \qquad \text{and} \qquad \tau_{V,W} \circ e^{\hbar t_{V,W}/2},\]
where $t$ is the standard infinitesimal braiding of $\DY_\b$, i.e. 
\[t_{V,W} = (\id_V \ten \pi_W) \circ (\tau_{\b,V} \ten \id_W )\circ (\pi^*_W \ten \id_V)+(\pi_V \ten \id_W) \circ (\tau_{V, \b}  \ten \id_W ) \circ (\id_V \ten \pi^*_W).\]
Consider the forgetful functor $F: \DY_\b^\Phi \to \TopFree_\K$ assigning to any topological Drinfeld--Yetter $\b$--module its underlying topologically free $\K[[\hbar]]$--module and to any morphism of Drinfeld--Yetter $\b$--modules its underlying morphism of topologically free modules. Etingof and Kazhdan constructed a non--trivial monoidal structure $J^{\mathrm{EK}}$ on $F$, inducing by Tannaka--Krein reconstruction a topological Hopf algebra structure on $\End(F)$, see \cite[\S 18]{etingof2002lecture} for more details. Twisting $\End(F)$ by $J^{\mathrm{EK}}$ produces a topological Hopf algebra quantizing the Drinfeld double $\d_\b$. In order to construct a quantization of $\b$, they finally consider the universal Verma module $M_- \coloneqq \mathrm{Ind}_{\b^*}^{\d_\b} \mathbb{C} \cong \U(\b)$, and show that $F(M_-)$ is a topological Hopf algebra quantizing $\b$, where the comultiplication is given by the composition $(J^{\mathrm{EK}}_{M_-,M_-})^{-1}\circ F(\Delta_0)$, where $\Delta_0$ is the comultiplication induced by standard one of $\U(\b)$. The construction is shown to be universal, see the next Sections for more details.
\subsection{PROPs}
We now introduce the concept of PROP, which goes back to \cite{maclanecategorical} and \cite{lawvere}. PROPs are used in order to encode the data of algebraic structures by modeling them in a $\K$--linear strict symmetric monoidal category. All the PROPs of our interests are quotients of the free PROP (see \cite{vallettekoszul} for more details) subject to generators and relations, depending on the algebraic structure we are considering. Richer algebraic structures, such as Drinfeld--Yetter modules over a Lie bialgebra, can be encoded in a extended notion of PROP, namely colored PROPs. Further details on PROPs can be find in \cite[\S 1.1--1.2]{etingof1998quantization}, \cite[Ch. 20]{etingof2002lecture}, \cite[\S 2]{enriquez2005invertibility}, \cite[\S 6]{appel20182}.
\begin{definition}
Let $\mathbb{K}$ be a field. 
\begin{itemize}
\item A PROP (product and permutation category) is a $\mathbb{K}$--linear, strict, symmetric monoidal category whose objects are indexed by non--negative integers and whose tensor product is given by $[n] \otimes [m] = [n+m]$. In particular, the tensor unit is $[0]$ and $[1]^{\otimes n} = [n]$. \\A morphism of PROPs\index{morphism of PROPs} is a strongly monoidal functor $F : \sfP \to \sfQ$ which is the identity on the objects and whose monoidal structure is the trivial one, i.e.
\[ F([n]_\sfP) \ten F([m]_\sfP) = [n]_\sfQ \ten [m]_\sfQ = [n+m]_\sfQ =F([n+m]_\sfP).\]
\item  A colored PROP is a $\mathbb{K}$--linear, strict, symmetric monoidal category whose objects are finite sequences over a set $\sfA$.
In other words, in a colored PROP $\sfP$ one has 
\[ \Obj(\sfP) = \coprod_{n \geqslant  0} \sfA^n.\]
 Here the tensor product is the concatenation of sequences, and the tensor unit is the empty sequence. The set $\sfA$ is said to be the set of colors of $\sfP$. Note that any PROP can be considered a colored PROP with respect to a set of colors of cardinality one.
\end{itemize}
\end{definition}
If $\sfP$ is a PROP, we have that for any $n \geqslant 0 $ there is an action 
\[ \mathbb{K}[\mathfrak{S}_n] \to \Hom_\sfP([n],[n]).\]
We shall call the corresponding morphisms \emph{permutation morphisms} and we shall denote them with the related permutation. In all examples of our interest, we have that such action is faithful.
We now present the main PROPs of our interest: the PROP of Lie bialgebras and the colored PROP of Drinfeld--Yetter modules.
\begin{example}
The PROP $\LBA$ of Lie bialgebras is the PROP generated by morphisms $\delta : [1] \to [2]$ and $\mu : [2] \to [1]$ (the universal Lie cobracket and the universal Lie bracket) subject to the following relations
\begin{subequations}
\begin{align}
\mu \circ (\id_{[2]} + (12))&=0 \label{eq:prop-lba-one} \\
\mu \circ (\mu \otimes \id_{[1]}) \circ (\id_{[3]} + (123) + (312))&=0 \label{eq:prop-lba-two}  \\
(\id_{[2]} + (12)) \circ \delta &= 0 \label{eq:prop-lba-three} \\
(\id_{[3]} + (123) + (312)) \circ (\delta \otimes \id_{[1]}) \circ \delta &=0 \label{eq:prop-lba-four} \\
\delta \circ \mu -  (\id_{[2]} - (12)) \circ (\id_{[1]} \otimes \mu) \circ (\delta \otimes \id_{[1]}) \circ (\id_{[2]}- (12))&=0 \label{eq:prop-lba-five} 
\end{align}
\end{subequations}
that respectively are the PROPic counterpart of the algebraic relations \eqref{eq:antis-bracket}, \eqref{eq:jacobi-rule}, \eqref{eq:antis-cobracket}, \eqref{eq:cojacobi-rule}, \eqref{eq:cocycle}.\\ 
Similarly, the PROP $\LA$ of Lie algebras is the PROP generated by $\mu : [1] \to [2]$ with relations \eqref{eq:prop-lba-one} and \eqref{eq:prop-lba-two}, while the PROP $\LCA$ of Lie coalgebras is the PROP generated by $\delta : [2] \to [1]$ with relations \eqref{eq:prop-lba-three} and \eqref{eq:prop-lba-four}.
\end{example}

\begin{example}
Let $n \geqslant  1$. The $n$--th Drinfeld--Yetter PROP $\DY^{n}$ is the colored PROP generated by $n+1$ objects $[1]$ and $\{ [V_k]\}_{k=1,\ldots,n}$ and by $2n + 2$ morphisms
\begin{equation*}
\begin{split}
\mu&:[2] \to [1] \\
\delta&: [1] \to [2] \\
\pi_k&: [1] \ten [V_k] \to [V_k] \\
\pi^*_k &: [V_k] \to [1] \ten [V_k]
\end{split}
\end{equation*}
such that the triple $([1], \mu, \delta)$ satisfies relations \eqref{eq:prop-lba-one}--\eqref{eq:prop-lba-five} and for any $k=1,\ldots, n$ the triple $([V_k], \pi_k, \pi_k^*)$ is a Drinfeld--Yetter module over $[1]$, i.e. the following relations are satisfied
\begin{subequations}
\begin{align}
\pi_k \circ (\mu \ten \id_{[V_k]}) &= \pi_k \circ (\id_{[1]} \ten \pi_k) - \pi_k \circ (\id_{[1]} \ten \pi_k) \circ (21) \label{eq:dy-module-one} \\
(\delta \ten \id_{[V_k]}) \circ \pi^*_k &= (21) \circ (\id_{[1]} \ten \pi^*_k) \circ \pi^*_k - (\id_{[1]} \ten \pi^*_k) \circ \pi^*_k \label{eq:dy-module-two} \\
\pi^*_k \circ \pi_k &=\big( (\id_{[1]} \ten \pi_k) \circ (12) +(\mu \ten \id_{[V_k]})  \big) \circ (\id_{[1]} \ten \pi^*_k)  - (\id_{[1]} \ten \pi_k) \circ (\delta \ten \id_{[V_k]}). \label{eq:dy-module-three}
\end{align}
\end{subequations}
\end{example}
PROPs are very commonly described using a pictorial representation.
In this paper, we shall represent the universal Lie bracket and the universal Lie cobracket of $\LBA$ and $\DY^n$ respectively with the diagrams
\begin{equation*}
\begin{tikzpicture}
\draw (1,0)--(1.5,-0.5);
\draw (1,-1)--(1.5,-0.5)--(2,-0.5); 
\draw (3,-0.5)--(3.5,-0.5)--(4,0);
\draw (3.5,-0.5)--(4,-1);
\end{tikzpicture}
\end{equation*}
which are read from left to right. According to this pictorial representation, we represent the PROPic Lie algebra axioms \eqref{eq:prop-lba-one}, \eqref{eq:prop-lba-two} respectively with

\vspace{0.3 cm}
\begin{equation}
\label{eq:lie}
\begin{tikzpicture}[scale=0.8]
\draw (-0.5,-0.5)--(0,0);
\draw (-0.5,0.5)--(0,0);
\draw (0,0)--(1,0);
\node at (1.5,0) {$=-$};
\draw (2,-0.5)--(3,0.5)--(4,0);
\draw(2,0.5)--(3,-0.5)--(4,0)--(4.5,0);
\node at (5.5,0.1) {and};
\node at (6.5,0) {$\Biggl($};
\draw(7,0.5)--(7.5,0.5);
\draw(7,0)--(7.5,0);
\draw(7,-0.5)--(7.5,-0.5);
\node at (8,0) {$+$};
\draw(8.5,-0.5)--(9.5,0.5);
\draw(8.5,0.5)--(9.5,0);
\draw(8.5,0)--(9.5,-0.5);
\node at (10,0) {$+$};
\draw(10.5,0.5)--(11.5,-0.5);
\draw(10.5,0)--(11.5,0.5);
\draw(10.5,-0.5)--(11.5,0);
\node at (12,0) {$\Biggr)$};
\draw(12.5,0.5)--(13,0.25);
\draw (12.5,0)--(13,0.25)--(13.5,0);
\draw(12.5,-0.5)--(13.5,0)--(14,0);
\node at (14.75,0) {$=0$};
\end{tikzpicture}
\end{equation}
\vspace{0.3 cm}

while the PROPic Lie coalgebra axioms \eqref{eq:prop-lba-three}, \eqref{eq:prop-lba-four} are represented respectively by 

\vspace{0.3 cm}
\begin{equation}
\label{eq:colie}
\begin{tikzpicture}[scale=0.77]
\draw(-1,0)--(0,0)--(0.5,0.5);
\draw(0,0)--(0.5,-0.5);
\node at (1.1,0) {$=-$};
\draw(1.7,0)--(2.5,0)--(3,0.5)--(4,-0.5);
\draw(2.5,0)--(3,-0.5)--(4,0.5);
\draw(6.5,0)--(7,0);
\draw(7,0)--(7.5,0.25)--(8,0.5);
\draw(7.5,0.25)--(8,0);
\draw(7,0)--(8,-0.5);
\node at (6.5+2,0) {$\Biggl($};
\draw(7+2,0.5)--(7.5+2,0.5);
\draw(7+2,0)--(7.5+2,0);
\draw(7+2,-0.5)--(7.5+2,-0.5);
\node at (8+2,0) {$+$};
\draw(8.5+2,-0.5)--(9.5+2,0.5);
\draw(8.5+2,0.5)--(9.5+2,0);
\draw(8.5+2,0)--(9.5+2,-0.5);
\node at (10+2,0) {$+$};
\draw(10.5+2,0.5)--(11.5+2,-0.5);
\draw(10.5+2,0)--(11.5+2,0.5);
\draw(10.5+2,-0.5)--(11.5+2,0);
\node at (12+2,0) {$\Biggr)$};
\node at (14.8,0) {$=0$}; 
\node at (5.5,0.1) {and};
\end{tikzpicture}
\end{equation}
\vspace{0.3 cm}

Finally, the PROPic cocycle condition \eqref{eq:prop-lba-five} is represented by 
\vspace{0.3 cm}
\begin{equation}
\label{eq:cocyclerule}
\begin{tikzpicture}[scale=1.7]
\draw (0.25,0.25)-- (0.75,0);
\draw (0.25,-0.25)--(0.75,0);
\draw (0.75,0)--(1,0);
\draw (1,0)--(1.5,0.25);
\draw (1,0)--(1.5,-0.25);
\node at (1.75,0) {=};
\draw (2,0.25)--(2.25,0.25);
\draw (2,-0.15)--(2.25,-0.15);
\draw (2.25,0.25)-- (2.5,0.05);
\draw (2.25,-0.15)--(2.5,0.05);
\draw (2.25,-0.15)--(2.5,-0.25);
\draw (2.5,-0.25)-- (2.75,-0.25);
\draw (2.5,0.05)-- (2.75,0.05);
\node at (3,0) {+};
\draw (3.25,-0.15)-- (3.5,-0.15);
\draw (3.5,-0.15)--(3.75,0.05);
\draw (3.5,-0.15)--(3.75,-0.25);
\draw (3.75,0.05)--(4.5,0.05);
\draw (3.75,-0.25)--(4,-0.25);
\draw (4,-0.25)--(4.25,-0.15);
\draw (4.25,-0.15)--(4.5,-0.15);
\draw (3.25,0.25)--(4,0.25);
\draw (4,0.25)--(4.25,-0.15);
\node at (4.75,0) {+};
\draw (5,-0.25)-- (5.25,-0.25);
\draw (5,0.05)--(5.25,0.05);
\draw (5.25,0.05)--(5.5,-0.10);
\draw (5.25,-0.25)--(5.5,-0.10);
\draw (5.25,0.05)--(5.5,0.15);
\draw (5.5,0.15)-- (5.75,0.15);
\draw (5.5,-0.10)-- (5.75,-0.10);
\node at (6,0) {+};
\draw (6.25,0.05)-- (6.5,0.05);
\draw (6.5,0.05)-- (6.75,0.25);
\draw (6.5,0.05)-- (6.75,-0.10);
\draw (6.75,0.25)--(7,0.25);
\draw (7,0.25)--(7.25,0.05);
\draw (7.25,0.05)--(7.5,0.05);
\draw (6.75,-0.10)--(7.5,-0.10);
\draw (6.25,-0.25)--(7,-0.25);
\draw (7,-0.25)--(7.25,0.05);
\end{tikzpicture}
\end{equation}
Similarly, we shall represent the generating morphisms of the category $\DY^{1}$ as follows. The identity $\id_{[1]}$ corresponds to a thin horizontal line, whereas the identity $\id_{[V_1]}$ is represented by a horizontal green bold line. The morphisms $\mu,\delta,\pi_1,\pi^*_1$ are thus respectively represented by the diagrams 

\vspace{0.3 cm}
\[
\begin{tikzpicture}
\draw (1,0)--(1.5,-0.5);
\draw (1,-1)--(1.5,-0.5)--(2,-0.5); 
\draw (3,-0.5)--(3.5,-0.5)--(4,0);
\draw (3.5,-0.5)--(4,-1);
\begin{scope}[shift={(5,-0.75)}]
\draw[-, very thick,ggreen] (0,0)--(1.5,0);
\draw (1,0) arc (0:90:0.75);
\draw(0.25,0.75)--(0,0.75);
\end{scope}
\begin{scope}[shift={(7,-0.75)}]
\draw[-, very thick,ggreen] (0,0)--(1.5,0);
\draw (1,0.75) arc (90:180:0.75);
\draw(1,0.75)--(1.25,0.75);
\end{scope}
\end{tikzpicture}
\]
\vspace{0.3 cm}

which are again read from left to right. The fact that the triple $([1], \mu, \delta)$ is a Lie bialgebra object in $\DY^1$ is then represented by the diagrams \eqref{eq:lie}, \eqref{eq:colie}, \eqref{eq:cocyclerule}. Finally, relations \eqref{eq:dy-module-one} \eqref{eq:dy-module-two} \eqref{eq:dy-module-three} are respectively represented by the following three pictorial identities
\vspace{0.3 cm}
\begin{equation}
\label{eq:actionrule}
\begin{tikzpicture}
\draw[-, very thick,ggreen] (-0.25,0)--(1.5,0);
\draw (1,0) arc (0:90:0.75);
\draw(0.25,0.75)--(0,0.75);
\draw (0,0.75)--(-0.25,1);
\draw (0,0.75)--(-0.25,0.5);
\node at (2,0.5) {$=$};
\draw[-, very thick,ggreen] (-0.25+2.75,0)--(1.5+2.75,0);
\draw (1+2.75,0) arc (0:90:0.75);
\draw(0.25+2.75,0.75)--(-0.25+2.75,0.75);
\draw (1+2.55,0) arc (0:90:0.55);
\draw(0.5+2.55,0.55)--(0+2.5,0.55);
\node at (2+2.75,0.5) {$-$};
\draw[-, very thick,ggreen] (-0.25+2.75+3,0)--(1.5+2.75+3,0);
\draw (1+2.75+3+0.2,0) arc (0:90:0.75);
\draw(0.45+2.55+3,0.55)--(2.55+3,0.75);
\draw (2.75+3+0.25,0.75)--(2.75+3+0.45,0.75);
\draw (2.75+3+0.25,0.55)--(2.75+3+0.45,0.55);
\draw (1+2.55+3+0.2,0) arc (0:90:0.55);
\draw(0.25+2.75+3,0.75)--(2.55+3,0.55);
\end{tikzpicture}
\end{equation}

\vspace{0.3 cm}
\begin{equation}
\label{eq:coactionrule}
\begin{tikzpicture}
\draw[-, very thick,ggreen] (0,0)--(1.5,0);
\draw (1,0.75) arc (90:180:0.75);
\draw(1,0.75)--(1.25,0.75);
\draw(1.25,0.75)--(1.5,1);
\draw(1.25,0.75)--(1.5,0.5);
\node at (2.3,0.5) {$=$};
\draw[-, very thick,ggreen] (0+2.75,0)--(1.5+2.95,0);
\draw (1+2.75,0.75) arc (90:180:0.75);
\draw(1+2.75,0.75)--(1.25+2.75,0.75)--(1.5+2.95,0.55);
\draw (1+2.75,0.55) arc (90:180:0.55);
\draw(1+2.75,0.55)--(1.5+2.5,0.55)--(1.5+2.95,0.75);
\node at (2+3.2,0.5) {$-$};
\draw[-, very thick,ggreen] (0+2.75+3,0)--(1.5+2.75+3,0);
\draw (1+2.75+3,0.75) arc (90:180:0.75);
\draw(1+2.75+3,0.75)--(1.25+2.75+3,0.75)--(1.5+2.75+3,0.75);
\draw (1+2.75+3,0.55) arc (90:180:0.55);
\draw(1+2.75+3,0.55)--(1.5+2.5+3,0.55)--(1.5+2.75+3,0.55);
\end{tikzpicture}
\end{equation}

\vspace{0.3 cm}

\begin{equation}
\label{eq:dyrule}
\begin{tikzpicture}
\draw[-, very thick,ggreen] (0+0.5,0)--(2.5+0.5,0);
\draw (1+0.5,0) arc (0:90:0.75);
\draw(0.25+0.5,0.75)--(0+0.5,0.75);
\draw (2.25+0.5,0.75) arc (90:180:0.75);
\draw(2.25+0.5,0.75)--(2.5+0.5,0.75);
\node at (3.5+0.5,0.3) {$=$};
\draw[-, very thick,ggreen] (5,0)--(6.25,0);
\draw (1+5,0) arc (0:90:0.75);
\draw(0.25+5,0.75)--(0+5,0.75);
\draw (1+5,0.75) arc (90:180:0.75);
\draw(1+5,0.75)--(1.25+5,0.75);
\node at (1.8+5,0.4) {$+$}; 
\draw[-, very thick,ggreen] (2.25+5,0)--(3.5+5,0);
\draw (3.25+5,0.75) arc (90:180:0.75); 
\draw(3.25+5,0.75)--(3.5+5,0.75);
\draw (2.8+5,0.6) arc (-300:-260:1); 
\node at (4+5,0.4) {$-$}; 
\draw[-, very thick,ggreen] (4.5+5,0)--(6+5,0); 
\draw (5.5+5,0) arc (0:90:0.75);
\draw(4.75+5,0.75)--(4.5+5,0.75);
\draw (5.75+5,0.75) arc (90:131:0.75); 
\draw (5.75+5,0.75)--(6+5,0.75);
\end{tikzpicture}
\end{equation}

\vspace{0.3 cm}
This pictorial description immediately generalizes to the case of the colored PROP $\DY^n$ by assigning a color to any $[V_k]$. \\
We shall need the following
\begin{definition}
\label{definition-normal-ordering}
We say that a morphism of $\DY^1$ is normally ordered if, in its pictorial representation, all coactions precede all actions and all cobrackets precede all brackets.
\end{definition}
\begin{remark}
Note that, if a morphism is not normally ordered, one can use the Drinfeld--Yetter, action and coaction rules in order to rewrite it as a sum of normally ordered elements. This reasoning will be the key idea of Proposition \ref{proposition-canonical-basis}.
\end{remark}
\subsection{Universal quantization functors}
The concept of universal construction is used to define the PROPic counterpart of universal functors between algebraic structures, such as the universal enveloping algebra of a Lie algebra.
\begin{definition}
Let $\sfP, \sfQ$ be two PROPs, and $\C$ be a symmetric monoidal category.
\begin{itemize}
\item A linear algebraic structure of type $\sfP$ on an object $X$ in $\Obj(\C)$ is a symmetric monoidal functor $F_X: \sfP \to \C$ such that $F_X([n]) = X^{\ten n}$.
\item A universal construction from $\sfP$ to $\sfQ$ is a strict symmetric functor $F: \sfQ \to \sfP$.
\end{itemize}
\end{definition}
\begin{example}
Consider the PROP $\LA$ of Lie algebras and the PROP $\AAA$ of associative algebras, i.e. the PROP generated by two morphisms $m: [2] \to [1]$ and $\eta : [0] \to [1]$ with relations
\begin{equation*}
\begin{split}
(\id_{[1]} \ten m) \circ m &= (m \ten \id_{[1]}) \circ m \\
m \circ (\id_{[1]} \ten m)  &= m \circ (m \ten \id_{[1]}) = \id_{[1]}.
\end{split}
\end{equation*}
The following functor is a universal construction
\begin{equation*}
\begin{split}
\mathsf{Lie} : \LA &\to \AAA \\
[1]_{\LA} & \mapsto [1]_{\AAA} \\
\mu & \mapsto m - m \circ (12).
\end{split}
\end{equation*}
If $A$ is an associative algebra, there is a symmetric monoidal functor 
\begin{equation*}
\begin{split}
F_A: \AAA &\to \Vect_\K \\
[1] & \mapsto A.
\end{split}
\end{equation*}
The fact that any associative algebra $A$ has a natural Lie algebra structure with $[a,b] = ab - ba$ is thus described by the composition of the functors $F_A \circ \mathsf{Lie}: \LA \to \Vect_\K $. 
\end{example}
In order to describe the functor \emph{universal enveloping algebra} in a PROPic way, one needs the following 
\begin{definition}
The Karoubi envelope of a category $\C$ is the category $\C^{kar}$ whose objects are pairs $(X, \pi)$, where $X \in \C$ and $\pi: X \to X$ is an idempotent morphism, and whose morphisms are 
\[ \Hom_{\C^{kar}} \big((X, \pi), (Y, \rho)\big) = \{ f \in \Hom_\C(X,Y) \ | \ \rho \circ f = f = f \circ \pi \}.\]
\end{definition}
In the Karoubi envelope of a category one has that every idempotent splits. Moreover, $\C^{kar}$ is the category containing $\C$ which is universal with respect the property that every idempotent is a split idempotent, see \cite[Lem. 1.8]{balmeridempotent} and references therein for more details.
In particular, if $\sfP$ is a PROP and $\sfP^{kar}$ is its Karoubi envelope, we can consider in $\sfP^{kar}$ the object $[n]_{\sfP^{kar}} \coloneqq ([n], \frac{1}{n!} \sum_{\sigma \in \mathfrak{S}_n} \sigma)$ for any $n \in \mathbb{N}$. Denoting by $\underline{\sfP^{kar}}$ the closure of $\sfP^{kar}$ with respect to all infinite inductive limits, one can consider the object 
\[ S[1] \coloneqq \bigoplus_{n \geqslant 0} \Bigg([n],\frac{1}{n!} \sum_{\sigma \in \mathfrak{S}_n} \sigma \Bigg) \in \underline{\sfP^{kar}}\]
which is the PROPic symmetric algebra. Recalling that for any Lie algebra $\g$ there is an isomorphism of coalgebras $S(\g) \cong \U(\g)$, one can define the universal enveloping algebra functor in a PROPic way through a universal construction, see \cite[6.6]{appel20182} for more details. \\ 
The Etingof--Kazhdan quantization of Lie bialgebras can be expressed through a universal construction, as stated in the following
\begin{theorem}{(Etingof--Kazhdan)}
Let $\QUE$ be the PROP of quantized universal enveloping algebras (see \cite[p.5]{etingof1998quantization} and \cite[p.6]{enriquez2005invertibility}). Then there exists a universal construction
\begin{equation*}
Q : \QUE \to \underline{\LBA}^{kar}[[\hbar]]
\end{equation*}
such that 
\begin{equation*}
\begin{split}
Q([1]_{\QUE}) &= S[[1]_{\LBA}] \\
Q(m) &= m_0 \ \mod \hbar \\
Q(\Delta) &= \Delta_0 \ \mod \hbar \\
Q(\Delta - (12) \circ \Delta) & = \hbar \delta \ \mod \hbar^2
\end{split}
\end{equation*}
where $m_0$ (resp. $\Delta_0$) is the multiplication (resp. comultiplication) of the (universal) universal enveloping algebra $\U([1]_{\LBA})$.
\end{theorem}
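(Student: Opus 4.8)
The plan is to transcribe the Etingof--Kazhdan construction — sketched above and carried out in detail in \cite{etingof1996quantization,etingof1998quantization,etingof2002lecture} — into PROPic language, the crucial point being that every step of that construction is natural in the Lie bialgebra and hence already takes place inside a suitable completion of $\LBA$. Fix a Drinfeld associator $\Phi$. First I would work in $\underline{\LBA}^{kar}[[\hbar]]$, in which the object
\[
S[1]=\bigoplus_{n\geqslant 0}\Bigl([n],\tfrac{1}{n!}\sum_{\sigma\in\mathfrak{S}_n}\sigma\Bigr)
\]
carries the PROPic universal enveloping bialgebra structure: as recalled above $S[1]\cong\U([1]_{\LBA})$ as a coalgebra, with comultiplication $\Delta_0$, and through the Hopf structure of $\U([1]_{\LBA})$ it acquires a multiplication $m_0$ together with a unit, counit and antipode. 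Then I would introduce the $\Phi$--deformed Drinfeld--Yetter PROP $\DY^n_\Phi$, with the same generators and relations as $\DY^n$ but with associativity and commutativity constraints deformed to $\Phi(\hbar t_{i,j},\hbar t_{j,k})$ and $\tau\circ e^{\hbar t/2}$, where $t$ is the universal infinitesimal braiding built out of $\mu,\delta,\pi_k,\pi_k^*$ as in the introduction; the pentagon and hexagon axioms over $\K[[\hbar]]$ hold because $\Phi$ satisfies its own pentagon/hexagon and $t$ is infinitesimally braided.

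The technical core is the construction of the universal Etingof--Kazhdan twist. I would first realise the universal Verma module PROPically: a pro--object $M_-$ of $\underline{\LBA}^{kar}$ whose underlying object is $S[1]$, equipped with a Drinfeld--Yetter structure over $[1]_{\LBA}$ whose action and coaction are expressed through $\mu$, $\delta$ and the symmetrisers; this is the PROPic incarnation of $\mathrm{Ind}_{\b^*}^{\d_\b}\mathbb{C}\cong\U(\b)$, and verifying its three Drinfeld--Yetter relations reduces to the Lie bialgebra axioms and to combinatorial identities for $S[1]$. Then, transporting the monoidal structure of the forgetful functor $F\colon\DY^1_\Phi\to\TopFree_\K$ through the specialisation at $M_-\ten M_-$, I would obtain an invertible morphism
\[
J^{\mathrm{EK}}\in\End_{\underline{\LBA}^{kar}[[\hbar]]}\bigl(S[1]\ten S[1]\bigr),\qquad J^{\mathrm{EK}}\equiv\id\ \bmod\ \hbar,
\]
as the $\hbar$--adically convergent iteration of the deformed constraints and of $e^{\hbar t/2}$ dictated by the Etingof--Kazhdan recipe, all of the constituent morphisms being universal, i.e.\ living in $\DY^2_\Phi$. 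The property one must verify is that $J^{\mathrm{EK}}$ satisfies the twist (cocycle) equation in $\underline{\LBA}^{kar}[[\hbar]]$, which guarantees coassociativity of the twisted comultiplication; this is a formal consequence of the pentagon and hexagons for the $\Phi$--deformed constraints, exactly as in \cite{etingof1998quantization}.

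With $J^{\mathrm{EK}}$ at hand I would define $Q$ on generators: $Q([1]_\QUE)=S[1]_{\LBA}$, $Q(m)=m_0$, $Q(\Delta)=$ the comultiplication obtained by twisting $\Delta_0$ by $J^{\mathrm{EK}}$ as in \cite[\S 18]{etingof2002lecture}, and likewise for the unit, counit and antipode (the last corrected by the standard distinguished grouplike element). That this extends to a strict symmetric monoidal functor amounts to checking that $\bigl(S[1],m_0,Q(\Delta),\dots\bigr)$ satisfies all defining relations of $\QUE$: coassociativity of $Q(\Delta)$ is the twist equation, compatibility of $Q(\Delta)$ with $m_0$ holds because twisting by $J^{\mathrm{EK}}$ carries bialgebras to bialgebras, and the counit/antipode relations follow from $J^{\mathrm{EK}}\equiv\id\bmod\hbar$; functoriality is automatic since every ingredient is a fixed universal morphism. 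The semiclassical conditions are then immediate: modulo $\hbar$ one has $J^{\mathrm{EK}}\equiv\id$, hence $Q(m)=m_0$ and $Q(\Delta)\equiv\Delta_0$; and for the last identity one expands $J^{\mathrm{EK}}=\id+\hbar J_1+O(\hbar^2)$ and computes the first--order term of $Q(\Delta)-(12)\circ Q(\Delta)$ exactly as in the classical Etingof--Kazhdan setting, the relevant input being that the $\hbar$--linear part of $e^{\hbar t/2}$ is $\tfrac12 t$ and that the antisymmetrisation of the infinitesimal braiding on primitive elements recovers $\delta$, yielding $\hbar\delta\bmod\hbar^2$.

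The main obstacle is the construction of $J^{\mathrm{EK}}$ together with its twist equation. Two issues require care. First, the Etingof--Kazhdan recipe for the tensor structure of $F$ involves infinite iterations of the action and coaction; in the PROPic setting these become infinite sums which converge only $\hbar$--adically, so one must check that they define genuine morphisms of the completed category $\underline{\LBA}^{kar}[[\hbar]]$ — this is precisely the role of closure under infinite inductive limits. Second, the twist equation is a priori a statement about all Lie bialgebras simultaneously, and proving it universally requires tracking the Verma--module action and coaction through the pentagon/hexagon manipulations, i.e.\ showing that the specialisation $\DY^2_\Phi\to\underline{\LBA}^{kar}[[\hbar]]$ at $M_-\ten M_-$ is compatible with all the structure involved. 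Everything else — the remaining $\QUE$ relations, functoriality of $Q$, and the two semiclassical limits — is then formal, reducing to the Lie bialgebra axioms and to standard combinatorial identities for $S[1]$.
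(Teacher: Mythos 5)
This statement is quoted in the paper as a theorem of Etingof--Kazhdan, with references in place of a proof; there is therefore no in-paper argument to compare against. Your outline faithfully reproduces the strategy of the cited sources (\cite{etingof1998quantization}, \cite{etingof2002lecture}, \cite{appel20182}): the PROPic symmetric algebra $S[1]$, the $\Phi$--deformed Drinfeld--Yetter category, the universal Verma module, the universal twist $J^{\mathrm{EK}}$ and its cocycle equation, and the semiclassical limits. It is, however, a roadmap rather than a proof: the genuinely hard steps --- endowing $M_-$ with its universal Drinfeld--Yetter structure, establishing $\hbar$--adic convergence of the tensor structure in $\underline{\LBA}^{kar}[[\hbar]]$, and verifying the twist equation universally --- are correctly identified and located but not carried out, which is consistent with the paper's own treatment of the result as external input.
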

The Etingof--Kazhdan quantization technique provides thus a universal quantization functor, (see \cite[\S 1]{etingof1998quantization} and \cite[\S 6.7-- \S 6.17]{appel20182}), solving the problem $Q 1.2$ stated by Drinfeld \cite{drinfeld1992some}.

\section{The universal Drinfeld--Yetter algebra}
\label{section-universal-algebrass}
\subsection{Enriquez universal algebras}
In this Section we recall Enriquez's universal algebras  \cite{enriquez2001quantization} \cite{enriquezonsome} \cite{enriquez2005cohomological}.
The original idea of Enriquez was to define a family of universal algebras $\{\mathcal{U} \mathfrak{S}_{univ}^n\}_{n \geqslant 0}$ with the following properties: 
\begin{itemize}
\item (Universal property):  for any quasi--triangular, finite--dimensional Lie bialgebra $\b$ there exists a morphism of algebras 
\begin{equation}
\label{eq:enriquez-realization-map}
\rho^n_\b : \mathcal{U} \mathfrak{S}^n_{univ} \to \U(\b)^{\ten n}.
\end{equation}
\item There exists a family of insertion-coproduct maps $\mathcal{U} \mathfrak{S}^n_{univ} \to \mathcal{U} \mathfrak{S}^{n+1}_{univ}$ which gives rise to a universal version of the coHochschild differential of $\U(\b)$.
\end{itemize}
It is well--known that the existence of quantizations of a Lie bialgebra $\b$ is governed by the Hochschild cohomology of $\U(\b)$, see e.g. \cite[XVIII]{kassel}. Hence, the idea of Enriquez was to replicate the Drinfeld's cohomological proof of the existence of quantization of Lie bialgebras, obtaining a cohomological interpretation of the Etingof--Kazhdan quantization. In particular, Enriquez's main result provides, for any Drinfeld associator $\Phi$, a universal twist $J_\Phi \in \mathcal{U}\mathfrak{S}^2_{univ}$ \emph{killing the associator}, see \cite[Th. 2.1]{enriquez2005cohomological}. The universal realization maps \eqref{eq:enriquez-realization-map} allows thus to, for any finite--dimensional, quasi--triangular Lie bialgebra $\b$, realize the twist on $\U(\b)$, giving rise to a universal quantization.  \\
Enriquez's algebras are defined as follows:
\begin{definition}
For any $n,N \geqslant 1$ let $\mathcal{A}_N$ be the free algebra in $N$ variables $x_i$, $i=1, \ldots, N$ and let $(\mathcal{A}_N^{\ten n})_{\delta_N}$ be the subspace of $\mathcal{A}_N^{\ten n}$ generated by elements of degree one in each variable. We have that the symmetric group $\mathfrak{S}_N$ acts diagonally on $(\mathcal{A}_N^{\ten n})_{\delta_N} \ten (\mathcal{A}_N^{\ten n})_{\delta_N}$ by simultaneus permutation of the variables. The $n$--th Enriquez's universal algebra is 
\[ \mathcal{U} \mathfrak{S}^n_{univ} = \sum_{N \geqslant 0} \big( (\mathcal{A}_N^{\ten n})_{\delta_N} \ten (\mathcal{A}_N^{\ten n})_{\delta_N} \big)_{\mathfrak{S}_N}\]
where $\big( (\mathcal{A}_N^{\ten n})_{\delta_N} \ten (\mathcal{A}_N^{\ten n})_{\delta_N} \big)_{\mathfrak{S}_N}$ is the space of $\mathfrak{S}_n$--coinvariants. 
\end{definition}
The algebra $\mathcal{U} \mathfrak{S}^n_{univ} $ is equipped with a standard basis defined as follows. For any $\underline{N}, \underline{N}' \in \mathbb{N}^n$ with $|\underline{N}| = |\underline{N}'| = N $ and $\sigma \in \mathfrak{S}_N$, consider elements $x_{\underline{N}}$ and $y_{\sigma(\underline{N}')}$ of $(\mathcal{A}_N^{\ten n})_{\delta_N}$ defined by 
\begin{equation*}
\begin{split}
x_{\underline{N}} &= x_1 \cdots x_{N_1} \ten x_{N_1 + 1} \cdots x_{N_1 + N_2} \ten \cdots \ten x_{N_1 + \cdots + N_{n-1}+1} \cdots x_N\\
y_{\sigma(\underline{N}')} &= y_{\sigma(1)} \cdots y_{\sigma(N_1')} \ten \cdots \ten y_{\sigma(N_1' + \cdots + N_{n-1}'+1} \cdots y_{\sigma(N)}
\end{split}
\end{equation*}
Then the collection $\{ x_{\underline{N}} \ten y_{\sigma(\underline{N}')}\}$ is a basis of $\mathcal{U} \mathfrak{S}^n_{univ}$. The algebra structure of $\mathcal{U} \mathfrak{S}^n_{univ}$ is provided in \cite{enriquez2001quantization} through a very intricate formula, and is proved to be associative by a lenghty computation. For $n=1$, one has that $\mathcal{U} \mathfrak{S}^1_{univ}$ is isomorphic -- as a vector space -- to the direct sum $\bigoplus_{N \geqslant 0} \K[\mathfrak{S}_N]$, and the product is the concatenation of permutations. \\
Finally, for any finite--dimensional, quasi--triangular Lie bialgebra $\b$ with $r$--matrix $r = \sum_i b_i \ten b^i$, the realization map -- we provide for simplicity the case $n=1$ -- $\rho_\b$ is defined by
\begin{equation*}
\begin{split}
\rho_\b : \mathcal{U} \mathfrak{S}^1_{univ} &\to \U(\b) \\
x_{N} \ten y(\sigma(N)) & \mapsto \sum_{i \in I^N} b_{i_1} \cdots b_{i_N} b^{i_{\sigma(1)}} \cdots b^{i_{\sigma(N)}}.
\end{split}
\end{equation*}
However, it turns out that such a map does not satisfy the desired universal property, as showed in the following 
\begin{example}
\label{counterexample}
Consider the quasi--triangular complex Lie bialgebra $\mathfrak{sl}_2$ with standard generators $e,f,h$ and standard $r$--matrix $r = e \ten f + \frac{1}{4} h \ten h$. Let $\{e^i f^j h^k , \ i,j,k \in \mathbb{N} \}$ be the Poincar\'e--Birkhoff--Witt basis of $\U( \mathfrak{sl}_2)$ and consider the elements $\id_1 \in \mathfrak{S}_1$, $\id_2\in \mathfrak{S}_2$, $(12) \in \mathfrak{S}_2$. Then it is easy to see -- through a lenghty but elementary computation -- that
\begin{equation*}
\begin{split}
\rho_{\mathfrak{sl}_2}(\id_1) &= ef + \frac{1}{4} h^2 \\
\big(\rho_{\mathfrak{sl}_2}(\id_1)\big)^2 &= e^2f^2 - efh  + \frac{efh^2}{2} + \frac{h^4}{16} +2ef\\
\rho_{\mathfrak{sl}_2}(\id_2) &= e^2f^2 - efh + \frac{efh^2}{2}+\frac{h^4}{16} +ef \\
\rho_{\mathfrak{sl}_2}\big((12)\big) &= e^2f^2 - efh + \frac{efh^2}{2}+\frac{h^4}{16}
\end{split}
\end{equation*}
It is thus clear that $(\rho_{\mathfrak{sl}_2}(\id_1))^2 \neq \rho_{\mathfrak{sl}_2}(\id_2) $, i.e. that the concatenation of permutations does not satisfy the required universal property. On the other hand, we have the following identity
\begin{equation}
\label{eq:counterexample-enriquez}
 (\rho_{\mathfrak{sl}_2}(\id_1))^2 = 2 \cdot \rho_{\mathfrak{sl}_2}(\id_2) -  \rho_{\mathfrak{sl}_2}((12)).
\end{equation}
\end{example}

\subsection{Appel--Toledano Laredo universal algebras}
We now present Drinfeld--Yetter universal algebras, defined by Appel and Toledano Laredo -- in their attempt to clarify the Enriquez's construction -- in \cite{appeluniqueness}. 
\begin{definition}
Let $n \geqslant 1$. The $n$--th universal Drinfeld--Yetter algebra $\mathfrak{U}_\DY^n$ is 
\[ \mathfrak{U}_\DY^n \coloneqq \End_{\DY^n}([V_1] \ten \cdots \ten [V_n]) \]
where the associative multiplication the composition of endomorphisms.
\end{definition}
The algebra $\mathfrak{U}_\DY^n $ has the following vector space structure. For any $N \in \mathbb{N}$ and $\underline{N}=(N_1, \ldots, N_n) \in \mathbb{N}^n$ such that $|\underline{N}| = N$ consider the following morphisms of $\DY^n$
\[ \pi^{(\underline{N})}: [N] \ten \bigotimes_{k=1}^n[V_k] \to \bigotimes_{k=1}^n[V_k] \qquad \text{and} \qquad \pi^{*(\underline{N})} : \bigotimes_{k=1}^n[V_k] \to  [N] \ten \bigotimes_{k=1}^n[V_k] \]
which are respectively the ordered composition of $N_i$ actions (resp. coactions) on $[V_i]$. Then we have 
\begin{proposition}{(\cite[Prop. 5.12]{appeluniqueness})}
The collection of elements 
\[ \big\{r_{\underline{N}, \underline{N}'}^\sigma \coloneqq \pi^{(\underline{N})} \circ (\sigma \ten \id) \circ \pi^{*(\underline{N}')}\big\}\]
where $N \geqslant 0$, $\underline{N}, \underline{N}' \in \mathbb{N}^n$ are such that $|\underline{N}| = |\underline{N}'|= N$, and $\sigma \in \mathfrak{S}_N$ is a basis of $\mathfrak{U}_\DY^n $.
\end{proposition}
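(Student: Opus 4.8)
The plan is to prove the two constituent statements separately: that the family $\{r_{\underline{N},\underline{N}'}^\sigma\}$ spans $\mathfrak{U}_\DY^n$, and that it is linearly independent. For the spanning part I would start from the fact that, by construction of a colored PROP, $\End_{\DY^n}([V_1]\ten\cdots\ten[V_n])$ is spanned over $\K$ by all composites of the generating morphisms $\mu,\delta,\pi_k,\pi^*_k$ and the permutation morphisms; I picture each such composite as a string diagram in which the number of auxiliary $[1]$--strands starts and ends at $0$, so that every $[1]$--strand originates at a coaction $\pi^*_k$.

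The spanning statement then follows from a \emph{normal ordering} rewriting of such diagrams, in three stages. First, eliminate all cobrackets: pick a $\delta$ whose input strand, traced backwards, meets a coaction before it meets any other cobracket (this exists since the diagram is finite and acyclic), slide it leftwards past the intervening brackets using antisymmetry \eqref{eq:prop-lba-three} and the cocycle relation \eqref{eq:prop-lba-five}, and delete it against that coaction via the coaction rule \eqref{eq:dy-module-two}, which trades ``coaction then cobracket'' for a sum of pairs of iterated coactions and introduces no new bracket or cobracket; iterate. Dually, using the Jacobi relation \eqref{eq:prop-lba-two} and the action rule \eqref{eq:dy-module-one}, push each remaining bracket rightwards into an action and delete it there, again without creating cobrackets. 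Finally, on the resulting linear combination of diagrams built only from coactions, actions and permutations, repeatedly apply the Drinfeld--Yetter relation \eqref{eq:dy-module-three} to any ``action followed by coaction'' to move the coaction leftwards, the two correction terms being of the shapes ``coaction then bracket'' and ``cobracket then action'', which are reabsorbed exactly as before. The output is a linear combination of morphisms $\pi^{(\underline{N})}\circ(\sigma\ten\id)\circ\pi^{*(\underline{N}')}$ with $|\underline{N}|=|\underline{N}'|$, i.e. of the $r_{\underline{N},\underline{N}'}^\sigma$; for $n=1$ this is \cite[Prop.~5.10]{appeluniqueness}. The hard point here, and the main obstacle of the whole proof, is \emph{termination}: one must supply a well-founded complexity measure on diagrams that strictly decreases under all of these moves, ruling out an infinite loop between the Drinfeld--Yetter rule and the re-elimination of the brackets and cobrackets it produces.

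For linear independence I would pass to the realization functors. A finite--dimensional quasitriangular Lie bialgebra $\b$ with $r$--matrix $r=\sum_i b_i\ten b^i$ and a choice of Drinfeld--Yetter $\b$--modules $V_1,\dots,V_n$ give an evaluation functor sending $r_{\underline{N},\underline{N}'}^\sigma$ to the operator on $V_1\ten\cdots\ten V_n$ performing $N'_k$ coactions on $V_k$, then the permutation $\sigma$ of the resulting $\b$--factors, then $N_k$ actions on $V_k$ --- in coordinates a sum over multi--indices of products of the $b_i$ and $b^i$ acting through the module structures, whose combinatorial shape records $(\underline{N},\underline{N}',\sigma)$. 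I would first note that $\DY^n$ carries a $\mathbb{Z}$--grading with $\mu,\pi_k$ of degree $1$ and $\delta,\pi^*_k$ of degree $0$ (all defining relations are homogeneous), which separates the $r_{\underline{N},\underline{N}'}^\sigma$ by the common value $N:=|\underline{N}|=|\underline{N}'|$, reducing us to a fixed $N$; then, choosing $\b$ and the $V_k$ generic enough --- equivalently, working over the free Lie bialgebra, where no accidental relations occur --- a Poincar\'e--Birkhoff--Witt leading--term analysis in $\U(\b)^{\ten n}$ shows that distinct triples $(\underline{N},\underline{N}',\sigma)$ give operators with distinct, linearly independent leading terms. (One could instead identify the graded pieces with Enriquez's coinvariant spaces $\big((\mathcal{A}_N^{\ten n})_{\delta_N}\ten(\mathcal{A}_N^{\ten n})_{\delta_N}\big)_{\mathfrak{S}_N}$, whose standard monomials form a basis by direct computation.) Together with the spanning statement this shows that $\{r_{\underline{N},\underline{N}'}^\sigma\}$ is a basis; of the two halves, I expect termination of the normal ordering algorithm to be the genuinely delicate step, the independence reducing to a routine evaluation once a sufficiently large test Lie bialgebra is fixed.
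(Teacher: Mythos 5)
Your proposal follows essentially the same route as the paper's own argument for the $n=1$ case (Proposition \ref{proposition-canonical-basis}): spanning is obtained by normally ordering an arbitrary composite via the Drinfeld--Yetter, cocycle, action and coaction relations, and linear independence by evaluating on a (quasitriangular) Lie bialgebra whose underlying Lie algebra is free, where a nontrivial relation would contradict the freeness of $\U(\b)$. The general-$n$ statement is only cited from Appel--Toledano Laredo, and the paper, like you, leaves the termination of the rewriting procedure implicit, so the one step you single out as delicate is not treated there either.
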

From now on we shall focus on the case $n=1$. The following proposition gives the universal property of the algebra $\mathfrak{U}_\DY^1$:
\begin{proposition}
For any finite--dimensional, quasi--triangular Lie bialgebra $\b$ with $r$--matrix $r= \sum_{i \in I} a_i \ten b_i$ the map
\begin{equation}
\label{eq:appel-realization-map}
\begin{split}
\rho_\b : \mathfrak{U}_{\DY}^1 &\to \U(\b) \\
r_n^\sigma& \mapsto \sum_{i_1 \in I} \cdots \sum_{i_n \in I} a_{i_1} \cdots a_{i_n} b_{i_{\sigma^{-1}(n)}} \cdots b_{i_{\sigma^{-1}(1)}}
\end{split}
\end{equation}
is a morphism of algebras, where $r_n^\sigma \coloneqq \pi_1^{(n)}\circ(\sigma\ten\id_{[V_1]})\circ\;{\pi_1}^{*(n)}$.
\end{proposition}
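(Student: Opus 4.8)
The plan is to realise the whole colored PROP $\DY^1$ inside the category $\Vect_\K$ of $\K$--vector spaces using the quasitriangular structure of $\b$, and then to restrict this realisation to $\End_{\DY^1}([V_1])$. Since $\DY^1$ is presented by generators and relations, a $\K$--linear symmetric monoidal functor $G_\b\colon\DY^1\to\Vect_\K$ is the same datum as an assignment on the generating objects and morphisms which respects the defining relations \eqref{eq:prop-lba-one}--\eqref{eq:prop-lba-five} and \eqref{eq:dy-module-one}--\eqref{eq:dy-module-three}; any such $G_\b$ automatically restricts to an algebra homomorphism $G_\b\colon\End_{\DY^1}([V_1])\to\End_\K(G_\b([V_1]))$, as it carries composition to composition and identities to identities.

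First I would construct $G_\b$ on generators. Recall that a quasitriangular structure on $\b$ amounts to an element $r=\sum_{i\in I}a_i\ten b_i\in\b\ten\b$ with $r+\tau(r)$ invariant and satisfying the classical Yang--Baxter equation, and that its cobracket is $\delta=\ad(r)$, i.e. $\delta(x)=[x\ten 1+1\ten x,r]$. Set $G_\b([1])=\b$, $G_\b([V_1])=\U(\b)$, and
\[
\mu\mapsto[\cdot,\cdot],\qquad \delta\mapsto\ad(r),\qquad \pi_1\mapsto\bigl(x\ten u\mapsto xu\bigr),\qquad \pi_1^*\mapsto\bigl(u\mapsto\textstyle\sum_{i\in I}a_i\ten b_i u\bigr).
\]
Then I would verify the relations: \eqref{eq:prop-lba-one}--\eqref{eq:prop-lba-five} hold because $(\b,[\cdot,\cdot],\ad(r))$ is a Lie bialgebra, which is exactly the content of quasitriangularity; \eqref{eq:dy-module-one} is immediate from associativity of $\U(\b)$ and $[x,y]=xy-yx$ for $x,y\in\b\subseteq\U(\b)$; and the remaining two relations \eqref{eq:dy-module-two} and \eqref{eq:dy-module-three}, after unwinding both sides on an element $u$ and substituting $\delta=\ad(r)$, collapse to the classical Yang--Baxter identity $[r_{12},r_{13}]+[r_{12},r_{23}]+[r_{13},r_{23}]=0$ in $\U(\b)^{\ten 3}$ acting on the module leg by left multiplication. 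Hence $G_\b$ is a well--defined $\K$--linear symmetric monoidal functor. (Alternatively, one may invoke the monoidal equivalence between $\DY_\b$ and equicontinuous $\d_\b$--modules and pull back the left regular representation $\U(\b)$ along the Lie bialgebra morphism $\d_\b\to\b$ determined by $r$.)

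Finally I would identify the restriction of $G_\b$ with $\rho_\b$. Left multiplications form a subalgebra $L\subseteq\End_\K(\U(\b))$, and $u\mapsto L_u$ is an injective algebra homomorphism $\U(\b)\to L$. By the basis theorem for $\mathfrak{U}_\DY^1$ recalled above (case $n=1$), $\End_{\DY^1}([V_1])$ is spanned by the elements $r_n^\sigma=\pi_1^{(n)}\circ(\sigma\ten\id_{[V_1]})\circ\pi_1^{*(n)}$. Evaluating $G_\b(r_n^\sigma)$ on $u\in\U(\b)$ — performing $n$ iterated coactions, each of which produces a fresh $a$--leg and left--multiplies the matching $b$ onto the module factor, then the permutation $\sigma$, then $n$ iterated actions, each left--multiplying an $a$--leg onto the module — yields $G_\b(r_n^\sigma)=L_w$ with $w=\sum_{i_1,\dots,i_n}a_{i_1}\cdots a_{i_n}\,b_{i_{\sigma^{-1}(n)}}\cdots b_{i_{\sigma^{-1}(1)}}$. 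Thus $G_\b\bigl(\End_{\DY^1}([V_1])\bigr)\subseteq L$, and composing with the isomorphism $L_u\mapsto u$ recovers exactly the map \eqref{eq:appel-realization-map}; being a composite of an algebra homomorphism with an algebra isomorphism onto a subalgebra, $\rho_\b$ is a morphism of algebras (it sends $r_0^{\id}=\id_{[V_1]}$ to $1$).

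The main obstacle I anticipate is twofold. The substantive point is the verification that \eqref{eq:dy-module-two} and \eqref{eq:dy-module-three} reduce to the classical Yang--Baxter equation: this is a genuine, if standard, computation requiring careful bookkeeping of the three tensor legs and of the position of the module factor. The second point is more a matter of care than of depth — reconciling the several sign and ordering conventions (the choice of $r$ versus $\tau(r)$, the side on which $\b$ acts on $\U(\b)$, the direction in which iterated (co)actions accumulate, and the convention for how $\mathfrak{S}_n$ permutes the legs) so that the closed formula comes out precisely as in \eqref{eq:appel-realization-map}, in particular with $\sigma^{-1}$ rather than $\sigma$.
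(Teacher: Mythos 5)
The paper states this proposition without proof (it is imported from \cite{appeluniqueness}), so there is no in-text argument to compare against; judged on its own, your proposal is correct and is essentially the standard argument: a quasitriangular structure makes every $\b$--module a Drinfeld--Yetter module via $\pi^*(u)=\sum_i a_i\ten b_iu$ (equivalently, it splits $\d_\b\to\b$), which yields a symmetric monoidal realization functor $\DY^1\to\Vect_\K$, and any such functor induces an algebra map on $\End([V_1])$; identifying the image with left multiplications gives $\rho_\b$. Two small remarks. First, in your verification the Drinfeld--Yetter relation \eqref{eq:dy-module-three} does not actually require the classical Yang--Baxter equation --- substituting $\delta=\ad(r)$ makes both sides equal to $\sum_i a_i\ten b_ixu$ identically; it is only the Lie coaction axiom \eqref{eq:dy-module-two} that genuinely reduces to $[r_{12},r_{13}]+[r_{12},r_{23}]+[r_{13},r_{23}]=0$. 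Second, the $\sigma$ versus $\sigma^{-1}$ bookkeeping you flag is a real (if purely conventional) issue --- note the paper itself writes $r_n^\sigma$ with the two opposite composition orders in \S\ref{section-universal-algebrass} and in the proposition statement --- so your decision to track it explicitly rather than wave at it is the right instinct, but it should be carried out to completion before the proof is considered finished.
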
 It is possible to show that for any infinite--dimensional, quasi--triangular Lie bialgebra $\b$ the map \eqref{eq:appel-realization-map} satisfies -- up to completing opportunely the algebra $\U(\b)$ -- such a universal property. \\
The algebras $\mathfrak{U}_\DY^n$ and $\mathcal{U} \mathfrak{S}^n_{univ} $ are related by the following
\begin{proposition}
We have that:
\begin{itemize}
\item[(i)] (\cite[p.31]{appeluniqueness})  There exist algebra homomorphisms $\Delta_i^n : \mathfrak{U}_\DY^n \to \mathfrak{U}_\DY^{n+1}$ giving to the tower of algebras $\{ \mathfrak{U}_\DY^n\}_{n \geqslant 0}$ the structure of a cosimplicial complex. 
\item[(ii)] (\cite[p.37]{appeluniqueness}) The collection of maps 
\begin{equation*}
\begin{split}
\xi^n : \mathfrak{U}_\DY^n &\to \mathcal{U} \mathfrak{S}^n_{univ} \\
r^\sigma_{\underline{N}, \underline{N}'} & \mapsto x_{\underline{N}} \ten y_{\tilde{\sigma}(\underline{N}')}
\end{split}
\end{equation*}
is a collection of isomorphims of vector spaces, where $\tilde{\sigma} \coloneqq \sigma^{-1} \circ \tau$, and $\tau$ is the element of $\mathfrak{S}_N$ such that $\tau(i)=N-i$.
\item[(iii)] (\cite[p.44]{appeluniqueness}) The isomorphisms $\xi^n$ induce an isomorphism of cosimplicial chains. 
\end{itemize}
\end{proposition}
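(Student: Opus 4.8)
All three assertions are due to Appel and Toledano Laredo \cite{appeluniqueness}; we indicate one route to them. The plan is to realize the face maps $\Delta_i^n$ by PROP-level operations on Drinfeld--Yetter modules, to read off $\xi^n$ as a relabelling of standard bases, and to reduce (iii) to a combinatorial identity between the fusion of iterated (co)actions and Enriquez's insertion--coproduct. For (i), I would use that the tensor product of two Drinfeld--Yetter modules carries a canonical Drinfeld--Yetter structure in any symmetric monoidal category of such modules; in PROP-theoretic terms this promotes, for $0<i\leqslant n$, to a morphism of colored PROPs $D_i\colon\DY^n\to\DY^{n+1}$ fixing $[1]$ together with its bracket and cobracket, sending $[V_k]\mapsto[V_k]$ for $k<i$, $[V_i]\mapsto[V_i]\ten[V_{i+1}]$, $[V_k]\mapsto[V_{k+1}]$ for $k>i$, and carrying $\pi_i,\pi_i^*$ to the action and coaction on the tensor product. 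That the images satisfy the Drinfeld--Yetter relations is a routine diagrammatic check from the action, coaction and Drinfeld--Yetter rules together with naturality of the symmetric braiding. The outer faces $\Delta_0^n$ and $\Delta_{n+1}^n$ I would instead take to be $f\mapsto\id_{[V_1]}\ten f$ and $f\mapsto f\ten\id_{[V_{n+1}]}$. Since each of these operations is a strict symmetric monoidal functor, it induces on endomorphism algebras a \emph{unital} algebra homomorphism $\mathfrak U_\DY^n\to\mathfrak U_\DY^{n+1}$, functoriality sending composition of endomorphisms to composition; and the cosimplicial identities $\Delta_j^{n+1}\Delta_i^n=\Delta_i^{n+1}\Delta_{j-1}^n$ for $i<j$ reduce on generators to coassociativity of the duplication operation and the evident commutation of insertions into disjoint colours.

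For (ii), both sides already carry explicit bases indexed by the \emph{same} data: by the preceding Proposition $\{r^\sigma_{\underline N,\underline N'}\}$ is a basis of $\mathfrak U_\DY^n$ parametrized by triples $(\underline N,\underline N',\sigma)$ with $\underline N,\underline N'\in\mathbb N^n$, $|\underline N|=|\underline N'|=N$ and $\sigma\in\mathfrak S_N$, while $\{x_{\underline N}\ten y_{\sigma(\underline N')}\}$ is Enriquez's standard basis of $\mathcal U\mathfrak S^n_{univ}$ with the same index set. Since $\sigma\mapsto\tilde\sigma=\sigma^{-1}\circ\tau$ is a bijection of $\mathfrak S_N$ for each $N$, $\xi^n$ is a bijection of bases, hence a linear isomorphism. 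I would also record why the twist has this shape: comparing the Appel--Toledano Laredo realization map with Enriquez's, under the identification $\sum a_i\ten b_i\leftrightarrow\sum b_i\ten b^i$ of $r$-matrices, the former lists the dual factors in the reversed order $b^{i_{\sigma^{-1}(N)}}\cdots b^{i_{\sigma^{-1}(1)}}$, which is exactly the order in which Enriquez's realization reads $y_{\tilde\sigma(\underline N')}$; thus $\xi^n$ is the unique relabelling making the two realizations agree, and this forces the twist $\tilde\sigma$.

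For (iii), it remains to verify $\xi^{n+1}\circ\Delta_i^n=\delta_i^n\circ\xi^n$, where $\delta_i^n$ denotes Enriquez's insertion--coproduct map; since all maps in sight are algebra homomorphisms, it suffices to test on $r^\sigma_{\underline N,\underline N'}$. Unwinding $\Delta_i^n$, fusing the $i$-th Drinfeld--Yetter module with the next one turns the $N_i'$ iterated coactions on $[V_i]\ten[V_{i+1}]$ into a signed sum of shuffles of $a$ coactions on $[V_i]$ and $b$ coactions on $[V_{i+1}]$ with $a+b=N_i'$, and dually for the $N_i$ actions, so $\Delta_i^n(r^\sigma_{\underline N,\underline N'})$ is an explicit signed sum of basis elements whose multi-indices split the $i$-th entries of $\underline N,\underline N'$ and whose permutations run over the corresponding shuffles; applying $\xi^{n+1}$ must then reproduce Enriquez's formula for $\delta_i^n(x_{\underline N}\ten y_{\tilde\sigma(\underline N')})$, the outer faces $i=0,n+1$ being immediate since there both sides merely pad with an empty block. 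I expect this last matching to be the genuine obstacle: one has to align, term by term and sign by sign, the combinatorics of splitting and shuffling iterated (co)actions in $\DY^{n+1}$ with Enriquez's insertion--coproduct formula, the delicate point being that the order reversal $\tau$ built into $\tilde{(\,\cdot\,)}$ interchanges shuffling with reversal, while the signs come from the Drinfeld--Yetter relation. Parts (i) and (ii) are, by comparison, essentially formal once the fusion morphism of PROPs and the two standard bases are in place.
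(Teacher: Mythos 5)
The paper itself offers no proof of this proposition: all three parts are imported from Appel--Toledano Laredo \cite{appeluniqueness} with page references, and the text proceeds directly to the next paragraph. So there is no in-paper argument to compare yours against; what you have written is a reconstruction of the cited source. As such it is broadly sound where it is complete. For (i), realizing the inner cofaces via the fusion $[V_i]\mapsto[V_i]\ten[V_{i+1}]$ (the canonical Drinfeld--Yetter structure on a tensor product) and the outer ones by padding with an identity strand is indeed the mechanism, and your point that a strict symmetric monoidal functor of colored PROPs induces a unital algebra homomorphism on endomorphism algebras is the correct reason these maps are multiplicative. For (ii), the bijection-of-bases argument is already a complete proof, since both standard bases are indexed by the same triples $(\underline{N},\underline{N}',\sigma)$ and $\sigma\mapsto\tilde\sigma$ is a bijection of $\mathfrak{S}_N$.

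Two caveats. First, in (iii) you justify the reduction to basis elements by saying that ``all maps in sight are algebra homomorphisms.'' The maps $\xi^n$ are \emph{not} algebra homomorphisms --- that is precisely the point of Example \ref{counterexample}: Enriquez's product on $\mathcal{U}\mathfrak{S}^n_{univ}$ does not agree with the composition product transported from $\mathfrak{U}_\DY^n$. The reduction is still valid, but only because all maps involved are linear and (iii) is a statement about cosimplicial \emph{vector spaces}. Second, and more substantively, you explicitly leave the identity $\xi^{n+1}\circ\Delta_i^n=\delta_i^n\circ\xi^n$ unverified, flagging the term-by-term matching against Enriquez's insertion--coproduct as ``the genuine obstacle.'' That verification is the entire content of part (iii), so as written your argument for (iii) is a plan rather than a proof. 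A simplification available to you there: expanding the $N_i'$ iterated coactions and $N_i$ iterated actions on the fused module $[V_i]\ten[V_{i+1}]$ produces a sum of shuffled basis elements with all coefficients equal to $+1$ --- the tensor-product action and coaction are each a sum of two unsigned terms, and no application of the Drinfeld--Yetter relation \eqref{eq:dyrule} is needed because the image of a normally ordered diagram under fusion is still normally ordered --- so the sign bookkeeping you anticipate does not in fact arise, and the matching is a purely combinatorial identification of shuffles.
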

Therefore, Appel--Toledano Laredo's universal algebras provide a PROPic refinement of Enriquez's ones, satisfying further the required universal property.
In the rest of the paper we shall study the structure of the algebra $\mathfrak{U}_\DY^1$.

\begin{notation}
	From now on we shall denote the composition of two morphisms in a PROP from left to right (as they are read in the respectively pictorial representation).
\end{notation}
\subsection{The universal Drinfeld--Yetter algebra}
\label{subsection-dy-algebra}
\begin{definition}
	\label{definition-u-d-y-algebra}
	The universal Drinfeld--Yetter algebra is $\mathfrak{U}_{\DY}^1 \coloneqq \End_{\DY} ([V_1])$.
\end{definition}
It is immediate to note that any element of $\mathfrak{U}_{\DY}^1 $ different from the identity can be pictorially represented by a linear combination of oriented diagrams of the form 
\vspace{0.3 cm}
\begin{equation}
	\label{eq:generic-element}
	\begin{tikzpicture} 
		\draw[-, ggreen,very thick] (0,0)--(3.8,0);
		\draw[very thick] (1,0.75) arc (90:180:0.75);
		\draw[very thick] (1,0.75)--(1.2,0.75);
		\draw[black] (1.2,0.3) rectangle (2.6,1);
		\node at (1.9, 0.65) {$\phi$};
		\draw[very thick] (1.6+1,0.75)--(1.8+1,0.75);
		\draw[very thick] (2.55+1,0) arc (0:90:0.75);
		\node at (2.55+1,-0.3) {$m$};
		\node at (0.3,-0.3) {$n$};
	\end{tikzpicture}
\end{equation}
\vspace{0.3 cm}

i.e. each of them necessarily starting with a certain number $n \geqslant  1$ of coactions, ending with a certain number $m \geqslant  1$ of actions, and containing  a morphism $\phi \in \Hom_{\DY^1}([n]\ten [V_1],[m] \ten [V_1])$ in the middle. If $n$ is a non--negative integer and $\sigma \in \mathfrak{S}_n$, we denote by $r_{n}^{\sigma}$ the following element of $\mathfrak{U}_{\DY}^1$
\[r_{n}^{\sigma}= \pi_1^{*(n)}\circ(\sigma\ten\id_{[V_1]})\circ\;{\pi_1}^{(n)}, \]
where $\pi_1^{(n)}: [n] \ten [V_1] \to [V_1]$ denotes the $n$-th iterated action and $\pi_1^{*(n)}: [V_1] \to [n] \ten [V_1]$ denotes the $n$-th iterated coaction.
The pictorial representation of $r_{n}^{\sigma}$ is the following

\vspace{0.3 cm}
\begin{equation}
	\label{eq:basis-element}
	\begin{tikzpicture} 
		\draw[-, very thick,ggreen] (0,0)--(2.8,0);
		\draw[very thick] (1,0.75) arc (90:180:0.75);
		\draw[very thick]  (1,0.75)--(1.2,0.75);
		\node[shape=circle,draw,inner sep=1pt] (char) at (1.4,0.75) {$\sigma$}; 
		\draw[very thick]  (1.6,0.75)--(1.8,0.75);
		\draw[very thick]  (2.55,0) arc (0:90:0.75);
		\node at (2.55,-0.3) {$n$};
		\node at (0.3,-0.3) {$n$};
	\end{tikzpicture}
\end{equation}
The following result describes the vector space structure of $ \mathfrak{U}_{\DY}^1$:
\begin{proposition}{ \cite[5.10]{appeluniqueness}}
	\label{proposition-canonical-basis}
	The collection of endomorphisms of $[V_1]$ given by 
	\begin{equation}
		\label{eq:canonical-basis}
		\B = \{ r_{n}^{\sigma}, n \geqslant  0, \sigma \in \mathfrak{S}_n \} 
	\end{equation}
	is a basis of  $ \mathfrak{U}_{\DY}^1$.
\end{proposition}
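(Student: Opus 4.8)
The plan is to establish two things: that $\B$ spans $\mathfrak{U}_{\DY}^1$, and that it is linearly independent. For the spanning statement, I would start from the observation recorded after \eqref{eq:generic-element}: any endomorphism of $[V_1]$ distinct from the identity is a linear combination of diagrams consisting of $n$ iterated coactions, followed by a morphism $\phi \in \Hom_{\DY^1}([n]\ten[V_1],[m]\ten[V_1])$, followed by $m$ iterated actions. The key reduction is to bring every such diagram into normal order in the sense of Definition \ref{definition-normal-ordering}. Concretely, using the Lie bracket relations \eqref{eq:prop-lba-one}--\eqref{eq:prop-lba-two}, the Lie cobracket relations \eqref{eq:prop-lba-three}--\eqref{eq:prop-lba-four}, the cocycle rule \eqref{eq:prop-lba-five}, the action rule \eqref{eq:actionrule}, the coaction rule \eqref{eq:coactionrule} and, crucially, the Drinfeld--Yetter rule \eqref{eq:dyrule}, one can rewrite $\phi$ as a linear combination of morphisms in which no bracket or cobracket occurs and every coaction precedes every action. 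I would argue that this rewriting terminates by introducing a suitable complexity measure on diagrams (for instance, the total number of bracket/cobracket vertices, together with a secondary count measuring how far the diagram is from having all coactions to the left of all actions, ordered lexicographically) and checking that each of the rewriting moves \eqref{eq:actionrule}--\eqref{eq:dyrule} strictly decreases it. A fully normally ordered diagram with no brackets or cobrackets and with $k$ coactions and $k$ actions (the counts must agree, since the source and target are both $[V_1]$) is, up to the symmetric group action on the $[1]$-strands in the middle, exactly an element $r_k^\sigma$ for some $\sigma \in \mathfrak{S}_k$; hence $\B$ spans.

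For linear independence, the natural route is to exhibit a single Lie bialgebra $\b$ together with a realization map on which the images of the $r_n^\sigma$ are visibly independent — but since the $\DY^n$ PROP is defined abstractly, the cleanest argument uses the isomorphism of vector spaces $\xi^1 : \mathfrak{U}_\DY^1 \xrightarrow{\sim} \mathcal{U}\mathfrak{S}^1_{univ}$ from part (ii) of the proposition comparing the two families of universal algebras, under which $r_n^\sigma \mapsto x_{(n)} \ten y_{\tilde\sigma(n)}$ with $\tilde\sigma = \sigma^{-1}\circ\tau$; the elements $\{x_{\underline N}\ten y_{\sigma(\underline N')}\}$ form the standard basis of $\mathcal{U}\mathfrak{S}^n_{univ}$, and for $n=1$ these are indexed precisely by $k\geqslant 0$ and $\sigma\in\mathfrak{S}_k$, so their linear independence is immediate. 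Alternatively one can avoid circularity by noting that $\mathcal{U}\mathfrak{S}^1_{univ} \cong \bigoplus_{N\geqslant 0}\K[\mathfrak{S}_N]$ as a vector space with basis the permutations, as recorded in the excerpt, and the map $\sigma\mapsto \tilde\sigma$ is a bijection of $\mathfrak{S}_k$; pulling this back along $\xi^1$ gives independence of $\B$.

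The main obstacle is the termination and well-definedness of the normal ordering procedure. The relations \eqref{eq:actionrule}, \eqref{eq:coactionrule} and especially the Drinfeld--Yetter relation \eqref{eq:dyrule} each replace a local configuration by a sum of several local configurations, some of which reintroduce brackets or cobrackets or move actions and coactions in ways that are not obviously "simpler." One must therefore set up the complexity measure with care — I expect a lexicographic pair $(\text{number of vertices}, \text{inversion count between the action-block and coaction-block})$, or a variant thereof, to work — and verify move-by-move that it decreases; this is the technical heart of \cite[Prop.~5.10]{appeluniqueness} and the only place where any real work is needed, the linear-independence half being essentially bookkeeping once $\xi^1$ is available.
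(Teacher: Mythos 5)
Your spanning argument follows the same route as the paper: rewrite an arbitrary diagram of the form \eqref{eq:generic-element} into normal order using \eqref{eq:dyrule}, \eqref{eq:cocyclerule}, \eqref{eq:actionrule} and \eqref{eq:coactionrule}, and observe that a normally ordered, bracket-free and cobracket-free endomorphism of $[V_1]$ is precisely some $r_k^\sigma$. Your remark that termination of this rewriting is the real technical content is fair -- the paper simply asserts that the relations ``allow'' the reordering and removal without exhibiting a decreasing complexity measure -- but since you also do not carry out the verification, the two treatments of this half are at the same level of detail and take the same approach.

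The linear independence half is where your proposal has a genuine gap: the argument via $\xi^1$ is circular. The statement that $\xi^1\colon \mathfrak{U}_\DY^1 \to \mathcal{U}\mathfrak{S}^1_{univ}$, $r_n^\sigma \mapsto x_{(n)}\ten y_{\tilde\sigma(n)}$, is an isomorphism of vector spaces presupposes that the assignment on the elements $r_n^\sigma$ extends to a well-defined injective linear map, and this is only automatic once one already knows that the $r_n^\sigma$ are linearly independent -- if they satisfied a nontrivial relation, prescribing $\xi^1$ on them by the stated formula would not even define a linear map, since the images are independent by construction in $\mathcal{U}\mathfrak{S}^1_{univ}$. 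Indeed, in \cite{appeluniqueness} the comparison with Enriquez's algebras is established only after (and using) the basis statement you are asked to prove, so your ``alternative to avoid circularity'' does not actually avoid it: it still invokes injectivity of $\xi^1$. The route you mention and then set aside -- realizing $\mathfrak{U}_\DY^1$ in $\U(\b)$ for a concrete $\b$ -- is exactly what the paper does: it takes $\b$ a Lie bialgebra whose underlying Lie algebra is free, so that a nontrivial relation $\sum_i c_i\, r_{n_i}^{\sigma_i}=0$ would push forward under the realization map to a nontrivial relation among the corresponding monomials in $\U(\b)$, contradicting freeness. Your worry that ``the $\DY^n$ PROP is defined abstractly'' is not an obstruction here: the universal property \eqref{eq:appel-realization-map} is precisely the device that lets one evaluate abstract PROPic morphisms on concrete Lie bialgebras, and it is the non-circular way to get independence.
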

\begin{proof}
	In order to show that the set $\B$ generates $ \mathfrak{U}_{\DY}^1$, let $f \in \End_{\DY}([V_1])$ be represented by a linear combination of oriented diagrams of the form \eqref{eq:generic-element}. The Drinfeld--Yetter relation \eqref{eq:dyrule} 
	\begin{equation*}
		\begin{tikzpicture}
			\draw[-, very thick,ggreen] (0+0.5,0)--(2.5+0.5,0);
			\draw (1+0.5,0) arc (0:90:0.75);
			\draw(0.25+0.5,0.75)--(0+0.5,0.75);
			\draw (2.25+0.5,0.75) arc (90:180:0.75);
			\draw(2.25+0.5,0.75)--(2.5+0.5,0.75);
			\node at (3.5+0.5,0.3) {$=$};
			\draw[-, very thick,ggreen] (5,0)--(6.25,0);
			\draw (1+5,0) arc (0:90:0.75);
			\draw(0.25+5,0.75)--(0+5,0.75);
			\draw (1+5,0.75) arc (90:180:0.75);
			\draw(1+5,0.75)--(1.25+5,0.75);
			\node at (1.8+5,0.4) {$+$}; 
			\draw[-, very thick,ggreen] (2.25+5,0)--(3.5+5,0);
			\draw (3.25+5,0.75) arc (90:180:0.75); 
			\draw(3.25+5,0.75)--(3.5+5,0.75);
			\draw (2.8+5,0.6) arc (-300:-260:1); 
			\node at (4+5,0.4) {$-$}; 
			\draw[-, very thick,ggreen] (4.5+5,0)--(6+5,0); 
			\draw (5.5+5,0) arc (0:90:0.75);
			\draw(4.75+5,0.75)--(4.5+5,0.75);
			\draw (5.75+5,0.75) arc (90:131:0.75); 
			\draw (5.75+5,0.75)--(6+5,0.75);
		\end{tikzpicture}
	\end{equation*}
	allows to reorder $\pi_1$ and $\pi^*_1$, moving every coaction before any action. The cocycle condition \eqref{eq:cocyclerule}
	\begin{align*}
		&\\
		&\begin{tikzpicture}[scale=1.7]
			\draw (0.25,0.25)-- (0.75,0);
			\draw (0.25,-0.25)--(0.75,0);
			\draw (0.75,0)--(1,0);
			\draw (1,0)--(1.5,0.25);
			\draw (1,0)--(1.5,-0.25);
			\node at (1.75,0) {=};
			\draw (2,0.25)--(2.25,0.25);
			\draw (2,-0.15)--(2.25,-0.15);
			\draw (2.25,0.25)-- (2.5,0.05);
			\draw (2.25,-0.15)--(2.5,0.05);
			\draw (2.25,-0.15)--(2.5,-0.25);
			\draw (2.5,-0.25)-- (2.75,-0.25);
			\draw (2.5,0.05)-- (2.75,0.05);
			\node at (3,0) {+};
			\draw (3.25,-0.15)-- (3.5,-0.15);
			\draw (3.5,-0.15)--(3.75,0.05);
			\draw (3.5,-0.15)--(3.75,-0.25);
			\draw (3.75,0.05)--(4.5,0.05);
			\draw (3.75,-0.25)--(4,-0.25);
			\draw (4,-0.25)--(4.25,-0.15);
			\draw (4.25,-0.15)--(4.5,-0.15);
			\draw (3.25,0.25)--(4,0.25);
			\draw (4,0.25)--(4.25,-0.15);
			\node at (4.75,0) {+};
			\draw (5,-0.25)-- (5.25,-0.25);
			\draw (5,0.05)--(5.25,0.05);
			\draw (5.25,0.05)--(5.5,-0.10);
			\draw (5.25,-0.25)--(5.5,-0.10);
			\draw (5.25,0.05)--(5.5,0.15);
			\draw (5.5,0.15)-- (5.75,0.15);
			\draw (5.5,-0.10)-- (5.75,-0.10);
			\node at (6,0) {+};
			\draw (6.25,0.05)-- (6.5,0.05);
			\draw (6.5,0.05)-- (6.75,0.25);
			\draw (6.5,0.05)-- (6.75,-0.10);
			\draw (6.75,0.25)--(7,0.25);
			\draw (7,0.25)--(7.25,0.05);
			\draw (7.25,0.05)--(7.5,0.05);
			\draw (6.75,-0.10)--(7.5,-0.10);
			\draw (6.25,-0.25)--(7,-0.25);
			\draw (7,-0.25)--(7.25,0.05);
		\end{tikzpicture}\\
	\end{align*}
	allows to reorder brackets and cobrackets in such a way cobrackets horizontally precede brackets. Finally, the relations \eqref{eq:actionrule}, \eqref{eq:coactionrule}\\
	\begin{align*} 
		&\begin{tikzpicture}
			\draw[-, very thick,ggreen] (-0.25,0)--(1.5,0);
			\draw (1,0) arc (0:90:0.75);
			\draw(0.25,0.75)--(0,0.75);
			\draw (0,0.75)--(-0.25,1);
			\draw (0,0.75)--(-0.25,0.5);
			\node at (2,0.5) {$=$};
			\draw[-, very thick,ggreen] (-0.25+2.75,0)--(1.5+2.75,0);
			\draw (1+2.75,0) arc (0:90:0.75);
			\draw(0.25+2.75,0.75)--(-0.25+2.75,0.75);
			\draw (1+2.55,0) arc (0:90:0.55);
			\draw(0.5+2.55,0.55)--(0+2.5,0.55);
			\node at (2+2.75,0.5) {$-$};
			\draw[-, very thick,ggreen] (-0.25+2.75+3,0)--(1.5+2.75+3,0);
			\draw (1+2.75+3+0.2,0) arc (0:90:0.75);
			\draw(0.45+2.55+3,0.55)--(2.55+3,0.75);
			\draw (2.75+3+0.25,0.75)--(2.75+3+0.45,0.75);
			\draw (2.75+3+0.25,0.55)--(2.75+3+0.45,0.55);
			\draw (1+2.55+3+0.2,0) arc (0:90:0.55);
			\draw(0.25+2.75+3,0.75)--(2.55+3,0.55);
		\end{tikzpicture}\\
		&\begin{tikzpicture}
			\draw[-, very thick,ggreen] (0,0)--(1.5,0);
			\draw (1,0.75) arc (90:180:0.75);
			\draw(1,0.75)--(1.25,0.75);
			\draw(1.25,0.75)--(1.5,1);
			\draw(1.25,0.75)--(1.5,0.5);
			\node at (2.3,0.5) {$=$};
			\draw[-, very thick,ggreen] (0+2.75,0)--(1.5+2.95,0);
			\draw (1+2.75,0.75) arc (90:180:0.75);
			\draw(1+2.75,0.75)--(1.25+2.75,0.75)--(1.5+2.95,0.55);
			\draw (1+2.75,0.55) arc (90:180:0.55);
			\draw(1+2.75,0.55)--(1.5+2.5,0.55)--(1.5+2.95,0.75);
			\node at (2+3.2,0.5) {$-$};
			\draw[-, very thick,ggreen] (0+2.75+3,0)--(1.5+2.75+3,0);
			\draw (1+2.75+3,0.75) arc (90:180:0.75);
			\draw(1+2.75+3,0.75)--(1.25+2.75+3,0.75)--(1.5+2.75+3,0.75);
			\draw (1+2.75+3,0.55) arc (90:180:0.55);
			\draw(1+2.75+3,0.55)--(1.5+2.5+3,0.55)--(1.5+2.75+3,0.55);
		\end{tikzpicture}\\
	\end{align*}
	allow to remove from the graph every $\mu$ and every $\delta$ involved. It follows that $f$ can be represented as a linear combination of endomorphisms of type \eqref{eq:basis-element}, showing that $\B$ is a set of generators for $\mathfrak{U}_{\DY}^1$. In order to show that the $r_n^\sigma$'s are linearly independent, consider a Lie bialgebra $\b$ whose underlying Lie algebra is free. Therefore, any non--trivial linear combination $\sum_i c_i r_{n_i}^{\sigma_i}=0$ would induce, through the universal property of $\mathfrak{U}_\DY^1$, a non--trivial relation in $\U( \b)$, contradicting its freeness.
\end{proof}
A direct consequence of the result above is the following
\begin{corollary}
	There is a canonical isomorphism of vector spaces 
	\begin{equation}
		\label{eq:isomorphism-symmetric-algebras}
		\mathfrak{U}_{\DY}^1 \simeq \bigoplus_{n \geqslant  0} \mathbb{K}[\mathfrak{S}_n]
	\end{equation}
	mapping $r_{n}^{\sigma}$ to $\sigma \in \mathfrak{S}_n$.
\end{corollary}
From now on we shall refer to \eqref{eq:canonical-basis} as the standard basis of $\mathfrak{U}_{\DY}^1$.
In addition to describing the vector space structure of the algebra $\mathfrak{U}_{\DY}^1$, Proposition \ref{proposition-canonical-basis} gives an algorithmic way to compute the multiplication with respect to the standard basis \eqref{eq:canonical-basis}. Namely, one can proceed in the following way: given $r_{n}^{\sigma},r_{m}^{\tau} \in \B$, consider the following algorithm:
\begin{itemize}
	\item[(1)] Apply repeatedly the Drinfeld--Yetter rule \eqref{eq:dyrule} until there is no action preceding any coaction;
	\item[(2)] Apply repeatedly the action rule \eqref{eq:actionrule} in order to remove all the brackets from the expansion obtained in (1);
	\item[(3)] Apply repeatedly the coaction rule \eqref{eq:coactionrule} in order to remove all the cobrackets from the expansion obtained in (2).
\end{itemize}
The result of this process gives the multiplication $r_n^\sigma \circ r_m^\tau$ with respect to the standard basis \eqref{eq:canonical-basis}.

\begin{remark}
	Since in every step of the algorithm described above the total number of the strings is preserved, we have that $\mathfrak{U}_{\DY}^1$ is a $\mathbb{N}$--graded algebra: for any $r_{n}^{\sigma},r_{m}^{\tau} \in \B$ there exist unique coefficients $c_{\sigma, \tau}^{\pi}$ such that 
	\begin{equation}
		\label{eq:definition-of-coefficients}
		r_{n}^{\sigma} \circ r_{m}^{\tau} = \sum_{\pi \in \mathfrak{S}_{n+m}} c_{\sigma, \tau}^{\pi} \cdot r_{n+m}^\pi.
	\end{equation}
	Moreover, it follows from the identities involved that the coefficients $c_{\sigma, \tau}^{\pi}$ are integers. Therefore, the structure of $\mathfrak{U}_{\DY}^1$ naturally induces a $\mathbb{N}$--graded algebra structure (with integral structure constants) on the vector space $\bigoplus_{n \geqslant  0} \mathbb{K}[\mathfrak{S}_n]$.
\end{remark}
A really challenging problem is to find an explicit formula for the structure constants $c_{\sigma, \tau}^\pi$ in terms of symmetric groups. This problem appears to be highly nontrivial: for instance, the number of summands appearing after the application of the algorithm described above seems to have exponential growth, as conjectured in \ref{conjecture-cardinality-looms}.\\
In the next Sections we shall describe the algebra structure of $\mathfrak{U}_{\DY}^1$ through two combinatorial objects, namely the Drinfeld--Yetter mosaics and Drinfeld--Yetter looms.

\section{Drinfeld--Yetter mosaics and looms}
\label{section-two}

\subsection{Drinfeld--Yetter mosaics}
\label{section-d-y-mosaics}
In this Section we define the set $\mosaic{n}{m}$ of $n \times m$ Drinfeld--Yetter mosaics, which will provide, through Proposition \ref{claim}, a combinatorial interpretation of the morphism $\pi_1^{(n)} \circ \pi_1^{*(m)}$, which is pictorially represented by the picture
\begin{equation*}
\begin{tikzpicture} 
\draw[-, very thick,ggreen] (0,0)--(2.5,0);
\draw[very thick]  (1,0) arc (0:90:0.75);
\draw[very thick] (0.25,0.75)--(0,0.75);
\draw[very thick]  (2.25,0.75) arc (90:180:0.75);
\draw[very thick] (2.25,0.75)--(2.5,0.75);
\node at (1.65,-0.3) {$m$};
\node at (0.9,-0.3) {$n$}; 
\end{tikzpicture}
\end{equation*}
More specifically, we are going to show that every element of $\mosaic{n}{m}$ is associated (through Equation \eqref{eq:morphism-from-mosaic-to-hom}) to a morphism appearing in the sum of morphisms generated by the iterated application of the Drinfeld--Yetter rule \eqref{eq:dyrule} to $\pi_1^{(n)} \circ \pi_1^{*(m)}$, hence giving a combinatorial description of the application of the first step of the algorithm described in \S \ref{subsection-dy-algebra}. 
\begin{notation}
Let $n,m \geqslant 1$. We denote by $\grid{n}{m}$ the \index{grid}grid with $n$ rows and $m$ columns. For example, $\grid{2}{3}$ is 
\vspace{0.5 cm}
\begin{center}
 \begin{tikzpicture}[scale=0.6]
\gri{2}{3}
\end{tikzpicture}.
\end{center}
\vspace{0.5 cm}
If $\T$ is a given set of tiles, we define a tiling\index{tiling} of $\grid{n}{m}$ with the elements of $\T$ as a map 
\[ F: \{1,\ldots, n\} \times \{1,\ldots, m\} \to \T. \]
We shall denote $F(i,j)$ by $F_{i,j}$. In other words, a tiling of $\grid{n}{m}$ consists of assigning to each position of the empty grid a tile of $\T$.
\end{notation}
\begin{definition}
\label{definition-mosaics}
Let $n,m \geqslant 1$ and let $\T_{\mathfrak{M}}$ be the following set of tiles
\begin{equation}
\label{eq:tiles-tableaux}
\begin{tikzpicture}[scale=0.6]
\node at (0.3,0) {$\mathcal{T}_\mathfrak{M}= \Big\{$};
\draw[black,very thick] (1.5,0.5) rectangle (2.5,-0.5);
\draw[red,very thick] (2,0.5)--(2,-0.5);
\draw[red,very thick] (1.5,0)--(2.5,0);
\node at (2.75,-0.5) {,};
\draw[black,very thick] (3,0.5) rectangle (4,-0.5);
\draw[red,very thick] (3,0)--(3.5,0);
\draw[red,very thick] (3.5,0.5)--(3.5,-0.5);
\node at (4.25,-0.5) {,};
\draw[black,very thick] (4.5,0.5) rectangle (5.5,-0.5);
\draw[red,very thick] (5,0.5)--(5,0);
\draw[red,very thick] (4.5,0)--(5.5,0);
\node at (5.75,-0.5) {,};
\draw[black,very thick] (6,0.5) rectangle (7,-0.5);
\draw[red,very thick] (6,0)--(7,0);
\node at (7.25,-0.5) {,};
\draw[black,very thick] (7.5,0.5) rectangle (8.5,-0.5);
\draw[red,very thick] (8,0.5)--(8,-0.5);
\node at (8.75,-0.5) {,};
\draw[black,very thick] (9,0.5) rectangle (10,-0.5);
\node at (10.5,-0) {\Big\}.};
\end{tikzpicture}
\end{equation}
We define the set of $n \times m$ Drinfeld--Yetter mosaics $\mosaic{n}{m}$ as the set of all possible tilings $M$ of $\grid{n}{m}$ with the elements of $\T_\mathfrak{M}$ such that the following three conditions are satisfied:
\begin{itemize}
\item[(1):] $M_{1,j} \notin \Big\{ \begin{tikzpicture}[scale= 0.35] \gri{1}{1} \hor{1}{1}

\end{tikzpicture} \ , \ \begin{tikzpicture}[scale= 0.35] \gri{1}{1}

\end{tikzpicture} \Big\}$ for all $j \in \{1, \ldots , m\}$.
\item[(2):] $M_{i,1} \notin \Big\{ \begin{tikzpicture}[scale= 0.35] \gri{1}{1} \ver{1}{1}

\end{tikzpicture} \ , \ \begin{tikzpicture}[scale= 0.35] \gri{1}{1}

\end{tikzpicture} \Big\}$ for all $i \in \{1, \ldots , n\}$.
\item[(3):] None of the following configurations appear in $M$:
\\ \\
\begin{minipage}{0,16\textwidth}
\begin{tikzpicture}[scale= 0.8]
\gri{1}{2} \cro{1}{1}
\end{tikzpicture}
\end{minipage}%
\begin{minipage}{0,16\textwidth}
\begin{tikzpicture}[scale= 0.8]
\gri{1}{2} \cro{1}{1} \ver{1}{2}
\end{tikzpicture}
\end{minipage}%
\begin{minipage}{0,16\textwidth}
\begin{tikzpicture}[scale= 0.8]
\gri{1}{2} \tbra{1}{1} \tcobra{1}{2}
\end{tikzpicture}
\end{minipage}%
\begin{minipage}{0,16\textwidth}
\begin{tikzpicture}[scale= 0.8]
\gri{1}{2} \tbra{1}{1} \hor{1}{2}
\end{tikzpicture}
\end{minipage}%
\begin{minipage}{0,16\textwidth}
\begin{tikzpicture}[scale= 0.8]
\gri{1}{2} \tbra{1}{1} \cro{1}{2}
\end{tikzpicture}
\end{minipage}%
\begin{minipage}{0,16\textwidth}
\begin{tikzpicture}[scale= 0.8]
\gri{1}{2} \tcobra{1}{1} \ver{1}{2}
\end{tikzpicture}
\end{minipage}
\\ \\
\begin{minipage}{0,16\textwidth}
\begin{tikzpicture}[scale= 0.8]
\gri{1}{2} \tcobra{1}{1}
\end{tikzpicture}
\end{minipage}%
\begin{minipage}{0,16\textwidth}
\begin{tikzpicture}[scale= 0.8]
\gri{1}{2} \hor{1}{1} \ver{1}{2}
\end{tikzpicture}
\end{minipage}%
\begin{minipage}{0,16\textwidth}
\begin{tikzpicture}[scale= 0.8]
\gri{1}{2} \hor{1}{1}
\end{tikzpicture}
\end{minipage}%
\begin{minipage}{0,16\textwidth}
\begin{tikzpicture}[scale= 0.8]
\gri{1}{2} \ver{1}{1} \cro{1}{2}
\end{tikzpicture}
\end{minipage}%
\begin{minipage}{0,16\textwidth}
\begin{tikzpicture}[scale= 0.8]
\gri{1}{2} \ver{1}{1} \tbra{1}{2}
\end{tikzpicture}
\end{minipage}%
\begin{minipage}{0,16\textwidth}
\begin{tikzpicture}[scale= 0.8]
\gri{1}{2} \ver{1}{1} \tcobra{1}{2}
\end{tikzpicture}
\end{minipage}
\\ \\
\begin{minipage}{0,16\textwidth}
\begin{tikzpicture}[scale= 0.8]
\gri{1}{2} \ver{1}{1} \hor{1}{2}
\end{tikzpicture}
\end{minipage}%
\begin{minipage}{0,16\textwidth}
\begin{tikzpicture}[scale= 0.8]
\gri{1}{2} \cro{1}{2}
\end{tikzpicture}
\end{minipage}%
\begin{minipage}{0,16\textwidth}
\begin{tikzpicture}[scale= 0.8]
\gri{1}{2} \tbra{1}{2}
\end{tikzpicture}
\end{minipage}%
\begin{minipage}{0,16\textwidth}
\begin{tikzpicture}[scale= 0.8]
\gri{1}{2} \tcobra{1}{2}
\end{tikzpicture}
\end{minipage}%
\begin{minipage}{0,16\textwidth}
\begin{tikzpicture}[scale= 0.8]
\gri{1}{2} \hor{1}{2}
\end{tikzpicture}
\end{minipage}%
\begin{minipage}{0,16\textwidth}
\begin{tikzpicture}[scale= 0.8]
\gri{1}{2} \tbra{1}{1} \tbra{1}{2}
\end{tikzpicture}
\end{minipage}
\\ \\
\begin{minipage}{0,11\textwidth}
\begin{tikzpicture}[scale= 0.8]
\gri{2}{1} \cro{1}{1} \hor{2}{1}
\end{tikzpicture}
\end{minipage}%
\begin{minipage}{0,11\textwidth}
\begin{tikzpicture}[scale= 0.8]
\gri{2}{1} \cro{1}{1}
\end{tikzpicture}
\end{minipage}%
\begin{minipage}{0,11\textwidth}
\begin{tikzpicture}[scale= 0.8]
\gri{2}{1} \tbra{1}{1} \hor{2}{1}
\end{tikzpicture}
\end{minipage}%
\begin{minipage}{0,11\textwidth}
\begin{tikzpicture}[scale= 0.8]
\gri{2}{1} \tbra{1}{1}
\end{tikzpicture}
\end{minipage}%
\begin{minipage}{0,11\textwidth}
\begin{tikzpicture}[scale= 0.8]
\gri{2}{1} \tcobra{1}{1} \cro{2}{1}
\end{tikzpicture}
\end{minipage}%
\begin{minipage}{0,11\textwidth}
\begin{tikzpicture}[scale= 0.8]
\gri{2}{1} \tcobra{1}{1} \tbra{2}{1}
\end{tikzpicture}
\end{minipage}%
\begin{minipage}{0,11\textwidth}
\begin{tikzpicture}[scale= 0.8]
\gri{2}{1} \tcobra{1}{1} \ver{2}{1}
\end{tikzpicture}
\end{minipage}%
\begin{minipage}{0,11\textwidth}
\begin{tikzpicture}[scale= 0.8]
\gri{2}{1} \ver{1}{1} \hor{2}{1}
\end{tikzpicture}
\end{minipage}%
\begin{minipage}{0,11\textwidth}
\begin{tikzpicture}[scale= 0.8]
\gri{2}{1} \ver{1}{1}
\end{tikzpicture}
\end{minipage}
\\ \\
\begin{minipage}{0,11\textwidth}
\begin{tikzpicture}[scale= 0.8]
\gri{2}{1} \hor{1}{1} \cro{2}{1}
\end{tikzpicture}
\end{minipage}%
\begin{minipage}{0,11\textwidth}
\begin{tikzpicture}[scale= 0.8]
\gri{2}{1} \hor{1}{1} \tbra{2}{1}
\end{tikzpicture}
\end{minipage}%
\begin{minipage}{0,11\textwidth}
\begin{tikzpicture}[scale= 0.8]
\gri{2}{1} \hor{1}{1} \tcobra{2}{1}
\end{tikzpicture}
\end{minipage}%
\begin{minipage}{0,11\textwidth}
\begin{tikzpicture}[scale= 0.8]
\gri{2}{1} \hor{1}{1} \ver{2}{1}
\end{tikzpicture}
\end{minipage}%
\begin{minipage}{0,11\textwidth}
\begin{tikzpicture}[scale= 0.8]
\gri{2}{1} \cro{2}{1}
\end{tikzpicture}
\end{minipage}%
\begin{minipage}{0,11\textwidth}
\begin{tikzpicture}[scale= 0.8]
\gri{2}{1} \tbra{2}{1}
\end{tikzpicture}
\end{minipage}%
\begin{minipage}{0,11\textwidth}
\begin{tikzpicture}[scale= 0.8]
\gri{2}{1} \tcobra{2}{1}
\end{tikzpicture}
\end{minipage}%
\begin{minipage}{0,11\textwidth}
\begin{tikzpicture}[scale= 0.8]
\gri{2}{1} \ver{2}{1}
\end{tikzpicture}
\end{minipage}%
\begin{minipage}{0,11\textwidth}
\begin{tikzpicture}[scale= 0.8]
\gri{2}{1} \tcobra{1}{1} \tcobra{2}{1}
\end{tikzpicture}
\end{minipage} \\
\end{itemize}
\end{definition}
Roughly speaking, the third condition avoids the existence of Drinfeld--Yetter mosaic in which there is \emph{discontinuity} between the red lines. We set by convention
\begin{equation*}
\begin{tikzpicture}[scale=0.6]
\node at (-2,-0.5) {$\mosaic{0}{m}=$};
\node at (-0.5,-0.5) {\bigg\{};
\node at (5.5,-0.5) {\bigg\}};
\gri{1}{5} \ver{1}{1} \ver{1}{2} \ver{1}{4} \ver{1}{5} \node at (2.5,-0.5) {$ \dots $};
\begin{scope}[shift={(-1,0)}]
\node at (10,-0.5) {$\mosaic{n}{0}=$\bigg\{ };
\begin{scope}[shift={(-1.5,0)}]
\draw[step=1cm,black,very thick] (13,2) grid (14,-3);
\node at (14.5,-0.5) {\bigg\}};
\draw[red, very thick] (13,1.5)--(14,1.5);
\draw[red, very thick] (13,0.5)--(14,0.5);
\node at (13.5,-0.3) {$\vdots$};
\draw[red, very thick] (13,-1.5)--(14,-1.5);
\draw[red, very thick] (13,-2.5)--(14,-2.5);
\end{scope}
\end{scope}
\node at (15.5,-0.5){$\mosaic{0}{0} =$ \bigg\{ };
\begin{scope}[shift={(17,0)}]
\gri{1}{1}
\end{scope}
\node at (18.5,-0.5) {\bigg\}.};
\end{tikzpicture}
\end{equation*}
We shall respectively call the tiles of \eqref{eq:tiles-tableaux} the permutation, bracket, cobracket, action, coaction and empty tile.
\begin{notation}
We shall need the following auxiliary functions counting the number of bracket and cobracket tiles in a Drinfeld--Yetter mosaic: 
\begin{subequations}
\begin{align}
\cob : \mosaic{n}{m} &\to \mathbb{Z}_{\geqslant 0} \label{eq:aux-funct-one}\\
\bra : \mosaic{n}{m} &\to \mathbb{Z}_{\geqslant 0} \label{eq:aux-funct-two}
\end{align}
\end{subequations}
defined by 
\[ \cob(M) = \Big| \Big\{M_{i,j} = \begin{tikzpicture}[scale=0.35]
\gri{1}{1} \tcobra{1}{1}
\end{tikzpicture} \Big\} \Big| \]
and
\[ \bra(M) = \Big| \Big\{M_{i,j} = \begin{tikzpicture}[scale=0.35]
\gri{1}{1} \tbra{1}{1}
\end{tikzpicture} \Big\} \Big|. \]
\end{notation}
\begin{remark}
Let $n,m \geqslant 1$. It follows by Definition \ref{definition-mosaics} that the rows and the columns of an element of $\mosaic{n}{m}$ are respectively of the  form \\ 
\begin{center}
\begin{tikzpicture}[scale=0.6]
\gri{1}{7} \tbra{1}{4} 
\node at (0.5,-0.5) {$ \clubsuit $};
\node at (1.5,-0.5) {$ \dots $};
\node at (2.5,-0.5) {$ \clubsuit $};
\node at (5.5,-0.5) {$ \dots $};
\node at (4.5,-0.5) {$ \spadesuit $};
\node at (6.5,-0.5) {$ \spadesuit $};
\node at (10,-0.5) {and};
\draw[step=1cm,black,very thick] (13,2) grid (14,-5);
\node at (13.5,1.5) {$ \heartsuit $};
\node at (13.5,0.7) {$\vdots$};
\node at (13.5,-0.5) {$ \heartsuit $};
\draw[red, very thick] (13,-1.5)--(14,-1.5);
\draw[red, very thick] (13.5,-1.5)--(13.5,-1);
\node at (13.5,-4.5) {$ \diamondsuit $};
\node at (13.5,-3.3) {$\vdots$};
\node at (13.5,-2.5) {$ \diamondsuit $};
\end{tikzpicture}
\end{center}
where 
\begin{center}
\begin{tikzpicture}[scale=0.6]
\draw[black,very thick] (0,0) rectangle (1,-1);
\node at (0.5,-0.5) {$\clubsuit$};
\node at (1.5,-0.5) {$\in$};
\node at (2,-0.5) {\Big\{};
\draw[black,very thick] (2.5,0) rectangle (3.5,-1);
\draw[red,very thick] (3,0)--(3,-1);
\draw[red,very thick] (2.5,-0.5)--(3.5,-0.5);
\node at (3.75,-1) {,};
\draw[black,very thick] (4,0) rectangle (5,-1);
\draw[red,very thick] (4.5,0)--(4.5,-0.5);
\draw[red,very thick] (4,-0.5)--(5,-0.5);
\node at (5.25,-1) {,};
\draw[black,very thick] (5.5,0) rectangle (6.5,-1);
\draw[red,very thick] (5.5,-0.5)--(6.5,-0.5);
\node at (7,-0.5) {\Big\}};
\node at (7.25,-1) {,};
\draw[black,very thick] (8,0) rectangle (9,-1);
\node at (8.5,-0.5) {$\spadesuit$};
\node at (9.5,-0.5) {$\in$};
\node at (10,-0.5) {\Big\{};
\draw[black,very thick] (10.5,0) rectangle (11.5,-1);
\draw[red,very thick] (11,0)--(11,-1);
\node at (11.75,-1) {,};
\draw[black,very thick] (12,0) rectangle (13,-1);
\node at (13.5,-0.5) {\Big\}};
\node at (13.75,-1) {,};
\end{tikzpicture}
\begin{tikzpicture}[scale=0.6]
\draw[black,very thick] (14.5,0) rectangle (15.5,-1);
\node at (0.5+14.5,-0.5) {$\heartsuit$};
\node at (1.5+14.5,-0.5) {$\in$};
\node at (2+14.5,-0.5) {\Big\{};
\draw[black,very thick] (2.5+14.5,0) rectangle (3.5+14.5,-1);
\draw[red,very thick] (17.5,0)--(17.5,-1);
\draw[red,very thick] (17,-0.5)--(18,-0.5);
\node at (3.75+14.5,-1) {,};
\draw[black,very thick] (4+14.5,0) rectangle (5+14.5,-1);
\draw[red,very thick] (19,0)--(19,-1);
\draw[red,very thick] (18.5,-0.5)--(19,-0.5);
\node at (5.25+14.5,-1) {,};
\draw[black,very thick] (5.5+14.5,0) rectangle (6.5+14.5,-1);
\draw[red,very thick] (20.5,0)--(20.5,-1);
\node at (7+14.5,-0.5) {\Big\}};
\node at (7.25+14.5,-1) {,};
\draw[black,very thick] (8+14.5,0) rectangle (9+14.5,-1);
\node at (8.5+14.5,-0.5) {$\diamondsuit$};
\node at (9.5+14.5,-0.5) {$\in$};
\node at (10+14.5,-0.5) {\Big\{};
\draw[black,very thick] (10.5+14.5,0) rectangle (11.5+14.5,-1);
\draw[red,very thick] (25,-0.5)--(26,-0.5);
\node at (11.75+14.5,-1) {,};
\draw[black,very thick] (12+14.5,0) rectangle (13+14.5,-1);
\node at (13.5+14.5,-0.5) {\Big\}};
\node at (13.75+14.5,-1) {.};
\end{tikzpicture}
\end{center}
We therefore deduce the following facts:
\begin{itemize}
\item Every row of a Drinfeld--Yetter mosaic has at most one bracket tile. 
\item Every column of a Drinfeld--Yetter mosaic has at most one cobracket tile. 
\item The set $\mosaic{n}{m}$ splits into disjoint union of three subsets 
\begin{equation}
\label{eq:disjoint-union-drinfeld-yetter-tableaux}
\mosaic{n}{m} = \mosaic{n}{m}^{\begin{tikzpicture}[scale=0.3] \gri{1}{1} \tbra{1}{1} \end{tikzpicture} } \sqcup \mosaic{n}{m}^{\begin{tikzpicture}[scale=0.3] \gri{1}{1} \tcobra{1}{1} \end{tikzpicture} } \sqcup \mosaic{n}{m}^{\begin{tikzpicture}[scale=0.3] \gri{1}{1} \cro{1}{1} \end{tikzpicture} } 
\end{equation}
where 
\[\mosaic{n}{m}^{\begin{tikzpicture}[scale=0.3]
\gri{1}{1} \node at (0.5,-0.5) {$ \ast$};
\end{tikzpicture}} \coloneqq \Big\{ M \in \mosaic{n}{m} \ , \ M_{1,1} = \begin{tikzpicture}[scale=0.35]
\gri{1}{1} \node at (0.5,-0.5) {$ \ast$}; \end{tikzpicture}\Big\} .\]
\end{itemize} 
\end{remark}
Relying on these facts we obtain the following 
\begin{proposition}
\label{recurrence-rule-dy-tableaux}
Let $\mosaic{1}{m}^{(\ell)}$ be the set of all $1 \times m$ Drinfeld--Yetter mosaics satisfying $\cob(M) = m- \ell$ and $\mosaic{n}{1}^{[k]}$ be the set of all $n \times 1$ Drinfeld--Yetter mosaics satisfying $\bra(M) = n-k$. Then 
\begin{equation}
\label{eq:drinfeld-yetter-tableaux-recursive-one}
\mosaic{n}{m}= \bigsqcup_{\ell=0}^m \mosaic{1}{m}^{(\ell)} \times \mosaic{n-1}{\ell}
\end{equation}  
and
\begin{equation}
\label{eq:drinfeld-yetter-tableaux-recursive-two}
\mosaic{n}{m} = \bigsqcup_{k=0}^n \mosaic{n}{1}^{[k]} \times \mosaic{k}{m-1}.
\end{equation}
\end{proposition}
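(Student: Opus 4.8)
The plan is to prove the first identity \eqref{eq:drinfeld-yetter-tableaux-recursive-one} by an explicit bijection between its two sides, obtained by peeling off the first row, and then to deduce the second identity \eqref{eq:drinfeld-yetter-tableaux-recursive-two} from it by the transposition symmetry of the tile set $\T_\mathfrak{M}$.

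So, fix $n,m\geqslant 1$ and let $M\in\mosaic{n}{m}$, with first row $R=(M_{1,1},\dots,M_{1,m})$. Conditions $(1)$--$(3)$ of Definition \ref{definition-mosaics}, restricted to the first row, show that $R$ is itself a $1\times m$ mosaic, and, by the description of the rows of a mosaic obtained above, each $M_{1,j}$ is a permutation, bracket, cobracket or coaction tile; setting $\ell\coloneqq m-\cob(R)$, the columns $j_1<\dots<j_\ell$ in which $R$ does not carry a cobracket tile are exactly those whose top tile has a strand on its bottom edge, so $R\in\mosaic{1}{m}^{(\ell)}$. The forward map sends $M$ to the pair $(R,M')$, where $M'$ is the tiling of $\grid{n-1}{\ell}$ obtained from $M$ by deleting the first row and keeping only the columns $j_1,\dots,j_\ell$ (re-indexed). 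For the inverse, given $R\in\mosaic{1}{m}^{(\ell)}$ and $M'\in\mosaic{n-1}{\ell}$, one puts $R$ as the first row, places the columns of $M'$ beneath the columns $j_1,\dots,j_\ell$ of $R$, and fills every remaining column — i.e.\ every column sitting under a cobracket tile of $R$ — by the forced local rule: in each row the cell in such a column is the action tile if the cell immediately to its left carries a strand on its right edge, and the empty tile otherwise (the leftmost column, if it sits under a cobracket tile, being filled with action tiles).

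The substance of the argument is to check that both maps are well defined — that $M'$ lies in $\mosaic{n-1}{\ell}$, and that the reconstructed tiling lies in $\mosaic{n}{m}$ — and that they are mutually inverse. This rests on two observations. First, below a cobracket tile of the first row the filling is forced, because the cobracket, action and empty tiles have no strand on their bottom edge, so conditions $(3)$ together with $(1)$--$(2)$ leave in such a column only action and empty tiles, and the choice between them in each row is exactly dictated by the left-neighbour rule above. Second, action tiles \emph{splice}: a short inspection of the forbidden $1\times 2$ configurations shows that if $(X,A)$ and $(A,Y)$ are admissible horizontal pairs, $A$ being the action tile, then so is $(X,Y)$, while a vertical pair of action or empty tiles, and a cobracket tile above an action or empty tile, are always admissible. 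Granting this, deleting a maximal block of forced columns does not disturb the admissibility of the horizontal adjacencies of the surviving columns nor any vertical adjacency, so $M'\in\mosaic{n-1}{\ell}$; and because $M'$ inherits conditions $(1)$--$(2)$, the reinsertion step is consistent and is the exact inverse of the deletion. Summing over $\ell$ gives \eqref{eq:drinfeld-yetter-tableaux-recursive-one}; the case $n=1$ is the tautology $\mosaic{1}{m}=\bigsqcup_\ell\mosaic{1}{m}^{(\ell)}$, and the conventions fixed for $\mosaic{0}{m}$, $\mosaic{n}{0}$ and $\mosaic{0}{0}$ make the boundary terms $\ell=0$ and $\ell=m$ come out right.

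Identity \eqref{eq:drinfeld-yetter-tableaux-recursive-two} then follows with no extra work: reflecting each cell across its main diagonal exchanges the bracket and cobracket tiles, exchanges the action and coaction tiles, fixes the permutation and empty tiles, turns condition $(1)$ into condition $(2)$, turns the forbidden $1\times 2$ configurations into the forbidden $2\times 1$ ones, and sends $\cob$ to $\bra$; hence this symmetry carries \eqref{eq:drinfeld-yetter-tableaux-recursive-one} for $\mosaic{m}{n}$ to \eqref{eq:drinfeld-yetter-tableaux-recursive-two} for $\mosaic{n}{m}$. I expect the only real obstacle to be the case analysis behind the two observations above: what must be verified is a ``combinatorial rigidity'' of $\T_\mathfrak{M}$ — that the forbidden-configuration list of Definition \ref{definition-mosaics} is closed under splicing and un-splicing by action tiles below cobracket tiles — and it is precisely this that makes the peeling-off of the first row reversible.
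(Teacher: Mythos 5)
Your proof is correct and follows essentially the same route as the paper's: peel off the first row, observe that the columns beneath its cobracket tiles are forced while the remaining $\ell$ columns form a free element of $\mosaic{n-1}{\ell}$, and get the second identity by the row--column symmetry of the tile set. Your left-neighbour propagation rule for the forced tiles is in fact stated more carefully than the paper's own version, and the ``splicing'' check you isolate is exactly the strand-continuity content of condition $(3)$.
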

\begin{proof}
We show \eqref{eq:drinfeld-yetter-tableaux-recursive-one}, the proof of \eqref{eq:drinfeld-yetter-tableaux-recursive-two} is analogous. In order to construct a Drinfeld--Yetter mosaic $M \in \mosaic{n}{m}$, one can freely assign the first row of $M$, which is an element $y$ of $\mosaic{1}{m}$ having the following form
\begin{center}
\begin{tikzpicture}[scale=0.6]
\node at (-1,-0.75) {$y=$};
\gri{1}{7} \ver{1}{7} \ver{1}{5} \tbra{1}{4} 
\node at (0.5,-0.5) {$ \clubsuit $};
\node at (1.5,-0.5) {$ \dots $};
\node at (2.5,-0.5) {$ \clubsuit $};
\node at (5.5,-0.5) {$ \dots $};
\draw[black,very thick] (10,0) rectangle (11,-1);
\node at (10.5,-0.5) {$\clubsuit$};
\node at (11.5,-0.5) {$\in$};
\node at (12,-0.5) {\Big\{};
\draw[black,very thick] (12.5,0) rectangle (13.5,-1);
\draw[red,very thick] (12.5,-0.5)--(13.5,-0.5);
\draw[red,very thick] (13,0)--(13,-1);
\node at (13.75,-1) {,};
\draw[black,very thick] (14,0) rectangle (15,-1);
\draw[red,very thick] (14,-0.5)--(15,-0.5);
\draw[red,very thick] (14.5,0)--(14.5,-0.5);
\node at (15.5,-0.5) {\Big\}.};
\end{tikzpicture}
\end{center}
For any cobracket tile in $y$, we have that the tiles below are automatically determined. More precisely, according to the defining rules of the Drinfeld--Yetter mosaics, if $M_{1,j}$ is a cobracket tile, then 
\[
M_{i,j}=
\begin{cases}
\begin{tikzpicture}[scale=0.4] \gri{1}{1}  \end{tikzpicture}  \text{ if }  M_{i,1} = \begin{tikzpicture}[scale=0.4] \gri{1}{1} \tbra{1}{1} \end{tikzpicture}  \\
\begin{tikzpicture}[scale=0.4] \gri{1}{1} \hor{1}{1} \end{tikzpicture} \text{ otherwise}  
\end{cases}.
\]
Therefore, if $\cob(y)=m-\ell$, we have $m-\ell$ columns of $M$ automatically determined. The other $\ell$ columns can be freely chosen among all the elements of $\mosaic{n-1}{\ell}$.
\end{proof}
As a direct consequence of Proposition \ref{recurrence-rule-dy-tableaux} we get the following two recursive formulas for $|\mosaic{n}{m}|$:
\begin{corollary}
\label{recursive-cardinality-mosaics}
One has 
\begin{equation}
\label{eq:cardinality-drinfeld-yetter-tableaux-recursive-one}
|\mosaic{n}{m}| = \sum_{\ell=0}^m \binom{m+1}{\ell} |\mosaic{n-1}{\ell}| 
\end{equation}
and 
\begin{equation}
\label{eq:cardinality-drinfeld-yetter-tableaux-recursive-two}
|\mosaic{n}{m}|  = \sum_{k=0}^{n} \binom{n+1}{k} |\mosaic{k}{m-1}|.
\end{equation}
\end{corollary}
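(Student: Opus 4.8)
The plan is to obtain both identities from Proposition~\ref{recurrence-rule-dy-tableaux} simply by passing to cardinalities; the only genuine computation left is to evaluate $|\mosaic{1}{m}^{(\ell)}|$ and $|\mosaic{n}{1}^{[k]}|$. Since cardinality is additive over disjoint unions and multiplicative over Cartesian products, \eqref{eq:drinfeld-yetter-tableaux-recursive-one} and \eqref{eq:drinfeld-yetter-tableaux-recursive-two} yield at once
\[
|\mosaic{n}{m}| = \sum_{\ell=0}^{m} |\mosaic{1}{m}^{(\ell)}|\cdot|\mosaic{n-1}{\ell}|
\qquad\text{and}\qquad
|\mosaic{n}{m}| = \sum_{k=0}^{n} |\mosaic{n}{1}^{[k]}|\cdot|\mosaic{k}{m-1}|,
\]
so it remains to show $|\mosaic{1}{m}^{(\ell)}| = \binom{m+1}{\ell}$ and $|\mosaic{n}{1}^{[k]}| = \binom{n+1}{k}$.

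For the first, I would read off the shape of a single row from the description recorded in the Remark preceding Proposition~\ref{recurrence-rule-dy-tableaux}, combined with condition~(3) and condition~(1)--(2) of Definition~\ref{definition-mosaics}: a mosaic $M \in \mosaic{1}{m}$ is a word $T_1\cdots T_m$ in which $T_1,\dots,T_a$ are permutation or cobracket tiles, $T_{a+1}$ is a bracket tile (this factor being possibly absent, in which case $a=m$), and $T_{a+2},\dots,T_m$ are coaction tiles. Indeed, the forbidden configurations prevent a coaction tile from following a permutation or a cobracket tile, so the coaction block can only be entered through a bracket tile, while condition~(2) forbids a row to begin with a coaction tile. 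Imposing $\cob(M)=m-\ell$ forces all $m-\ell$ cobracket tiles to sit among $T_1,\dots,T_a$, whence $a\in\{m-\ell,\dots,m\}$, and such an $M$ is then uniquely determined by the value of $a$ together with the choice of which $m-\ell$ of the first $a$ tiles are cobrackets. The hockey--stick identity $\sum_{a=r}^{N}\binom{a}{r}=\binom{N+1}{r+1}$, with $r=m-\ell$ and $N=m$, then gives
\[
|\mosaic{1}{m}^{(\ell)}| \;=\; \sum_{a=m-\ell}^{m}\binom{a}{m-\ell} \;=\; \binom{m+1}{m-\ell+1} \;=\; \binom{m+1}{\ell}.
\]

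For $|\mosaic{n}{1}^{[k]}|$ I would appeal to the transposition symmetry of Definition~\ref{definition-mosaics}: transposing $\grid{n}{m}$ while simultaneously interchanging the action tile with the coaction tile and the bracket tile with the cobracket tile maps $\mathcal{T}_\mathfrak{M}$ to itself, swaps conditions~(1) and~(2), and matches the horizontal forbidden configurations of condition~(3) with the vertical ones. This produces a bijection $\mosaic{n}{m}\cong\mosaic{m}{n}$ carrying $\mosaic{n}{1}^{[k]}$ onto $\mosaic{1}{n}^{(k)}$ and $\bra$ onto $\cob$, so that $|\mosaic{n}{1}^{[k]}|=|\mosaic{1}{n}^{(k)}|=\binom{n+1}{k}$. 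Substituting the two evaluations into the displayed sums gives \eqref{eq:cardinality-drinfeld-yetter-tableaux-recursive-one} and \eqref{eq:cardinality-drinfeld-yetter-tableaux-recursive-two}.

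I do not expect a serious obstacle: once Proposition~\ref{recurrence-rule-dy-tableaux} is in hand the argument is essentially bookkeeping. The step requiring the most care is the explicit classification of single--row (and, dually, single--column) mosaics, i.e.\ checking against the full list in condition~(3) of Definition~\ref{definition-mosaics} that the admissible words are exactly those of the stated form, with no hidden constraint at the boundary; after that one only has to keep the indices of the hockey--stick summation straight, and one may even avoid the identity altogether by exhibiting a direct bijection between $\mosaic{1}{m}^{(\ell)}$ and the family of $\ell$--element subsets of $\{1,\dots,m+1\}$.
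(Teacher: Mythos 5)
Your proposal is correct and follows essentially the same route as the paper: take cardinalities in the disjoint-union decompositions of Proposition~\ref{recurrence-rule-dy-tableaux}, classify single-row mosaics as a block of permutation/cobracket tiles, an optional bracket tile, and a tail of coaction tiles, and then evaluate $|\mosaic{1}{m}^{(\ell)}|=\sum_{a=m-\ell}^{m}\binom{a}{m-\ell}=\binom{m+1}{\ell}$ via the hockey--stick identity. The only (harmless) cosmetic difference is that you handle the column case by an explicit transposition bijection, where the paper simply declares it analogous.
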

\begin{proof}
We show \eqref{eq:cardinality-drinfeld-yetter-tableaux-recursive-one}, the proof of \eqref{eq:cardinality-drinfeld-yetter-tableaux-recursive-two} is analogous. Let $M \in \mosaic{1}{m}$ and let $t$ be the number of permutation and cobracket tiles of $M$. Then we have 
\[
|\mosaic{1}{m}^{(\ell)}| = \sum_{t=0}^{m} \binom{t}{m-\ell}  = \sum_{t=m-\ell}^m \binom{t}{m-\ell} = \binom{m+1}{m-\ell+1} = \binom{m+1}{\ell} 
\]
where the third equality follows by the well--known Hockey--Stick identity. 
\end{proof}
Next, we provide a closed formula for $\FM{n}{m}\coloneqq|\mosaic{n}{m}|$. It is easy to see that $\FM{n}{m}$ is symmetric
(i.e. $\FM{n}{m}=\FM{m}{n}$). By iterating equations \eqref{eq:cardinality-drinfeld-yetter-tableaux-recursive-one} and \eqref{eq:cardinality-drinfeld-yetter-tableaux-recursive-two}, we obtain
\begin{align}
	\label{eq:F-closed} \FM{n}{m}=\sum_{\underline{k}\in I_{n,m}}^m\prod_{i=0}^n\binom{k_i+1}{k_{i+1}}
\end{align}
where $I_{n,m}\coloneqq\{(k_0,k_1,\dots, k_n)\;|\; k_0\coloneqq m,\; k_i\leqslant k_{i-1}, i=1,\dots, n\}$.\\
Recall that the \index{Stirling numbers of the second kind}Stirling numbers of the second kind are the non--negative integers $\stirling{n}{k}$ counting the number of ways to partition a set of $n$ labelled objects into $k$ non--empty unlabelled subsets, and they satisfy the recursive relation
\begin{align}
\label{eq:stirling}
\stirling{n+1}{k+1}=(k+1)\cdot\stirling{n}{k+1}+\stirling{n}{k}	
\end{align}
with initial conditions
\[ 	\stirling{0}{0} = 1 \quad \text{and} \quad \stirling{n}{0}=\stirling{0}{k} = 0.\]
One has
\[
	\stirling{n}{k}=\frac{1}{k!}\sum_{i=0}^k(-1)^i\cdot\binom{k}{i}\cdot(k-i)^n.
\]

We shall provide a concise expression of $F_{n,m}$ in terms of Stirling numbers of the second kind.
To this end, we shall use the following 

\begin{lemma}
For any $0\leqslant k\leqslant n+1$, we have
\begin{align}
\label{eq:stirling-formula}
k\cdot\stirling{n+1}{k}=\sum_{\ell=k-1}^n(-1)^{n-\ell}\cdot\binom{n+1}{\ell}\cdot\stirling{\ell+1}{k}	.
\end{align}	
\end{lemma}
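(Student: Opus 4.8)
The plan is to evaluate the right-hand side directly by means of the closed formula $\stirling{n}{k}=\frac{1}{k!}\sum_{i=0}^{k}(-1)^i\binom{k}{i}(k-i)^n$ recalled above, and to recognise the outcome as an instance of the Stirling recurrence \eqref{eq:stirling}. One may assume $1\leqslant k\leqslant n+1$: the case $k=0$ is trivial, since both sides vanish (each $\stirling{\ell+1}{0}$ on the right is multiplied either by $0$ or by $\binom{n+1}{-1}=0$), and the boundary case $k=n+1$ is checked by inspection.

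First I would remove the awkward lower limit of summation: since $\stirling{\ell+1}{k}=0$ for $\ell<k-1$, the sum on the right is unchanged if $\ell$ is allowed to run over $0\leqslant\ell\leqslant n$. Substituting the closed formula for $\stirling{\ell+1}{k}$ and interchanging the two finite summations, the right-hand side becomes
\[
\frac{1}{k!}\sum_{i=0}^{k}(-1)^i\binom{k}{i}\,(k-i)\sum_{\ell=0}^{n}(-1)^{n-\ell}\binom{n+1}{\ell}(k-i)^{\ell}.
\]
Writing $a=k-i$, the inner sum is the binomial expansion of $(a-1)^{n+1}$ with its top term removed: from $\sum_{\ell=0}^{n+1}(-1)^{n+1-\ell}\binom{n+1}{\ell}a^{\ell}=(a-1)^{n+1}$ and the fact that the $\ell=n+1$ term equals $a^{n+1}$, one gets $\sum_{\ell=0}^{n}(-1)^{n-\ell}\binom{n+1}{\ell}a^{\ell}=a^{n+1}-(a-1)^{n+1}$. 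Hence the right-hand side equals
\[
\frac{1}{k!}\sum_{i=0}^{k}(-1)^i\binom{k}{i}\Bigl[(k-i)^{n+2}-(k-i)(k-i-1)^{n+1}\Bigr].
\]

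The first group is precisely $\stirling{n+2}{k}$, again by the closed formula. For the second group I would substitute $j=k-i$, use the absorption identity $j\binom{k}{j}=k\binom{k-1}{j-1}$ (the $j=0$ term drops out), shift $j\mapsto j+1$, and compare the result with the closed formula for $\stirling{n+1}{k-1}$; this identifies the second group with $\stirling{n+1}{k-1}$. Therefore the right-hand side equals $\stirling{n+2}{k}-\stirling{n+1}{k-1}$, which by \eqref{eq:stirling} in the form $\stirling{n+2}{k}=k\cdot\stirling{n+1}{k}+\stirling{n+1}{k-1}$ is exactly $k\cdot\stirling{n+1}{k}$, as claimed.

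The computation is elementary throughout; the only point requiring genuine care is the bookkeeping of summation limits together with the degenerate boundary cases $k=0$ and $k=n+1$, each of which is verified directly. An alternative would be an induction on $n$ using \eqref{eq:stirling}, but the manipulation above is cleaner and makes the appearance of the two telescoping Stirling numbers $\stirling{n+2}{k}$ and $\stirling{n+1}{k-1}$ transparent.
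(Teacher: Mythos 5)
Your proof is correct, but it follows a genuinely different route from the paper's. The paper argues by induction on $n$: it splits off the $\ell=n$ term (contributing $(n+1)\stirling{n+1}{k}$), applies Pascal's rule to $\binom{n+1}{\ell}$, uses the inductive hypothesis on one of the resulting sums and the Stirling recurrence \eqref{eq:stirling} on the other, and after a second round of reindexing and induction collapses everything to $k\cdot\stirling{n+1}{k}$. You instead substitute the explicit alternating-sum formula $\stirling{m}{k}=\frac{1}{k!}\sum_{i}(-1)^i\binom{k}{i}(k-i)^m$ (which the paper records just before the lemma), interchange the summations, and evaluate the inner sum as the truncated binomial expansion $\sum_{\ell=0}^{n}(-1)^{n-\ell}\binom{n+1}{\ell}a^{\ell}=a^{n+1}-(a-1)^{n+1}$; the two resulting pieces are identified with $\stirling{n+2}{k}$ and $\stirling{n+1}{k-1}$, and a single application of \eqref{eq:stirling} in the form $\stirling{n+2}{k}=k\cdot\stirling{n+1}{k}+\stirling{n+1}{k-1}$ finishes. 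I checked the reindexings (extending the sum down to $\ell=0$, the substitution $j=k-i$, the absorption identity $j\binom{k}{j}=k\binom{k-1}{j-1}$, and the shift $j\mapsto j+1$) and they are all sound, as is the handling of the degenerate cases $k=0$ and $k=n+1$ (in fact your general computation already covers $k=n+1$). Your route is shorter and makes transparent that the identity is a disguised form of the basic Stirling recurrence applied once at level $n+2$; the trade-off is that it leans on the closed formula rather than working purely with the recurrence, as the paper's inductive proof does.
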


\begin{proof}
We proceed by induction on $n$. We observe that the cases
$k=0, n+1$ are trivial, while for $k=1$ it reduces to the identity
\[
	\sum_{\ell=0}^{n+1}(-1)^\ell\binom{n+1}{\ell}=0.
\]
Therefore, the case $n=1$ is clear.
Assume the result holds for $n-1\geqslant0$. Then,
\begin{equation*}
\begin{split}
&\sum_{\ell=k-1}^n(-1)^{n-\ell}\binom{n+1}{\ell}\stirling{\ell+1}{k}=(n+1)\stirling{n+1}{k}+\sum_{\ell=k-1}^{n-1}(-1)^{n-\ell}\binom{n+1}{\ell}\stirling{\ell+1}{k}\\
&=(n+1)\stirling{n+1}{k}-\sum_{\ell=k-1}^{n-1}(-1)^{n-1-\ell}\binom{n}{\ell}\stirling{\ell+1}{k}-\sum_{\ell=k-1}^{n-1}(-1)^{n-1-\ell}\binom{n}{\ell-1}\stirling{\ell+1}{k}\\
&=(n+1)\stirling{n+1}{k}-k\stirling{n}{k}
-k\sum_{\ell=k-1}^{n-1}(-1)^{n-1-\ell}\binom{n}{\ell-1}\stirling{\ell}{k}-\sum_{\ell=k-1}^{n-1}(-1)^{n-1-\ell}\binom{n}{\ell-1}\stirling{\ell}{k-1}\\
&=(n+1)\stirling{n+1}{k}-k\stirling{n}{k}
+k\sum_{j=k-1}^{n-2}(-1)^{n-1-j}\binom{n}{j}\stirling{j+1}{k}
+\sum_{j=k-2}^{n-2}(-1)^{n-1-j}\binom{n}{j}\stirling{j+1}{k-1}\\
&=(n+1)\stirling{n+1}{k}-k\stirling{n}{k}+k^2\stirling{n}{k}-k n\stirling{n}{k}+(k-1)\stirling{n}{k-1}-n\stirling{n}{k-1}\\
&=(n+1)\stirling{n+1}{k}-(n-k+1)\left(k\cdot\stirling{n}{k}+\stirling{n}{k-1}\right)\\
&=k\stirling{n+1}{k}
\end{split} 
\end{equation*}
where the second equality follows from the recursive identity for the binomial coefficient,
the third one follows by induction and the recursive identity for the Stirling numbers of the second kind
\eqref{eq:stirling}, the fourth and fifth\footnote{Note that $\stirling{k-1}{k}=0$, 
thus we can assume the first sum starts at $j=k-1$.} one by induction, and the last one by \eqref{eq:stirling}.
\end{proof}

Relying on the above result, we get the following

\begin{proposition}
For any $n,m\geqslant 0$, we have
\begin{align}
	\label{eq:F-stirling}
	\FM{n}{m}=\sum_{k=1}^{n+1}(-1)^{n-k+1}\cdot k!\cdot k^{m}\cdot \stirling{n+1}{k}.
\end{align}
\end{proposition}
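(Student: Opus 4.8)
The plan is to prove \eqref{eq:F-stirling} by induction on $m$, using the recursion \eqref{eq:cardinality-drinfeld-yetter-tableaux-recursive-two} together with Lemma \eqref{eq:stirling-formula}. For the base case $m=0$ one has $\FM{n}{0}=|\mosaic{n}{0}|=1$ by convention, while the right-hand side of \eqref{eq:F-stirling} specializes to $\sum_{k=1}^{n+1}(-1)^{n-k+1}k!\stirling{n+1}{k}$. I would first record the classical identity $\sum_{k=0}^{N}(-1)^{k}k!\stirling{N}{k}=(-1)^{N}$ and give its short proof: setting $b_N:=\sum_{k\geqslant 0}(-1)^{N-k}k!\stirling{N}{k}$, one splits $\stirling{N}{k}=k\stirling{N-1}{k}+\stirling{N-1}{k-1}$ via \eqref{eq:stirling}, re-indexes the second sum by the shift $k\mapsto k+1$, and finds $b_N=-\sum_{k\geqslant 0}(-1)^{N-k}k!\stirling{N-1}{k}=b_{N-1}$, hence $b_N=b_0=1$. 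Since $\stirling{n+1}{0}=0$, this gives precisely $\sum_{k=1}^{n+1}(-1)^{n-k+1}k!\stirling{n+1}{k}=1=\FM{n}{0}$, as required.

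For the inductive step, assume \eqref{eq:F-stirling} holds with $m$ replaced by $m-1$ for every $n\geqslant 0$. If $n=0$ there is nothing to prove, since $\FM{0}{m}=1$ and the formula evaluates to $1$ for all $m$; so suppose $n\geqslant 1$. Applying \eqref{eq:cardinality-drinfeld-yetter-tableaux-recursive-two} and then the inductive hypothesis to each $\FM{\ell}{m-1}$,
\[
\FM{n}{m}=\sum_{\ell=0}^{n}\binom{n+1}{\ell}\FM{\ell}{m-1}=\sum_{\ell=0}^{n}\binom{n+1}{\ell}\sum_{k=1}^{\ell+1}(-1)^{\ell-k+1}k!\,k^{m-1}\stirling{\ell+1}{k}.
\]
I would then exchange the order of summation: the constraints $0\leqslant\ell\leqslant n$, $1\leqslant k\leqslant\ell+1$ become $1\leqslant k\leqslant n+1$, $k-1\leqslant\ell\leqslant n$, giving
\[
\FM{n}{m}=\sum_{k=1}^{n+1}k!\,k^{m-1}\sum_{\ell=k-1}^{n}(-1)^{\ell-k+1}\binom{n+1}{\ell}\stirling{\ell+1}{k}.
\]

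To conclude, I would rewrite the parity factor as $(-1)^{\ell-k+1}=(-1)^{n+k+1}(-1)^{n-\ell}$ and invoke Lemma \eqref{eq:stirling-formula} (with its fixed index equal to $k$ and its summation index equal to $\ell$), which collapses the inner sum to $(-1)^{n+k+1}\,k\,\stirling{n+1}{k}$. Since $(-1)^{n+k+1}=(-1)^{n-k+1}$, this yields
\[
\FM{n}{m}=\sum_{k=1}^{n+1}(-1)^{n-k+1}k!\,k^{m}\stirling{n+1}{k},
\]
which is exactly \eqref{eq:F-stirling}, completing the induction.

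I do not expect a genuine obstacle: the one substantial ingredient, Lemma \eqref{eq:stirling-formula}, is already established, and recursion \eqref{eq:cardinality-drinfeld-yetter-tableaux-recursive-two} is available from Corollary \ref{recursive-cardinality-mosaics}. The only points requiring care are purely bookkeeping — correctly describing the index set after swapping the order of summation, and tracking the sign parities so that $(-1)^{\ell-k+1}$ is matched exactly against the $(-1)^{n-\ell}$ appearing in the Lemma. The base-case Stirling identity is standard, but I would include its one-line induction for self-containedness rather than merely cite it.
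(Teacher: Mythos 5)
Your proof is correct and follows essentially the same route as the paper: both combine the recursion \eqref{eq:cardinality-drinfeld-yetter-tableaux-recursive-two} with Lemma \eqref{eq:stirling-formula} (the paper phrases this as showing that $G_{n,m}\coloneqq\sum_k(-1)^{n-k+1}k!\,k^m\stirling{n+1}{k}$ satisfies the same recursion as $\FM{n}{m}$, which is your inductive step in different clothing). Your explicit verification of the base case $m=0$ via the classical identity $\sum_k(-1)^{N-k}k!\stirling{N}{k}=1$ is a welcome addition, since the paper leaves the agreement of initial data implicit.
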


\begin{proof}
Set $B_{n,k}\coloneqq(-1)^{n-k}\cdot (k-1)!\cdot\stirling{n}{k}$.
Then, the formula \eqref{eq:F-stirling} is equivalent to
\[
	\FM{n}{m}=\sum_{k=1}^{n+1}B_{n+1,k}\cdot k^{m+1}.
\]
Therefore, it is enough to prove that the numbers
\[
	G_{n,m}\coloneqq\sum_{k=1}^{n+1}B_{n+1,k}\cdot k^{m+1}	
\]
satisfy the recursive relation \eqref{eq:cardinality-drinfeld-yetter-tableaux-recursive-one}. Note that
\eqref{eq:stirling-formula} is equivalent to 
\begin{align}
\label{eq:B-formula}
k\cdot B_{n+1,k}=\sum_{\ell=k-1}^{n}\binom{n+1}{\ell}\cdot B_{\ell+1,k}.	
\end{align}
Therefore,
\begin{equation*}
\begin{split}
	\sum_{\ell=0}^{n}\binom{n+1}{\ell}G_{\ell,m-1}&=
	\sum_{\ell=0}^{n}\binom{n+1}{\ell}\sum_{k=1}^{\ell+1}B_{\ell+1,k}\cdot k^{m}\\
	&=\sum_{\ell=0}^{n}\sum_{k=1}^{\ell+1}\binom{n+1}{\ell}B_{\ell+1,k}\cdot k^{m}\\
	&=\sum_{k=1}^{n+1}\left(\sum_{\ell=k-1}^n\binom{n+1}{\ell}B_{\ell+1,k}\right)\cdot k^m\\
	&=\sum_{k=1}^{n+1}B_{n+1,k}\cdot k^{m+1}=G_{n,m}
\end{split}
\end{equation*}
where the fourth identity follows from \eqref{eq:B-formula}.
Thus, $G_{n,m}=F_{n,m}$ since they satisfy the same recursion relation.	
\end{proof}
\subsection{Drinfeld--Yetter looms}
\label{section-d-y-looms}
In this Section we define the set $\loom{n}{m}$ of $n \times m$ Drinfeld--Yetter looms, which will give a combinatorial description of the application of steps $(2)$--$(3)$ of the algorithm described in \S \ref{subsection-dy-algebra}. To any Drinfeld--Yetter mosaic $M$ we shall associate a morphism pictorially represented by 
\begin{equation}
\label{eq:morphism-mosaic-permutations}
\begin{tikzpicture}
\draw[-, very thick,ggreen] (0,0)--(2,0);
\draw[-, very thick,ggreen] (3.8+1,0)--(5.9+1,0);
\draw[very thick]  (1,0.75) arc (90:180:0.75);
\draw[very thick] (1,0.75)--(1.2,0.75);
\node[shape=circle,draw,inner sep=1pt] (char) at (1.4,0.75) {$\sigma$}; 
\draw[very thick] (1.6,0.75)--(2,0.75);
\node at (0.3,-0.3) {$n$};
\begin{scope}[shift={(0.1,0)}]
\draw[very thick] (3.8+1,0.75)--(4.2+1,0.75);
\node[shape=circle,draw,inner sep=1pt] (char) at (4.4+1,0.75) {$\tau$}; 
\draw[very thick] (4.6+1,0.75)--(4.8+1,0.75);
\draw[very thick]  (5.55+1,0) arc (0:90:0.75);
\node at (5.55+1,-0.3) {$m$};
\node at (3.6+1,0.75) {$m$};
\end{scope}
\node at (2.2,0.75) {$n$};
\node at (3.4,0.4) {\Bigg($\varphi_{n,m}(M)$ \Bigg) };
\end{tikzpicture}
\end{equation}
where $\varphi_{n,m}(M) \in \Hom_{\DY^1}([V] \ten [n], [V] \ten [m])$ is given by Equation \eqref{eq:morphism-from-mosaic-to-hom}. We  then describe the process of removing Lie brackets and Lie cobrackets from the morphism \eqref{eq:morphism-mosaic-permutations} --by applying formulas \eqref{eq:actionrule} and \eqref{eq:coactionrule}-- through a set $\mathfrak{L}(M) \subset \loom{n}{m}$ (see Equation \eqref{eq:dy-looms-associated-to-a-mosaic}). We finally show that the collection $\{\mathfrak{L}(M)\}_{M \in \mosaic{n}{m}} $ is a partition of $\loom{n}{m}$, hence giving a description of the product of $\mathfrak{U}_\DY^1$ in terms of Drinfeld--Yetter looms.
\begin{definition}
\label{definition-refined-Drinfeld-Yetter-Tableaux}
Let $n,m \geqslant 1$ and let $\T_{\loom{n}{m}}$ be the following set of tiles
\begin{center}
\begin{tikzpicture}[scale=0.6]
\node at (-0.15,0) {$\mathcal{T}_{\loom{n}{m}}= \Big\{$};
\draw[black,very thick] (1.5,0.5) rectangle (2.5,-0.5);
\lcro{0.5}{2.5}
\node at (2.75,-0.5) {,};
\draw[black,very thick] (3,0.5) rectangle (4,-0.5);
\lmua{0.5}{4}
\node at (4.25,-0.5) {,};
\draw[black,very thick] (4.5,0.5) rectangle (5.5,-0.5);
\lmub{0.5}{5.5}
\node at (5.75,-0.5) {,};
\draw[black,very thick] (6,0.5) rectangle (7,-0.5);
\dela{0.5}{7}
\node at (7.25,-0.5) {,};
\draw[black,very thick] (7.5,0.5) rectangle (8.5,-0.5);
\delb{0.5}{8.5}
\node at (8.75,-0.5) {,};
\draw[black,very thick] (9,0.5) rectangle (10,-0.5);
\lhor{0.5}{10}
\node at (10.25,-0.5) {,};
\draw[black,very thick] (10.5,0.5) rectangle (11.5,-0.5);
\lver{0.5}{11.5}
\node at (11.75,-0.5) {,};
\draw[black,very thick] (12,0.5) rectangle (13,-0.5);
\node at (13.5,-0) {\Big\}};
\end{tikzpicture}
\end{center}
where the yellow line denotes $k \in \{0,\ldots,m-1 \}$ red horizontal lines and the blue line denotes $\ell \in \{0,\ldots, n-1 \}$ red vertical lines.  To any tile $\mathcal{T}$ of $\T_{\loom{n}{m}}$, we associate a tuple of integers $(t,b,l,r)$ that respectively indicates the number of strings occurring on the top, bottom, left and right edge of $\mathcal{T}$.  We define the set of $n \times m$ Drinfeld--Yetter looms $\loom{n}{m}$ as the set of all possible tilings $L$ of $\grid{n}{m}$ with the elements of $\T_{\loom{n}{m}}$ such that the following five conditions are satisfied:
\begin{itemize}
\item[(1):] $b_{i,j} = t_{i+1,j}$ for all $i=1,\ldots,m-1$ and $j=1,\ldots,n;$
\item[(2):] $l_{i,j}= r_{i,j+1}$ for all $i=1,\ldots,m$ and  $j=1,\ldots,n-1;$
\item[(3):] $\sum_{i=1}^n l_{i,1} + \sum_{j=1}^m b_{n,j}  = \sum_{i=1}^n r_{i,m} + \sum_{j=1}^m t_{1,j} = n+m;$
\item[(4):]$R_{1,j} \notin \Big\{ \begin{tikzpicture}[scale= 0.4] \gri{1}{1} \lhor{1}{1}

\end{tikzpicture} \ , \ \begin{tikzpicture}[scale= 0.4] \gri{1}{1}

\end{tikzpicture} \Big\}$ for all $j \in \{1, \ldots , m\};$
\item[(5):] $R_{i,1} \notin \Big\{ \begin{tikzpicture}[scale= 0.4] \gri{1}{1} \lver{1}{1}

\end{tikzpicture} \ , \ \begin{tikzpicture}[scale= 0.4] \gri{1}{1}

\end{tikzpicture} \Big\}$ for all $i \in \{1, \ldots , n\},$
\end{itemize}
where $(t_{i,j},b_{i,j},l_{i,j},r_{i,j})$ denotes the tuple of weights of $L_{i,j}$. We set by convention $\loom{0}{m} = \mosaic{0}{m}$, $\loom{n}{0}=\mosaic{n}{0}$ and $\loom{0}{0}=\mosaic{0}{0}$.
\end{definition}
As in the case of the Drinfeld--Yetter mosaics, the first two conditions avoid the existence of Drinfeld--Yetter looms in which there is discontinuity between the red lines. The fourth and fifth conditions are the analogous of the first two conditions of Definition \ref{definition-mosaics}. 
Note that, as opposed to the case of Drinfeld--Yetter mosaics, the set of tiles of $\loom{n}{m}$ depends on $n$ and $m$ (more precisely, its cardinality is $ nm + 3m+3n$). \\
For any $L \in \loom{n}{m}$, we shall interpret the left and bottom (resp. right and top) strings as ingoing (resp. ongoing) strings. By definition, the path of any ingoing string across $L$ ends in a outgoing string. We can therefore associate to $L$ a permutation in $\mathfrak{S}_{n+m}$ in the following way: we assign to any ingoing string a number in $\{1,\ldots,n+m\}$, starting from the top left to the bottom left and carrying on from the bottom left to the bottom right. In the same way, we assign to any outgoing string a number in $\{1,\ldots,n+m\}$, starting from the top left to the top right and carrying on from the top right to the bottom right.
\begin{example}
\label{example-permutation-ass-to-a-loom}
The permutation associated to the following Drinfeld--Yetter loom
\begin{center}
\begin{tikzpicture}[scale=1.5]
\gri{2}{2} \cro{1}{1} \cro{2}{2} \ver{1}{2} \hor{2}{1}
\draw[red, very thick] (2-1,-1+0.5) arc (-75:0:0.5); 
\draw[red, very thick] (1-1,-2+0.25) arc (-90:0:0.5); 
\draw[red, very thick] (0.5,0)--(0.5,-1.25);
\node at (-0.2,-0.5) {$1$};
\node at (-0.2,-1.5) {$2$};
\node at (-0.2,-1.75) {$3$};
\node at (1.5,-2.2) {$4$};
\node at (0.5,0.2) {$1$};
\node at (1.35,0.2) {$2$};
\node at (1.5,0.2) {$3$};
\node at (2.2,-1.5) {$4$};
\end{tikzpicture}
\end{center}
is $(1243) \in \mathfrak{S}_{4}$. 
\end{example}
The procedure described above defines a collection of maps $\gamma_{n,m} : \loom{n}{m} \to \mathfrak{S}_{n+m}$. Note that the maps $\gamma_{n,m}$ are in general not injective nor surjective. For example, for the following $L_1,L_2 \in \loom{2}{2}$
\begin{center}
\begin{tikzpicture}[scale=1.5]
\gri{2}{2} \cro{1}{1} \cro{2}{2} \ver{1}{2} \hor{2}{1}
\draw[red, very thick] (2-1,-1+0.5) arc (-75:0:0.5); 
\draw[red, very thick] (1-1,-2+0.25) arc (-90:0:0.5); 
\draw[red, very thick] (0.5,0)--(0.5,-1.25);
\node at (-0.2,-0.5) {$1$};
\node at (-0.2,-1.5) {$2$};
\node at (-0.2,-1.75) {$3$};
\node at (1.5,-2.2) {$4$};
\node at (0.5,0.2) {$1$};
\node at (1.35,0.2) {$2$};
\node at (1.5,0.2) {$3$};
\node at (2.2,-1.5) {$4$};
\end{tikzpicture}
\hspace{2cm}
\begin{tikzpicture}[scale=1.5]
\gri{2}{2} \cro{1}{1} \cro{2}{2} \ver{1}{2} \cro{2}{1} 
\draw[red, very thick] (2-1,-1+0.5) arc (-75:0:0.5); 
\draw[red, very thick] (0.5,0)--(0.5,-1.25);
\node at (-0.2,-0.5) {$1$};
\node at (-0.2,-1.5) {$2$};
\node at (0.5,-2.2) {$3$};
\node at (1.5,-2.2) {$4$};
\node at (0.5,0.2) {$1$};
\node at (1.35,0.2) {$2$};
\node at (1.5,0.2) {$3$};
\node at (2.2,-1.5) {$4$};
\end{tikzpicture}
\end{center}
we have $\gamma_{2,2}(L_1) = \gamma_{2,2}(L_2)$. On the other hand, one can show through a direct inspection that there is no Drinfeld--Yetter loom $L \in \loom{1}{3}$ such that $\gamma_{1,3}(L) = (12)(34) \in \mathfrak{S}_4$. \\ Next, let $M \in \mosaic{n}{m}$ with tiles $\{M_{1,1},\ldots, M_{n,m} \}$ and consider the following map
\begin{center}
\begin{tikzpicture}[scale=0.6]
\node at (5,0) {$f_M : \{ M_{1,1},\ldots, M_{n,m} \} \to \P(\mathcal{T}_{\loom{n}{m}})$} ;

\draw[black,very thick] (5.25,-0.75) rectangle (6.25,-1.75);
\node at (7,-1.25) {$\mapsto$}; \node at (7.75,-1.25) {\Big\{};
\draw[black,very thick] (8.25,-0.75) rectangle (9.25,-1.75);
\node at (9.75, -1.25) {$\Big\}$};
\cro{1.75}{6.25} \lcro{1.75}{9.25}

\draw[black,very thick] (5.25,-0.75-1.5) rectangle (6.25,-1.75-1.5);
\node at (7,-1.25-1.5) {$\mapsto$}; \node at (7.75,-1.25-1.5) {\Big\{};
\draw[black,very thick] (8.25,-0.75-1.5) rectangle (9.25,-1.75-1.5);
\node at (9.5,-1.75-1.5) {,};
\draw[black,very thick] (9.75,-0.75-1.5) rectangle (10.75,-1.75-1.5);
\node at (11.25, -1.25-1.5) {$\Big\}$};
\tbra{1.75-1.5}{6.25} \lmua{1.75-1.5}{9.25} \lmub{1.75-1.5}{10.75}

\draw[black,very thick] (5.25,-0.75-1.5-1.5) rectangle (6.25,-1.75-1.5-1.5);
\node at (7,-1.25-1.5-1.5) {$\mapsto$}; \node at (7.75,-1.25-1.5-1.5) {\Big\{};
\draw[black,very thick] (8.25,-0.75-1.5-1.5) rectangle (9.25,-1.75-1.5-1.5);
\node at (9.5,-1.75-1.5-1.5) {,};
\draw[black,very thick] (9.75,-0.75-1.5-1.5) rectangle (10.75,-1.75-1.5-1.5);
\node at (11.25, -1.25-1.5-1.5) {$\Big\}$};
\tcobra{1.75-1.5-1.5}{6.25} \dela{1.75-1.5-1.5}{9.25} \delb{1.75-1.5-1.5}{10.75}

\draw[black,very thick] (5.25,-0.75-1.5-1.5-1.5) rectangle (6.25,-1.75-1.5-1.5-1.5);
\node at (7,-1.25-1.5-1.5-1.5) {$\mapsto$}; \node at (7.75,-1.25-1.5-1.5-1.5) {\Big\{};
\draw[black,very thick] (8.25,-0.75-1.5-1.5-1.5) rectangle (9.25,-1.75-1.5-1.5-1.5);
\node at (9.75, -1.25-1.5-1.5-1.5) {$\Big\}$};
\hor{1.75-1.5-1.5-1.5}{6.25} \lhor{1.75-1.5-1.5-1.5}{9.25}

\draw[black,very thick] (5.25,-0.75-1.5-1.5-1.5-1.5) rectangle (6.25,-1.75-1.5-1.5-1.5-1.5);
\node at (7,-1.25-1.5-1.5-1.5-1.5) {$\mapsto$}; \node at (7.75,-1.25-1.5-1.5-1.5-1.5) {\Big\{};
\draw[black,very thick] (8.25,-0.75-1.5-1.5-1.5-1.5) rectangle (9.25,-1.75-1.5-1.5-1.5-1.5);
\node at (9.75, -1.25-1.5-1.5-1.5-1.5) {$\Big\}$};
\ver{1.75-1.5-1.5-1.5-1.5}{6.25} \lver{1.75-1.5-1.5-1.5-1.5}{9.25}

\draw[black,very thick] (5.25,-0.75-1.5-1.5-1.5-1.5-1.5) rectangle (6.25,-1.75-1.5-1.5-1.5-1.5-1.5);
\node at (7,-1.25-1.5-1.5-1.5-1.5-1.5) {$\mapsto$}; \node at (7.75,-1.25-1.5-1.5-1.5-1.5-1.5) {\Big\{};
\draw[black,very thick] (8.25,-0.75-1.5-1.5-1.5-1.5-1.5) rectangle (9.25,-1.75-1.5-1.5-1.5-1.5-1.5);
\node at (9.75, -1.25-1.5-1.5-1.5-1.5-1.5) {$\Big\}$};
\end{tikzpicture}
\end{center}
where the numbers $k,\ell$ of red lines corresponding to the yellow and the blue ones are 
\begin{equation*}
\begin{split}
k &= \big| \{M_{i,t} = \begin{tikzpicture}[scale=0.4]
\gri{1}{1} \tcobra{1}{1}
\end{tikzpicture}, t>j \} \big| \\
\ell&= \big| \{ M_{s,j} = \begin{tikzpicture}[scale=0.4]
\gri{1}{1} \tbra{1}{1}
\end{tikzpicture}, s>i \} \big|
\end{split}
\end{equation*}
\begin{definition}
The set $\mathfrak{L}(M)$ of all the \index{Drinfeld--Yetter looms related to a Drinfeld--Yetter mosaic}Drinfeld--Yetter looms related to $M$ is
\begin{equation}
\label{eq:dy-looms-associated-to-a-mosaic}
\mathfrak{L}(M) = \{L \in \loom{n}{m} \ | \  L_{i,j} \in f_M(M_{i,j}) \}.
\end{equation}
\end{definition}
\begin{proposition}
\label{proposition-partition-drinfeld-yetter-tableaux}
The collection $\{ \mathfrak{L}(M)\}_{M \in \mosaic{n}{m}}$ defines a partition of $\loom{n}{m}$.
\end{proposition}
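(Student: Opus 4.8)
The plan is to realise the family $\{\mathfrak{L}(M)\}_{M\in\mosaic{n}{m}}$ as the collection of (nonempty) fibres of a single map $\Psi\colon\loom{n}{m}\to\mosaic{n}{m}$; since the nonempty fibres of a function partition its domain, this would finish the proof. The cases $n=0$ and $m=0$ are immediate from the conventions $\loom{0}{m}=\mosaic{0}{m}$, $\loom{n}{0}=\mosaic{n}{0}$ and $\loom{0}{0}=\mosaic{0}{0}$, so I would assume $n,m\geqslant 1$ throughout.

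The map $\Psi$ is defined tile by tile. By construction each tile of $\T_{\loom{n}{m}}$ is a refinement of a unique tile of $\T_\mathfrak{M}$ (the crossing tiles refine the permutation tile, the two bracket tiles refine the bracket tile, the two cobracket tiles refine the cobracket tile, and the three remaining tiles refine the action, coaction and empty tiles), so there is a well-defined forgetful assignment $p\colon\T_{\loom{n}{m}}\to\T_\mathfrak{M}$, and I would set $\Psi(L)_{i,j}\coloneqq p(L_{i,j})$. The first point to verify is that $\Psi(L)$ is a genuine Drinfeld--Yetter mosaic whenever $L\in\loom{n}{m}$: conditions $(4)$ and $(5)$ of Definition \ref{definition-refined-Drinfeld-Yetter-Tableaux} translate under $p$ into conditions $(1)$ and $(2)$ of Definition \ref{definition-mosaics}, and a direct check on the finitely many adjacent tile pairs shows that the loom continuity conditions $(1)$--$(2)$ rule out, after applying $p$, exactly the configurations forbidden by condition $(3)$ of Definition \ref{definition-mosaics}. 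This check also yields one inclusion at once: since every tile appearing in $f_M(M_{i,j})$ refines $M_{i,j}$, any $L\in\mathfrak{L}(M)$ satisfies $\Psi(L)=M$; hence the sets $\mathfrak{L}(M)$ are pairwise disjoint and $\mathfrak{L}(M)\subseteq\Psi^{-1}(M)$ for each $M$.

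The heart of the argument is the reverse inclusion $\Psi^{-1}(M)\subseteq\mathfrak{L}(M)$: a loom $L$ with $\Psi(L)=M$ is forced, at every position, to carry exactly the auxiliary strands prescribed by the definition of $f_M$. Fix such an $L$ and a position $(i,j)$; since $p(L_{i,j})=M_{i,j}$ already fixes the ``shape'' of $L_{i,j}$, it is enough to show that the number of yellow strings of $L_{i,j}$ equals $\bigl|\{\,t>j:M_{i,t}\text{ is a cobracket tile}\,\}\bigr|$ and that the number of blue strings equals $\bigl|\{\,s>i:M_{s,j}\text{ is a bracket tile}\,\}\bigr|$. I would prove this by a conservation-of-strands induction along the $i$-th row and the $j$-th column, using the loom gluing conditions $(1)$--$(2)$ together with the flux condition $(3)$ of Definition \ref{definition-refined-Drinfeld-Yetter-Tableaux}: reading a row, the count of horizontal auxiliary strands vanishes past the last column and increases by exactly one as one passes each cobracket tile of $M$, while every other tile transports those strands unchanged; dually for columns and bracket tiles. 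Once the multiplicities are pinned down, $L_{i,j}$ must be one of the one or two tiles listed in $f_M(M_{i,j})$, so $L\in\mathfrak{L}(M)$.

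Combining the two inclusions gives $\mathfrak{L}(M)=\Psi^{-1}(M)$ for every $M\in\mosaic{n}{m}$, so $\{\mathfrak{L}(M)\}$ is exactly the family of fibres of $\Psi$. It remains to note that each fibre is nonempty: given $M$, resolving each of its bracket and cobracket tiles by a fixed one of its two refinements (with the multiplicities forced above) produces a tiling that still satisfies conditions $(1)$--$(5)$ of Definition \ref{definition-refined-Drinfeld-Yetter-Tableaux} --- the gluing and flux conditions are insensitive to this choice because the two refinements of a bracket (resp.\ cobracket) tile have the same edge weights --- and hence lies in $\mathfrak{L}(M)$. I expect the forced-multiplicity claim of the third paragraph to be the main obstacle: it is the step where one must show that the loom continuity conditions leave no freedom in the number of auxiliary (spectator) strands, which is precisely what turns $\{\mathfrak{L}(M)\}$ from a mere covering of $\loom{n}{m}$ into an honest partition.
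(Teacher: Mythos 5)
Your proposal is correct and follows essentially the same route as the paper: the paper's proof also defines a forgetful tile map $\chi\colon\T_{\loom{n}{m}}\to\T_{\mathfrak{M}}$, assigns to each loom $L$ the mosaic with tiles $\chi(L_{i,j})$, and concludes $L\in\mathfrak{L}(M)$, asserting disjointness and non-emptiness as clear. The only difference is one of detail: the step the paper dismisses as ``easy to see'' is exactly your forced-multiplicity claim (that the gluing and flux conditions pin down the number of auxiliary yellow/blue strands to the values prescribed by $f_M$), which you correctly identify as the real content and for which your anchored conservation argument along rows and columns, combined with the boundary flux condition, is sound.
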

\begin{proof}
It is clear that $\mathfrak{L}(M) $ is non--empty for any $M \in \mosaic{n}{m}$ and that $M_1 \neq M_2$ implies $\mathfrak{L}(M_1)  \cap \mathfrak{L}(M_1)  = \emptyset$. It remains to prove that the collection $\{ \mathfrak{L}(M)\}_{M \in \mosaic{n}{m}}$ defines a covering of $\loom{n}{m}$, i.e. that for any $L \in \loom{n}{m}$ there exists $M \in \mosaic{n}{m}$ such that $L \in \mathfrak{L}(M)$. For any $L \in \loom{n}{m}$, we construct such a $M$ in the following way: consider the following map
\begin{center}
\begin{tikzpicture}[scale=0.6]
\node at (3,0) {$\chi : \mathcal{T}_{\loom{n}{m}} \to \mathcal{T}_{\mathfrak{M}}$} ;
\draw[black,very thick] (2.25,-0.75) rectangle (3.25,-1.75);
\node at (3.85,-1.25) {$\mapsto$}; 
\draw[black,very thick] (4.4,-0.75) rectangle (5.4,-1.75);
\lcro{1.75}{3.25} \cro{1.75}{5.4}

\draw[black,very thick] (2.25,-0.75-1.5) rectangle (3.25,-1.75-1.5);
\node at (3.85,-1.25-1.5) {$\mapsto$}; 
\draw[black,very thick] (4.4,-0.75-1.5) rectangle (5.4,-1.75-1.5);
\lmua{1.75-1.5}{3.25} \tbra{1.75-1.5}{5.4}

\draw[black,very thick] (2.25,-0.75-1.5-1.5) rectangle (3.25,-1.75-1.5-1.5);
\node at (3.85,-1.25-1.5-1.5) {$\mapsto$}; 
\draw[black,very thick] (4.4,-0.75-1.5-1.5) rectangle (5.4,-1.75-1.5-1.5);
\lmub{1.75-1.5-1.5}{3.25} \tbra{1.75-1.5-1.5}{5.4}

\draw[black,very thick] (2.25,-0.75-1.5-1.5-1.5) rectangle (3.25,-1.75-1.5-1.5-1.5);
\node at (3.85,-1.25-1.5-1.5-1.5) {$\mapsto$}; 
\draw[black,very thick] (4.4,-0.75-1.5-1.5-1.5) rectangle (5.4,-1.75-1.5-1.5-1.5);
\dela{1.75-1.5-1.5-1.5}{3.25} \tcobra{1.75-1.5-1.5-1.5}{5.4}

\draw[black,very thick] (2.25,-0.75-1.5-1.5-1.5-1.5) rectangle (3.25,-1.75-1.5-1.5-1.5-1.5);
\node at (3.85,-1.25-1.5-1.5-1.5-1.5) {$\mapsto$}; 
\draw[black,very thick] (4.4,-0.75-1.5-1.5-1.5-1.5) rectangle (5.4,-1.75-1.5-1.5-1.5-1.5);
\delb{1.75-1.5-1.5-1.5-1.5}{3.25} \tcobra{1.75-1.5-1.5-1.5-1.5}{5.4}

\draw[black,very thick] (2.25,-0.75-1.5-1.5-1.5-1.5-1.5) rectangle (3.25,-1.75-1.5-1.5-1.5-1.5-1.5);
\node at (3.85,-1.25-1.5-1.5-1.5-1.5-1.5) {$\mapsto$}; 
\draw[black,very thick] (4.4,-0.75-1.5-1.5-1.5-1.5-1.5) rectangle (5.4,-1.75-1.5-1.5-1.5-1.5-1.5);
\lhor{1.75-1.5-1.5-1.5-1.5-1.5}{3.25} \hor{1.75-1.5-1.5-1.5-1.5-1.5}{5.4}

\draw[black,very thick] (2.25,-0.75-1.5-1.5-1.5-1.5-1.5-1.5) rectangle (3.25,-1.75-1.5-1.5-1.5-1.5-1.5-1.5);
\node at (3.85,-1.25-1.5-1.5-1.5-1.5-1.5-1.5) {$\mapsto$}; 
\draw[black,very thick] (4.4,-0.75-1.5-1.5-1.5-1.5-1.5-1.5) rectangle (5.4,-1.75-1.5-1.5-1.5-1.5-1.5-1.5);
\lver{1.75-1.5-1.5-1.5-1.5-1.5-1.5}{3.25} \ver{1.75-1.5-1.5-1.5-1.5-1.5-1.5}{5.4}

\draw[black,very thick] (2.25,-0.75-1.5-1.5-1.5-1.5-1.5-1.5-1.5) rectangle (3.25,-1.75-1.5-1.5-1.5-1.5-1.5-1.5-1.5);
\node at (3.85,-1.25-1.5-1.5-1.5-1.5-1.5-1.5-1.5) {$\mapsto$}; 
\draw[black,very thick] (4.4,-0.75-1.5-1.5-1.5-1.5-1.5-1.5-1.5) rectangle (5.4,-1.75-1.5-1.5-1.5-1.5-1.5-1.5-1.5);
\end{tikzpicture}
\end{center}
We define the Drinfeld--Yetter loom $L_M$ associated to $M$ as the one with $L_{i,j}= \chi(L_{i,j})$. It is easy to see that $L \in \mathfrak{L}(M_M)$, hence the claim is proved.
\end{proof}
\begin{remark}
Note that, recalling the auxiliary functions \eqref{eq:aux-funct-one} - \eqref{eq:aux-funct-two}, we have the following formula for the cardinality of $\mathfrak{L}(M)$:
\begin{equation}
\label{eq:cardinality-associated-refined-drinfeld-yetter-tableaux}
|\mathfrak{L}(M)| = 2^{\cob(M) + \bra(M)}.
\end{equation}
\end{remark}
\begin{notation}
As in the case of Drinfeld--Yetter mosaics, we shall need some auxiliary functions counting the amount of some tiles occurring in a Drinfeld--Yetter loom: we set
\begin{equation*}
\begin{split}
\countingone : \loom{n}{m} &\to \mathbb{Z}_{\geqslant 0} \\
\countingtwo : \loom{n}{m} &\to \mathbb{Z}_{\geqslant 0} 
\end{split}
\end{equation*}
defined by 
\begin{equation*}
\begin{split}
\countingone(L) &= \big| \big\{L_{i,j} = \begin{tikzpicture}[scale=0.4]
\gri{1}{1} \dela{1}{1}
\end{tikzpicture} \ , \ \begin{tikzpicture}[scale=0.4]
\gri{1}{1} \lmub{1}{1}
\end{tikzpicture}\big\} \big| \\
\countingtwo(L) &= \big|  \big\{L_{i,j} = \begin{tikzpicture}[scale=0.4]
\gri{1}{1} \delb{1}{1}
\end{tikzpicture} \ , \ \begin{tikzpicture}[scale=0.4]
\gri{1}{1} \lmub{1}{1}
\end{tikzpicture}\big\}\big| .
\end{split}
\end{equation*}
\end{notation}
\begin{proposition}
\label{remarksigns}
For any $M \in \mosaic{n}{m}$ and $L \in \mathfrak{L}(M)$ we have
\[ 
(-1)^{\cob(M)} (-1)^{\countingone(L)} = (-1)^{\countingtwo(L)}. 
\]
\end{proposition}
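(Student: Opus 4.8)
The strategy is to reduce the claimed sign identity to the parity statement
\[
\cob(M)+\countingone(L)+\countingtwo(L)\equiv 0\pmod 2 ,
\]
which is visibly equivalent to $(-1)^{\cob(M)}(-1)^{\countingone(L)}(-1)^{\countingtwo(L)}=1$ and hence to the assertion; the plan is then to prove this parity statement by matching the tiles of a loom $L\in\mathfrak{L}(M)$ against the tiles of $M$ that they refine, via the map $f_M$.

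First I would unwind the definitions of $\countingone$ and $\countingtwo$. Each of them counts the occurrences in $L$ of tiles lying in a two-element set of shapes, and by inspection of the Notation preceding the statement these two sets share exactly one shape --- call it $e$ --- while their remaining members are the two shapes making up $f_M$ of a cobracket tile; write $\mathcal D=\{d_1,d_2\}$ for that pair, say with $d_1$ counted by $\countingone$ and $d_2$ by $\countingtwo$. Since $e\neq d_1$ and $e\neq d_2$, we get
\[
\countingone(L)+\countingtwo(L)=\big(\#\{(i,j):L_{i,j}=d_1\}+\#\{(i,j):L_{i,j}=d_2\}\big)+2\,\#\{(i,j):L_{i,j}=e\},
\]
so the shape $e$ contributes an even amount and is irrelevant modulo $2$; it remains to show $\#\{(i,j):L_{i,j}\in\mathcal D\}\equiv\cob(M)\pmod 2$, and in fact I expect the exact equality.

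The second step is the only one requiring genuine care. Using the table that defines $f_M$ together with $\mathfrak{L}(M)=\{L\in\loom{n}{m}\mid L_{i,j}\in f_M(M_{i,j})\}$ from \eqref{eq:dy-looms-associated-to-a-mosaic}, one checks that a tile of $L$ lies in $\mathcal D$ if and only if the corresponding tile of $M$ is a cobracket tile, and that in that case exactly one of $d_1,d_2$ occurs there; in particular the shapes in $\mathcal D$ never arise as refinements of a bracket tile (whose $f_M$-image consists of two other shapes) nor of a permutation, action, coaction, or empty tile (whose $f_M$-images are singletons disjoint from $\mathcal D$). Summing over positions gives $\#\{(i,j):L_{i,j}\in\mathcal D\}=\cob(M)$, hence $\countingone(L)+\countingtwo(L)=\cob(M)+2\,\#\{(i,j):L_{i,j}=e\}$, and therefore $\cob(M)+\countingone(L)+\countingtwo(L)$ is even. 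The main obstacle is thus purely bookkeeping: correctly reading off from the definition of $f_M$ that the two distinguished ``$\delta$-type'' shapes are produced by cobracket tiles of $M$ and by nothing else, and that each cobracket tile is refined into exactly one of them. Once this dictionary is fixed, the conclusion is a one-line parity count, so no further machinery is needed.
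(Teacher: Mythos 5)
Your proof is correct and is essentially the paper's own argument: both rest on the observation that, for $L\in\mathfrak{L}(M)$, the two ``$\delta$-type'' tiles of $L$ occur exactly at the cobracket positions of $M$ (so their total count is $\cob(M)$), while the tile common to the counting sets of $\countingone$ and $\countingtwo$ contributes equally to both and hence cancels modulo $2$. The paper phrases this multiplicatively with $k,\ell,h$ and you phrase it as a parity count, but the content is identical.
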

\begin{proof}
Set $k = |\{L_{i,j} = \begin{tikzpicture}[scale=0.4]
\gri{1}{1} \dela{1}{1}
\end{tikzpicture} \}|$, $\ell =  | \{L_{i,j} = \begin{tikzpicture}[scale=0.4]
\gri{1}{1} \delb{1}{1}
\end{tikzpicture} \}|$ and $h = | \{L_{i,j} = \begin{tikzpicture}[scale=0.4]
\gri{1}{1} \lmub{1}{1}
\end{tikzpicture} \}|$.
By the definition of $\alpha$ we have $\cob(M) = k + \ell$, hence $ (-1)^{\cob(M)} (-1)^k = (-1)^\ell$. To end the proof it suffices to multiply $(-1)^h$ to both sides.
\end{proof}

\section{Structure of the Drinfeld--Yetter algebra}
\label{section-explicit-formula}
\subsection{An explicit formula for the multiplication}

In this Section we provide an explicit formula for the multiplication of $ \mathfrak{U}_{\DY}^1$ with respect to the standard basis \eqref{eq:canonical-basis}.\\
To any Drinfeld--Yetter mosaic $M \in \mosaic{n}{m}$ we associate a morphism in $\Hom_{\DY^1}([n] \ten [V_1], [m] \ten [V_1])$ by considering the picture obtained by removing all borders from the Drinfeld--Yetter mosaic and turning it 45 degrees clockwise. This procedure defines a colletion of maps 
\begin{equation}
\label{eq:morphism-from-mosaic-to-hom}
\varphi_{n,m} : \mosaic{n}{m}\to \Hom_{\DY^1}([n] \ten [V_1], [m] \ten [V_1]).
\end{equation}
\begin{example}
Given the following Drinfeld--Yetter mosaic
\vspace{0.3 cm}
\begin{center}
\begin{tikzpicture}
\gri{2}{2} \cro{1}{1} \cro{1}{2} \cro{2}{1} \tbra{2}{2}
\node at (-1,-1) {$M=$};
\end{tikzpicture}
\end{center}
\vspace{0.3 cm}
the procedure described above gives the following picture
\vspace{0.3 cm}
\begin{center}
\begin{tikzpicture}[scale=0.85]
\gri{2}{2} \cro{1}{1} \cro{1}{2} \cro{2}{1} \tbra{2}{2}
\node at (3,-1) {$\longrightarrow$};
\begin{scope}[shift={(4,0)}]
\cro{1}{1} \cro{1}{2} \cro{2}{1} \tbra{2}{2}
\node at (3,-1) {$\longrightarrow$};
\end{scope}
\begin{scope}[shift={(9,0.3)},rotate=315]
\cro{1}{1} \cro{1}{2} \cro{2}{1} \tbra{2}{2}
\end{scope}
\node at (11,-1) {$\longrightarrow$};
\begin{scope}[shift={(12,-1.75)},scale=1.5]
\draw (1,0) arc (0:100:0.75);
\draw[-, very thick,ggreen] (0,0)--(1.5,0);
\draw (1.25,0.75) arc (90:180:0.75); 
\draw (0.63,0.4) arc (-300:-260:0.8); 
\draw (1,0.90) arc (90:190:0.75); 
\draw(1,0.9)--(1.2,0.9);
\end{scope}
\end{tikzpicture}
\end{center}
\vspace{0.3 cm}
leading to the following morphism of $\Hom_{\DY^1}([2] \ten [V_1], [2] \ten [V_1])$:
\[ \varphi_{2,2}(M) =(\id_{[2]} \ten \pi_1^{*(2)}) \circ \bigl(\bigl(  (23) \circ\id_{[2]} \ten \delta \circ (132)\bigr) \ten \id_{[V_1]}\bigr) \circ (\id_{[2]} \ten \pi_1).
\]
\end{example}
\begin{remark}
Note that $\varphi_{0,m}(M) = \pi_1^{*(m)}$, $\varphi_{n,0}(M) = \pi_1^{(n)}$ and $\varphi_{0,0}(M)= \id_{[1]}$.
\end{remark}
Similarly, for any $M \in \mosaic{n}{m}$ we denote by $M^\top$ the morphism of $\Hom_{\DY^1}([n+m - \cob(M) ] , [n+m - \bra(M) ])$ pictorially represented by removing all borders from $M$, turning it 45 degrees clockwise and attaching horizontal lines to the end and beginning of any diangonal line.
\begin{example}
For the Drinfeld--Yetter mosaic $M$ of the previous example, we obtain the following picture 
\begin{center}
\begin{tikzpicture}[scale=0.85]
\gri{2}{2} \cro{1}{1} \cro{1}{2} \cro{2}{1} \tbra{2}{2}
\node at (3,-1) {$\longrightarrow$};
\begin{scope}[shift={(4,0)}]
\cro{1}{1} \cro{1}{2} \cro{2}{1} \tbra{2}{2}
\node at (3,-1) {$\longrightarrow$};
\end{scope}
\begin{scope}[shift={(9,0.3)},rotate=315]
\cro{1}{1} \cro{1}{2} \cro{2}{1} \tbra{2}{2}
\end{scope}
\node at (11,-1) {$\longrightarrow$};
\begin{scope}[shift={(13.2,0)}, rotate=315, scale=0.8]
\cro{1}{1} \cro{1}{2} \cro{2}{1} \tbra{2}{2}
\draw[red, very thick] (0.5,0)--(1,0.5)--(1.5,1);
\draw[red, very thick] (1.5,0)--(2,0.5);
\draw[red, very thick] (2,-0.5)--(2.5,0);
\draw[red, very thick] (0,-0.5)--(-0.5,-1)--(-1,-1.5);
\draw[red, very thick] (0,-1.5)--(-0.5,-2);
\draw[red, very thick] (0.5,-2)--(0,-2.5);
\draw[red, very thick] (1.5,-2)--(1,-2.5)--(0.5,-3);
\end{scope}
\end{tikzpicture}
\end{center}
corresponding to the morphism
\[ M^	\top = (23) \circ (\id_{[2]} \ten \delta) \circ (132) \in \Hom_{\DY^1}([4],[3]).\]
\end{example}
Note that for any $M \in \mosaic{n}{m}$ we have 
\begin{equation}
\label{eq:morphis-associated-to-a-tableaux}
\varphi_{n,m}(M) = \big(\id_{[n]} \ten \pi_1^{*(m-\cob(M))}\big) \circ (M^\top \ten \id_{[V_1]})\circ \big(\id_{[m]} \ten \pi_1^{(n - \bra(M))}\big).
\end{equation}
\begin{lemma}
\label{lemma-normal-ordering-tableaux}
For any $n,m \geqslant 0$ we have 
\begin{equation*}
\begin{tikzpicture} 
\draw[-, very thick,ggreen] (0,0)--(2.5,0);
\draw (1,0) arc (0:90:0.75);
\draw(0.25,0.75)--(0,0.75);
\draw[very thick]  (2.25,0.75) arc (90:180:0.75);
\draw[very thick] (2.25,0.75)--(2.5,0.75);
\node at (1.65,-0.3) {$m$};
\node at (5,0.2) {$= \mathlarger{\sum_{M \in \mosaic{1}{m}} (-1)^{\cob(M)} \varphi_{1,m}(M)}$};
\end{tikzpicture}
\end{equation*}
and 
\begin{equation*}
\begin{tikzpicture} 
\draw[-, very thick,ggreen] (0,0)--(2.5,0);
\draw[very thick]  (1,0) arc (0:90:0.75);
\draw[very thick] (0.25,0.75)--(0,0.75);
\draw (2.25,0.75) arc (90:180:0.75);
\draw(2.25,0.75)--(2.5,0.75);
\node at (0.9,-0.3) {$n$}; 
\node at (5,0.2) {$= \mathlarger{\sum_{M \in \mosaic{n}{1}} (-1)^{\cob(M)} \varphi_{n,1}(M)}$};
\end{tikzpicture}.
\end{equation*}
\end{lemma}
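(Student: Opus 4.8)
The plan is to prove both displayed identities by induction --- the first on $m$, the second on $n$ --- using only the Drinfeld--Yetter rule \eqref{eq:dyrule}; the Lie brackets and cobrackets it creates are never removed, since they are precisely what the bracket and cobracket tiles record. I will spell out the first identity in detail; the second is symmetric, with actions and coactions --- and correspondingly the first column and the first row of a mosaic --- exchanging roles. For the base case $m=0$ one has $\pi_1^{*(0)}=\id_{[V_1]}$, so the left--hand side is $\pi_1^{(1)}$, while $\mosaic{1}{0}$ is a single mosaic with $\cob=0$ and $\varphi_{1,0}=\pi_1^{(1)}$, and the two sides agree. (Equivalently one may start at $m=1$, where the statement reduces directly to \eqref{eq:dyrule} once one checks that $\varphi_{1,1}$ of the permutation, bracket and cobracket tiles --- the three elements of $\mosaic{1}{1}$ --- are respectively the braiding term, the $\mu$--term and the $\delta$--term of \eqref{eq:dyrule}; only the cobracket tile has $\cob=1$, and it is the one carrying the minus sign, which is how $(-1)^{\cob}$ arises.)

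For the inductive step I will locate the unique place in $\pi_1^{(1)}\circ\pi_1^{*(m)}$ where an action is immediately followed by a coaction --- the single action followed by the first of the $m$ coactions --- and apply \eqref{eq:dyrule} there once. This produces three groups of terms, which I will match, via the instance $n=1$ of the column recursion \eqref{eq:drinfeld-yetter-tableaux-recursive-two} of Proposition \ref{recurrence-rule-dy-tableaux}, with the three possibilities for the first tile of a mosaic in $\mosaic{1}{m}$. The braiding term is a coaction, then a braiding, then one action composed with $m-1$ coactions, which by the inductive hypothesis equals $\sum_{M''\in\mosaic{1}{m-1}}(-1)^{\cob(M'')}\varphi_{1,m-1}(M'')$, and prepending a permutation tile to each $M''$ produces exactly the permutation--first mosaics of $\mosaic{1}{m}$, with $\cob$ (hence the sign) unchanged. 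The $\mu$--term is a coaction, then a Lie bracket, then $m-1$ coactions, and since by \eqref{eq:drinfeld-yetter-tableaux-recursive-two} a bracket in the first tile forces the remainder of the (one) row, this term is $\varphi_{1,m}(M)$ for the unique bracket--first $M$, with sign $+1=(-1)^{\cob(M)}$. The $\delta$--term is a Lie cobracket on the incoming $[1]$--strand followed by $\pi_1^{(1)}\circ\pi_1^{*(m-1)}$ on the surviving strand, which by the inductive hypothesis and \eqref{eq:drinfeld-yetter-tableaux-recursive-two} gives the sum over the cobracket--first mosaics, the minus sign of \eqref{eq:dyrule} together with the extra cobracket reproducing $(-1)^{\cob}$. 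Summing over the partition of $\mosaic{1}{m}$ into permutation--, bracket-- and cobracket--first mosaics then gives $\sum_{M\in\mosaic{1}{m}}(-1)^{\cob(M)}\varphi_{1,m}(M)$. For the second identity I will instead isolate the action immediately to the left of the single coaction and peel off the first \emph{row} via \eqref{eq:drinfeld-yetter-tableaux-recursive-one}; this time the braiding and the $\mu$--terms both reduce to $\mosaic{n-1}{1}$ by induction, while the $\delta$--term is the terminal case, its companion rows being forced to action tiles.

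The hard part will not be the induction itself but two ``definition--unwinding'' verifications that make the above matchings literal: first, that $\varphi_{1,1}$ of the three $1\times 1$ mosaics really equals the three terms of \eqref{eq:dyrule}; and second, that $\varphi_{1,m}$ (resp.\ $\varphi_{n,1}$) of the mosaic obtained by prepending a permutation, bracket or cobracket tile coincides with the composite of $\varphi_{1,m-1}$ (resp.\ $\varphi_{n-1,1}$) with, respectively, a braiding, a $\mu$ or a $\delta$ on the appropriate strand. Both amount to following carefully the recipe behind \eqref{eq:morphism-from-mosaic-to-hom} --- removing borders and rotating $45^{\circ}$ clockwise --- and the explicit formula \eqref{eq:morphis-associated-to-a-tableaux}, together with the forced--tile analysis from the proof of Proposition \ref{recurrence-rule-dy-tableaux}. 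Once these are settled the induction runs formally, with all signs governed entirely by the lone minus sign on the $\delta$--term of the Drinfeld--Yetter rule.
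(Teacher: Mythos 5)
Your proposal is correct and takes essentially the same route as the paper's proof: induction on $m$ (resp.\ $n$), with the base case being a single application of the Drinfeld--Yetter rule \eqref{eq:dyrule} matched against the three elements of $\mosaic{1}{1}$, and the inductive step obtained by applying \eqref{eq:dyrule} once at the unique action--coaction adjacency and sorting the three resulting terms according to the first tile of the mosaic via the decomposition \eqref{eq:disjoint-union-drinfeld-yetter-tableaux} and the recursion of Proposition \ref{recurrence-rule-dy-tableaux}. Your identification of which of the three terms is terminal (the $\mu$--term for the first identity, the $\delta$--term for the second) and of the sign bookkeeping through $(-1)^{\cob}$ agrees with the paper.
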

\begin{proof}
We prove the first identity by induction on $m \geqslant 0$. For $m=0$ the claim holds trivially. For $m=1$ we have \\ \\
\begin{equation*}
\begin{tikzpicture}
\draw[-, very thick,ggreen] (0+0.5,0)--(2.5+0.5,0);
\draw (1+0.5,0) arc (0:90:0.75);
\draw(0.25+0.5,0.75)--(0+0.5,0.75);
\draw (2.25+0.5,0.75) arc (90:180:0.75);
\draw(2.25+0.5,0.75)--(2.5+0.5,0.75);
\node at (3.5+0.5,0.3) {$=$};
\draw[-, very thick,ggreen] (5,0)--(6.25,0);
\draw (1+5,0) arc (0:90:0.75);
\draw(0.25+5,0.75)--(0+5,0.75);
\draw (1+5,0.75) arc (90:180:0.75);
\draw(1+5,0.75)--(1.25+5,0.75);
\node at (1.8+5,0.4) {$+$}; 
\draw[-, very thick,ggreen] (2.25+5,0)--(3.5+5,0);
\draw (3.25+5,0.75) arc (90:180:0.75); 
\draw(3.25+5,0.75)--(3.5+5,0.75);
\draw (2.8+5,0.6) arc (-300:-260:1); 
\node at (4+5,0.4) {$-$}; 
\draw[-, very thick,ggreen] (4.5+5,0)--(6+5,0); 
\draw (5.5+5,0) arc (0:90:0.75);
\draw(4.75+5,0.75)--(4.5+5,0.75);
\draw (5.75+5,0.75) arc (90:131:0.75); 
\draw (5.75+5,0.75)--(6+5,0.75);
\node at (3.5+0.5,-1.7) {$=$};
\node at (7.7,-1.6 ) {$\varphi_{1,1} \big( \ \begin{tikzpicture}[scale=0.4] \gri{1}{1} \cro{1}{1} \end{tikzpicture} \ \big) + \varphi_{1,1} \big( \ \begin{tikzpicture}[scale=0.4]  \gri{1}{1} \tbra{1}{1} \end{tikzpicture} \ \big) - \varphi_{1,1} \big( \  \begin{tikzpicture}[scale=0.4]  \gri{1}{1} \tcobra{1}{1} \end{tikzpicture} \ \big)$};
\node at (3.5+0.5,-3) {$=$};
\node at (7,-3) {$\mathlarger{\sum_{M \in \mosaic{1}{1}}(-1)^{\cob(M)}  \varphi_{1,1}(M)}$}; 
\end{tikzpicture}
\end{equation*}
where the first equality follows from Equation \eqref{eq:dyrule}. If $m \geqslant 1$, we have \\ \\
\begin{equation*}
\begin{tikzpicture}[scale=0.9]
\draw[-, very thick,ggreen] (0,0)--(2.5,0);
\draw (1,0) arc (0:90:0.75);
\draw(0.25,0.75)--(0,0.75);
\draw[very thick]  (2.25,0.75) arc (90:180:0.75);
\draw[very thick] (2.25,0.75)--(2.5,0.75);
\node at (1.7,-0.3) {\footnotesize $m\!+\!1$};
\node at (3.5,0.35) {$=$};
\draw[-, very thick,ggreen] (4.5,0)--(7.2,0);
\draw (5.5,0) arc (0:90:0.75);
\draw(4.75,0.75)--(4.5,0.75);
\draw (6.75,0.75) arc (90:180:0.75);
\draw(6.7,0.75)--(7.2,0.75);
\draw[very thick]  (7.05,0.5) arc (90:170:0.6); 
\draw[very thick] (7.05,0.5)--(7.2,0.5);
\node at (6.5,-0.2) {\footnotesize $m$};
\node at (3.5,-2) {$=$};
\node at (4,-2) {$\Bigg($};
\draw[-, very thick,ggreen] (0+4.5,0-2.3)--(1.25+4.5,0-2.3);
\draw (1+4.5,0-2.3) arc (0:90:0.75);
\draw(0.25+4.5,0.75-2.3)--(0+4.5,0.75-2.3);
\draw (1+4.5,0.75-2.3) arc (90:180:0.75);
\draw(1+4.5,0.75-2.3)--(1.25+4.5,0.75-2.3);
\node at (1.8+4.5,0.4-2.3) {$+$}; 
\draw[-, very thick,ggreen] (2.25+4.5,0-2.3)--(3.5+4.5,0-2.3);
\draw (3.25+4.5,0.75-2.3) arc (90:180:0.75); 
\draw(3.25+4.5,0.75-2.3)--(3.5+4.5,0.75-2.3);
\draw (2.8+4.5,0.6-2.3) arc (-300:-260:1); 
\node at (4+4.5,0.4-2.3) {$-$}; 
\draw[-, very thick,ggreen] (4.5+4.5,0-2.3)--(6+4.5,0-2.3); 
\draw (5.5+4.5,0-2.3) arc (0:90:0.75);
\draw(4.75+4.5,0.75-2.3)--(4.5+4.5,0.75-2.3);
\draw (5.75+4.5,0.75-2.3) arc (90:131:0.75); 
\draw (5.75+4.5,0.75-2.3)--(6+4.5,0.75-2.3);
\node at (11,-2) {$\Bigg)$};
\node at (11.8,-2.5) {\footnotesize $m$ };
\draw[-, very thick,ggreen] (0+11.5,0-2.3)--(1.3+11.5,0-2.3);
\draw[very thick]  (1+11.5,0.75-2.3) arc (90:180:0.75);
\draw[very thick] (1+11.5,0.75-2.3)--(1.25+11.5,0.75-2.3);
\node at (3.5,-2-2.3) {$=$};
\draw[-, very thick,ggreen] (4.5,0-2.3-2.3)--(1.25+5.2,0-2.3-2.3);
\draw (1+4.5,0-2.3-2.3) arc (0:90:0.75);
\draw(0.25+4.5,0.75-2.3-2.3)--(0+4.5,0.75-2.3-2.3);
\draw (1+4.5,0.75-2.3-2.3) arc (90:180:0.75);
\draw (1+4.5,0.75-2.3-2.3)--(6.45,0.75-2.3-2.3); 
\draw[very thick]  (1+5.3,0.6-2.3-2.3-0.1) arc (90:170:0.6);
\draw[very thick] (1+5.3,0.6-2.3-2.3-0.1)--(1+5.3+0.15,0.6-2.3-2.3-0.1);  
\node at (11.4,-2.65-2.2) {\footnotesize $m$};
\node at (7,0.4-2.3-2.3) {$+$}; 
\draw[-, very thick,ggreen] (7.5,0-2.3-2.3)--(9,0-2.3-2.3);
\draw (3.25+5+0.35,0.75-2.3-2.3) arc (90:180:0.75); 
\draw(3.25+5+0.35,0.75-2.3-2.3)--(3.65+5+0.35,0.75-2.3-2.3);
\draw (2.8+5+0.35,0.6-2.3-2.3) arc (-300:-260:1); 
\draw[very thick]  (7.05+1.5+0.35,0.6-2.3-2.3-0.1) arc (90:170:0.6);
\draw[very thick] (7.05+1.5+0.35,0.6-2.3-2.3-0.1)--(7.05+1.5+0.35+0.1,0.6-2.3-2.3-0.1);
\node at (8.4,-2.65-2.2) {\footnotesize $m$};
\node at (5.8,-2.65-2.2) {\footnotesize $m$};
\node at (9.5,0.4-2.3-2.3) {$-$}; 
\draw[-, very thick,ggreen] (4.5+4.5+1,0-2.3-2.3)--(12,0-2.3-2.3); 
\begin{scope}[shift={(3,0)}]
\draw[very thick]  (7.05+1.5+0.35,0.6-2.3-2.3-0.1) arc (90:170:0.6);
\draw[very thick] (7.05+1.5+0.35,0.6-2.3-2.3-0.1)--(7.05+1.5+0.35+0.1,0.6-2.3-2.3-0.1);
\end{scope}
\begin{scope}[shift={(1,-2.3)}]
\draw (5.5+4.5,0-2.3) arc (0:90:0.75);
\draw(4.75+4.5,0.75-2.3)--(4.5+4.5,0.75-2.3);
\draw (5.75+4.5,0.75-2.3) arc (90:131:0.75); 
\draw (5.75+4.5,0.75-2.3)--(6.5+4.5,0.75-2.3);
\end{scope}
\node at (3.5,-2-2.3-2.3) {$=$};
\node at (9.5,-2-2.3-2.6)  
{$\mathlarger{\sum_{M \in \mosaic{1}{m+1}^{\begin{tikzpicture}[scale=0.3] \gri{1}{1} \cro{1}{1} \end{tikzpicture} } }} (-1)^{\cob(M)} \varphi_{1,m+1}(M) \ +\mathlarger{\sum_{M \in \mosaic{1}{m+1}^{\begin{tikzpicture}[scale=0.3] \gri{1}{1} \tbra{1}{1} \end{tikzpicture} } }} (-1)^{\cob(M)} \varphi_{1,m+1}(M) \   $};
\node at (3.5,-2-2.3-2.3-2.3) {$+$};
\node at (6.5,-2-2.3-2.3-2.6) {$\mathlarger{\sum_{M \in \mosaic{1}{m+1}^{\begin{tikzpicture}[scale=0.3] \gri{1}{1} \tcobra{1}{1} \end{tikzpicture} } }} (-1)^{\cob(M)} \varphi_{1,m+1}(M)$}; 
\begin{scope}[shift={(0,-2)}]
\node at (3.5,-2-2.3-2.3-2.3) {$=$};
\node at (6.5,-2-2.3-2.3-2.6) {$\mathlarger{\sum_{M \in \mosaic{1}{m+1} }} (-1)^{\cob(M)} \varphi_{1,m+1}(M)$}; 
\end{scope}
\end{tikzpicture}
\end{equation*}
where the second equality follows from Equation \eqref{eq:dyrule} and the fourth equality follows by the inductive hypothesis. Then the first part of the claim is proved. The proof of the second part is analogous.
\end{proof}
The next Proposition is the main result regarding Drinfeld--Yetter mosaics:
\begin{proposition}
\label{claim}
Let $n,m \geqslant 0$. Then 
\begin{equation*}
\begin{tikzpicture} 
\draw[-, very thick,ggreen] (0,0)--(2.5,0);
\draw[very thick]  (1,0) arc (0:90:0.75);
\draw[very thick] (0.25,0.75)--(0,0.75);
\draw[very thick]  (2.25,0.75) arc (90:180:0.75);
\draw[very thick] (2.25,0.75)--(2.5,0.75);
\node at (1.65,-0.3) {$m$};
\node at (0.9,-0.3) {$n$}; 
\node at (5,0.2) {$= \mathlarger{\sum_{M \in \mosaic{n}{m}} (-1)^{\cob(M)} \varphi_{n,m}(M)}$.};
\end{tikzpicture}
\end{equation*} 
\end{proposition}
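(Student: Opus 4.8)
The plan is to establish the identity by induction on $n$, using Lemma \ref{lemma-normal-ordering-tableaux} as the base case and the recursive decomposition of Proposition \ref{recurrence-rule-dy-tableaux} for the inductive step. For $n=0$ the statement is trivial, since $\mosaic{0}{m}=\{M\}$ with $\cob(M)=0$ and $\varphi_{0,m}(M)=\pi_1^{*(m)}$; the case $n=1$ is precisely the first identity of Lemma \ref{lemma-normal-ordering-tableaux}. (By the symmetry of the statement one may equally induct on $m$, using the second identity of that Lemma and \eqref{eq:drinfeld-yetter-tableaux-recursive-two}.)

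For the inductive step, let $n\geqslant 2$ and assume the result for $n-1$ with arbitrary second index. Reading diagrams from left to right, split the $n$ iterated actions as the first $n-1$ of them followed by one more action; this realizes $\pi_1^{(n)}\circ\pi_1^{*(m)}$ as the composite of $\pi_1^{(n-1)}$ with the $1\times m$ block $\pi_1\circ\pi_1^{*(m)}$. Applying the first identity of Lemma \ref{lemma-normal-ordering-tableaux} to that block gives
\[
\pi_1^{(n)}\circ\pi_1^{*(m)}\;=\;\sum_{M_1\in\mosaic{1}{m}}(-1)^{\cob(M_1)}\,\Bigl(\pi_1^{(n-1)}\ \text{followed by}\ \varphi_{1,m}(M_1)\Bigr).
\]
Fix $M_1\in\mosaic{1}{m}$ and put $\ell\coloneqq m-\cob(M_1)$. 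By the factorization \eqref{eq:morphis-associated-to-a-tableaux}, inside $\varphi_{1,m}(M_1)$ the strand $[V_1]$ first performs exactly $\ell$ coactions, after which only the bracket/cobracket morphism $M_1^\top$ (which does not meet $[V_1]$) and $1-\bra(M_1)$ trailing actions occur. Hence precomposing with $\pi_1^{(n-1)}$ creates, along $[V_1]$, an $(n-1)$-fold iterated action followed by an $\ell$-fold iterated coaction, that is $\pi_1^{(n-1)}\circ\pi_1^{*(\ell)}$, after which the remainder of $\varphi_{1,m}(M_1)$ is inert. Applying the inductive hypothesis to $\pi_1^{(n-1)}\circ\pi_1^{*(\ell)}$ rewrites it as $\sum_{M'\in\mosaic{n-1}{\ell}}(-1)^{\cob(M')}\varphi_{n-1,\ell}(M')$, and reinserting each $M'$ into the $\ell$ columns of $M_1$ not carrying a cobracket yields precisely the mosaics of $\mosaic{n}{m}$ with first row $M_1$, which is the content of \eqref{eq:drinfeld-yetter-tableaux-recursive-one}. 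Since $\cob$ and $\bra$ are additive along this decomposition, $\cob(M)=\cob(M_1)+\cob(M')$ and $\bra(M)=\bra(M_1)+\bra(M')$, a check using \eqref{eq:morphis-associated-to-a-tableaux} identifies the reassembled morphism with $\varphi_{n,m}(M)$ and shows the signs multiply to $(-1)^{\cob(M)}$; summing over $M_1$ and then over $M'$ completes the step.

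The main obstacle is the geometric bookkeeping in the inductive step: one must verify that, once the last action has been resolved against the $m$ coactions via the Drinfeld--Yetter rule \eqref{eq:dyrule} (as packaged in Lemma \ref{lemma-normal-ordering-tableaux}), the $n-1$ remaining actions are reordered against the $\ell$ coactions surviving in $\varphi_{1,m}(M_1)$ and against nothing else -- the morphism $M_1^\top$ and the $1-\bra(M_1)$ trailing actions commuting through without triggering any new application of the action, coaction, cocycle or Drinfeld--Yetter rules -- and that the sub-diagram produced on those $\ell$ columns is indeed of the form $\varphi_{n-1,\ell}(M')$ for a unique $M'\in\mosaic{n-1}{\ell}$, so that the inductive hypothesis applies verbatim. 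This rigidity is exactly what conditions $(1)$--$(3)$ of Definition \ref{definition-mosaics}, together with the description of the rows and columns of a mosaic in the Remark following it, were designed to guarantee: a cobracket tile forces the column beneath it to be inert, and a bracket tile is the last non-trivial tile in its row. Once this compatibility of the map $\varphi_{n,m}$ with the decomposition \eqref{eq:drinfeld-yetter-tableaux-recursive-one} is established, the additivity of $\cob$ and $\bra$ together with the partition in Proposition \ref{recurrence-rule-dy-tableaux} close the argument.
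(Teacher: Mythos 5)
Your proposal is correct and follows essentially the same route as the paper: both split off one action from $\pi_1^{(n)}$, resolve the resulting $1\times m$ block via Lemma \ref{lemma-normal-ordering-tableaux}, use the factorization \eqref{eq:morphis-associated-to-a-tableaux} of $\varphi_{1,m}(M_1)$ to expose an inner $(n-1)\times\ell$ block to which the inductive hypothesis applies, and reassemble via the decomposition \eqref{eq:drinfeld-yetter-tableaux-recursive-one} with the sign tracked by the additivity of $\cob$. Your discussion of the "geometric bookkeeping" makes explicit a point the paper handles only pictorially, but the argument is the same.
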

\begin{proof}
The cases $n=0$, $m=0$ and $n=m=0$ hold trivially, while the cases $n=1$ and $m=1$ hold for Lemma \ref{lemma-normal-ordering-tableaux}. For $n,m \geqslant 2$ we have 
\begin{equation*}
\begin{tikzpicture}
\draw[-, very thick,ggreen] (0,0)--(2.5,0);
\draw[very thick]  (1,0) arc (0:90:0.75);
\draw[very thick] (0.25,0.75)--(0,0.75);
\draw[very thick]  (2.25,0.75) arc (90:180:0.75);
\draw[very thick] (2.25,0.75)--(2.5,0.75);
\node at (1.65,-0.3) {$m$};
\node at (0.9,-0.3) {$n$}; 
\node at (3.2,0.3) {=};
\draw[-, very thick,ggreen] (0+3.8,0)--(2.5+4,0);
\draw (1+4,0) arc (0:90:0.75);
\draw[very thick]  (1+3.5,0) arc (0:100:0.6);
\draw(0.25+4,0.75)--(0+3.8,0.75);
\draw[very thick]  (2.25+4,0.75) arc (90:180:0.75);
\draw[very thick] (2.25+4,0.75)--(2.5+4,0.75);
\node at (1.65+4,-0.3) {$m$};
\node at (0.4+4,-0.3) {\footnotesize $n\!-\!1$}; 
\node at (3.2,0.3-2.3) {=};
\draw[-, very thick,ggreen] (0+3.8,0-2.3)--(1+4,0-2.3);
\draw[very thick]  (1+3.5,0-2.3) arc (0:100:0.6);
\node at (0.4+4,-0.3-2.3) {\footnotesize $n\!-\!1$}; 
\node at (7.5,-2.1) {$\Bigg( \mathlarger{\sum_{M \in \mosaic{1}{m}} (-1)^{\cob(M)} \varphi_{1,m}(M)} \Bigg)$};
\node at (3.2,0.3-2.3-2.3) {=};
\draw[-, very thick,ggreen] (0+3.8,0-2.3-2.3)--(1+4,0-2.3-2.3);
\draw[very thick]  (1+3.5,0-2.3-2.3) arc (0:100:0.6);
\node at (0.4+4,-0.3-2.3-2.3) {\footnotesize $n\!-\!1$}; 
\node at (8,-2.1-2.3) {$\Bigg( \mathlarger{\sum_{\ell=0}^m \ \sum_{M \in \mosaic{1}{m}^{(\ell)}} (-1)^{\cob(M)} \varphi_{1,m}(M)} \Bigg)$};
\node at (3.2,0.3-2.3-2.3-2.3) {=};
\node at (6,0.3-2.3-2.3-2.5) {$\mathlarger{\sum_{\ell=0}^m \ \sum_{\substack{M \in \mosaic{1}{m}^{(\ell)} \\ \bra(M)=0 }} (-1)^{\cob(M)}} \Bigg($};
\draw[-, very thick,ggreen] (0+8.5-0.15,0-2.3-2.3-2.3)--(4.5+8.5-0.15,0-2.3-2.3-2.3);
\draw[very thick]  (1+8.5-0.15,0-2.3-2.3-2.3) arc (0:90:0.75);
\draw[very thick] (0.25+8.5-0.15,0.75-2.3-2.3-2.3)--(0+8.5-0.15,0.75-2.3-2.3-2.3);
\draw[very thick]  (2.25+8.5-0.15,0.75-2.3-2.3-2.3) arc (90:180:0.75);
\draw[very thick] (2.25+8.5-0.15,0.75-2.3-2.3-2.3)--(2.5+8.5-0.15,0.75-2.3-2.3-2.3);
\draw (1+3.25+8.5-0.15,0-2.3-2.3-2.3) arc (0:90:0.75);
\draw(0.25+3.25+8.5-0.15,0.75-2.3-2.3-2.3)--(0+3.25+8.5-0.15,0.75-2.3-2.3-2.3);
\draw [draw=black] (2.5+8.5-0.15,0.65-2.3-2.3-2.3) rectangle (3.25+8.5-0.15,1.1-2.3-2.3-2.3);
\draw (2.5+8.5-0.15,1-2.3-2.3-2.3)--(0+8.5-0.15,1-2.3-2.3-2.3);
\draw[very thick]  (3.25+8.5-0.15,1-2.3-2.3-2.3)--(4.5+8.5-0.15,1-2.3-2.3-2.3);
\node at (4.6+9-0.15,0.6-2.3-2.3-2.5) {$\Bigg)$};
\node at (3.2,0.3-2.3-2.3-2.3-2.3) {+};
\node at (6,0.3-2.3-2.3-2.5-2.3) {$\mathlarger{\sum_{\ell=0}^m \ \sum_{\substack{M \in \mosaic{1}{m}^{(\ell)} \\ \bra(M)=1  }} (-1)^{\cob(M)}} \Bigg($};
\draw[-, very thick,ggreen] (0+8.5-0.15,0-2.3-2.3-2.3-2.3)--(4.5+7.75-0.15,0-2.3-2.3-2.3-2.3);
\node at (0+8.5-0.15+4.5-0.5,0.75-2.3-2.3-2.3-2.05) {\footnotesize $m$};
\node at (0+8.5-0.15+5.2-0.5,0.75-2.3-2.3-2.05) {\footnotesize $m$};
\node at (9.2,0-2.3-2.3-2.3-2.5) { \footnotesize $n\!-\!1$};
\node at (9.9,0-2.3-2.3-2.3-2.5) { \footnotesize $\ell$};
\node at (9.9,0-2.3-2.3-2.5) { \footnotesize $\ell$};
\node at (9.2,0-2.3-2.3-2.5) { \footnotesize $n\!-\!1$};
\node at (0+8.5-0.15+3.45-0.5,0.75-2.3-2.3-2.15) {\scriptsize $M^\top$};
\node at (0+8.5-0.15+3.45-0.5,0.75-2.3-2.3-2.3-2.15) {\scriptsize $M^\top$};
\draw[very thick]  (1+8.5-0.15,0-2.3-2.3-2.3-2.3) arc (0:90:0.75);
\draw[very thick] (0.25+8.5-0.15,0.75-2.3-2.3-2.3-2.3)--(0+8.5-0.15,0.75-2.3-2.3-2.3-2.3);
\draw[very thick]  (2.25+8.5-0.15,0.75-2.3-2.3-2.3-2.3) arc (90:180:0.75);
\draw[very thick] (2.25+8.5-0.15,0.75-2.3-2.3-2.3-2.3)--(2.5+8.5-0.15,0.75-2.3-2.3-2.3-2.3);
\draw [draw=black] (2.5+8.5-0.15,0.65-2.3-2.3-2.3-2.3) rectangle (3.25+8.5-0.15,1.1-2.3-2.3-2.3-2.3);
\draw (2.5+8.5-0.15,1-2.3-2.3-2.3-2.3)--(0+8.5-0.15,1-2.3-2.3-2.3-2.3);
\draw[very thick]  (3.25+8.5-0.15,1-2.3-2.3-2.3-2.3)--(4.5+7.75-0.15,1-2.3-2.3-2.3-2.3);
\node at (4.6+8.3-0.15,0.6-2.3-2.3-2.5-2.3) {$\Bigg)$};
\node at (3.2,0.3-2.3-2.3-2.3-2.3-2.3) {=};
\node at (7,0.1-2.3-2.3-2.3-2.3-2.3) {$\mathlarger{\sum_{\ell=0}^m \quad \sum_{M \in \mosaic{1}{m}^{(\ell)} \times \mosaic{n-1}{\ell}} (-1)^{\cob(M)} \varphi_{n,m}(M)} $};
\node at (3.2,0.3-2.3-2.3-2.3-2.3-2.3-2.3) {=};
\node at (5.5,0.1-2.3-2.3-2.3-2.3-2.3-2.3) {$\mathlarger{\sum_{M \in \mosaic{n}{m}}} (-1)^{\cob(M)} \varphi_{n,m}(M)$};
\end{tikzpicture}
\end{equation*}
where the second equality follows from Lemma \ref{lemma-normal-ordering-tableaux}, the third and the fifth equalities follow from Proposition \ref{recurrence-rule-dy-tableaux}, the fourth equality follows from Equation \eqref{eq:morphis-associated-to-a-tableaux} and the sixth equality follows from Equation \eqref{eq:drinfeld-yetter-tableaux-recursive-one}.
\end{proof}
Next, we associate a permutation in $\mathfrak{S}_{n+m}$ to any $L \in \loom{n}{m}$, $\sigma \in \mathfrak{S}_n$ and $\tau \in \mathfrak{S}_m$, by gluing $\sigma$ (resp. $\tau$) to the left (resp. top) edge of $L$, moving  together strings appearing in the same tile. We thus get a permutation of $\mathfrak{S}_{n+m}$ by labelling the strings from $1$ to $n+m$, following the same argument of Example \ref{example-permutation-ass-to-a-loom}.
\begin{example}

Consider the following Drinfeld--Yetter loom $L \in \loom{2}{3}$
\begin{center}

\begin{tikzpicture}[scale=0.7]
\gri{2}{3} \ver{1}{3} \cro{1}{1} \ver{2}{3}
\draw[red, very thick] (2-1,-1+0.5) arc (-90:0:0.5); 
\draw[red, very thick] (2-0.7,-1+1)--(2-0.7, -2);
\draw[red, very thick] (1-1,-2+0.5) arc (-90:0:0.5);
\draw[red, very thick] (1-1, -2+0.35)--(3, -2+0.35);
\end{tikzpicture}
\end{center}
and the permutations $\sigma =(12) \in \mathfrak{S}_2$ and $\tau= (132) \in \mathfrak{S}_3$. By gluing $\sigma$ to the left edge of $L$ and $\tau$ to the top edge of $L$, we obtain the picture 
\begin{center}

\begin{tikzpicture}
\gri{2}{3} \ver{1}{3} \cro{1}{1} \ver{2}{3}
\draw[red, very thick] (2-1,-1+0.5) arc (-90:0:0.5); 
\draw[red, very thick] (2-0.7,-1+1)--(2-0.7, -2);
\draw[red, very thick] (1-1,-2+0.5) arc (-90:0:0.5);
\draw[red, very thick] (1-1, -2+0.35)--(3, -2+0.35);
\draw[red, very thick] (2.5,-2)--(2.5,-2.25);
\draw[red, very thick] (1.3,-2)--(1.3,-2.25);
\draw[red, very thick] (1.3,0)--(1.3,0.25)--(0.3,0.75)--(0.3,1);
\draw[red, very thick] (1.5,0)--(1.5,0.25)--(0.5,0.75)--(0.5,1);
\draw[red, very thick] (2.5,0)--(2.5,0.25)--(1.5,0.75)--(1.5,1);
\draw[red, very thick] (0.5,0)--(0.5,0.25)--(2.5,0.75)--(2.5,1);
\draw[red, very thick] (3,-1.65)--(3.25,-1.65);
\draw[red, very thick] (0,-0.5)--(-0.25,-0.5)--(-0.75,-1.55)--(-1,-1.55);
\draw[red, very thick] (0,-1.5)--(-0.25,-1.5)--(-0.75,-0.4)--(-1,-0.4);
\draw[red, very thick] (0,-1.65)--(-0.25,-1.65)--(-0.75,-0.6)--(-1,-0.6);
\node at (-1.15,-0.35) {\footnotesize{1}};
\node at (-1.15,-0.65) {\footnotesize{2}};
\node at (-1.15,-1.5) {\footnotesize{3}};
\node at (1.3,-2.45) {\footnotesize{4}};
\node at (2.5,-2.45) {\footnotesize{5}};
\node at (0.3,1.15)  {\footnotesize{1}};
\node at (0.5,1.15)  {\footnotesize{2}};
\node at (1.5,1.15)  {\footnotesize{3}};
\node at (2.5,1.15)  {\footnotesize{4}};
\node at (3.4,-1.65)  {\footnotesize{5}};
\end{tikzpicture}
\end{center}
to which we associate the permutation $(14)(253) \in \mathfrak{S}_5$.
\end{example}
The procedure described above defines a family of maps
$\tilde{\gamma}_{n,m}: \mathfrak{S}_n \times \loom{n}{m} \times \mathfrak{S}_m \to \mathfrak{S}_{n+m}$.
Note that for all $L \in \loom{n}{m}$ we have
$\tilde{\gamma}_{n,m}(\id_n, L, \id_m) = \gamma_{n,m}(L)$.
\begin{lemma}
\label{lemmamultiplication}
Let $M \in \mosaic{n}{m}$. Then 
\begin{center}
\begin{tikzpicture}
\draw[-, very thick,ggreen] (0,0)--(2,0);
\draw[-, very thick,ggreen] (3.8+1,0)--(5.9+1,0);
\draw[very thick]  (1,0.75) arc (90:180:0.75);
\draw[very thick] (1,0.75)--(1.2,0.75);
\node[shape=circle,draw,inner sep=1pt] (char) at (1.4,0.75) {$\sigma$}; 
\draw[very thick] (1.6,0.75)--(2,0.75);
\node at (0.3,-0.3) {$n$};
\begin{scope}[shift={(0.1,0)}]
\draw[very thick] (3.8+1,0.75)--(4.2+1,0.75);
\node[shape=circle,draw,inner sep=1pt] (char) at (4.4+1,0.75) {$\tau$}; 
\draw[very thick] (4.6+1,0.75)--(4.8+1,0.75);
\draw[very thick]  (5.55+1,0) arc (0:90:0.75);
\node at (5.55+1,-0.3) {$m$};
\node at (3.6+1,0.75) {$m$};
\end{scope}
\node at (2.2,0.75) {$n$};
\node at (3.4,0.4) {\Bigg($\varphi_{n,m}(M)$ \Bigg) };
\node at (7.5,0.4) {$=$};
\begin{scope}[shift={(-0.5,0)}]
\node at (-2.7+12,0.33) {$\mathlarger{\sum_{L \in \mathfrak{L}(M)} (-1)^{\countingone(L)}}$};
\end{scope}
\begin{scope}[shift={(-1.2,0)}]
\draw[-, very thick,ggreen] (11,0)--(14.9,0);
\draw[very thick]  (1+11,0.75) arc (90:180:0.75);
\draw[very thick] (1+11,0.75)--(1.15+11,0.75);
\begin{scope}[shift={(0.3,0)}]
\node[shape=rectangle,draw,inner sep=1pt] (char) at (1.65+11,0.75) {\tiny $ \tilde{\gamma}_{n,m}(\sigma, L, \tau) $}; 
\end{scope}
\begin{scope}[shift={(0.6,0)}]
\draw[very thick] (1.65+0.25+0.25+11,0.75)--(1.8+0.25+0.25+11,0.75);
\draw[very thick]  (2.55+0.25+0.25+11,0) arc (0:90:0.75);
\node at (2.55+0.25+0.25+11,-0.3) {$n\!+\!m$};
\end{scope}
\node at (0.3+11,-0.3) {$n\!+\!m$};
\end{scope}
\end{tikzpicture}.
\end{center}
\end{lemma}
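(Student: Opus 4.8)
The plan is to normally order the morphism on the left--hand side by successively removing the Lie brackets and Lie cobrackets occurring in $M^\top$, and to organize the resulting terms by means of the refinement $\mathfrak{L}(M)$ of $M$. By \eqref{eq:morphis-associated-to-a-tableaux}, the left--hand side is, up to tensoring with $\id_{[V_1]}$, a composite of $m-\cob(M)$ iterated coactions, the morphism $M^\top$ (which is built only from crossings, brackets and cobrackets), and $n-\bra(M)$ iterated actions, with $\sigma$ and $\tau$ attached to the relevant families of strings exactly as in the construction of $\tilde{\gamma}_{n,m}$. Since, by the Remark following Definition \ref{definition-mosaics}, every row of $M$ carries at most one bracket tile and every column at most one cobracket tile, the brackets and cobrackets of $M^\top$ occupy pairwise ``independent'' positions: the action rule \eqref{eq:actionrule} applied to a bracket, against the iterated actions below it in its column, and the coaction rule \eqref{eq:coactionrule} applied to a cobracket, against the iterated coactions above it in its row, may be carried out in any order without interfering.

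I would then remove these tiles one at a time, the bookkeeping being controlled by an induction on the number of still unresolved bracket and cobracket tiles (for which the statement should first be generalized to tilings mixing mosaic and loom tiles). In the terminal case, where $M$ has no bracket or cobracket tiles, $M^\top$ is a permutation, the left--hand side is already normally ordered, $\mathfrak{L}(M)$ reduces to the single loom $L_M=\chi(M)$ (notation of the proof of Proposition \ref{proposition-partition-drinfeld-yetter-tableaux}) with $\countingone(L_M)=0$, and tracing strings through $M^\top$ after attaching $\sigma$ and $\tau$ yields precisely $r_{n+m}^{\tilde{\gamma}_{n,m}(\sigma,L_M,\tau)}$, by the very definition of $\tilde{\gamma}_{n,m}$. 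For the recursive step, pick a bracket tile in an uppermost row carrying one and apply \eqref{eq:actionrule} to slide it past the actions below it: this produces a sum of two diagrams, each of which is of the same shape for a configuration in which that bracket tile has been replaced by one of the two loom tiles that $f_M$ assigns to a bracket. The weight $\ell$ built into $f_M$ records how many strings coming from already resolved brackets of that column are dragged along, while $k$ is unchanged; the $-$ sign in \eqref{eq:actionrule} is attached to the ``swapped'' one of the two tiles. Symmetrically, \eqref{eq:coactionrule} replaces a cobracket tile by one of the two loom tiles that $f_M$ assigns to it, again with a $-$ sign on a definite one of them.

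Iterating over all $\cob(M)+\bra(M)$ such tiles, each joint choice of resolutions produces one normally ordered diagram; the set of these choices is in bijection with $\mathfrak{L}(M)$ via $f_M$, and since $|\mathfrak{L}(M)|=2^{\cob(M)+\bra(M)}$ by \eqref{eq:cardinality-associated-refined-drinfeld-yetter-tableaux} no diagram is produced more than once. The accumulated sign of the diagram indexed by $L\in\mathfrak{L}(M)$ comes out to $(-1)^{\countingone(L)}$ --- consistently with Proposition \ref{remarksigns} --- and its underlying permutation is $\tilde{\gamma}_{n,m}(\sigma,L,\tau)$, by the same string--tracing argument as in the terminal case, applied now to $L$ rather than to $M$. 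Summing over $L\in\mathfrak{L}(M)$ yields the asserted identity.

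The main obstacle I anticipate is the diagrammatic bookkeeping in the recursive step: checking that each application of \eqref{eq:actionrule} or \eqref{eq:coactionrule} to a single bracket or cobracket returns two diagrams that are again of the form ``$\varphi$ of a configuration, with $\sigma$ and $\tau$ attached'', differing from the original only in that one tile, and in particular that the number of strings dragged along agrees with the weights $k,\ell$ encoded in $f_M$ and that the induced permutation changes exactly according to the local replacement $M_{i,j}\rightsquigarrow L_{i,j}$. This is where the structural constraints on mosaics (one bracket per row, one cobracket per column) and the precise definition of $f_M$ are used in an essential way, and where Proposition \ref{remarksigns} serves as a useful consistency check on the signs.
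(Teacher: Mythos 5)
Your proposal is correct and follows essentially the same route as the paper: the paper's proof likewise resolves all bracket and cobracket tiles of $\varphi_{n,m}(M)$ by iterated application of \eqref{eq:actionrule} and \eqref{eq:coactionrule}, identifies the $2^{\cob(M)+\bra(M)}$ resulting normally ordered terms with the elements of $\mathfrak{L}(M)$ carrying signs $(-1)^{\countingone(L)}$, and concludes by observing that $\tilde{\gamma}_{n,m}(\sigma,L,\tau)=(\tilde{\sigma}\ten\id)\circ\gamma_{n,m}(L)\circ(\tilde{\tau}\ten\id)$. Your inductive organization and the explicit discussion of the weights $k,\ell$ in $f_M$ only spell out bookkeeping that the paper leaves implicit.
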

Before proving the Lemma, let us give an
\begin{example}
Let $M$ be the following $2 \times 2$ Drinfeld--Yetter mosaic
\begin{center}
\begin{tikzpicture}[scale=0.7]
\gri{2}{2} \cro{1}{1} \tbra{2}{1} \tcobra{1}{2}
\end{tikzpicture}.
\end{center}
Then the morphism $\varphi_{2,2}(M)$ is pictorially represented by 
\begin{center}
\begin{tikzpicture}
\draw (1,0) arc (0:100:0.75);
\draw[-, very thick,ggreen] (0,0)--(1.5,0);
\draw (1.25,0.75) arc (90:180:0.75); 
\draw (0.63,0.4) arc (-300:-260:0.8); 
\draw(1.3,0.5) arc (90:120:0.8);
\end{tikzpicture}.
\end{center}
Gluing two permutations $\sigma, \tau \in \mathfrak{S}_2$ to $\varphi_{2,2}(M)$ and composing with $\pi_1^{*(2)}$ and $\pi_1^{(2)}$ we obtain 
\vspace{0.5cm}
\begin{center}
\begin{tikzpicture}
\draw (1,0) arc (0:90:0.75);
\draw[-, very thick,ggreen] (-1.5,0)--(3,0);
\draw (1.25,0.75) arc (90:180:0.75); 
\draw (0.63,0.4) arc (-300:-270:0.8); 
\draw(1.3,0.5) arc (90:120:0.8);
\draw(1.3,0.5)--(1.5,0.5);
\draw (1.25,0.75)--(1.5,0.75);
\draw(0.25,0.75)--(0,0.75);
\draw(0.25,0.507)--(0,0.507);
\node[shape=circle,draw,inner sep=1pt] (char) at (1.65,0.63) {$\tau$}; 
\node[shape=circle,draw,inner sep=1pt] (char) at (-0.15,0.63) {$\sigma$}; 
\draw(-0.3,0.75)--(-0.5,0.75);
\draw(-0.3,0.507)--(-0.5,0.507);
\draw (-0.5,0.75) arc (90:180:0.75);
\draw (-0.5,0.507) arc (90:180:0.507);
\draw(1.8,0.5)--(2,0.5);
\draw (1.8,0.75)--(2,0.75);
\draw (2.75,0) arc (0:90:0.75);
\draw (2.5,0) arc (0:90:0.5);
\end{tikzpicture}
\end{center}
and applying relations \eqref{eq:actionrule} and \eqref{eq:coactionrule}, we obtain 
\begin{center}
\begin{tikzpicture}
\draw (1,0) arc (0:90:0.75);
\draw[-, very thick,ggreen] (-1.5,0)--(3,0);
\draw (1.25,0.75) arc (90:180:0.75); 
\draw (0.63,0.4) arc (-300:-270:0.8); 
\draw(1.3,0.5) arc (90:120:0.8);
\draw(1.3,0.5)--(1.5,0.5);
\draw (1.25,0.75)--(1.5,0.75);
\draw(0.25,0.75)--(0,0.75);
\draw(0.25,0.507)--(0,0.507);
\node[shape=circle,draw,inner sep=1pt] (char) at (1.65,0.63) {$\tau$}; 
\node[shape=circle,draw,inner sep=1pt] (char) at (-0.15,0.63) {$\sigma$}; 
\draw(-0.3,0.75)--(-0.5,0.75);
\draw(-0.3,0.507)--(-0.5,0.507);
\draw (-0.5,0.75) arc (90:180:0.75);
\draw (-0.5,0.507) arc (90:180:0.507);
\draw(1.8,0.5)--(2,0.5);
\draw (1.8,0.75)--(2,0.75);
\draw (2.75,0) arc (0:90:0.75);
\draw (2.5,0) arc (0:90:0.5);
\node at (3.3,0.3) {$=$};
\node at (3.3,-1.7) {$+$};
\node at (8.3,0.3) {$-$};
\node at (8.3,-1.7) {$-$};
\begin{scope}[shift={(4,0)}]
\draw[-, very thick,ggreen] (0,0)--(4.1,0);
\draw (1.2,1) arc (90:180:1);
\draw (1.2,0.8) arc (90:180:0.8);
\draw (1.2,0.6) arc (90:180:0.6);
\draw (1.2,0.4) arc (90:180:0.4);
\draw (1.2,0.4)--(1.8,0.4)--(2.3,0.8);
\draw (1.7,0.8)--(1.8,0.8)--(2.3,0.6);
\draw (1.7,0.6)--(1.8,0.6)--(2.3,1);
\draw (1.7,1)--(1.8,1)--(2.3,0.4);
\draw [draw=black] (1.2,1.05) rectangle (1.7,0.55);
\node at (1.45,0.8) {$\tilde{\sigma}$};
\draw (2.3,0.8)--(2.4,0.8);
\draw (2.3,0.6)--(2.4,0.6);
\draw (2.3,1)--(2.4,1);
\draw (2.3,0.4)--(2.9,0.4);
\draw [draw=black] (2.4,1.05) rectangle (2.9,0.55);
\node at (2.65,0.8) {$\tilde{\tau}$};
\draw (3.9,0) arc (0:90:1);
\draw (3.7,0) arc (0:90:0.8);
\draw (3.5,0) arc (0:90:0.6);
\draw (3.3,0) arc (0:90:0.4);
\end{scope}

\begin{scope}[shift={(8.6,0)}]
\draw[-, very thick,ggreen] (0,0)--(4.1,0);
\draw (1.2,1) arc (90:180:1);
\draw (1.2,0.8) arc (90:180:0.8);
\draw (1.2,0.6) arc (90:180:0.6);
\draw (1.2,0.4) arc (90:180:0.4);
\draw (1.2,0.4)--(1.8,0.4)--(2.3,0.8);
\draw (1.7,0.8)--(1.8,0.8)--(2.3,0.4);
\draw (1.7,0.6)--(1.8,0.6)--(2.3,1);
\draw (1.7,1)--(1.8,1)--(2.3,0.6);
\draw [draw=black] (1.2,1.05) rectangle (1.7,0.55);
\node at (1.45,0.8) {$\tilde{\sigma}$};
\draw (2.3,0.8)--(2.4,0.8);
\draw (2.3,0.6)--(2.4,0.6);
\draw (2.3,1)--(2.4,1);
\draw (2.3,0.4)--(2.9,0.4);
\draw [draw=black] (2.4,1.05) rectangle (2.9,0.55);
\node at (2.65,0.8) {$\tilde{\tau}$};
\draw (3.9,0) arc (0:90:1);
\draw (3.7,0) arc (0:90:0.8);
\draw (3.5,0) arc (0:90:0.6);
\draw (3.3,0) arc (0:90:0.4);
\end{scope}

\begin{scope}[shift={(4,-2)}]
\draw[-, very thick,ggreen] (0,0)--(4.1,0);
\draw (1.2,1) arc (90:180:1);
\draw (1.2,0.8) arc (90:180:0.8);
\draw (1.2,0.6) arc (90:180:0.6);
\draw (1.2,0.4) arc (90:180:0.4);
\draw (1.2,0.4)--(1.8,0.4)--(2.3,1);
\draw (1.7,0.8)--(1.8,0.8)--(2.3,0.4);
\draw (1.7,0.6)--(1.8,0.6)--(2.3,0.8);
\draw (1.7,1)--(1.8,1)--(2.3,0.6);
\draw [draw=black] (1.2,1.05) rectangle (1.7,0.55);
\node at (1.45,0.8) {$\tilde{\sigma}$};
\draw (2.3,0.8)--(2.4,0.8);
\draw (2.3,0.6)--(2.4,0.6);
\draw (2.3,1)--(2.4,1);
\draw (2.3,0.4)--(2.9,0.4);
\draw [draw=black] (2.4,1.05) rectangle (2.9,0.55);
\node at (2.65,0.8) {$\tilde{\tau}$};
\draw (3.9,0) arc (0:90:1);
\draw (3.7,0) arc (0:90:0.8);
\draw (3.5,0) arc (0:90:0.6);
\draw (3.3,0) arc (0:90:0.4);
\end{scope}

\begin{scope}[shift={(8.6,-2)}]
\draw[-, very thick,ggreen] (0,0)--(4.1,0);
\draw (1.2,1) arc (90:180:1);
\draw (1.2,0.8) arc (90:180:0.8);
\draw (1.2,0.6) arc (90:180:0.6);
\draw (1.2,0.4) arc (90:180:0.4);
\draw (1.2,0.4)--(1.8,0.4)--(2.3,1);
\draw (1.7,0.8)--(1.8,0.8)--(2.3,0.6);
\draw (1.7,0.6)--(1.8,0.6)--(2.3,0.8);
\draw (1.7,1)--(1.8,1)--(2.3,0.4);
\draw [draw=black] (1.2,1.05) rectangle (1.7,0.55);
\node at (1.45,0.8) {$\tilde{\sigma}$};
\draw (2.3,0.8)--(2.4,0.8);
\draw (2.3,0.6)--(2.4,0.6);
\draw (2.3,1)--(2.4,1);
\draw (2.3,0.4)--(2.9,0.4);
\draw [draw=black] (2.4,1.05) rectangle (2.9,0.55);
\node at (2.65,0.8) {$\tilde{\tau}$};
\draw (3.9,0) arc (0:90:1);
\draw (3.7,0) arc (0:90:0.8);
\draw (3.5,0) arc (0:90:0.6);
\draw (3.3,0) arc (0:90:0.4);
\end{scope}
\end{tikzpicture}
\end{center}
where $\tilde{\sigma}$ and $\tilde{\tau}$ are the permutations of $\mathfrak{S}_3$ moving the first two strings as they were one. On the other hand, we have that 
\begin{center}
\begin{tikzpicture}[scale=0.5]
\gri{2}{2} \ver{1}{1} \ver{1}{1.25} \hor{1}{1} \hor{1.25}{1}
\draw[red, very thick] (2-1,-1+0.25) arc (-90:0:0.5); \draw[red, very thick] (2-1, -1+0.5)--(2, -1+0.5); \draw[red, very thick] (2-0.5,-1+1)--(2-0.5,-1+0.75);
\draw[red, very thick] (1-1,-2+0.5) arc (-90:0:0.5); \draw[red, very thick] (1-0.25,-2+1)--(1-0.25, -2);
\begin{scope}[shift={(2.5,0)}]
\gri{2}{2} \ver{1}{1} \ver{1}{1.25}
\draw[red, very thick] (2-1,-1+0.5) arc (-90:0:0.5);  \draw[red, very thick] (0,-1+0.25)--(2-1, -1+0.25)--(2, -1+0.25);
\draw[red, very thick] (1-1,-2+0.5) arc (-90:0:0.5); \draw[red, very thick] (1-0.25,-2+1)--(1-0.25, -2);
\draw[red,very thick] (0,-1+0.5)--(2-1, -1+0.5);
\node at (2.2,-2) {,};
\end{scope}
\begin{scope}[shift={(5,0)}]
\gri{2}{2} \ver{1}{1.15} \ver{1}{0.85}
\draw[red,very thick] (0,-1+0.5)--(2-1, -1+0.5);
\draw[red, very thick] (1-1+0.15,-2+0.5) arc (-90:0:0.5); \draw[red,very thick] (1-0.65,-2+1)--(1-0.65, -2); \draw[red,very thick] (1-1+0.25,-2+0.5)--(1-1,-2+0.5);
\draw[red, very thick] (2-1,-1+0.5) arc (-90:0:0.5);  \draw[red, very thick] (0,-1+0.25)--(2-1, -1+0.25)--(2, -1+0.25);
\node at (2.2,-2) {,};
\end{scope}
\begin{scope}[shift={(7.5,0)}]
\gri{2}{2} \ver{1}{1.15} \ver{1}{0.85} \hor{1}{1} \hor{1.25}{1}
\draw[red, very thick] (2-1,-1+0.25) arc (-90:0:0.5); \draw[red, very thick] (2-1, -1+0.5)--(2, -1+0.5); \draw[red, very thick] (2-0.5,-1+1)--(2-0.5,-1+0.75);
\draw[red, very thick] (1-1+0.15,-2+0.5) arc (-90:0:0.5); \draw[red,very thick] (1-0.65,-2+1)--(1-0.65, -2); \draw[red,very thick] (1-1+0.25,-2+0.5)--(1-1,-2+0.5);
\end{scope}
\node at (2.2,-2) {,};
\node at (-0.5,-1) {\Bigg\{};
\node at (10,-1) {\Bigg\}};
\node at (-5,-1) {$\mathfrak{L}(M) = \{L_1,L_2,L_3,L_4 \} =$};
\end{tikzpicture}
\end{center}
and it is easy to see that \\ \\
\begin{minipage}{0,6\textwidth}
\begin{equation*}
\begin{split}
r_{4}^{\tilde{\gamma}_{2,2}(\sigma, L_1, \tau)} &= \begin{tikzpicture}
\draw[-, very thick,ggreen] (0,0)--(4.1,0);
\draw (1.2,1) arc (90:180:1);
\draw (1.2,0.8) arc (90:180:0.8);
\draw (1.2,0.6) arc (90:180:0.6);
\draw (1.2,0.4) arc (90:180:0.4);
\draw (1.2,0.4)--(1.8,0.4)--(2.3,0.8);
\draw (1.7,0.8)--(1.8,0.8)--(2.3,0.6);
\draw (1.7,0.6)--(1.8,0.6)--(2.3,1);
\draw (1.7,1)--(1.8,1)--(2.3,0.4);
\draw [draw=black] (1.2,1.05) rectangle (1.7,0.55);
\node at (1.45,0.8) {$\tilde{\sigma}$};
\draw (2.3,0.8)--(2.4,0.8);
\draw (2.3,0.6)--(2.4,0.6);
\draw (2.3,1)--(2.4,1);
\draw (2.3,0.4)--(2.9,0.4);
\draw [draw=black] (2.4,1.05) rectangle (2.9,0.55);
\node at (2.65,0.8) {$\tilde{\tau}$};
\draw (3.9,0) arc (0:90:1);
\draw (3.7,0) arc (0:90:0.8);
\draw (3.5,0) arc (0:90:0.6);
\draw (3.3,0) arc (0:90:0.4);
\end{tikzpicture} \\
r_{4}^{\tilde{\gamma}_{2,2}(\sigma, L_2, \tau)} &= \begin{tikzpicture}
\draw[-, very thick,ggreen] (0,0)--(4.1,0);
\draw (1.2,1) arc (90:180:1);
\draw (1.2,0.8) arc (90:180:0.8);
\draw (1.2,0.6) arc (90:180:0.6);
\draw (1.2,0.4) arc (90:180:0.4);
\draw (1.2,0.4)--(1.8,0.4)--(2.3,0.8);
\draw (1.7,0.8)--(1.8,0.8)--(2.3,0.4);
\draw (1.7,0.6)--(1.8,0.6)--(2.3,1);
\draw (1.7,1)--(1.8,1)--(2.3,0.6);
\draw [draw=black] (1.2,1.05) rectangle (1.7,0.55);
\node at (1.45,0.8) {$\tilde{\sigma}$};
\draw (2.3,0.8)--(2.4,0.8);
\draw (2.3,0.6)--(2.4,0.6);
\draw (2.3,1)--(2.4,1);
\draw (2.3,0.4)--(2.9,0.4);
\draw [draw=black] (2.4,1.05) rectangle (2.9,0.55);
\node at (2.65,0.8) {$\tilde{\tau}$};
\draw (3.9,0) arc (0:90:1);
\draw (3.7,0) arc (0:90:0.8);
\draw (3.5,0) arc (0:90:0.6);
\draw (3.3,0) arc (0:90:0.4);
\end{tikzpicture}
\end{split}
\end{equation*}
\end{minipage}
\begin{minipage}{0,5\textwidth}
\begin{equation*}
\begin{split}
r_{4}^{\tilde{\gamma}_{2,2}(\sigma, L_3, \tau)} &= \begin{tikzpicture}
\draw[-, very thick,ggreen] (0,0)--(4.1,0);
\draw (1.2,1) arc (90:180:1);
\draw (1.2,0.8) arc (90:180:0.8);
\draw (1.2,0.6) arc (90:180:0.6);
\draw (1.2,0.4) arc (90:180:0.4);
\draw (1.2,0.4)--(1.8,0.4)--(2.3,1);
\draw (1.7,0.8)--(1.8,0.8)--(2.3,0.4);
\draw (1.7,0.6)--(1.8,0.6)--(2.3,0.8);
\draw (1.7,1)--(1.8,1)--(2.3,0.6);
\draw [draw=black] (1.2,1.05) rectangle (1.7,0.55);
\node at (1.45,0.8) {$\tilde{\sigma}$};
\draw (2.3,0.8)--(2.4,0.8);
\draw (2.3,0.6)--(2.4,0.6);
\draw (2.3,1)--(2.4,1);
\draw (2.3,0.4)--(2.9,0.4);
\draw [draw=black] (2.4,1.05) rectangle (2.9,0.55);
\node at (2.65,0.8) {$\tilde{\tau}$};
\draw (3.9,0) arc (0:90:1);
\draw (3.7,0) arc (0:90:0.8);
\draw (3.5,0) arc (0:90:0.6);
\draw (3.3,0) arc (0:90:0.4);
\end{tikzpicture} \\
r_{4}^{\tilde{\gamma}_{2,2}(\sigma, L_4, \tau)}  &= \begin{tikzpicture}
\draw[-, very thick,ggreen] (0,0)--(4.1,0);
\draw (1.2,1) arc (90:180:1);
\draw (1.2,0.8) arc (90:180:0.8);
\draw (1.2,0.6) arc (90:180:0.6);
\draw (1.2,0.4) arc (90:180:0.4);
\draw (1.2,0.4)--(1.8,0.4)--(2.3,1);
\draw (1.7,0.8)--(1.8,0.8)--(2.3,0.6);
\draw (1.7,0.6)--(1.8,0.6)--(2.3,0.8);
\draw (1.7,1)--(1.8,1)--(2.3,0.4);
\draw [draw=black] (1.2,1.05) rectangle (1.7,0.55);
\node at (1.45,0.8) {$\tilde{\sigma}$};
\draw (2.3,0.8)--(2.4,0.8);
\draw (2.3,0.6)--(2.4,0.6);
\draw (2.3,1)--(2.4,1);
\draw (2.3,0.4)--(2.9,0.4);
\draw [draw=black] (2.4,1.05) rectangle (2.9,0.55);
\node at (2.65,0.8) {$\tilde{\tau}$};
\draw (3.9,0) arc (0:90:1);
\draw (3.7,0) arc (0:90:0.8);
\draw (3.5,0) arc (0:90:0.6);
\draw (3.3,0) arc (0:90:0.4);
\end{tikzpicture} 
\end{split}
\end{equation*}
\end{minipage}
\vspace{0.5cm}
where the permutation in the middle of $r_{4}^{\tilde{\gamma}_{2,2}(\sigma, L_i, \tau)}$ is exactly $\gamma_{2,2}(L_i)$. We therefore get 
\[  \pi_1^{*(2)} \circ ((\sigma \ten \id_{[V_1]}) \circ \varphi_{2,2}(M) \circ (\tau \ten \id_{[V_1]})) \circ \pi_1^{(2)} = r_4^{\tilde{\gamma}_{2,2}(\sigma, L_1, \tau)} - r_4^{\tilde{\gamma}_{2,2}(\sigma, L_2, \tau)} + r_4^{\tilde{\gamma}_{2,2}(\sigma, L_3, \tau)} - r_4^{\tilde{\gamma}_{2,2}(\sigma, L_4, \tau)} \]
as is claimed in the statement.
\end{example}
\begin{proof}
Let $M \in \mosaic{n}{m}$ and let $\varphi_{n,m}(M)$ be the associated morphism in $\DY^1$. Applying relations \eqref{eq:actionrule}, \eqref{eq:coactionrule} we get  \\ \\
\begin{tikzpicture}
\draw[-, very thick,ggreen] (0,0)--(2,0);
\draw[-, very thick,ggreen] (3.8+1,0)--(5.9+1,0);
\draw[very thick] (1,0.75) arc (90:180:0.75);
\draw[very thick] (1,0.75)--(1.2,0.75);
\node[shape=circle,draw,inner sep=1pt] (char) at (1.4,0.75) {$\sigma$}; 
\draw[very thick] (1.6,0.75)--(2,0.75);
\node at (0.3,-0.3) {$n$};
\begin{scope}[shift={(0.1,0)}]
\draw[very thick] (3.8+1,0.75)--(4.2+1,0.75);
\node[shape=circle,draw,inner sep=1pt] (char) at (4.4+1,0.75) {$\tau$}; 
\draw[very thick] (4.6+1,0.75)--(4.8+1,0.75);
\draw[very thick]  (5.55+1,0) arc (0:90:0.75);
\node at (5.55+1,-0.3) {$m$};
\node at (3.6+1,0.75) {$m$};
\end{scope}
\node at (2.2,0.75) {$n$};
\node at (3.4,0.4) {\Bigg( $\varphi_{n,m}(M)$ \Bigg) };
\node at (7.2,0.4) {$=$};
\begin{scope}[shift={(-0.5,0)}]
\node at (-2.9+12,0.33) {$\mathlarger{\sum_{L \in \mathfrak{L}(M)} (-1)^{\countingone(L)}}$};
\begin{scope}[shift={(-0.5,0)}]
\draw[-, very thick,ggreen] (10.5,0)--(15.1,0);
\draw[very thick] (11.5,0.85) arc (90:180:0.85);
\draw[very thick] (11.8,0.45) arc (90:180:0.45);
\draw[very thick] (11.8,0.45)--(12.3,0.45);
\draw [draw=black] (11.5,1) rectangle (12,0.6);
\node at (11.75,0.8) {$\tilde{\sigma}$};
\draw[very thick] (12,0.85)--(12.3,0.85);
\draw [draw=black] (12.3,1) rectangle (13.3,0.3);
\node at (12.8,0.65) {\tiny $\gamma_{n,m}(L)$};
\draw[very thick] (13.3,0.45)--(13.8,0.45);
\draw[very thick] (13.3,0.85)--(13.6,0.85);
\draw [draw=black] (13.6,1) rectangle (14.1,0.6);
\node at (13.85,0.8) {$\tilde{\tau}$};
\draw[very thick]  (14.95,0) arc (0:90:0.85);
\draw[very thick]  (14.25,0) arc (0:90:0.45);
\node at (10.7,-0.3) {$\Lambda$};
\node at (15,-0.3) {$\Omega$};
\node at (11.4,-0.3) {$\Gamma$};
\node at (14.3,-0.3) {$\Theta$};
\end{scope}
\end{scope}
\end{tikzpicture}
\\ \\
where 
\[ \Lambda = \sum_{i=1}^n l_{i1} , \qquad \Omega = \sum_{j=1}^{m} t_{1j}, \qquad \Gamma = n+m-\Lambda, \qquad \Theta = n+m-\Omega .\]
To end the proof it suffices to note that $\tilde{\gamma}_{n,m}(\sigma,L,\tau) =(\tilde{\sigma} \ten \id_{[\Gamma]}) \circ \gamma_{n,m}(L) \circ  (\tilde{\tau} \ten \id_{[\Theta]})$.
\end{proof}
We now give an explicit formula for the multiplication of $\mathfrak{U}_{\DY}^1$.
\begin{theorem}
\label{maintheorem}
We have 
\begin{equation}
\label{eq:formulamultiplication}
r_{n}^{\sigma} \circ r_{m}^{\tau} =
\sum_{L \in \loom{n}{m}} (-1)^{\countingtwo(L)} r_{n+m}^{\tilde{\gamma}_{n,m}(\sigma, L, \tau)}
\end{equation}
\end{theorem}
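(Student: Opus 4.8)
The plan is to obtain the formula by splicing together three results already proved above: Proposition \ref{claim} (the normal ordering of $\pi_1^{(n)}$ followed by $\pi_1^{*(m)}$ as a signed sum over Drinfeld--Yetter mosaics), Lemma \ref{lemmamultiplication} (the normal ordering of a single mosaic morphism, flanked by $\sigma$, $\tau$ and the outer coactions and actions, as a signed sum over the associated looms $\mathfrak{L}(M)$), and Proposition \ref{proposition-partition-drinfeld-yetter-tableaux} (the family $\{\mathfrak{L}(M)\}_{M\in\mosaic{n}{m}}$ partitions $\loom{n}{m}$), with the two sign conventions reconciled by Proposition \ref{remarksigns}.

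First I would unwind the left-hand side. By the left-to-right composition convention and the definition $r_n^\sigma=\pi_1^{*(n)}\circ(\sigma\ten\id_{[V_1]})\circ\pi_1^{(n)}$, the endomorphism $r_n^\sigma\circ r_m^\tau$ is the concatenation of the diagram \eqref{eq:basis-element} for $\sigma$ with that for $\tau$, i.e.
\[ r_n^\sigma\circ r_m^\tau=\pi_1^{*(n)}\circ(\sigma\ten\id_{[V_1]})\circ\bigl(\pi_1^{(n)}\circ\pi_1^{*(m)}\bigr)\circ(\tau\ten\id_{[V_1]})\circ\pi_1^{(m)}. \]
The point is that the middle factor $\pi_1^{(n)}\circ\pi_1^{*(m)}$ is exactly the morphism on the left-hand side of Proposition \ref{claim}, so I may substitute $\pi_1^{(n)}\circ\pi_1^{*(m)}=\sum_{M\in\mosaic{n}{m}}(-1)^{\cob(M)}\varphi_{n,m}(M)$ and, by bilinearity of composition, pull the finite sum outside. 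This reduces matters, term by term in $M$, to the morphism $\pi_1^{*(n)}\circ(\sigma\ten\id_{[V_1]})\circ\varphi_{n,m}(M)\circ(\tau\ten\id_{[V_1]})\circ\pi_1^{(m)}$.

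That last morphism is precisely the one evaluated in Lemma \ref{lemmamultiplication}, which rewrites it as $\sum_{L\in\mathfrak{L}(M)}(-1)^{\countingone(L)}\,r_{n+m}^{\tilde{\gamma}_{n,m}(\sigma,L,\tau)}$. Substituting, the product $r_n^\sigma\circ r_m^\tau$ becomes the double sum $\sum_{M\in\mosaic{n}{m}}\sum_{L\in\mathfrak{L}(M)}(-1)^{\cob(M)}(-1)^{\countingone(L)}\,r_{n+m}^{\tilde{\gamma}_{n,m}(\sigma,L,\tau)}$. By Proposition \ref{remarksigns} the sign $(-1)^{\cob(M)}(-1)^{\countingone(L)}$ equals $(-1)^{\countingtwo(L)}$ for every $L\in\mathfrak{L}(M)$, and by Proposition \ref{proposition-partition-drinfeld-yetter-tableaux} summing over all pairs $(M,L)$ with $L\in\mathfrak{L}(M)$ is the same as summing over all $L\in\loom{n}{m}$. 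This produces exactly \eqref{eq:formulamultiplication}. I would end by disposing of the degenerate cases $n=0$ or $m=0$ via the conventions $\loom{0}{m}=\mosaic{0}{m}$, $\loom{n}{0}=\mosaic{n}{0}$ together with the fact that $r_0^{\id}=\id_{[V_1]}$.

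All the substantive diagrammatic and combinatorial content having been placed in the earlier lemmas and propositions, the proof itself is a short chain of substitutions; the only place that needs care is matching conventions — checking that the middle block of the expanded product is literally the left-hand side of Proposition \ref{claim} (same orientation and the same order of $n$ versus $m$), and that the $\sigma,\tau$-flanked mosaic morphism is literally the left-hand side of Lemma \ref{lemmamultiplication}. I expect this bookkeeping, rather than any genuine mathematical obstacle, to be the main thing to get right.
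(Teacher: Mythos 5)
Your proposal is correct and follows exactly the same route as the paper: the paper's proof is precisely the chain Proposition \ref{claim} $\rightarrow$ Lemma \ref{lemmamultiplication} $\rightarrow$ Propositions \ref{proposition-partition-drinfeld-yetter-tableaux} and \ref{remarksigns}, carried out pictorially. The convention-matching you flag (left-to-right composition, so the middle block of the product is literally $\pi_1^{(n)}\circ\pi_1^{*(m)}$) is indeed the only bookkeeping point, and it works out as you describe.
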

\begin{proof}
The proof is pictorial. We have
\begin{center}
\begin{tikzpicture} \draw[-, very thick,ggreen] (0,0)--(5.8,0);
\draw[very thick]  (1,0.75) arc (90:180:0.75);
\draw[very thick] (1,0.75)--(1.2,0.75);
\node[shape=circle,draw,inner sep=1pt] (char) at (1.4,0.75) {$\sigma$}; 
\draw[very thick] (1.6,0.75)--(1.8,0.75);
\draw[very thick]  (2.55,0) arc (0:90:0.75);
\node at (2.55,-0.3) {$n$};
\node at (0.3,-0.3) {$n$};
\draw[very thick]  (4,0.75) arc (90:180:0.75);
\draw[very thick] (4,0.75)--(4.2,0.75);
\node[shape=circle,draw,inner sep=1pt] (char) at (4.4,0.75) {$\tau$}; 
\draw[very thick] (4.6,0.75)--(4.8,0.75);
\draw[very thick]  (5.55,0) arc (0:90:0.75);
\node at (5.55,-0.3) {$m$};
\node at (3.3,-0.3) {$m$};
\node at (6.25,0.33) {$=$};
\begin{scope}[shift={(-6,-2)}]
\begin{scope}[shift={(-0.2,0)}]
\node at (6.45,0.33) {$=$};
\node at (-1.5+8+1.5,0.2) {$\mathlarger{\sum_{M \in \mosaic{n}{m}} (-1)^{\cob(M)} }$};
\begin{scope}[shift={(-0.2,0)}]
\draw[-, very thick,ggreen] (0+8+1.5,0)--(2+8+1.5,0);
\draw[-, very thick,ggreen] (3.8+1+8+1.5,0)--(5.9+1+8+1.5,0);
\draw[very thick]  (1+8+1.5,0.75) arc (90:180:0.75);
\draw[very thick] (1+8+1.5,0.75)--(1.2+8+1.5,0.75);
\node[shape=circle,draw,inner sep=1pt] (char) at (1.4+8+1.5,0.75) {$\sigma$}; 
\draw[very thick] (1.6+8+1.5,0.75)--(2+8+1.5,0.75);
\node at (0.3+8+1.5,-0.3) {$n$};
\begin{scope}[shift={(0.1,0)}]
\draw[very thick] (3.8+1+8+1.5,0.75)--(4.2+1+8+1.5,0.75);
\node[shape=circle,draw,inner sep=1pt] (char) at (4.4+1+8+1.5,0.75) {$\tau$}; 
\draw[very thick] (4.6+1+8+1.5,0.75)--(4.8+1+8+1.5,0.75);
\draw[very thick]  (5.55+1+8+1.5,0) arc (0:90:0.75);
\node at (5.55+1+8+1.5,-0.3) {$m$};
\node at (3.6+1+8+1.5,0.75) {$m$};
\end{scope}
\node at (2.2+8+1.5,0.75) {$n$};
\node at (3.4+8+1.5,0.4) {\Bigg( $\varphi_{n,m}(M)$ \Bigg)};
\end{scope}
\end{scope}
\node at (6.25,-1.4) {$=$};
\node at (9.3,-1.6) {$\mathlarger{\sum_{M \in \mosaic{n}{m}}(-1)^{\cob(M)}  \sum_{L \in \mathfrak{L}(M)} (-1)^{\countingone(L)}}$};
\begin{scope}[shift={(-0.4,0)}]
\draw[-, very thick,ggreen] (0+12+0.5+0.1,-1.5-0.2)--(2.8+0.25+0.25+12+1.1,0-1.5-0.2);
\draw[very thick]  (1+12+0.5,0.75-1.5-0.2) arc (90:180:0.75);
\draw[very thick] (1+12+0.5,0.75-1.5-0.2)--(1.2+12+0.5,0.75-1.5-0.2);
\begin{scope}[shift={(0.3,0)}]
\node[shape=rectangle,draw,inner sep=1pt] (char) at (1.65+12+0.5+0.05,0.75-1.5-0.2) {\tiny $\tilde{\gamma}_{n,m}(\sigma,L,\tau)$}; 
\end{scope}
\begin{scope}[shift={(0.6,0)}]
\draw[very thick] (1.6+0.25+0.25+12+0.5+0.1,0.75-1.5-0.2)--(1.8+0.25+0.25+12+0.5+0.1,0.75-1.5-0.2);
\draw[very thick]  (2.55+0.25+0.25+12+0.5+0.1,0-1.5-0.2) arc (0:90:0.75);
\node at (2.55+0.25+0.25+12+0.5+0.1,-0.3-1.5-0.2) {$n\!+\!m$};
\end{scope}
\node at (0.3+12+0.5+0.1,-0.3-1.5-0.2) {$n\!+\!m$};
\end{scope}
\node at (6.25,-3) {$=$};
\node at (7.9,-3.2) {$\mathlarger{\sum_{L \in \loom{n}{m}}(-1)^{\countingtwo(L) }}$};
\draw[-, very thick,ggreen] (0+12+0.5-3.1+0.1,-1.5-0.2-1.7)--(2.8+0.25+0.25+12+1.1-3.1,0-1.5-0.2-1.7);
\draw[very thick]  (1+12+0.5-3.1,0.75-1.5-0.2-1.7) arc (90:180:0.75);
\draw[very thick] (1+12+0.5-3.1,0.75-1.5-0.2-1.7)--(1.2+12+0.5-3.1,0.75-1.5-0.2-1.7);
\begin{scope}[shift={(0.3,0)}]
\node[shape=rectangle,draw,inner sep=1pt] (char) at (1.65+12+0.5-3.1+0.05,0.75-1.5-0.2-1.7) {\tiny $\tilde{\gamma}_{n,m}(\sigma,L,\tau)$}; 
\end{scope}
\begin{scope}[shift={(0.6,0)}]
\draw[very thick] (1.6+0.25+0.25+12+0.5-3.1+0.1,0.75-1.5-0.2-1.7)--(1.8+0.25+0.25+12+0.5-3.1+0.1,0.75-1.5-0.2-1.7);
\draw[very thick]  (2.55+0.25+0.25+12+0.5-3.1+0.1,0-1.5-0.2-1.7) arc (0:90:0.75);
\node at (2.55+0.25+0.25+12+0.5-3.1+0.1,-0.3-1.5-0.2-1.7) {$n\!+\!m$};
\end{scope}
\node at (0.3+12+0.5-3.1+0.1,-0.3-1.5-0.2-1.7) {$n \!+\! m$};
\end{scope}
\end{tikzpicture}
\end{center}
where the first equality follows by Proposition \ref{claim}, the second equality follows by Lemma \ref{lemmamultiplication} and the third equality follows by Proposition \ref{proposition-partition-drinfeld-yetter-tableaux} and Proposition \ref{remarksigns}.
\end{proof}
Theorem \ref{maintheorem} gives an explicit formula for the product of $\mathfrak{U}_\DY^1$ with respect to the standard basis \eqref{eq:canonical-basis}. However, this does not provide a formula in terms of symmetric groups, i.e. a formula of the following form 
\[ r_{n}^\sigma \circ r_m^\tau  = \sum_{\pi \in \mathfrak{S}_{n+m}} c_{\sigma,\tau}^\pi r_{n+m}^\pi.\]
In the next Section we propose an approach to find such a formula consisting in determine a subset of $\loom{n}{m}$ depending on $\sigma$ and $\tau$ (namely a set of \emph{($\sigma$--$\tau$)--essential Drinfeld--Yetter looms}) satisfying the property of being a minimal set necessary to describe the product $r_{n}^\sigma \circ r_m^\tau$ through the formula \eqref{eq:formulamultiplication}.
\subsection{Essential Drinfeld--Yetter looms}
It follows by Theorem \ref{maintheorem} that the number of summands appearing in the multiplication $r_n^\sigma \circ r_m^\tau$ does not depend on $\sigma$ and $\tau$, but only on $n$ and $m$, and it is equal to $|\loom{n}{m}|$. However, as conjectured in \ref{conjecture-cardinality-looms}, we have that $|\loom{n}{m}|<< (n+m)!$. This means that there are several Drinfeld--Yetter looms that do not give any contribution to the sum \eqref{eq:formulamultiplication}; we shall call such Drinfeld--Yetter looms \index{negligible Drinfeld--Yetter loom}\emph{negligible}. In order to formalize this definition, let $\sigma \in \mathfrak{S}_n$, $\tau \in \mathfrak{S}_m$ and $\pi \in \mathfrak{S}_{n+m}$. To such permutations we associate the set 
\[ \Gamma_{n,m}^{\sigma,\tau,\pi} \coloneqq \{ L \in \loom{n}{m} \ | \ \tilde{\gamma}_{n,m}(\sigma,L,\tau) = \pi\}.\]
Consider also the following non--negative integers
\begin{equation*}
\begin{split}
P_{n,m}^{\sigma,\tau,\pi} & \coloneqq \# \{ L \in \loom{n}{m} \ | \ \tilde{\gamma}_{n,m}(\sigma,L,\tau)= \pi \ , (-1)^{\countingtwo(L)}=1\} \\
N_{n,m}^{\sigma,\tau,\pi} & \coloneqq \# \{ L \in \loom{n}{m} \ | \ \tilde{\gamma}_{n,m}(\sigma,L,\tau)= \pi \ , (-1)^{\countingtwo(L)}=-1\} 
\end{split}
\end{equation*} 
which we call the number of positive (resp. negative) Drinfeld--Yetter looms $(\sigma,\tau)$--associated to the permutation $\pi$.
It is clear that $\{ \Gamma_{n,m}^{\sigma,\tau,\pi}\}_{\pi \in \mathfrak{S}_{n+m}}$ defines a partition of $\mathfrak{S}_{n+m}$ (with eventually some empty blocks). It is also clear that 
\[ |\Gamma_{n,m}^{\sigma,\tau\pi} | = P_{n,m}^{\sigma,\tau\pi}   + N_{n,m}^{\sigma,\tau\pi} . \]

\begin{definition}
We say that $L_1,L_2 \in \loom{n}{m}$ is a pair of $(\sigma,\tau)$--negligible Drinfeld--Yetter looms (and we write $\{ L_1,L_2\} \in \Neg_{n,m}^{\sigma,\tau}$) if 
\[\tilde{\gamma}_{n,m}(\sigma,L_1,\tau)=\tilde{\gamma}_{n,m}(\sigma,L_2,\tau) \qquad \text{and} \qquad (-1)^{\countingtwo(L_1)}=-(-1)^{\countingtwo(L_2)}.\]
\end{definition}
\begin{example}
\label{example-negligible-looms}
The set of $1 \times 1$ Drinfeld--Yetter looms is
\begin{center}
\begin{tikzpicture}[scale=0.6]
\node at (0.15,0) {$\loom{1}{1}= \Big\{$};
\draw[black,very thick] (1.5,0.5) rectangle (2.5,-0.5);
\cro{0.5}{2.5}
\node at (2.75,-0.5) {,};
\draw[black,very thick] (3,0.5) rectangle (4,-0.5);
\draw[red, very thick] (4-1,-0.5+0.5) arc (-90:0:0.5); \draw[red, very thick] (4-0.35,-0.5+1)--(4-0.35, -0.5);
\node at (4.25,-0.5) {,};
\draw[black,very thick] (4.5,0.5) rectangle (5.5,-0.5);
\draw[red, very thick] (5.5-1+0.25,-0.5+0.5) arc (-90:0:0.5);  \draw[red,very thick] (5.5-0.75,-0.5+1)--(5.5-0.75, -0.5); \draw[red,very thick] (5.5-1+0.25,-0.5+0.5)--(5.5-1,-0.5+0.5);
\node at (5.75,-0.5) {,};
\draw[black,very thick] (6,0.5) rectangle (7,-0.5);
\draw[red, very thick] (7-1,-0.5+0.5) arc (-90:0:0.5); \draw[red, very thick] (7-1, -0.5+0.35)--(7, -0.5+0.35);
\node at (7.25,-0.5) {,};
\draw[black,very thick] (7.5,0.5) rectangle (8.5,-0.5);
\draw[red, very thick] (8.5-1,-0.5+0.25) arc (-90:0:0.5); \draw[red, very thick] (8.5-1, -0.5+0.75)--(8.5, -0.5+0.75); \draw[red, very thick] (8.5-0.5,-0.5+1)--(8.5-0.5,-0.5+0.75);
\node at (9,-0) {\Big\}.};
\end{tikzpicture}
\end{center}
We therefore have the following two (non--disjoint) sets of negligible $1 \times 1$ Drinfeld--Yetter looms:
\begin{center}
\begin{tikzpicture}[scale=0.6]
\node at (1,0) {$ \Big\{$};
\draw[black,very thick] (1.5,0.5) rectangle (2.5,-0.5);
\begin{scope}[shift={(2.5,1.5)}]
\cro{2}{0}
\end{scope}
\node at (2.75,-0.5) {,};
\draw[black,very thick] (3,0.5) rectangle (4,-0.5);
\begin{scope}[shift= {(3,0.5)}]
\draw[red, very thick] (1-1,-1+0.25) arc (-90:0:0.5); \draw[red, very thick] (1-1, -1+0.75)--(1, -1+0.75);  \draw[red, very thick] (1-0.5,-1+1)--(1-0.5,-1+0.75);
 \end{scope}
\node at (4.5,-0) {\Big\}};
\node at (5.75,0) {and};
\begin{scope}[shift= {(6,0)}]
\node at (1,0) {$ \Big\{$};
\draw[black,very thick] (1.5,0.5) rectangle (2.5,-0.5);
\begin{scope}[shift={(2.5,1.5)}]
\cro{2}{0}
\end{scope}
\node at (2.75,-0.5) {,};
\draw[black,very thick] (3,0.5) rectangle (4,-0.5);
\begin{scope}[shift={(-1.5,0)}]
\draw[red, very thick] (5.5-1+0.25,-0.5+0.5) arc (-90:0:0.5);  \draw[red,very thick] (5.5-0.75,-0.5+1)--(5.5-0.75, -0.5); \draw[red,very thick] (5.5-1+0.25,-0.5+0.5)--(5.5-1,-0.5+0.5);
\end{scope}
\node at (4.5,-0) {\Big\}};
\end{scope}
\end{tikzpicture}
\end{center}
which both refer to the permutation $\pi=(12) \in \mathfrak{S}_2$.
\end{example}
Fix $n,m \geqslant 1$, $\sigma \in \mathfrak{S}_n$ and $\tau \in \mathfrak{S}_m$ and consider the partition $\{ \Gamma_{n,m}^{\sigma,\tau,\pi} \}_{\pi \in \mathfrak{S}_{n+m}}$ of $\loom{n}{m}$. For any (non--empty) block $\Gamma_{n,m}^{\sigma,\tau,\pi}$, choose an ordering of its elements 
\begin{equation}
\label{eq:ordering-essential-looms}
 \Gamma_{n,m}^{\sigma,\tau,\pi}=\{ L_1, \ldots, L_{P_{n,m}^{\sigma,\tau,\pi}}, L_{P_{n,m}^{\sigma,\tau,\pi}+1}, \ldots , L_{P_{n,m}^{\sigma,\tau,\pi} + N_{n,m}^{\sigma,\tau,\pi}}\} 
\end{equation}
such that the first $P_{n,m}^{\sigma,\tau,\pi}$ elements are positive Drinfeld--Yetter looms and the last $N_{n,m}^{\sigma,\tau,\pi}$ elements are negative Drinfeld--Yetter looms. Next, take $M_{n,m}^{\sigma,\tau,\pi}= \min \{P_{n,m}^{\sigma,\tau,\pi}, N_{n,m}^{\sigma,\tau,\pi} \}$. It is then clear that all the pairs $\{L_i, L_{P_{n,m}^{\sigma,\tau,\pi} + N_{n,m}^{\sigma,\tau,\pi}-i+1} \}_{i=1,\ldots, M_{n,m}^{\sigma,\tau,\pi}}$ are disjoint elements of $\Neg_{n,m}^{\sigma,\tau}$. By removing all of these pairs from $\Gamma_{n,m}^{\sigma,\tau,\pi}$, we obtain 
\[ \Gamma_{n,m}^{\sigma,\tau,\pi,ess} \coloneqq \Gamma_{n,m}^{\sigma,\tau,\pi} \setminus \bigcup_{i=1}^{M_{n,m}^{\sigma,\tau ,\pi}} \{ L_i, L_{P_{n,m}^{\sigma,\tau,\pi} + N_{n,m}^{\sigma,\tau,\pi}-i+1} \}.\] 
Finally, we define 
\[ \loom{n}{m}^{\sigma,\tau,ess} \coloneqq \bigcup_{\pi \in \mathfrak{S}_{n+m}} \Gamma_{n,m}^{\sigma,\tau,\pi,ess}  \subset \loom{n}{m} \]
and we call it a set of $(\sigma,\tau)$--essential Drinfeld--Yetter looms\index{essential Drinfeld--Yetter loom}. Note that the construction of $\loom{n}{m}^{\sigma,\tau,ess} $ strongly depends on the choice of an ordering as in \eqref{eq:ordering-essential-looms}.
\begin{example}
We have the following two sets of essential $1 \times 1$ Drinfeld--Yetter looms (which are related to Example \ref{example-negligible-looms}):
\begin{center}
\begin{tikzpicture}[scale=0.6]
\node at (1,0) {\Big\{};
\draw[black,very thick] (1.5,0.5) rectangle (2.5,-0.5);
\begin{scope}[shift= {(1.5,0.5)}]
\draw[red, very thick] (1-1,-1+0.5) arc (-90:0:0.5); 
\draw[red, very thick] (1-0.35,-1+1)--(1-0.35, -1);
\end{scope}
\node at (2.75,-0.5) {,};
\draw[black,very thick] (3,0.5) rectangle (4,-0.5);
\begin{scope}[shift= {(3,0.5)}]
\draw[red, very thick] (1-1+0.25,-1+0.5) arc (-90:0:0.5);  \draw[red,very thick] (1-0.75,-1+1)--(1-0.75, -1); \draw[red,very thick]
 (1-1+0.25,-1+0.5)--(1-1,-1+0.5);
 \end{scope}
\node at (4.25,-0.5) {,};
\draw[black,very thick] (4.5,0.5) rectangle (5.5,-0.5);
\begin{scope}[shift= {(4.5,0.5)}]
\draw[red, very thick] (1-1,-1+0.5) arc (-90:0:0.5);  \draw[red, very thick] (1-1, -1+0.35)--(1, -1+0.35);
\end{scope}
\node at (6,-0) {\Big\}};
\node at (7.5,0) {and};
\begin{scope}[shift={(8,0)}]
\node at (1,0) {$ \Big\{$};
\draw[black,very thick] (1.5,0.5) rectangle (2.5,-0.5);
\begin{scope}[shift= {(1.5,0.5)}]
\draw[red, very thick] (1-1,-1+0.5) arc (-90:0:0.5); 
\draw[red, very thick] (1-0.35,-1+1)--(1-0.35, -1);
\end{scope}
\node at (2.75,-0.5) {,};
\draw[black,very thick] (3,0.5) rectangle (4,-0.5);
\begin{scope}[shift= {(3,0.5)}]
\draw[red, very thick] (1-1,-1+0.25) arc (-90:0:0.5); \draw[red, very thick] (1-1, -1+0.75)--(1, -1+0.75);  \draw[red, very thick] (1-0.5,-1+1)--(1-0.5,-1+0.75);
 \end{scope}
\node at (4.25,-0.5) {,};
\draw[black,very thick] (4.5,0.5) rectangle (5.5,-0.5);
\begin{scope}[shift= {(4.5,0.5)}]
\draw[red, very thick] (1-1,-1+0.5) arc (-90:0:0.5);  \draw[red, very thick] (1-1, -1+0.35)--(1, -1+0.35);
\end{scope}
\node at (6,-0) {\Big\}.};
\end{scope}
\end{tikzpicture}
\end{center}
\end{example}
 A set of essential Drinfeld--Yetter looms is a minimal set of Drinfeld--Yetter looms necessary to describe the multiplication of $\mathfrak{U}_\DY^1$, as is stated in the following
\begin{proposition}
\label{properties-essential-looms}
Let $n,m \geqslant 1$, $\sigma \in \mathfrak{S}_n$ and $\tau \in \mathfrak{S}_m$. For any $\pi \in \mathfrak{S}_{n+m}$ choose an ordering of the elements of $\Gamma_{n,m}^{\sigma, \tau,\pi}$ as in \ref{eq:ordering-essential-looms} and let $\loom{n}{m}^{\sigma, \tau, ess}$ be the associated set of essential Drinfeld--Yetter looms. Then
\begin{itemize}
\item[(i)] For any $\pi \in \mathfrak{S}_{n+m}$, the set $ \Gamma_{n,m}^{\sigma, \tau, \pi,ess}$ is made of Drinfeld--Yetter looms having all the same sign (i.e. they are all either positive or negative). 
\item[(ii)] The following formula holds
\begin{equation*}
r_n^\sigma \circ r_m^\tau = \sum_{L \in \loom{n}{m}^{\sigma, \tau, ess}} (-1)^{\countingtwo(L)} r_{n+m}^{\tilde{\gamma}_{n,m}(\sigma,L,\tau)}.
\end{equation*}
\end{itemize}
\end{proposition}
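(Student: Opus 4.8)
The plan is to derive both claims directly from Theorem~\ref{maintheorem} together with the combinatorial construction of $\loom{n}{m}^{\sigma,\tau,ess}$ carried out just above the statement. The key observation is that, by the very definition of $\Gamma_{n,m}^{\sigma,\tau,\pi,ess}$, we have removed from each block $\Gamma_{n,m}^{\sigma,\tau,\pi}$ exactly $M_{n,m}^{\sigma,\tau,\pi}=\min\{P_{n,m}^{\sigma,\tau,\pi},N_{n,m}^{\sigma,\tau,\pi}\}$ disjoint pairs $\{L_i,L_{P+N-i+1}\}$, each consisting of one positive and one negative loom with the same image under $\tilde{\gamma}_{n,m}(\sigma,\cdot,\tau)$.

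First I would prove (i). Fix $\pi\in\mathfrak{S}_{n+m}$. Using the chosen ordering \eqref{eq:ordering-essential-looms}, the loom $L_i$ is positive for $i\leqslant P_{n,m}^{\sigma,\tau,\pi}$ and negative for $i>P_{n,m}^{\sigma,\tau,\pi}$. Removing the pairs $\{L_i,L_{P+N-i+1}\}_{i=1,\dots,M_{n,m}^{\sigma,\tau,\pi}}$ deletes the first $M_{n,m}^{\sigma,\tau,\pi}$ positive looms and the last $M_{n,m}^{\sigma,\tau,\pi}$ negative looms; since $M_{n,m}^{\sigma,\tau,\pi}=\min\{P_{n,m}^{\sigma,\tau,\pi},N_{n,m}^{\sigma,\tau,\pi}\}$, one of the two sides is emptied completely, so $\Gamma_{n,m}^{\sigma,\tau,\pi,ess}$ contains only looms of the surviving sign (and is empty when $P_{n,m}^{\sigma,\tau,\pi}=N_{n,m}^{\sigma,\tau,\pi}$). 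This is essentially a bookkeeping argument and should be routine.

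Next I would prove (ii). Starting from \eqref{eq:formulamultiplication}, split the sum over $\loom{n}{m}$ according to the partition $\{\Gamma_{n,m}^{\sigma,\tau,\pi}\}_{\pi\in\mathfrak{S}_{n+m}}$, obtaining
\[
r_n^\sigma\circ r_m^\tau=\sum_{\pi\in\mathfrak{S}_{n+m}}\Bigl(\sum_{L\in\Gamma_{n,m}^{\sigma,\tau,\pi}}(-1)^{\countingtwo(L)}\Bigr)r_{n+m}^\pi
=\sum_{\pi\in\mathfrak{S}_{n+m}}\bigl(P_{n,m}^{\sigma,\tau,\pi}-N_{n,m}^{\sigma,\tau,\pi}\bigr)r_{n+m}^\pi .
\]
For each fixed $\pi$, the removed pairs $\{L_i,L_{P+N-i+1}\}$ contribute $(-1)^{\countingtwo(L_i)}+(-1)^{\countingtwo(L_{P+N-i+1})}=1+(-1)=0$ to the inner sum, so $\sum_{L\in\Gamma_{n,m}^{\sigma,\tau,\pi}}(-1)^{\countingtwo(L)}=\sum_{L\in\Gamma_{n,m}^{\sigma,\tau,\pi,ess}}(-1)^{\countingtwo(L)}$. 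Substituting and reassembling over $\pi$ via $\loom{n}{m}^{\sigma,\tau,ess}=\bigcup_\pi\Gamma_{n,m}^{\sigma,\tau,\pi,ess}$ yields exactly the stated formula. The only point requiring a word of care is that the $\tilde\gamma$-value of every loom in $\Gamma_{n,m}^{\sigma,\tau,\pi,ess}$ is still $\pi$, which is immediate since $\Gamma_{n,m}^{\sigma,\tau,\pi,ess}\subseteq\Gamma_{n,m}^{\sigma,\tau,\pi}$.

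I do not expect a genuine obstacle here: the substance of the result is carried by Theorem~\ref{maintheorem} and by the explicit pairing construction preceding the proposition, so the proof is a direct reorganization of a finite sum by cancelling sign-opposite pairs within each fibre of $\tilde\gamma_{n,m}(\sigma,\cdot,\tau)$. The mildest subtlety — and the place I would be most careful — is to state clearly that the construction, and hence the set $\loom{n}{m}^{\sigma,\tau,ess}$, depends on the chosen ordering \eqref{eq:ordering-essential-looms}, but that the validity of formula (ii) does not, because the cancelling sum over each block is ordering-independent.
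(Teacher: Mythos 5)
Your proposal is correct and follows essentially the same route as the paper: part (i) is the same bookkeeping observation that removing the first $M_{n,m}^{\sigma,\tau,\pi}$ positive and last $M_{n,m}^{\sigma,\tau,\pi}$ negative looms empties one of the two sign classes, and part (ii) is the same chain of equalities starting from Theorem \ref{maintheorem}, splitting over the partition $\{\Gamma_{n,m}^{\sigma,\tau,\pi}\}_{\pi}$, and cancelling the sign-opposite pairs within each block. Your closing remark that the resulting identity is independent of the chosen ordering, even though $\loom{n}{m}^{\sigma,\tau,ess}$ is not, is a worthwhile clarification that the paper leaves implicit.
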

\begin{proof}
We observe that $(i)$ follows directly by the construction of $ \Gamma_{n,m}^{\sigma, \tau, \pi,ess}$. We have
\begin{equation*}
\begin{split}
r_n^\sigma \circ r_m^\tau &= \sum_{L \in \loom{n}{m}} (-1)^{\countingtwo(L)} r_{n+m}^{\tilde{\gamma}_{n,m}(\sigma,L,\tau)} \\
&= \sum_{\pi \in \mathfrak{S}_{n+m}} \ \sum_{L \in \Gamma_{n,m}^{\sigma,\tau,\pi}} (-1)^{\countingtwo(L)} r_{n+m}^{\pi} \\
&= \sum_{\pi \in \mathfrak{S}_{n+m}} \ \sum_{i=1}^{P_{n,m}^{\sigma,\tau,\pi}+N_{n,m}^{\sigma,\tau,\pi}} (-1)^{\countingtwo(L_i)} r_{n+m}^{\pi} \\
&= \sum_{\pi \in \mathfrak{S}_{n+m}} \ \sum_{i=M_{n,m}^{\sigma,\tau,\pi}+1}^{P_{n,m}^{\sigma,\tau,\pi}+N_{n,m}^{\sigma,\tau,\pi}-M_{n,m}^{\sigma,\tau,\pi}} (-1)^{\countingtwo(L_i)} r_{n+m}^\pi \\
&= \sum_{\pi \in \mathfrak{S}_{n+m}} \ \sum_{L \in \Gamma_{n,m}^{\sigma,\tau,\pi,ess}} (-1)^{\countingtwo(L)} r_{n+m}^{\pi} \\
&= \sum_{L \in \loom{n}{m}^{\sigma,\tau,ess}} (-1)^{\countingtwo(L)} r_{n+m}^{\tilde{\gamma}_{n,m}(\sigma,L,\tau)}
\end{split}
\end{equation*}
where the second and the last equality follow from the fact that the collection $\{ \Gamma_{n,m}^{\sigma,\tau,\pi,ess} \}_{\pi \in \mathfrak{S}_{n+m}}$ defines a partition of the set $\loom{n}{m}^{\sigma,\tau,ess}$.
\end{proof}
\begin{corollary}\label{maincor}
Under the previous notations, we have
\[ r_n^\sigma \circ r_m^\tau = \sum_{\pi \in \mathfrak{S}_{n+m}} (P_{n,m}^{\sigma,\tau,\pi} - N_{n,m}^{\sigma,\tau,\pi}) r_{n+m}^\pi.\]
\end{corollary}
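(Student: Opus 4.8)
The plan is to deduce the statement directly from Theorem \ref{maintheorem} by regrouping the sum over looms according to the permutation each loom produces. First I would recall that Theorem \ref{maintheorem} gives
\[ r_n^\sigma \circ r_m^\tau = \sum_{L \in \loom{n}{m}} (-1)^{\countingtwo(L)}\, r_{n+m}^{\tilde{\gamma}_{n,m}(\sigma,L,\tau)}, \]
and that the map $L \mapsto \tilde{\gamma}_{n,m}(\sigma,L,\tau)$ sorts $\loom{n}{m}$ into the fibres $\Gamma_{n,m}^{\sigma,\tau,\pi}$, $\pi \in \mathfrak{S}_{n+m}$ (some of which may be empty). Hence the right-hand side can be rewritten as a double sum, first over $\pi$ and then over $L$ in the corresponding fibre, in which every term of the inner sum carries the same basis element $r_{n+m}^{\pi}$.

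Next I would factor $r_{n+m}^{\pi}$ out of each inner sum and observe that $\sum_{L \in \Gamma_{n,m}^{\sigma,\tau,\pi}} (-1)^{\countingtwo(L)}$ is, by the very definitions of $P_{n,m}^{\sigma,\tau,\pi}$ and $N_{n,m}^{\sigma,\tau,\pi}$, the number of looms in the fibre with $(-1)^{\countingtwo(L)}=1$ minus the number with $(-1)^{\countingtwo(L)}=-1$, that is $P_{n,m}^{\sigma,\tau,\pi} - N_{n,m}^{\sigma,\tau,\pi}$. Collecting these identities over all $\pi$ yields the claimed formula. Alternatively, one can simply invoke Proposition \ref{properties-essential-looms}: after pairing off the negligible looms, each non-empty block $\Gamma_{n,m}^{\sigma,\tau,\pi,ess}$ consists of $|P_{n,m}^{\sigma,\tau,\pi}-N_{n,m}^{\sigma,\tau,\pi}|$ looms of a single sign, contributing exactly $(P_{n,m}^{\sigma,\tau,\pi}-N_{n,m}^{\sigma,\tau,\pi})\, r_{n+m}^{\pi}$ to the product.

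There is essentially no obstacle here: the corollary is pure bookkeeping once Theorem \ref{maintheorem} is available, since all the genuine diagrammatic and combinatorial work has already been carried out in Proposition \ref{claim}, Lemma \ref{lemmamultiplication}, Proposition \ref{proposition-partition-drinfeld-yetter-tableaux} and Proposition \ref{remarksigns}. The only points deserving a word of care are that the fibres $\Gamma_{n,m}^{\sigma,\tau,\pi}$ genuinely exhaust $\loom{n}{m}$ — immediate, since $\tilde{\gamma}_{n,m}(\sigma,-,\tau)$ is a well-defined map into $\mathfrak{S}_{n+m}$ — and that regrouping a finite sum by the value of a function is legitimate, which is clear. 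I would therefore present the argument as a short three-line computation, the regrouping step flanked by the two definitional identities above.
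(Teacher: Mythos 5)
Your regrouping of the sum in Theorem \ref{maintheorem} over the fibres $\Gamma_{n,m}^{\sigma,\tau,\pi}$, combined with the observation that $\sum_{L\in\Gamma_{n,m}^{\sigma,\tau,\pi}}(-1)^{\countingtwo(L)}=P_{n,m}^{\sigma,\tau,\pi}-N_{n,m}^{\sigma,\tau,\pi}$ by definition, is correct and is exactly the (implicit) argument the paper relies on, which also appears in the computation proving Proposition \ref{properties-essential-looms}. Nothing further is needed.
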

Therefore, finding a complete description of the set $\loom{n}{m}^{\sigma,\tau, ess}$ would directly give an explicit formula for $\circ$ in terms of symmetric groups. However, finding such a description appear to be a very challenging problem, due to the \emph{elusive} nature of the set $\loom{n}{m}$. 
\appendix
\section{Explicit computations and conjectures}
\label{Appendix}
In this Appendix we collect some explicit computations and conjectures related to the set $\loom{n}{m}$ and to the algebra $\mathfrak{U}_\DY^1$.
\subsection{Explicit computations}
The proof of all the following statements can be find in \cite{rivezzithesis}.
\begin{proposition}
Let $H_{n,m} = |\loom{n}{m}|$. Then we have the following two recursive formulas for $H_{n,m}$:
\begin{equation}
\label{eq:recursive-formula-looms-one}
H_{n,m} = 2 H_{n-1,m} + (2n+1) H_{n,m-1} + \sum_{k=1}^{n-1} \binom{n-1}{k} 2^k H_{n-k,m-1} +  \sum_{k=1}^{n-2}\sum_{\ell=1}^k \binom{k}{\ell} 2^{\ell+1}  H_{n-\ell,m-1} 
\end{equation} 
and 
\begin{equation}
\label{eq:recursive-formula-looms-two}
H_{n,m} = 2 H_{n,m-1} + (2m+1) H_{n-1,m} + \sum_{k=1}^{m-1} \binom{m-1}{k} 2^k H_{n-1,m-k} +  \sum_{k=1}^{m-2}\sum_{\ell=1}^k \binom{k}{\ell}  2^{\ell+1} H_{n-1,m-\ell} .
\end{equation} 
\end{proposition}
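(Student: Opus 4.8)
The plan is to prove \eqref{eq:recursive-formula-looms-one} by a direct decomposition of $\loom{n}{m}$ governed by the content of its leftmost column, and then to deduce \eqref{eq:recursive-formula-looms-two} from it via the reflection symmetry $H_{n,m}=H_{m,n}$. That symmetry is immediate once one observes that reflecting a loom across its main diagonal is a bijection $\loom{n}{m}\xrightarrow{\ \sim\ }\loom{m}{n}$: it interchanges the blue and yellow auxiliary lines, swaps the two action-type tiles with the two coaction-type tiles and the plain horizontal tile with the plain vertical tile, fixes the crossing tile and the empty tile, and carries conditions $(1)$--$(2)$ and $(4)$--$(5)$ of Definition \ref{definition-refined-Drinfeld-Yetter-Tableaux} to one another; alternatively it follows from $H_{n,m}=\sum_{M\in\mosaic{n}{m}}2^{\cob(M)+\bra(M)}$, which is a consequence of Proposition \ref{proposition-partition-drinfeld-yetter-tableaux} and \eqref{eq:cardinality-associated-refined-drinfeld-yetter-tableaux}, together with the transposition bijection of mosaics, which swaps $\cob$ and $\bra$. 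Substituting $n\leftrightarrow m$ in \eqref{eq:recursive-formula-looms-one} and applying $H_{a,b}=H_{b,a}$ then yields \eqref{eq:recursive-formula-looms-two} verbatim, so it suffices to establish \eqref{eq:recursive-formula-looms-one}.

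For \eqref{eq:recursive-formula-looms-one} I would fix $L\in\loom{n}{m}$ and read off its leftmost column. By conditions $(4)$--$(5)$ no first-column tile is the plain horizontal or the empty tile, so each $L_{i,1}$ is a crossing, an action tile, or a coaction tile, while conditions $(1)$--$(2)$ together with the weight balance $(3)$ constrain how the $n+m$ ingoing strands split between the left and bottom edges. The decomposition is the loom analogue of $\mosaic{n}{m}=\bigsqcup_{k}\mosaic{n}{1}^{[k]}\times\mosaic{k}{m-1}$ from Proposition \ref{recurrence-rule-dy-tableaux}: once the first column is fixed, the part of each row lying to the right of an action tile of that column is forced by the discontinuity-free requirement, and the remaining cells are freely fillable by a loom with one fewer column. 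The binomials $\binom{n-1}{k}$ and $\binom{k}{\ell}$ record which of the $n-1$ non-top rows carry an action tile in the first column (the top row being distinguished by condition $(4)$), and each factor of $2$ is the binary choice between the two shapes of an action tile, or of a coaction tile — the same phenomenon behind $|\mathfrak{L}(M)|=2^{\cob(M)+\bra(M)}$. Concretely one partitions $\loom{n}{m}$ into four classes matching the four groups of summands: (a) looms whose first column is action-free, which carries at most one coaction tile and hence has $2n+1$ possibilities, its deletion producing an element of $\loom{n}{m-1}$ and accounting for $(2n+1)H_{n,m-1}$; (b) looms whose first column has an action tile in the top row, so the top row detaches and one expects the $2H_{n-1,m}$ term, with the $2$ from the two action-tile shapes; (c) looms whose first column has exactly $k\geqslant 1$ action tiles, all among the $n-1$ lower rows, contributing $\sum_{k=1}^{n-1}\binom{n-1}{k}2^{k}H_{n-k,m-1}$; and (d) the \emph{nested} configurations, in which an action tile of the first column is fed by a strand produced by another action tile lower down, giving $\sum_{k=1}^{n-2}\sum_{\ell=1}^{k}\binom{k}{\ell}2^{\ell+1}H_{n-\ell,m-1}$. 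For each class one exhibits an explicit bijection onto a product of a finite choice set (of cardinality a power of $2$ times a binomial coefficient) and a strictly smaller loom set, and sums.

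A complementary route passes through mosaics: from Proposition \ref{proposition-partition-drinfeld-yetter-tableaux}, \eqref{eq:cardinality-associated-refined-drinfeld-yetter-tableaux} and \eqref{eq:cardinality-drinfeld-yetter-tableaux-recursive-two} one gets $H_{n,m}=\sum_{k=0}^{n}2^{\,n-k}w_{n,k}H_{k,m-1}$, where $w_{n,k}=\sum_{z}2^{\cob(z)}$ ranges over the $n\times 1$ mosaics with exactly $k$ non-bracket rows; since such a $z$ has at most one cobracket tile, $w_{n,k}=\binom{n+1}{k}+d_{n,k}$ with $d_{n,k}$ an elementary count of the $z$ carrying a cobracket tile. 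Reindexing and regrouping this single-sum identity recovers the four groups of terms in \eqref{eq:recursive-formula-looms-one}; the only wrinkle is that $2H_{n-1,m}$ does not appear directly and must be extracted either by one further application of the same identity to $H_{n-1,m}$ or, more transparently, by tracking separately those first-column tiles that leave all $m$ remaining columns untouched.

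The main obstacle is the bookkeeping in the last two classes. Because the tile alphabet $\T_{\loom{n}{m}}$ itself depends on $n$ and $m$ through the admissible numbers of blue and yellow auxiliary lines, one must check that the column-deletion map (erase the first column, then renormalize the auxiliary line counts) really lands in $\loom{n-j}{m-1}$ with the correct $j$, preserves and reflects conditions $(1)$--$(5)$, and is bijective onto its image; and one must verify that the emerging multiplicities are exactly the stated binomials and powers of $2$, including the delicate $+1$ in $(2n+1)$ and the shift in $2^{\ell+1}$. Once these deletion/insertion bijections are nailed down, the remainder is a routine inclusion--exclusion over which rows of the first column carry action tiles, in the spirit of the proof of Corollary \ref{recursive-cardinality-mosaics}.
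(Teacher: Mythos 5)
The paper does not actually contain a proof of this proposition: it sits in Appendix A, where the text states that the proofs of all statements in that subsection are to be found in \cite{rivezzithesis}. So there is no argument in the paper to compare yours against, and I can only assess your proposal on its own terms. The parts you do carry out are sound: the symmetry $H_{n,m}=H_{m,n}$ via transposition of looms (or via $H_{n,m}=\sum_{M\in\mosaic{n}{m}}2^{\cob(M)+\bra(M)}$ and the transposition of mosaics) is correct, and substituting $n\leftrightarrow m$ in \eqref{eq:recursive-formula-looms-one} does yield \eqref{eq:recursive-formula-looms-two} verbatim, so reducing to the first identity is legitimate. The first-column peeling is also the right idea, and your classes (a) and (b) check out: in small cases one gets $H_{1,1}=2\cdot1+3\cdot1=5$ and $H_{2,1}=2\cdot5+5\cdot1+2\cdot1=17=H_{1,2}$, consistent with the stated formula and with Conjecture \ref{conjecture-cardinality-looms}.

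The genuine gap is that the combinatorial core of the identity --- the two double sums --- is asserted rather than proved. Your class (d) of ``nested configurations, in which an action tile of the first column is fed by a strand produced by another action tile lower down'' is never given a precise definition: it is not clear what the indices $k$ and $\ell$ parametrize, why this class is disjoint from class (c) (both describe first columns whose action tiles all lie in the $n-1$ lower rows), nor why the deletion map for such a configuration lands in $\loom{n-\ell}{m-1}$ rather than $\loom{n-k}{m-1}$. Likewise the multiplicities $\binom{n-1}{k}2^{k}$ and $\binom{k}{\ell}2^{\ell+1}$ are named but not derived; note that after a hockey-stick summation the combined coefficient of $H_{n-k,m-1}$ in the last two sums is $\binom{n-1}{k}2^{k}+\binom{n-1}{k+1}2^{k+1}$, which strongly suggests the correct partition of first columns is not the one you describe. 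You explicitly defer ``the bookkeeping in the last two classes,'' but that bookkeeping \emph{is} the proposition: without a precise definition of the classes, a proof that they partition $\loom{n}{m}$, and explicit deletion/insertion bijections establishing each multiplicity, the argument does not yet establish \eqref{eq:recursive-formula-looms-one}.
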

\begin{proposition}
\label{identitymultiplication}
For any $n \geqslant 1$, we have 
\begin{equation}
\label{eq:identity-relation-one}
 r_{n}^{\id} \circ r_{1}^{\id} = (n+1) r_{n+1}^{\id} - \sum_{i=1}^n r_{n+1}^{(i,i+1)}
\end{equation}
and
\begin{equation}
\label{eq:identity-relation-two}
 r_{1}^{\id} \circ r_{n}^{\id} = (n+1) r_{n+1}^{\id} - \sum_{i=1}^n r_{n+1}^{(i,i+1)}
\end{equation}
where $(i,i+1)$ denotes the permutation that swaps $i$ and $i+1$ and fixes all the other elements.
\end{proposition}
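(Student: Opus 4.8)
The plan is to derive both identities from Theorem~\ref{maintheorem} specialised to $m=1$. Taking $\sigma=\id_n$ and $\tau=\id_1$ and using that $\tilde\gamma_{n,1}(\id_n,L,\id_1)=\gamma_{n,1}(L)$, Theorem~\ref{maintheorem} gives
\[
r_n^{\id}\circ r_1^{\id}=\sum_{L\in\loom{n}{1}}(-1)^{\countingtwo(L)}\,r_{n+1}^{\gamma_{n,1}(L)},
\]
so the whole problem reduces to an explicit description of the set $\loom{n}{1}$ of $n\times 1$ Drinfeld--Yetter looms, together with the associated permutation $\gamma_{n,1}(L)\in\mathfrak{S}_{n+1}$ and the parity $\countingtwo(L)$. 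As a sanity check, for $n=1$ one should recover $r_1^{\id}\circ r_1^{\id}=2\,r_2^{\id}-r_2^{(12)}$, which is exactly the universal shadow of the $\mathfrak{sl}_2$ identity \eqref{eq:counterexample-enriquez}.

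First I would enumerate $\loom{n}{1}$. In an $n\times 1$ loom every cell lies in the single column, so conditions (4)--(5) of Definition~\ref{definition-refined-Drinfeld-Yetter-Tableaux} forbid the empty tile and the pure vertical tile in every cell and forbid the pure horizontal tile in the top cell; the remaining options in a cell are the crossing tile, one of the two ``action'' (bracket) tiles, one of the two ``coaction'' (cobracket) tiles, and — below the first row only — the horizontal tile. The continuity conditions (1) together with the strand--count condition (3) then rigidly constrain how these tiles stack: in each row the single distinguished red strand either crosses the horizontal strand of that row or is absorbed by a bracket- or cobracket-type tile, and once it is absorbed the remaining cells of the column are forced. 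I expect this to give a clean parametrisation of $\loom{n}{1}$ by the row at which absorption occurs (if any), the type of absorption, and the blue/yellow multiplicities recorded along the way; from the recursion \eqref{eq:recursive-formula-looms-one} one gets $|\loom{n}{1}|=2\cdot 3^{n}-1$.

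With the enumeration in hand, the next step is to compute $\gamma_{n,1}(L)$ and $\countingtwo(L)$ for each $L$ and collect terms. The expected picture is that exactly $n+1$ looms carry the identity permutation, each with sign $+1$, producing $(n+1)\,r_{n+1}^{\id}$; that for each $i=1,\dots,n$ the looms with $\gamma_{n,1}(L)=(i,i+1)$ contribute a net $-r_{n+1}^{(i,i+1)}$; and that all the remaining $2\cdot 3^{n}-1-(2n+1)$ looms pair off into elements of $\Neg_{n,1}^{\id,\id}$ and hence cancel. This would give \eqref{eq:identity-relation-one}. For \eqref{eq:identity-relation-two} I would run the mirror argument with $\loom{1}{n}$ in place of $\loom{n}{1}$, the conditions defining $\loom{1}{n}$ being the transpose of those defining $\loom{n}{1}$, and obtain $r_1^{\id}\circ r_n^{\id}$ with the same right-hand side; alternatively, \eqref{eq:identity-relation-two} follows from \eqref{eq:identity-relation-one} by applying the anti-automorphism $\theta$ of $\mathfrak{U}_{\DY}^1$ coming from the self-duality of the Drinfeld--Yetter relations under exchanging actions with coactions and brackets with cobrackets (up to signs on the generators): since $r_n^{\id}$, $r_1^{\id}$ and the $r_{n+1}^{(i,i+1)}$ involve no bracket or cobracket, $\theta$ fixes $r_n^{\id}$ and $r_1^{\id}$, reverses the product, and permutes $\{r_{n+1}^{(i,i+1)}\}_{i}$ among itself, so it sends \eqref{eq:identity-relation-one} to \eqref{eq:identity-relation-two}.

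The main obstacle is precisely this enumeration--and--cancellation step: pinning down $\loom{n}{1}$ in closed form and tracking the pair $(\gamma_{n,1},\countingtwo)$ across all tile types so as to see exactly which looms survive. A less bulky route would work one level up, with mosaics rather than looms: Lemma~\ref{lemma-normal-ordering-tableaux} already expresses $\pi_1^{(n)}\circ\pi_1^{*(1)}$ as a signed sum over $\mosaic{n}{1}$, and here $|\mosaic{n}{1}|=2^{n+1}-1$ while the permutations $\tilde\sigma,\tilde\tau$ in Lemma~\ref{lemmamultiplication} are trivial, so one can read $\gamma_{n,1}$ off each mosaic, apply Proposition~\ref{remarksigns}, and pass back to looms via Proposition~\ref{proposition-partition-drinfeld-yetter-tableaux} only at the end — or, better still, prove the identity by induction on $n$ directly for the morphism $\pi_1^{*(n)}\circ(\pi_1^{(n)}\circ\pi_1^{*(1)})\circ\pi_1^{(1)}$, using the recursive structure of Proposition~\ref{recurrence-rule-dy-tableaux}.
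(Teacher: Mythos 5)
The paper itself gives no proof of Proposition \ref{identitymultiplication}: it is stated in the Appendix with all proofs deferred to \cite{rivezzithesis}, so there is no in-text argument to compare yours against. Your framework is the natural one and your checkpoints are consistent with the paper: specializing Theorem \ref{maintheorem} to $m=1$ does reduce \eqref{eq:identity-relation-one} to a signed count over $\loom{n}{1}$, the cardinality $|\loom{n}{1}|=2\cdot 3^{n}-1$ agrees with \eqref{eq:recursive-formula-looms-two} (which gives $H_{n,1}=2+3H_{n-1,1}$, $H_{0,1}=1$), and your $n=1$ sanity check $r_1^{\id}\circ r_1^{\id}=2r_2^{\id}-r_2^{(12)}$ is exactly what Example \ref{example-negligible-looms} exhibits.

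The gap is that the entire content of the proposition is concentrated in the step you label as ``the expected picture'': that exactly $n+1$ looms in $\loom{n}{1}$ have $\gamma_{n,1}(L)=\id_{n+1}$, all with $\countingtwo(L)$ even; that for each $i$ the looms mapping to $(i,i+1)$ contribute a net $-1$; and that the remaining $2\cdot 3^{n}-1-(2n+1)$ looms cancel in pairs from $\Neg_{n,1}^{\id,\id}$. None of this is proved --- no parametrisation of $\loom{n}{1}$ is actually written down, no computation of $\gamma_{n,1}$ or $\countingtwo$ on the parametrised families is carried out, and no explicit sign-reversing involution on the cancelling looms is constructed. Asserting the answer and checking it at $n=1$ is not a proof of the general case, so as it stands the argument establishes nothing beyond the base case. (Your two suggested routes to \eqref{eq:identity-relation-two} are both plausible --- the transpose symmetry of the tile set, or an anti-automorphism from the $\mu\leftrightarrow\delta$, $\pi\leftrightarrow\pi^{*}$ duality --- but the latter also needs verification that the duality respects all five PROP relations and that it sends the set $\{r_{n+1}^{(i,i+1)}\}_{i}$ to itself; neither is spelled out.) To close the gap, either complete the enumeration of $\loom{n}{1}$ by absorption row and tile type as you outline, or run an induction on $n$ at the level of $\mosaic{n}{1}$ using Proposition \ref{recurrence-rule-dy-tableaux} and Lemma \ref{lemma-normal-ordering-tableaux}, which keeps only $2^{n+1}-1$ terms before passing to looms.
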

\begin{proposition}
\label{proposition-coefficient-identity}
Let $c_{n,m}$ be the coefficient of $\id_{n+m}$ with respect to the multiplication $r_n^\id \circ r_m^\id$. Then 
\[ c_{n,m} = \frac{(n+m)!}{n!m!}.\]
\end{proposition}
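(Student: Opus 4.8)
The plan is to read off the coefficient of $r_{n+m}^{\id}$ from Theorem~\ref{maintheorem} and then to pin down, with signs, the Drinfeld--Yetter looms responsible for it. Specialising Theorem~\ref{maintheorem} to $\sigma=\id_n$ and $\tau=\id_m$, and using $\tilde{\gamma}_{n,m}(\id_n,L,\id_m)=\gamma_{n,m}(L)$, one gets
\[
r_n^{\id}\circ r_m^{\id}=\sum_{L\in\loom{n}{m}}(-1)^{\countingtwo(L)}\,r_{n+m}^{\gamma_{n,m}(L)},
\]
so that, in the notation of Corollary~\ref{maincor},
\[
c_{n,m}=P_{n,m}^{\id,\id,\id}-N_{n,m}^{\id,\id,\id}=\sum_{L\in\gamma_{n,m}^{-1}(\id_{n+m})}(-1)^{\countingtwo(L)} .
\]
Everything thus reduces to describing the fibre $\gamma_{n,m}^{-1}(\id_{n+m})\subseteq\loom{n}{m}$ and the behaviour of $\countingtwo$ on it.

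The second step is to show that this fibre is \emph{planar}. Of the eight tiles of $\T_{\loom{n}{m}}$, exactly three carry a genuine crossing of red strings: the crossing tile and the two tiles counted by $\countingtwo$. I would prove that every $L$ with $\gamma_{n,m}(L)=\id_{n+m}$ is crossing--free, i.e.\ uses none of these three tiles; in particular $\countingtwo(L)=0$ on the whole fibre, whence $N_{n,m}^{\id,\id,\id}=0$ and all surviving signs equal $+1$. The mechanism is the planarity statement underlying Example~\ref{example-permutation-ass-to-a-loom}: $\gamma_{n,m}(L)$ is the boundary pairing of a string diagram in a disc whose $n+m$ incoming ends lie, in increasing order, along the arc (left edge)$\,\cup\,$(bottom edge) of $\grid{n}{m}$, and whose $n+m$ outgoing ends lie, in increasing order, along the complementary arc (top edge)$\,\cup\,$(right edge); a diagram without crossings pairs these two arcs in the unique planar fashion, which under the chosen labellings is the identity, while a single crossing tile forces a non-trivial block transposition. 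The delicate part is ruling out \emph{cancellation} of crossings --- showing that two strings cannot re-meet after crossing --- which I would do by a local inspection of how the admissible tiles can route strings back together inside the grid.

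The third step is to count the crossing--free looms. Such a loom uses only the five tiles of $\T_{\loom{n}{m}}$ distinct from the crossing tile and from the two tiles counted by $\countingtwo$; its $n+m$ red strings are then $n+m$ pairwise disjoint monotone paths, and the frontier separating the strings that enter along the left edge from those that enter along the bottom edge is a monotone lattice path in $\grid{n}{m}$ joining two opposite corners. This assignment is a bijection, so $|\gamma_{n,m}^{-1}(\id_{n+m})|=\binom{n+m}{n}$, and combining with the previous step
\[
c_{n,m}=\binom{n+m}{n}=\frac{(n+m)!}{n!\,m!} .
\]

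The main obstacle is the second step: establishing that no loom carrying a crossing contributes to the identity coefficient, i.e.\ that $N_{n,m}^{\id,\id,\id}=0$ and the positive looms over $\id_{n+m}$ are exactly the planar ones. An alternative route that avoids the explicit classification is to prove the Pascal recursion $c_{n,m}=c_{n-1,m}+c_{n,m-1}$ directly: the base cases $c_{n,0}=c_{0,m}=1$ hold because $r_0^{\id}$ is the unit of $\mathfrak{U}_\DY^1$, the case $m=1$ follows from Proposition~\ref{identitymultiplication}, and the inductive step would exploit the recursive structure of $\loom{n}{m}$ (in the spirit of Proposition~\ref{recurrence-rule-dy-tableaux} and of the recursions of the Appendix); the difficulty then migrates to isolating the contribution of the fibre over $\id_{n+m}$ from the full recursion. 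In both approaches, what must be controlled is the way a single crossing propagates through the grid.
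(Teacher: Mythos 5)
Your strategy is sound and can be completed as outlined; note that the paper itself does not prove Proposition~\ref{proposition-coefficient-identity} but defers it to \cite{rivezzithesis}, so there is no in-paper argument to compare against. Step one is immediate from Theorem~\ref{maintheorem} together with $\tilde{\gamma}_{n,m}(\id_n,L,\id_m)=\gamma_{n,m}(L)$. The planarity step you flag as delicate is in fact easier than you fear: no tile of $\T_{\loom{n}{m}}$ turns a vertical string to the right, and a string can become vertical only once (via one of the four arc tiles), so every string in a loom is a hook --- horizontal for a while, then possibly vertical --- and a routine case check shows that two hooks, or a hook and a straight vertical, meet in at most one cell; hence any two strings cross at most once. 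Since for $\pi=\id_{n+m}$ the endpoints of any two strings do not interleave on the boundary circle, each pair must cross an even number of times, so zero times; as all crossings are localized in the three tiles you single out, the fibre over $\id_{n+m}$ consists exactly of the looms avoiding them, and $\countingtwo\equiv 0$ there, giving $N_{n,m}^{\id,\id,\id}=0$. For the count, rather than setting up the lattice-path bijection by hand you can observe that under the partition of Proposition~\ref{proposition-partition-drinfeld-yetter-tableaux} a crossing-free loom is the same datum as a mosaic with no permutation tile (each bracket, resp.\ cobracket, tile being forced to resolve to the non-crossing member of its pair); the first-row analysis in the proof of Proposition~\ref{recurrence-rule-dy-tableaux} then yields $a_{n,m}=\sum_{\ell=0}^{m}a_{n-1,\ell}$ with $a_{0,m}=1$ for the number $a_{n,m}$ of such mosaics, whence $a_{n,m}=\binom{n+m}{n}$ by the hockey-stick identity. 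This agrees with the sanity checks available in the paper ($c_{1,1}=2$ from Example~\ref{counterexample}, $c_{n,1}=n+1$ from Proposition~\ref{identitymultiplication}), so I consider the proposal correct modulo writing out the case check on tile adjacencies.
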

\begin{proposition}
Let $n,m \geqslant 0 $ such that $n+m \leqslant 4$ and let $\sigma \in \mathfrak{S}_n$ and $\tau \in \mathfrak{S}_m$. Then 
\[ r_{n}^{\sigma} \circ r_{m}^{\tau} = r_{m}^{\tau} \circ r_{n}^{\sigma}.\]
\end{proposition}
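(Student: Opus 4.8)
The statement is trivially true when $n=0$ or $m=0$, since then one of the factors is the unit $r_0^{\id}$ of $\mathfrak{U}_{\DY}^1$, and when $n=m=1$, since then $\sigma=\tau=\id$; it is also trivial whenever $\sigma=\tau$ and $n=m$. For products $r_n^{\id}\circ r_m^{\id}$ commutativity follows from Proposition~\ref{identitymultiplication} when $\min\{n,m\}=1$, and is immediate for $(n,m)=(2,2)$. Discarding all of these, and using the symmetry of the assertion to assume $n\leqslant m$, one is reduced to the finitely many genuinely new instances: the product $r_1^{\id}\circ r_2^{(12)}$, the five products $r_1^{\id}\circ r_3^{\tau}$ with $\tau\in\mathfrak{S}_3\setminus\{\id\}$, and the product $r_2^{\id}\circ r_2^{(12)}$.

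The first step is to reformulate each instance through Theorem~\ref{maintheorem} and Corollary~\ref{maincor}: one must verify $P_{n,m}^{\sigma,\tau,\pi}-N_{n,m}^{\sigma,\tau,\pi}=P_{m,n}^{\tau,\sigma,\pi}-N_{m,n}^{\tau,\sigma,\pi}$ for every $\pi\in\mathfrak{S}_{n+m}$, and only $(n,m)\in\{(1,2),(1,3),(2,2)\}$ occur. The cardinalities $|\loom{1}{2}|$, $|\loom{1}{3}|$ and $|\loom{2}{2}|$ are small and are read off the recursion \eqref{eq:recursive-formula-looms-one}, so each $\loom{n}{m}$ can be listed explicitly together with, for every $L$, its sign $(-1)^{\countingtwo(L)}$ and the induced permutation $\tilde{\gamma}_{n,m}(\sigma,L,\tau)$.

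To organize the verification I would set up the transposition $L\mapsto L^{\top}\colon\loom{n}{m}\to\loom{m}{n}$ obtained by reflecting a loom across its main diagonal, each tile of $\T_{\loom{n}{m}}$ being carried to the corresponding tile of $\T_{\loom{m}{n}}$. One checks that this is a bijection with $(-1)^{\countingtwo(L^{\top})}=(-1)^{\countingtwo(L)}$; moreover, since the reflection exchanges ingoing and outgoing strands and swaps the left and top boundaries of the grid, it satisfies $\tilde{\gamma}_{m,n}(\tau,L^{\top},\sigma)=\bigl(\tilde{\gamma}_{n,m}(\sigma,L,\tau)\bigr)^{-1}$. Substituting this into \eqref{eq:formulamultiplication} identifies $r_m^{\tau}\circ r_n^{\sigma}$ with the image of $r_n^{\sigma}\circ r_m^{\tau}$ under the linear involution $r_{n+m}^{\pi}\mapsto r_{n+m}^{\pi^{-1}}$ of $\K[\mathfrak{S}_{n+m}]$. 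Hence, for a fixed $(n,m)$, commutativity is equivalent to the assertion that the element $r_n^{\sigma}\circ r_m^{\tau}\in\K[\mathfrak{S}_{n+m}]$ is fixed by $\pi\mapsto\pi^{-1}$, and it is precisely here that the bound $n+m\leqslant 4$ enters: every element of $\mathfrak{S}_2$ is an involution, the only non-involutions in $\mathfrak{S}_3$ are the two $3$-cycles, and the only non-involutions in $\mathfrak{S}_4$ are its eight $3$-cycles and six $4$-cycles, which fall into seven inverse pairs. Thus only a handful of coefficient comparisons survive, one per inverse pair occurring in the loom expansions of the finitely many products above.

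The main obstacle is exactly this last finite verification for $r_1^{\id}\circ r_2^{(12)}$, $r_1^{\id}\circ r_3^{\tau}$ and $r_2^{\id}\circ r_2^{(12)}$: it is not a formal consequence of Theorem~\ref{maintheorem}, and in fact the inversion invariance fails once $n+m\geqslant 5$, so the circumstance that the ambient symmetric groups are small enough for most of their elements to be self-inverse is essential to the statement. Concretely, one must match the signs $\countingtwo$ and the permutations $\tilde{\gamma}_{n,m}(\sigma,L,\tau)$ across the finitely many looms involved; this computation is carried out in \cite{rivezzithesis}.
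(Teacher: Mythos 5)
You should first be aware that the paper does not actually prove this proposition: it appears in the appendix under the blanket remark that ``the proof of all the following statements can be find in \cite{rivezzithesis}'', so there is no argument in the paper to compare yours against. Judged on its own terms, your reduction is the natural one --- use Theorem \ref{maintheorem}/Corollary \ref{maincor} to turn each instance into a finite comparison of signed loom counts, dispose of the trivial cases and of $r_n^{\id}\circ r_1^{\id}$ via Proposition \ref{identitymultiplication}, and enumerate the seven genuinely new products. Your enumeration of those products is complete and correct.

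The gap is in the transposition argument, which is the only non-routine content of your proposal. The identity $\tilde{\gamma}_{m,n}(\tau,L^{\top},\sigma)=\bigl(\tilde{\gamma}_{n,m}(\sigma,L,\tau)\bigr)^{-1}$ is asserted with ``one checks'', but it is exactly the delicate point: reflecting the glued diagram across the main diagonal certainly inverts the total permutation (ingoing and outgoing labels are exchanged), but it also reflects the attached diagrams of $\sigma$ and $\tau$, and whether the reflected left-glued copy of $\sigma$ is the top-glued copy of $\sigma$ or of $\sigma^{-1}$ depends on a convention the paper never pins down (its only worked example uses an involution on the left, which cannot distinguish the two). Under the ``covariant'' reading of the gluing convention one gets instead $\tilde{\gamma}_{m,n}(\tau,L^{\top},\sigma)=\bigl(\tilde{\gamma}_{n,m}(\sigma^{-1},L,\tau^{-1})\bigr)^{-1}$, in which case your equivalence ``commutativity $\Leftrightarrow$ inversion-invariance of $r_n^{\sigma}\circ r_m^{\tau}$'' holds only when $\sigma$ and $\tau$ are involutions; this breaks precisely for the two products $r_1^{\id}\circ r_3^{(123)}$ and $r_1^{\id}\circ r_3^{(132)}$, which are among the cases you most need it for. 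You must therefore either fix the gluing convention and verify the reflection identity carefully, or handle the two $3$-cycle cases by direct enumeration of $\loom{1}{3}$ and $\loom{3}{1}$. Finally, even granting the symmetry, your argument still terminates in ``the remaining coefficient comparisons are carried out in \cite{rivezzithesis}'': the decisive finite computation is deferred rather than performed, so as written this is a (well-structured) plan of proof rather than a proof.
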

\begin{proposition}
Let $\star$ be the multiplication in $\mathfrak{U}_\DY^1$ given by $r_n^\sigma \star r_m^\tau = r_{n+m}^{\sigma \ten \tau}$, which is pictorially represented by the incapsulation of $r_m^\tau$ inside $r_n^\sigma$:
\begin{center}
\begin{tikzpicture}
\draw[-, very thick,ggreen] (-0.2,0)--(3,0);
\draw[very thick]  (1,1) arc (90:180:1);
\draw[very thick]  (1,0.5) arc (90:180:0.5);
\draw[very thick] (1,1)--(1.2,1);
\draw[very thick] (1,0.5)--(1.2,0.5);
\node[shape=circle,draw,inner sep=1pt] (char) at (1.4,1) {$\sigma$}; 
\node[shape=circle,draw,inner sep=1pt] (char) at (1.4,0.5) {$\tau$}; 
\draw[very thick] (1.6,1)--(1.8,1);
\draw[very thick] (1.6,0.5)--(1.8,0.5);
\draw[very thick]  (2.75,0) arc (0:90:1);
\draw[very thick]  (2.30,0) arc (0:90:0.5);
\node at (2.85,-0.3) {$n$};
\node at (0,-0.3) {$n$};
\node at (2.25,-0.3) {$m$};
\node at (0.6,-0.3) {$m$};
\end{tikzpicture}
\end{center}
Then for any $n,m \geqslant 0$ and $\tau \in \mathfrak{S}_m$ we have
\[ r_1^\id \circ (r_n^\id \star r_m^\tau) = n \cdot r_{n+1}^\id \star r_m^\tau + r_n^\id \star (r_1^\id \circ r_m^\tau) - \sum_{k=1}^n r_{n+1}^{(k,k+1)} \star r_m^\tau. \]
\end{proposition}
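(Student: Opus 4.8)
The plan is to reduce the statement, by induction on $n$, to the case $n=1$, and to settle that case via the explicit formula of Theorem~\ref{maintheorem}. Throughout I would use that $\star$ is $\K$-bilinear and associative --- being, on the standard basis, $r_n^\sigma\star r_m^\tau=r_{n+m}^{\sigma\ten\tau}$ with $\sigma\ten\tau$ the block permutation of $\mathfrak{S}_{n+m}$ --- together with the elementary identities that $\star$-multiplying by $r_1^\id$ (resp.\ by $r_2^{(1,2)}$) merely shifts the permutation: $r_1^\id\star r_k^\id=r_{k+1}^\id$, $r_1^\id\star r_k^{(i,i+1)}=r_{k+1}^{(i+1,i+2)}$, and $r_2^{(1,2)}\star r_k^\id=r_{k+2}^{(1,2)}$. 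All of these are immediate from the description of $\sigma\ten\tau$.

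\textbf{Reduction to $n=1$.} The case $n=0$ is trivial, since $r_0^\id$ is the $\star$-unit; the case $n=1$ is treated below. For $n\geqslant 2$, I would write $r_n^\id\star r_m^\tau=r_1^\id\star(r_{n-1}^\id\star r_m^\tau)$ by associativity of $\star$, apply the already-established case $n=1$ to the inner factor $X\coloneqq r_{n-1}^\id\star r_m^\tau$ to get $r_1^\id\circ(r_1^\id\star X)=r_2^\id\star X+r_1^\id\star(r_1^\id\circ X)-r_2^{(1,2)}\star X$, and then expand $r_1^\id\circ X=r_1^\id\circ(r_{n-1}^\id\star r_m^\tau)$ by the inductive hypothesis. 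Recombining with the shift identities above, $r_2^\id\star X=r_{n+1}^\id\star r_m^\tau$ and $r_2^{(1,2)}\star X=r_{n+1}^{(1,2)}\star r_m^\tau$, while $r_1^\id\star(r_1^\id\circ X)$ contributes $(n-1)\,r_{n+1}^\id\star r_m^\tau+r_n^\id\star(r_1^\id\circ r_m^\tau)-\sum_{k=1}^{n-1}r_{n+1}^{(k+1,k+2)}\star r_m^\tau$. Adding these, the coefficient of $r_{n+1}^\id\star r_m^\tau$ becomes $1+(n-1)=n$, the transposition $(1,2)$ from $r_2^{(1,2)}\star X$ together with the shifted ones $(2,3),\dots,(n,n+1)$ fill out exactly $\sum_{j=1}^n r_{n+1}^{(j,j+1)}\star r_m^\tau$, and the middle term is $r_n^\id\star(r_1^\id\circ r_m^\tau)$, which is the asserted formula. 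This step is pure bookkeeping with the shift identities; I expect no difficulty here.

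\textbf{The case $n=1$.} What remains is $r_1^\id\circ r_{m+1}^{\id_1\ten\tau}=r_2^\id\star r_m^\tau-r_2^{(1,2)}\star r_m^\tau+r_1^\id\star(r_1^\id\circ r_m^\tau)$. I would expand the left side by Theorem~\ref{maintheorem} as $\sum_{L\in\loom{1}{m+1}}(-1)^{\countingtwo(L)}\,r_{m+2}^{\tilde{\gamma}_{1,m+1}(\id,L,\id_1\ten\tau)}$ and partition $\loom{1}{m+1}$ according to the content of the first column. Since $\id_1\ten\tau$ fixes the first strand, the looms whose first column does not route the $r_1^\id$-strand into the remaining $m$ columns split into two families, obtained by adjoining the first strand to an element of $\loom{1}{m}$ either untouched or as a single transposition $(1,2)$; summed with signs these reproduce $r_2^\id\star r_m^\tau-r_2^{(1,2)}\star r_m^\tau$. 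The looms whose first column sends the $r_1^\id$-strand into the last $m$ columns are, I claim, in sign-preserving bijection with $\loom{1}{m}$ in such a way that $\tilde{\gamma}_{1,m+1}(\id,L,\id_1\ten\tau)=\id_1\ten\tilde{\gamma}_{1,m}(\id,L',\tau)$; summing over $L'$ and using Theorem~\ref{maintheorem} once more gives $r_1^\id\star(r_1^\id\circ r_m^\tau)$. A diagrammatic paraphrase of this analysis: the single coaction--action bump of $r_1^\id$ slides rightward through the tower $r_1^\id\star r_m^\tau$ by repeated use of the Drinfeld--Yetter rule \eqref{eq:dyrule}; it is either absorbed at the outer identity bump (contributing $r_2^\id\star r_m^\tau$, together with the correction $-r_2^{(1,2)}\star r_m^\tau$ produced by the bracket/cobracket terms once reduced via \eqref{eq:coactionrule}), or it passes that bump and then interacts with the inner $r_m^\tau$-block, which by the very definition of $\circ$ yields $r_1^\id\star(r_1^\id\circ r_m^\tau)$. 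As a consistency check, the subcase $m=0$ recovers Proposition~\ref{identitymultiplication} for $n=1$. The main obstacle is exactly this last step: verifying that the recursive structure of $\loom{1}{m+1}$ (cf.\ the partition in Proposition~\ref{proposition-partition-drinfeld-yetter-tableaux}) splits off a ``first-strand'' factor in the claimed way, and that the signs $(-1)^{\countingtwo}$ are respected under the bijection with $\loom{1}{m}$.
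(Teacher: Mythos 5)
First, a caveat: the paper does not actually prove this proposition — it appears in the appendix with the proof deferred to \cite{rivezzithesis} — so your argument can only be judged on its own terms. The algebraic part of your proposal is correct: $\star$ is associative with unit $r_0^{\id}$, the shift identities hold, and the induction reducing the general case to the case $n=1$ (for arbitrary $m$ and arbitrary $\tau$) is accurate bookkeeping; the coefficient $1+(n-1)=n$ and the reassembly of $(1,2)$ together with the shifted transpositions $(k+1,k+2)$ into $\sum_{j=1}^{n}r_{n+1}^{(j,j+1)}\star r_m^\tau$ both check out. This reduction is a genuine contribution, since it concentrates all of the content of the statement into the single identity $r_1^{\id}\circ(r_1^{\id}\star r_m^{\tau})=r_2^{\id}\star r_m^{\tau}-r_2^{(1,2)}\star r_m^{\tau}+r_1^{\id}\star(r_1^{\id}\circ r_m^{\tau})$.

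The gap is in that base case, and it is not just a matter of routine verification: the mechanism you propose cannot be the right one. The recursion \eqref{eq:recursive-formula-looms-one} gives $H_{1,m+1}=2+3H_{1,m}$, reflecting the fact that a loom in $\loom{1}{m+1}$ either has one of the two bracket tiles in its first column — in which case the remaining $m$ columns are \emph{forced}, not free over $\loom{1}{m}$ as your phrasing suggests, although these two looms do contribute exactly $r_2^{\id}\star r_m^{\tau}-r_2^{(1,2)}\star r_m^{\tau}$ — or its first column is one of \emph{three} tiles (the crossing tile or one of the two cobracket tiles) followed by what amounts to an element of $\loom{1}{m}$. Hence the family that routes the extra strand inward has cardinality $3H_{1,m}$ and cannot be in sign-preserving bijection with $\loom{1}{m}$. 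What must actually happen — and what one already sees for $m=0$, where the five elements of $\loom{1}{1}$ produce $2r_2^{\id}-r_2^{(1,2)}$ consistently with Proposition \ref{identitymultiplication} — is that two of the three copies cancel as a negligible pair (the crossing-tile copy, of sign $+1$, against the copy headed by the cobracket tile that is counted by $\countingtwo$, of sign $-1$), and only the surviving third copy is matched with $\loom{1}{m}$ compatibly with $\tilde{\gamma}$ and with Theorem \ref{maintheorem}. Proving that the first two copies always yield the same permutation $\tilde{\gamma}_{1,m+1}(\id_1,-,\id_1\ten\tau)$, and that the surviving copy satisfies $\tilde{\gamma}_{1,m+1}(\id_1,L,\id_1\ten\tau)=\id_1\ten\tilde{\gamma}_{1,m}(\id_1,L',\tau)$, is precisely the missing content. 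You flag this step as the main obstacle, but the bijection as you state it is false for cardinality reasons as well as unproven, so the argument is incomplete at its crux.
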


\begin{proposition}
Let $n \geqslant 0$. Then, for any $2 \leqslant k  \leqslant n $ one has
\begin{equation*}
\begin{split}
r_n^{(1,k)} \circ r_1^\id &= (k-2) r_{n+1}^{(1,k+1)} + (n-k+1)r_{n+1}^{(1,k)} - \sum_{i=2}^{k-1} r_{n+1}^{(1,k+1)(i,i+1)} \\
& - \sum_{i=k+1}^{n} r_{n+1}^{(1,k)(i,i+1)} + r_{n+1}^{(1,k+1,2,3, \ldots , k-1,k)} - r_{n+1}^{(1,k+1)(2,3,\ldots, k)} \\
& + r_{n+1}^{(1,k,k-1,k-2, \ldots, 3,2,k+1)} - r_{n+1}^{(1,k+1)(2,k,k-1,\ldots, 3,2)}.
\end{split}
\end{equation*}
\end{proposition}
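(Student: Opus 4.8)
The plan is to apply Theorem~\ref{maintheorem} with $m=1$, $\tau=\id\in\mathfrak{S}_1$ and $\sigma=(1,k)\in\mathfrak{S}_n$. This reduces the statement to the identity
\[
r_n^{(1,k)}\circ r_1^{\id}=\sum_{L\in\loom{n}{1}}(-1)^{\countingtwo(L)}\,r_{n+1}^{\tilde{\gamma}_{n,1}((1,k),L,\id)},
\]
so that the whole problem becomes the combinatorial task of enumerating the $n\times 1$ Drinfeld--Yetter looms and computing, for each of them, the sign $(-1)^{\countingtwo(L)}$ and the permutation $\tilde{\gamma}_{n,1}((1,k),L,\id)$.

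The first step is to make $\loom{n}{1}$ explicit. Because $m=1$, the horizontal (yellow) bundles in the tiles of $\T_{\loom{n}{1}}$ are empty, so a loom here is simply a vertical stack of $n$ decorated tiles. By Proposition~\ref{proposition-partition-drinfeld-yetter-tableaux} it is enough to go mosaic by mosaic: $\loom{n}{1}=\bigsqcup_{M\in\mosaic{n}{1}}\mathfrak{L}(M)$, with $|\mathfrak{L}(M)|=2^{\cob(M)+\bra(M)}$ and the explicit map $f_M$. The single--column mosaics $\mosaic{n}{1}$ are especially simple: the continuity rules of Definition~\ref{definition-mosaics} force each such $M$ to be a run of $p\ge 0$ permutation/bracket tiles, followed by at most one cobracket tile, followed by a run of action tiles, and one computes $|\mosaic{n}{1}|=2^{n+1}-1$ and hence $|\loom{n}{1}|=2\cdot 3^{n}-1$ (consistent with \eqref{eq:recursive-formula-looms-one}). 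Equivalently, one may peel off the top row, which lies in $\mosaic{1}{1}=\{\text{permutation},\text{bracket},\text{cobracket}\}$: a cobracket on top forces all tiles below it to be action tiles, while a permutation or bracket on top leaves an $(n-1)\times 1$ mosaic underneath, which yields a recursion expressing $r_n^{(1,k)}\circ r_1^{\id}$ through products of lower degree.

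The second step is the evaluation. For each loom I would compute $\tilde{\gamma}_{n,1}((1,k),L,\id)$ by following the $n+1$ strings through the column with $(1,k)$ glued to the left edge and strings sharing a tile moved together, using the identity $\tilde{\gamma}_{n,1}((1,k),L,\id)=(\tilde{\sigma}\ten\id)\circ\gamma_{n,1}(L)$ (with $\tilde{\sigma}$ the fattened permutation) from the proof of Lemma~\ref{lemmamultiplication}; the sign $(-1)^{\countingtwo(L)}$ is read off by counting the two distinguished tile types measured by $\countingtwo$. It is convenient to organise the enumeration by the location of the bracket/cobracket tile relative to the two strings $1$ and $k$ moved by $\sigma$: the looms whose distinguished tile lies below both of these strings, or above both, produce the ``bulk'' contributions $(k-2)r_{n+1}^{(1,k+1)}+(n-k+1)r_{n+1}^{(1,k)}-\sum_{i=2}^{k-1}r_{n+1}^{(1,k+1)(i,i+1)}-\sum_{i=k+1}^{n}r_{n+1}^{(1,k)(i,i+1)}$, whose shape is exactly that of Proposition~\ref{identitymultiplication} applied separately to the portions of the column above and below the inserted string; the four remaining long--cycle terms come from the finitely many looms in which the newly created string is routed across the entire support $\{1,\dots,k\}$ of $(1,k)$, which is precisely where the cycle structure of $\sigma$ interacts nontrivially with the crossings of $L$.

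The main obstacle is the cancellation. Before collecting, Theorem~\ref{maintheorem} produces $|\loom{n}{1}|=2\cdot 3^{n}-1$ summands, and all but the displayed ones must cancel in pairs of opposite sign mapping under $\tilde{\gamma}_{n,1}((1,k),\cdot,\id)$ to the same element of $\mathfrak{S}_{n+1}$; equivalently, the non--surviving looms must split into $(1,k)$--negligible pairs in the sense of the construction of $\loom{n}{1}^{(1,k),\id,ess}$ preceding Proposition~\ref{properties-essential-looms}. The subtlety is that replacing a single bracket resolution by the other one, or a single cobracket resolution by the other one, simultaneously flips $(-1)^{\countingtwo}$ and composes $\gamma_{n,1}(L)$ with a transposition of two strings, so the required sign--reversing involution is not the naive one. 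I would construct it by pairing, inside each fibre of $\tilde{\gamma}_{n,1}((1,k),\cdot,\id)$, looms according to whether the first bracket or cobracket encountered along a fixed strand is resolved ``straight'' or ``crossed'', and then verify that exactly the eight families on the right--hand side survive with the asserted multiplicities. A check of the degenerate cases $k=2$ and $k=n$, where some of the families collapse or become empty, together with a direct verification for small $n$, would confirm the coefficients $k-2$ and $n-k+1$.
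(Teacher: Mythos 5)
The paper offers no proof of this proposition to compare against: it appears in the Appendix, whose statements are all deferred to \cite{rivezzithesis}. Your overall strategy is nevertheless the natural one given the machinery of the paper: specialize Theorem \ref{maintheorem} to $m=1$, $\tau=\id$, $\sigma=(1,k)$, and reduce to an enumeration of $\loom{n}{1}$. Your structural preliminaries are also correct: for $m=1$ the yellow bundles are empty, a loom is a single column of decorated tiles, a mosaic in $\mosaic{n}{1}$ is a run of permutation/bracket tiles followed by at most one cobracket tile followed by action tiles, and the counts $|\mosaic{n}{1}|=2^{n+1}-1$, $|\loom{n}{1}|=2\cdot 3^{n}-1$ check out against \eqref{eq:F-stirling} and \eqref{eq:recursive-formula-looms-one}.

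The genuine gap is that everything after this setup is announced rather than carried out, and what is announced is exactly the content of the proposition. Concretely: (a) you never compute $\tilde{\gamma}_{n,1}((1,k),L,\id)$ or $(-1)^{\countingtwo(L)}$ for a single family of looms, so the coefficients $k-2$ and $n-k+1$, the two transposition sums, and the four exceptional long cycles are asserted, not derived; (b) the appeal to Proposition \ref{identitymultiplication} for the ``bulk'' terms is only a heuristic, since gluing $(1,k)$ to the left edge reroutes strings $1$ and $k$ through the whole column, so the column does not decouple into independent pieces above and below the inserted strand; and (c) the sign-reversing involution that must cancel all but $2n+1$ of the $2\cdot 3^{n}-1$ summands is not actually constructed --- you correctly observe that the naive ``flip one bracket resolution'' involution fails because it changes the permutation, but your replacement (``pair looms inside each fibre of $\tilde{\gamma}_{n,1}((1,k),\cdot,\id)$'') presupposes knowledge of the fibres and of the fact that each non-surviving fibre contains equally many positive and negative looms, which is precisely what has to be proved. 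To close the argument you would need to execute the case analysis on the positions of the cobracket tile and of the bracket tiles relative to rows $1$ and $k$ after the twist by $(1,k)$ (or set up and actually close the induction on $n$ you hint at by peeling off the top tile), and separately verify the degenerate cases $k=2$ and $k=n$ where several of the eight families coincide or vanish.
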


\subsection{Conjectures}
\label{section-conjectures}
\begin{conjecture}
Let $c_{n,m}$ as in Proposition \ref{proposition-coefficient-identity}. Then $c_{n,m}$ is the dominant term of $r_n^\id \circ r_m^\id$, i.e. the biggest in absolute value.
\end{conjecture}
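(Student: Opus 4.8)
The plan is to deduce the statement from Corollary~\ref{maincor}, which turns it into the assertion that $|c_{\id,\id}^{\pi}| = |P_{n,m}^{\id,\id,\pi} - N_{n,m}^{\id,\id,\pi}| \leqslant \binom{n+m}{n}$ for every $\pi \in \mathfrak{S}_{n+m}$, the case $\pi = \id$ being exactly Proposition~\ref{proposition-coefficient-identity} (there it is an equality). A first step I would carry out is to describe the fibre $\Gamma_{n,m}^{\id,\id,\id}$ of $\tilde{\gamma}_{n,m}(\id,\,\cdot\,,\id)$ over the identity explicitly: I expect these looms to be precisely the tilings of $\grid{n}{m}$ by the tiles of $\T_{\loom{n}{m}}$ that do not involve any crossing of strings, and each such $L$ to have $\countingtwo(L)$ even. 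This would give $N_{n,m}^{\id,\id,\id}=0$ and $P_{n,m}^{\id,\id,\id} = |\Gamma_{n,m}^{\id,\id,\id}| = \binom{n+m}{n}$, re-deriving Proposition~\ref{proposition-coefficient-identity} combinatorially and, more importantly, isolating a canonical family $\mathcal{N}_{n,m}\subset\loom{n}{m}$ of $\binom{n+m}{n}$ ``non-crossing'' looms.

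The heart of the argument would then be the construction, for each $\pi\neq\id$, of an injection from the essential fibre $\Gamma_{n,m}^{\id,\id,\pi,ess}$ into $\mathcal{N}_{n,m}$. By Proposition~\ref{properties-essential-looms}(i) the set $\Gamma_{n,m}^{\id,\id,\pi,ess}$ is monochromatic (all its looms share one sign) and has cardinality $|c_{\id,\id}^{\pi}|$, so such an injection yields exactly the desired bound. Concretely I would define an ``uncrossing'' move replacing a crossing tile by a non-crossing one, analyse how $\tilde{\gamma}_{n,m}$ and $\countingtwo$ change under it, and order the moves (say, lexicographically from the top-left corner) so that the induced map is injective on each essential fibre. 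An alternative, more algebraic route is induction on $\min(n,m)$: using the relations of Proposition~\ref{identitymultiplication}, associativity of $\circ$, and the loom recursions \eqref{eq:recursive-formula-looms-one}--\eqref{eq:recursive-formula-looms-two}, one sets up a recursion in $\bigoplus_k \K[\mathfrak{S}_k]$ for $r_n^{\id}\circ r_m^{\id}=\sum_\pi c_{\id,\id}^\pi[\pi]$ and proves inductively that the $\id$-coefficient stays dominant; here the identity $\sum_\pi c_{\id,\id}^\pi = 1$ (which follows from Proposition~\ref{identitymultiplication}, associativity of $\circ$, and the fact that $\sum_{L\in\loom{n}{m}}(-1)^{\countingtwo(L)}$ is independent of $\sigma,\tau$) is a useful sanity check and a constraint the recursion must be made to respect.

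The main obstacle is controlling the cancellation hidden in the difference $P_{n,m}^{\id,\id,\pi}-N_{n,m}^{\id,\id,\pi}$: each of $P$ and $N$ grows rapidly in $n+m$ and is in general far larger than $\binom{n+m}{n}$, so any bound must come from a genuine pairing or from the uncrossing injection above, and both demand a sharper understanding of $\gamma_{n,m}^{-1}(\pi)$ than is currently available (the ``elusive'' nature of $\loom{n}{m}$ noted after Corollary~\ref{maincor}). A secondary difficulty, should one attempt the realization-map route, is that $\rho_{\b}$ is faithful on $\mathfrak{U}_{\DY}^1$ only for Lie bialgebras whose underlying Lie algebra is free, so reading off the $c_{\id,\id}^\pi$ from a PBW straightening of $\rho_{\b}(r_n^{\id})\rho_{\b}(r_m^{\id})=\sum_\pi c_{\id,\id}^\pi\,\rho_{\b}(r_{n+m}^\pi)$ forces one to work with an infinite-dimensional $\b$ and still does not obviously yield the uniform bound.
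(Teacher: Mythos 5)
This statement is one of the open conjectures collected in Appendix \ref{section-conjectures}: the paper offers no proof of it, so there is no argument of the author's to compare yours against. The only question is whether your proposal actually closes the gap, and it does not --- it is a research plan whose two load-bearing steps are left unestablished, as you yourself concede in your final paragraph. The reduction via Corollary \ref{maincor} and Proposition \ref{properties-essential-looms}(i) to the bound $|\Gamma_{n,m}^{\id,\id,\pi,ess}|\leqslant\binom{n+m}{n}$ is sound, but everything after that is conjectural. First, your description of the identity fibre is unproved and not obviously correct: Proposition \ref{proposition-coefficient-identity} (itself only cited from \cite{rivezzithesis}) gives $P_{n,m}^{\id,\id,\id}-N_{n,m}^{\id,\id,\id}=\binom{n+m}{n}$, not $N_{n,m}^{\id,\id,\id}=0$, and a loom can realize the identity permutation while containing crossings that undo one another (the paper's own Example of $L_1,L_2\in\loom{2}{2}$ with $\gamma_{2,2}(L_1)=\gamma_{2,2}(L_2)$ shows how little the tile content is determined by the permutation). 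So the equality $|\Gamma_{n,m}^{\id,\id,\id}|=\binom{n+m}{n}$ and the evenness of $\countingtwo$ on that fibre both need arguments you do not supply.

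Second, the ``uncrossing'' injection is the entire content of the conjecture and is not constructed. You would need to show (i) that the move is well defined on $\Gamma_{n,m}^{\id,\id,\pi,ess}$, which is delicate because that set depends on an arbitrary choice of pairing in \eqref{eq:ordering-essential-looms}; (ii) that iterated uncrossing terminates in an element of $\mathcal{N}_{n,m}$ and not merely in some loom with fewer crossings; and (iii) that the resulting map is injective on each fibre --- none of which is addressed. The alternative inductive route fares no better: Proposition \ref{identitymultiplication} only treats $m=1$, and since $(r_1^{\id})^{\circ 2}=2r_2^{\id}-r_2^{(12)}\neq r_2^{\id}$, associativity does not let you build $r_m^{\id}$ out of copies of $r_1^{\id}$; the assertion that ``the $\id$-coefficient stays dominant'' under the recursion is exactly the conjecture restated, not a consequence of the recursion. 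In short, the proposal correctly locates the difficulty (the cancellation in $P-N$) but does not resolve it, so the statement remains open.
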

\begin{conjecture}
Let $\pi_n \in \mathfrak{S}_n$ defined by 
\[ \pi_n \coloneqq \prod_{i=1}^{\lfloor \frac{n}{2}\rfloor } (i,n-i+1) \in \mathfrak{S}_n.\] Then the following identity holds in $\mathfrak{U}_\DY^1$:
\[ r_1^\id \circ r_n^{\pi_n} = -n \cdot r_{n+1}^{\pi_{n+1}} + r_{n}^{\pi_n} \star r_1^\id + \sum_{i=1}^n r_{n+1}^{\pi_{n+1} \circ (i,i+1)}. \]
Furthermore, let $p_{n,m}$ be the coefficient of $r_{n+m}^{\pi_{n+m}}$ with respect to the multiplication $r_{n}^\id \circ r_m^{\pi_m}$. Then $p_{n,m}$ is the dominant coefficient of $r_1^\id \circ r_n^{\pi_n}$ (i.e. is the biggest in absolute value). Moreover, the $p_{n,m}$'s satisfy the recursive formula $p_{n,m} = (-1)^n (|p_{n-1,m}| + |p_{n,m-1}|)$
with initial conditions $p_{1,1}=-1$, $p_{1,m} = -m$ and $p_{n,1}=0$ for $n >1$.
\end{conjecture}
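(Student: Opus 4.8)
The strategy is to reduce all three assertions to the combinatorics of Drinfeld--Yetter looms through Theorem~\ref{maintheorem} and Corollary~\ref{maincor}, and to run an induction on $n+m$. For the first identity I would apply Theorem~\ref{maintheorem} with $\sigma=\id_1\in\mathfrak{S}_1$ and $\tau=\pi_n\in\mathfrak{S}_n$, getting
\[
r_1^{\id}\circ r_n^{\pi_n}=\sum_{L\in\loom{1}{n}}(-1)^{\countingtwo(L)}\,r_{n+1}^{\tilde{\gamma}_{1,n}(\id,L,\pi_n)}.
\]
Since the grid $\grid{1}{n}$ has a single row, the vertical parameter $\ell$ of the tiles in $\T_{\loom{1}{n}}$ is forced to $0$, so a loom here is just a word of length $n$ in the one-row tiles subject to the edge-matching conditions (1)--(5) of Definition~\ref{definition-refined-Drinfeld-Yetter-Tableaux}; these are governed by the elementary recursion $|\loom{1}{m}|=2+3|\loom{1}{m-1}|$ and can be listed, together with their images under $\tilde{\gamma}_{1,n}(\id,-,\pi_n)$ and their signs $\countingtwo$, by a direct induction on $n$. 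Collecting terms should reproduce the three families on the right-hand side with the stated coefficients: signed loom-count $1$ on the encapsulated permutation $\pi_n\otimes\id$ (that is, $r_n^{\pi_n}\star r_1^{\id}$), signed loom-count $1$ on each $\pi_{n+1}\circ(i,i+1)$, and signed loom-count $-n$ on the reversal $\pi_{n+1}$. As these $n+2$ permutations are pairwise distinct (distinct fixed-point sets or cycle types), there is no interference and the identity follows; in particular $p_{1,m}=-m$.

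The remaining boundary values come from Proposition~\ref{identitymultiplication}: in $r_n^{\id}\circ r_1^{\id}=(n+1)r_{n+1}^{\id}-\sum_{i=1}^{n}r_{n+1}^{(i,i+1)}$ the reversal $\pi_{n+1}$ equals $\id_{n+1}$ for no $n\geqslant 1$ and equals a simple transposition only for $n=1$, whence $p_{1,1}=-1$ and $p_{n,1}=0$ for $n>1$. For the recursion, write $p_{n,m}=\sum_L(-1)^{\countingtwo(L)}$ over the looms $L\in\loom{n}{m}$ with $\tilde{\gamma}_{n,m}(\id,L,\pi_m)=\pi_{n+m}$, and decompose them by their first row, in the spirit of Proposition~\ref{recurrence-rule-dy-tableaux} and of the recursions \eqref{eq:recursive-formula-looms-one}--\eqref{eq:recursive-formula-looms-two}. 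The point to prove is that requiring the whole diagram to realize the reversal $\pi_{n+m}$ is rigid enough that only two completions of the first row survive: one which, after deleting the first row, yields a loom in $\loom{n-1}{m}$ realizing $\pi_{(n-1)+m}$ against $\pi_m$, and one which symmetrically reduces (via the first column) to $\loom{n}{m-1}$, each carrying exactly one extra distinguished tile that flips the parity recorded by $\countingtwo$. Coupled with a sign-coherence statement --- all looms landing on $\pi_{n+m}$ carry the same sign, proved in tandem using Proposition~\ref{remarksigns} --- this gives $p_{n,m}=(-1)^n(|p_{n-1,m}|+|p_{n,m-1}|)$.

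For the dominance assertion --- that $p_{n,m}$ is the structure constant of largest absolute value in $r_n^{\id}\circ r_m^{\pi_m}$ --- Corollary~\ref{maincor} expresses every such structure constant as $P_{n,m}^{\id,\pi_m,\pi}-N_{n,m}^{\id,\pi_m,\pi}$, so it is enough to bound $|\Gamma_{n,m}^{\id,\pi_m,\pi}|=P_{n,m}^{\id,\pi_m,\pi}+N_{n,m}^{\id,\pi_m,\pi}$ for $\pi\neq\pi_{n+m}$ against $|p_{n,m}|$. I would look for an injection, for each such $\pi$, of $\Gamma_{n,m}^{\id,\pi_m,\pi}$ into the set of same-sign looms landing on the reversal --- a ``straightening'' of the local configurations that prevent the output from being $\pi_{n+m}$ --- which, together with the sign-coherence from the previous step (so that $|p_{n,m}|=|\Gamma_{n,m}^{\id,\pi_m,\pi_{n+m}}|$, with no cancellation), yields the inequality.

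The routine parts are the explicit product $r_1^{\id}\circ r_n^{\pi_n}$ and the boundary values, which are finite enumerations and a direct appeal to Proposition~\ref{identitymultiplication}. The main obstacle is the first-row decomposition behind the recursion: since $\tilde{\gamma}_{n,m}$ is neither injective nor surjective and a single changed tile can alter the output permutation unpredictably, proving that precisely two first-row completions are compatible with the reversal --- and controlling the induced $\countingtwo$-sign and the cancellations --- is delicate. The dominance claim is likely harder still, as it bears on the ``elusive'' fine structure of $\loom{n}{m}$ flagged in the paper; a realistic intermediate target is to prove it conditionally on the sign-coherence statement, reducing it to a comparison of unsigned cardinalities of the loom blocks $\Gamma_{n,m}^{\id,\pi_m,\pi}$.
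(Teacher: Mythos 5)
This statement is labelled a \emph{conjecture} in the paper's appendix; the paper supplies no proof of it, so there is no argument of the author's to compare yours against, and the only question is whether your proposal actually settles the problem. It does not: it is a research plan in which every genuinely hard step is asserted or hoped for rather than proved. Your framework --- reducing all three assertions to the loom combinatorics of Theorem \ref{maintheorem} and Corollary \ref{maincor} --- is certainly the intended one, and your derivation of the boundary values $p_{1,1}=-1$, $p_{n,1}=0$ from Proposition \ref{identitymultiplication} is correct. But for the first identity the sum runs over the $2\cdot 3^{n}-1$ elements of $\loom{1}{n}$, and you claim only that collecting terms ``should'' produce the $n+2$ stated summands; no mechanism for the massive cancellation is given, and your justification that the target permutations are pairwise distinct (``distinct fixed-point sets or cycle types'') already fails at $n=1$, where $\pi_1\ten\id=\id_2=\pi_2\circ(12)$, so even the bookkeeping needs repair.

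The two remaining parts contain the real gaps. For the recursion $p_{n,m}=(-1)^n(|p_{n-1,m}|+|p_{n,m-1}|)$, the entire content is your unproved rigidity claim that exactly two first-row (resp.\ first-column) completions are compatible with realizing the reversal $\pi_{n+m}$, together with an unproved sign-coherence statement for the block $\Gamma_{n,m}^{\id,\pi_m,\pi_{n+m}}$; you correctly identify this as the main obstacle, but you offer no argument for either claim, and Proposition \ref{remarksigns} only relates $\countingone$, $\countingtwo$ and $\cob$ \emph{within} a single set $\mathfrak{L}(M)$ --- it does not give coherence of $(-1)^{\countingtwo}$ across all looms mapping to $\pi_{n+m}$, which may come from different mosaics. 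For dominance you propose to ``look for an injection'' of each $\Gamma_{n,m}^{\id,\pi_m,\pi}$ into the block of the reversal without constructing one, and this is precisely the part of the fine structure of $\loom{n}{m}$ that the paper describes as elusive. In short, the proposal identifies a sensible strategy and honestly flags its own missing pieces, but none of the three assertions is actually established.
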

\begin{conjecture}
\label{conjecture-cardinality-looms}
Let $H_{n,m} = |\loom{n}{m}|$. Then
\begin{equation*}
\begin{split}
H_{n,m}&=\sum_{k=0}^m \sum_{i=0}^k (-1)^{m-i} \binom{k}{i} (2i+1)^m (2k+1)^n \\
&= \sum_{k=0}^m  (-1)^{m-k} T_{m,k}(2k+1)^n
\end{split}
\end{equation*} 
where $T_{m,k}$ is defined by the recurrence rule $ T_{m,k} = (2k+1) T_{m-1,k} + 2k T_{m-1,k-1}$
with initial conditions 
$ T_{0,0}=1$ and $ T_{0,k}=0$ for any $k \geqslant 1$.
\end{conjecture}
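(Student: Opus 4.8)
The plan is as follows. Throughout write $H_{n,m}=|\loom{n}{m}|$, and introduce the auxiliary quantity
\[
H_{n,m}^{\sharp}:=\sum_{k=0}^{m}(-1)^{m-k}\,T_{m,k}\,(2k+1)^{n},
\]
with $T_{m,k}$ as in the statement; the conjecture is exactly $H_{n,m}=H_{n,m}^{\sharp}$, and the equivalence of the two displayed right-hand sides is the identity $T_{m,k}=\sum_{i=0}^{k}(-1)^{k-i}\binom{k}{i}(2i+1)^{m}$. This last identity is checked directly: its right-hand side satisfies $T_{m,k}=(2k+1)T_{m-1,k}+2kT_{m-1,k-1}$ by $\binom{k}{i}(k-i)=k\binom{k-1}{i}$, and matches the initial conditions. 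For later use I also record the generating function
\[
f_{k}(x):=\sum_{m\geqslant 0}T_{m,k}\,x^{m}=\frac{2^{k}\,k!\,x^{k}}{\prod_{j=0}^{k}\bigl(1-(2j+1)x\bigr)},
\]
which follows from the recurrence in the form $f_{k}(x)\bigl(1-(2k+1)x\bigr)=2kx\,f_{k-1}(x)$ ($k\geqslant1$) together with $f_{0}(x)=1/(1-x)$.

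Next I would prove $H_{n,m}=H_{n,m}^{\sharp}$ by induction on the pair $(m,n)$, ordered lexicographically with $m$ primary, using the recursion \eqref{eq:recursive-formula-looms-two} (which may be assumed); indeed \eqref{eq:recursive-formula-looms-two} expresses $H_{n,m}$ through the values $H_{n,m-1}$ and $H_{n-1,p}$ with $p\leqslant m$, all strictly preceding $(m,n)$. The base cases are $H_{n,0}=1=H_{n,0}^{\sharp}$ (trivial, since $T_{0,0}=1$) and $H_{0,m}=1$ (the singleton mosaic): here hockey-stick summation gives $H_{0,m}^{\sharp}=\sum_{i=0}^{m}(-1)^{m-i}\binom{m+1}{i+1}(2i+1)^{m}$, which equals $1$ because the $(m+1)$-st finite difference of the degree-$m$ polynomial $j\mapsto(2j-1)^{m}$ vanishes.

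For the inductive step one substitutes the already-known closed forms for $H_{n,m-1}$ and all $H_{n-1,p}$ into the right-hand side of \eqref{eq:recursive-formula-looms-two}. After collapsing the inner sum $\sum_{\ell=1}^{k'}\binom{k'}{\ell}$ by hockey-stick and reindexing, and using $(2k+1)^{n-1}=(2k+1)^{-1}(2k+1)^{n}$ together with the linear independence of the geometric sequences $n\mapsto(2k+1)^{n}$ ($0\leqslant k\leqslant m$), the required identity $H_{n,m}^{\sharp}=(\text{RHS})$ reduces to the following \emph{master identity}, to be established for all $m\geqslant1$ and $0\leqslant k\leqslant m$:
\[
2(2k+1)\,T_{m-1,k}-2(m-k)\,T_{m,k}=\sum_{j=1}^{m-1}(-1)^{j}\Bigl[2^{j}\tbinom{m-1}{j}+2^{j+1}\tbinom{m-1}{j+1}\Bigr]T_{m-j,k}.
\]
This is the counterpart, in the present setting, of the identity \eqref{eq:stirling-formula}; it could be proved by an induction on $m$ in the same spirit, but the quickest route is generating-functional. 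For $k\geqslant1$, multiplying by $x^{m}$ and summing over $m\geqslant1$ turns the left-hand side into $2\bigl[(2k+1)x+k\bigr]f_{k}(x)-2xf_{k}'(x)$, and — applying the substitution rule $\sum_{j\geqslant0}(-2x)^{j}[y^{j}]G(y)=G(-2x)$ to each of the two binomials, with due care for the first-order correction produced by the index shift in the second — turns the right-hand side into $\tfrac{1+x}{x}\bigl(f_{k}(x)-f_{k}(\tfrac{x}{1+2x})\bigr)-2xf_{k}'(x)$. Comparing, the master identity becomes $(1+x)\,f_{k}\bigl(\tfrac{x}{1+2x}\bigr)=(1+2x)\bigl(1-(2k+1)x\bigr)f_{k}(x)$, which is immediate from the product formula for $f_{k}$ using $\prod_{j=0}^{k}\bigl(1+(1-2j)x\bigr)=(1+x)\prod_{j=0}^{k}\bigl(1-(2j+1)x\bigr)\big/\bigl(1-(2k+1)x\bigr)$. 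The remaining case $k=0$, where $T_{\bullet,0}\equiv1$, collapses to $\sum_{j\geqslant1}(-1)^{j}\bigl[2^{j}\binom{m-1}{j}+2^{j+1}\binom{m-1}{j+1}\bigr]=2-2m$, which follows from $(1-2)^{m-1}=(-1)^{m-1}$ and its shifted version.

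The hard part will be the inductive step: extracting the master identity cleanly from \eqref{eq:recursive-formula-looms-two} demands careful bookkeeping of the nested binomial sums, and although the generating-function proof of the master identity is short, it is delicate in the low-order terms — in particular the $[y^{1}]$-correction arising from the index shift, and the fact that $T_{1,k}=0$ for $k\geqslant2$ while $T_{1,1}=2$, which must be isolated. Everything else is routine.
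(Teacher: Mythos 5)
The statement you are addressing is not proved in the paper at all: it is recorded as Conjecture \ref{conjecture-cardinality-looms}, and even the recursions \eqref{eq:recursive-formula-looms-one}--\eqref{eq:recursive-formula-looms-two} on which any proof would rest are only cited from \cite{rivezzithesis}. So there is no argument of the paper's to compare yours against; what you have written is, as far as I can check, an actual proof of the conjecture, conditional on \eqref{eq:recursive-formula-looms-two}, which you are entitled to assume since the paper states it as a proposition. I verified the ingredients independently. The identity $T_{m,k}=\sum_{i=0}^{k}(-1)^{k-i}\binom{k}{i}(2i+1)^{m}$ does satisfy the recurrence via $\binom{k}{i}(k-i)=k\binom{k-1}{i}$, and it makes the two displayed forms of $H_{n,m}$ agree; the product formula for $f_k$ follows from $f_k(1-(2k+1)x)=2kx\,f_{k-1}$; the base case $H^{\sharp}_{0,m}=1$ is exactly the vanishing of the $(m+1)$-st finite difference of $j\mapsto(2j-1)^m$ after the hockey-stick collapse. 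For the inductive step, collapsing the double sum in \eqref{eq:recursive-formula-looms-two} gives the weight $2^{\ell}\binom{m-1}{\ell}+2^{\ell+1}\binom{m-1}{\ell+1}$ on $H_{n-1,m-\ell}$, and comparing the coefficient of $(2k+1)^{n-1}$ yields precisely your master identity $2(2k+1)T_{m-1,k}-2(m-k)T_{m,k}=\sum_{j\ge1}(-1)^{j}\bigl[2^{j}\binom{m-1}{j}+2^{j+1}\binom{m-1}{j+1}\bigr]T_{m-j,k}$; I confirmed this identity numerically for all $k$ with $m\le 3$ and reproduced $H_{1,1}=5$, $H_{1,2}=H_{2,1}=17$, $H_{2,2}=129$, $H_{1,3}=53$ from both sides. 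Your generating-function proof of the master identity also closes: the right-hand side sums to $\tfrac{1+x}{x}\bigl(f_k(x)-f_k(\tfrac{x}{1+2x})\bigr)-2xf_k'(x)$, the $-2xf_k'$ terms cancel, and the remaining relation $(1+x)f_k\bigl(\tfrac{x}{1+2x}\bigr)=(1+2x)\bigl(1-(2k+1)x\bigr)f_k(x)$ is immediate from the product formula since $1-(2j+1)\tfrac{x}{1+2x}=\tfrac{1-(2j-1)x}{1+2x}$. Two minor points for the write-up: the appeal to linear independence of $n\mapsto(2k+1)^n$ is unnecessary for the direction you actually use (matching coefficients is sufficient, not necessary, for the two sums to agree), and you should state explicitly that $T_{r,k}=0$ for $k>r$, which is what makes the terms with $m-j<k$ drop out of the master identity and lets you sum the generating functions over all $r\ge1$. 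With those details spelled out, this would upgrade the conjecture to a theorem (modulo the recursion from the Appendix).
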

\newcommand{\Av}{\mathsf{Av}}
\begin{conjecture}
Given a set of permutations $S$, denote by $\Av(S)$ the set of all permutations avoiding the patterns of $S$ (see \cite{kitaev} and references therein for more details). Then for any $n,m \geqslant 1$, there is a set of permutations $S_{n,m}$ such that  $\gamma_{n,m}(\loom{n}{m}) = \Av(S_{n,m})$. 
\end{conjecture}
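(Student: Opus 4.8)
The plan is to reformulate the conjecture as a closure property and then verify it by analyzing the string combinatorics of looms. First I would make the statement precise: writing $A_{n,m}\coloneqq\gamma_{n,m}(\loom{n}{m})\subseteq\mathfrak{S}_{n+m}$, the assertion is that $A_{n,m}=\Av(S_{n,m})\cap\mathfrak{S}_{n+m}$ for some set $S_{n,m}$ of permutations of size strictly less than $n+m$ (otherwise the claim is vacuous, since $S_{n,m}=\mathfrak{S}_{n+m}\setminus A_{n,m}$ would trivially work). A standard lemma on permutation classes shows this is equivalent to the following \emph{witness condition}: for every $\pi\in\mathfrak{S}_{n+m}\setminus A_{n,m}$ there is a proper pattern $\rho\prec\pi$ avoided by every permutation in $A_{n,m}$; if so, one takes $S_{n,m}$ to be the set of all such witnesses, and the minimal valid choice is the antichain of minimal elements of the complement of the permutation class $\langle A_{n,m}\rangle$ generated downward (under $\preceq$) by $A_{n,m}$. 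Thus the content of the conjecture is exactly that $\langle A_{n,m}\rangle\cap\mathfrak{S}_{n+m}=A_{n,m}$: passing to the downward closure creates no new permutation of size $n+m$.

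The heart of the argument is a structural study of loom permutations. By construction a loom $L\in\loom{n}{m}$ carries $n+m$ strings --- $n$ ``coaction'' strings entering from the left edge (labelled $1,\dots,n$ top to bottom) and $m$ ``action'' strings entering from the bottom (labelled $n+1,\dots,n+m$ left to right) --- and the five admissibility conditions of Definition \ref{definition-refined-Drinfeld-Yetter-Tableaux} severely restrict how strings of each type may cross. From these conditions I would extract a short list of local prohibitions on $\gamma_{n,m}(L)$: the fact noted in the text that $(12)(34)\notin\gamma_{1,3}(\loom{1}{3})$ already suggests that patterns built from two independent ``descents across the left/bottom divide'' are excluded. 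To fix $S_{n,m}$ and to sanity-check the guess I would compare cardinalities with Conjecture \ref{conjecture-cardinality-looms}: the closed form $H_{n,m}=\sum_k(-1)^{m-k}T_{m,k}(2k+1)^n$ must agree with $|\Av(S_{n,m})\cap\mathfrak{S}_{n+m}|$, and matching these expressions (or their generating functions) should pin the pattern set down.

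With $S_{n,m}$ in hand the proof splits into two inclusions. For $A_{n,m}\subseteq\Av(S_{n,m})$ I would argue by contradiction: a loom whose permutation contained a pattern in $S_{n,m}$ would, upon tracing the offending strings through the grid, force a tiling configuration violating one of conditions (1)--(5) --- continuity of the red lines, or the boundary conditions (4)--(5); here the key tool is compatibility of $\gamma_{n,m}$ with the natural restriction of a loom (deleting a row or a column of the grid), which at the level of permutations should realize deletion of entries, i.e.\ pattern-taking. For the reverse inclusion $\Av(S_{n,m})\cap\mathfrak{S}_{n+m}\subseteq A_{n,m}$ I would give an explicit algorithm: starting from $\pi\in\mathfrak{S}_{n+m}$ avoiding $S_{n,m}$, build a loom tile by tile, following the recursive decompositions behind \eqref{eq:recursive-formula-looms-one} and \eqref{eq:recursive-formula-looms-two}, so that the resulting string diagram realizes precisely the crossings dictated by $\pi$; the avoidance hypothesis is exactly what keeps every admissibility condition intact at each step. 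This naturally runs as an induction on $n+m$, using the conjecture for $\loom{n-1}{m}$ and $\loom{n}{m-1}$.

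I expect the main obstacle to be this last step, surjectivity of $\gamma_{n,m}$ onto the pattern class. It demands a clean normal form for looms and a precise dictionary between ``crossing patterns realizable by admissible tilings'' and ``$S_{n,m}$-avoiding permutations''; since, as the paper notes, $\gamma_{n,m}$ is neither injective nor onto $\mathfrak{S}_{n+m}$, a counting argument cannot substitute for the construction. A secondary difficulty is uniformity: the tile set $\T_{\loom{n}{m}}$ depends on $n$ and $m$, so $S_{n,m}$ presumably does too, and the forbidden patterns must be described uniformly enough that the induction on $n+m$ closes.
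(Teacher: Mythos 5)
First, note that the statement you are addressing is listed in the paper as an open conjecture (\S \ref{section-conjectures}); the paper offers no proof, so your argument must stand entirely on its own. As written it does not: it is a research plan rather than a proof. Your preliminary observation is sound --- read literally, $\Av(S_{n,m})$ contains permutations of every size, so the asserted equality must mean $\Av(S_{n,m})\cap\mathfrak{S}_{n+m}=\gamma_{n,m}(\loom{n}{m})$, and this is vacuous unless the elements of $S_{n,m}$ are required to be proper patterns of size $<n+m$, in which case the content is exactly your witness condition. (Your subsequent reformulation ``$\langle A_{n,m}\rangle\cap\mathfrak{S}_{n+m}=A_{n,m}$'' is not equivalent: the downward closure of a set of size-$(n+m)$ permutations never adds new permutations of that size, so that identity holds automatically; the genuine content is that the basis of the generated class contains no element of size $n+m$.) Beyond this framing, however, every substantive step is deferred. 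No candidate $S_{n,m}$ is exhibited, even for $(n,m)=(1,3)$ where the paper records $(12)(34)\notin\gamma_{1,3}(\loom{1}{3})$; and the proposed ``comparison with Conjecture \ref{conjecture-cardinality-looms}'' would at best be a consistency check against another unproven statement --- a cardinality match can never determine $S_{n,m}$, especially since $\gamma_{n,m}$ is not injective, so $|\gamma_{n,m}(\loom{n}{m})|\neq H_{n,m}$ in general.

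Each of the two inclusions also contains a concrete unaddressed gap. For $\gamma_{n,m}(\loom{n}{m})\subseteq\Av(S_{n,m})$ you invoke ``compatibility of $\gamma_{n,m}$ with the natural restriction of a loom (deleting a row or a column)'', but this operation is neither defined nor obviously well-defined: the tile set $\T_{\loom{n}{m}}$ depends on $n$ and $m$, the weights $(t,b,l,r)$ of many tiles change when a string is removed, and there is no argument that the result satisfies conditions (1)--(5) of Definition \ref{definition-refined-Drinfeld-Yetter-Tableaux}, nor that deleting a row corresponds to deleting a single entry of $\gamma_{n,m}(L)$ (one left-entering string may participate in crossings spread across the whole grid). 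For the reverse inclusion you propose to build a loom ``following the recursive decompositions behind \eqref{eq:recursive-formula-looms-one} and \eqref{eq:recursive-formula-looms-two}'', but no construction and no invariant is given showing that avoidance of $S_{n,m}$ preserves admissibility at each step; you yourself identify this as the main obstacle. In short, the proposal correctly isolates where the difficulty lies but closes none of it; the statement remains a conjecture.
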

\begin{conjecture}
The center of $\mathfrak{U}_\DY^1$ is generated by $r_1^\id$.
\end{conjecture}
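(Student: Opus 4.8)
The plan is to split the statement into two halves: that $r_1^\id$ is central, and that the centre is no larger than the subalgebra it generates. First I would record that, by the $\mathbb{N}$--grading of $\mathfrak{U}_{\DY}^1$ and the vector space identification \eqref{eq:isomorphism-symmetric-algebras}, the subalgebra $\K[r_1^\id]$ occupies at most one dimension in each degree: its degree--$k$ component is spanned by the single element $(r_1^\id)^{\circ k}\in\K[\mathfrak{S}_k]$. This element is nonzero for every $k$, since applying a realization map $\rho_\b$ from \eqref{eq:appel-realization-map} to a quasitriangular Lie bialgebra whose underlying Lie algebra is free sends $(r_1^\id)^{\circ k}$ to a nonzero power of $\rho_\b(r_1^\id)$ (cf.\ the freeness argument in the proof of Proposition \ref{proposition-canonical-basis}). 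Thus the conjecture is equivalent to the conjunction of (i) $r_1^\id$ is central, whence $\K[r_1^\id]\subseteq Z(\mathfrak{U}_{\DY}^1)$, and (ii) $\dim_\K Z(\mathfrak{U}_{\DY}^1)_k\leqslant 1$ for all $k$; here the grading lets me reduce (ii) to homogeneous central elements.

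For (i), the conceptual route is to identify $r_1^\id=\pi_1\circ\pi_1^*$ with the universal Casimir operator. Under the monoidal equivalence between $\DY_\b$ and equicontinuous modules over the Drinfeld double $\d_\b$ recalled after the definition of Drinfeld--Yetter modules, the endomorphism $\pi\circ\pi^*$ is the action of the canonical Casimir element of $\U(\d_\b)$ associated to the pairing $\b\otimes\b^{*}$; being central there, it defines a natural endomorphism of the identity functor, and naturality with respect to all endomorphisms of $V_1$ is precisely centrality in $\End_{\DY}(V_1)=\mathfrak{U}_{\DY}^1$. A purely combinatorial alternative, more in the spirit of Theorem \ref{maintheorem}, is to compare the two sums computing $r_1^\id\circ r_m^\tau$ and $r_m^\tau\circ r_1^\id$ via \eqref{eq:formulamultiplication}: reflecting loom diagrams across the main diagonal should give a sign--preserving bijection $\loom{1}{m}\to\loom{m}{1}$ interchanging the left and top gluings, and one checks it intertwines $\tilde{\gamma}_{1,m}(\id,\,\cdot\,,\tau)$ with $\tilde{\gamma}_{m,1}(\tau,\,\cdot\,,\id)$. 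Either way the delicate point is bookkeeping: in the conceptual argument one must make the passage from the family $\{\rho_\b\}$ to the universal algebra precise, and in the combinatorial one must control the effect of the reflection on orientations and on the counting function $\countingtwo$, so that no global conjugation by the longest element $w_0$ survives.

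For (ii), let $z=\sum_{\sigma\in\mathfrak{S}_k}c_\sigma\,r_k^\sigma$ be homogeneous and central. I would impose $r_m^\tau\circ z=z\circ r_m^\tau$ for all $m,\tau$ and expand both sides with Theorem \ref{maintheorem}, equivalently with Corollary \ref{maincor}, obtaining for each $m,\tau,\pi$ a linear equation in the unknowns $c_\sigma$ whose coefficients are the differences $P^{\sigma,\tau,\pi}_{n,m}-N^{\sigma,\tau,\pi}_{n,m}$. The goal is to show that this linear system has a one--dimensional solution space, necessarily $\K\cdot(r_1^\id)^{\circ k}$. Two strategies seem viable: either exhibit, for each candidate $z\notin\K(r_1^\id)^{\circ k}$, an explicit high--degree test element $r_m^\tau$ that fails to commute with $z$ (the degree--$2$ case already demands this, since the conjecture forces $r_2^\id$ to be noncentral even though it commutes with everything of degree $\leqslant 2$ by the $n+m\leqslant 4$ commutativity result in the Appendix), or produce a filtration of $\mathfrak{U}_{\DY}^1$ whose associated graded $\mathrm{gr}\,\mathfrak{U}_{\DY}^1$ is an algebra with explicitly computable, one--dimensional--per--degree centre, generated by the symbol of $r_1^\id$.

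The principal difficulty is part (ii): ruling out extra central elements requires simultaneous control of all commutators $[z,r_m^\tau]$, i.e.\ of the structure constants $c^\pi_{\sigma,\tau}$ uniformly in $m$, which is exactly the quantity the paper shows to be combinatorially elusive. I expect the crux to be establishing a structural handle---either the filtration identifying $\mathrm{gr}\,\mathfrak{U}_{\DY}^1$ with a known algebra, or an inductive family of separating test elements---strong enough to collapse the solution space to dimension one; the centrality of $r_1^\id$ in (i), by contrast, should be the more tractable half.
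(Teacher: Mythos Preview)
The statement is recorded in the paper as an open \emph{conjecture} (Appendix, \S\ref{section-conjectures}); the paper offers no proof, so there is nothing to compare your attempt against. What you have written is, appropriately, a strategic outline rather than a proof, and you correctly isolate the two halves (i) $r_1^{\id}\in Z(\mathfrak{U}_\DY^1)$ and (ii) $\dim_\K Z(\mathfrak{U}_\DY^1)_k\leqslant 1$.

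For (i), neither route is complete as stated. The naturality argument shows that $\pi\circ\pi^*$ commutes with every \emph{morphism of Drinfeld--Yetter modules} $f\colon V\to V$; transporting this to $\End_{\DY^1}([V_1])$ requires either that every endomorphism of $[V_1]$ in the PROP is a module endomorphism in that internal sense, or that the family $\{\rho_\b\}$ jointly detects commutators, and you establish neither. Note in particular that $\rho_{\mathfrak{sl}_2}(r_1^{\id})=ef+\tfrac14 h^2$ is \emph{not} central in $\U(\mathfrak{sl}_2)$ (one checks $[e,ef+\tfrac14 h^2]=-e$), so centrality cannot simply be read off realizations; one needs joint injectivity on the specific elements $[r_1^{\id},r_m^\tau]$, which is stronger than the linear independence proved in Proposition~\ref{proposition-canonical-basis}. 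Your reflection bijection $\loom{1}{m}\leftrightarrow\loom{m}{1}$ is the right combinatorial idea, and the paper's Proposition~\ref{identitymultiplication} confirms the special case $\tau=\id_m$, but compatibility of the reflection with $\tilde\gamma$ and with $(-1)^{\countingtwo}$ for arbitrary $\tau$ is exactly the unfinished step.

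For (ii) you name two strategies (separating test elements; a filtration with computable associated graded) but execute neither. Since this is the substantive content of the conjecture and the paper itself leaves it open, that is unsurprising; but your text should say plainly that (ii) is not merely ``delicate'' but unresolved.
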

\subsection{Drinfeld--Yetter looms and bumpless pipedreams}
We now give a connection between Drinfeld--Yetter looms and other combinatorial objects called bumpless pipedreams.\index{bumpless pipedream}
\begin{definition}[\cite{lamschubert}, \cite{heck2019duplicitous}]
Let $n \geqslant 1$ and let $\T_{\mathcal{BPD}}$ be the following set of tiles
\begin{equation*}
\begin{tikzpicture}[scale=0.6]
\node at (0,0) {$\T_{\mathcal{BPD}} = \Big\{$};
\draw[black,very thick] (1.5,0.5) rectangle (2.5,-0.5);
\draw[violet,very thick] (2,0.5)--(2,-0.5);
\draw[violet,very thick] (1.5,0)--(2.5,0);
\node at (2.75,-0.5) {,};
\draw[black,very thick] (3,0.5) rectangle (4,-0.5);
\draw[violet,very thick] (4,0)--(3.5,0);
\draw[violet,very thick] (3.5,-0.5)--(3.5,0);
\node at (4.25,-0.5) {,};
\draw[black,very thick] (4.5,0.5) rectangle (5.5,-0.5);
\draw[violet,very thick] (5,0.5)--(5,0);
\draw[violet,very thick] (4.5,0)--(5,0);
\node at (5.75,-0.5) {,};
\draw[black,very thick] (6,0.5) rectangle (7,-0.5);
\draw[violet,very thick] (6,0)--(7,0);
\node at (7.25,-0.5) {,};
\draw[black,very thick] (7.5,0.5) rectangle (8.5,-0.5);
\draw[violet,very thick] (8,0.5)--(8,-0.5);
\node at (8.75,-0.5) {,};
\draw[black,very thick] (9,0.5) rectangle (10,-0.5);
\node at (10.5,-0) {\Big\}.};
\end{tikzpicture}
\end{equation*}
The set of $n \times n$ bumpless pipedreams $\mathcal{BPD}_{n}$ is the set of all possible tilings $B$ of $\grid{n}{n}$ with the elements of $\T_{\mathcal{BPD}}$ such that the following six conditions are satisfied:
\begin{itemize}
\item[(1):] $B_{1,j} \notin \Big\{ \begin{tikzpicture}[scale= 0.4] \draw[black,very thick] (6,0.5) rectangle (7,-0.5);
\draw[violet,very thick] (6,0)--(7,0);

\end{tikzpicture} \ , \ \begin{tikzpicture}[scale= 0.4] \gri{1}{1}

\end{tikzpicture} \Big\}$ for all $j \in \{1, \ldots , n\}$.
\item[(2):] $B_{i,1} \notin \Big\{ \begin{tikzpicture}[scale= 0.4] \draw[black,very thick] (7.5,0.5) rectangle (8.5,-0.5);
\draw[violet,very thick] (8,0.5)--(8,-0.5);

\end{tikzpicture} \ , \ \begin{tikzpicture}[scale= 0.4] \gri{1}{1}

\end{tikzpicture} \Big\}$ for all $i \in \{1, \ldots , n\}$.
\item[(3):] $B_{n,j} \notin \Big\{ \begin{tikzpicture}[scale= 0.4] \gri{1}{1} \bpdcro{1}{1}
\end{tikzpicture} \  , \ \begin{tikzpicture}[scale= 0.4] \gri{1}{1} \bpdtwo{1}{1} 
\end{tikzpicture} \ , \ \begin{tikzpicture}[scale= 0.4] \gri{1}{1} \bpdver{1}{1}
\end{tikzpicture} \Big\}$ for all $j \in \{1, \ldots , n\}$.
\item[(4):] $B_{i,n} \notin \Big\{ \begin{tikzpicture}[scale= 0.4] \gri{1}{1} \bpdcro{1}{1}
\end{tikzpicture} \ ,  \ \begin{tikzpicture}[scale= 0.4] \gri{1}{1} \bpdtwo{1}{1} 
\end{tikzpicture} \ , \ \begin{tikzpicture}[scale= 0.4] \gri{1}{1} \bpdhor{1}{1}
\end{tikzpicture} \Big\}$ for all $i \in \{1, \ldots , n\}$.
\item[(5):] Strings cross pairwise at most once (with respect to the picture obtained by removing all \emph{black borders} from $B$).
\item[(6):] None of the following configurations appear in $B$:
\\ \\
\begin{minipage}{0,16\textwidth}
\begin{tikzpicture}[scale= 0.8]
\gri{1}{2} \bpdcro{1}{1}
\end{tikzpicture}
\end{minipage}%
\begin{minipage}{0,16\textwidth}
\begin{tikzpicture}[scale= 0.8]
\gri{1}{2} \bpdcro{1}{1} \bpdver{1}{2}
\end{tikzpicture}
\end{minipage}%
\begin{minipage}{0,16\textwidth}
\begin{tikzpicture}[scale= 0.8]
\gri{1}{2} \bpdcro{1}{1} \bpdtwo{1}{2}
\end{tikzpicture}
\end{minipage}%
\begin{minipage}{0,16\textwidth}
\begin{tikzpicture}[scale= 0.8]
\gri{1}{2} \bpdcro{1}{2}
\end{tikzpicture}
\end{minipage}%
\begin{minipage}{0,16\textwidth}
\begin{tikzpicture}[scale= 0.8]
\gri{1}{2} \bpdone{1}{1} \bpdcro{1}{2}
\end{tikzpicture}
\end{minipage}%
\begin{minipage}{0,16\textwidth}
\begin{tikzpicture}[scale= 0.8]
\gri{1}{2} \bpdver{1}{1} \bpdcro{1}{2}
\end{tikzpicture}
\end{minipage}
\\ \\
\begin{minipage}{0,16\textwidth}
\begin{tikzpicture}[scale= 0.8]
\gri{1}{2} \bpdone{1}{1} \bpdhor{1}{2}
\end{tikzpicture}
\end{minipage}%
\begin{minipage}{0,16\textwidth}
\begin{tikzpicture}[scale= 0.8]
\gri{1}{2} \bpdone{1}{1} \bpdone{1}{2} 
\end{tikzpicture}
\end{minipage}%
\begin{minipage}{0,16\textwidth}
\begin{tikzpicture}[scale= 0.8]
\gri{1}{2} \bpdver{1}{1} \bpdone{1}{2} 
\end{tikzpicture}
\end{minipage}%
\begin{minipage}{0,16\textwidth}
\begin{tikzpicture}[scale= 0.8]
\gri{1}{2} \bpdone{1}{2} 
\end{tikzpicture}
\end{minipage}%
\begin{minipage}{0,16\textwidth}
\begin{tikzpicture}[scale= 0.8]
\gri{1}{2} \bpdtwo{1}{1}
\end{tikzpicture}
\end{minipage}%
\begin{minipage}{0,16\textwidth}
\begin{tikzpicture}[scale= 0.8]
\gri{1}{2} \bpdtwo{1}{1} \bpdtwo{1}{2}
\end{tikzpicture}
\end{minipage}
\\ \\
\begin{minipage}{0,16\textwidth}
\begin{tikzpicture}[scale= 0.8]
\gri{1}{2} \bpdtwo{1}{1} \bpdver{1}{2}
\end{tikzpicture}
\end{minipage}%
\begin{minipage}{0,16\textwidth}
\begin{tikzpicture}[scale= 0.8]
\gri{1}{2} \bpdtwo{1}{2} \bpdhor{1}{1}
\end{tikzpicture}
\end{minipage}%
\begin{minipage}{0,16\textwidth}
\begin{tikzpicture}[scale= 0.8]
\gri{1}{2} \bpdhor{1}{1} \bpdver{1}{2}
\end{tikzpicture}
\end{minipage}%
\begin{minipage}{0,16\textwidth}
\begin{tikzpicture}[scale= 0.8]
\gri{1}{2} \bpdver{1}{1} \bpdhor{1}{2}
\end{tikzpicture}
\end{minipage}%
\begin{minipage}{0,16\textwidth}
\begin{tikzpicture}[scale= 0.8]
\gri{1}{2} \bpdhor{1}{2}
\end{tikzpicture}
\end{minipage}%
\begin{minipage}{0,16\textwidth}
\begin{tikzpicture}[scale= 0.8]
\gri{1}{2} \bpdhor{1}{1}
\end{tikzpicture}
\end{minipage}
\\ \\
\begin{minipage}{0,11\textwidth}
\begin{tikzpicture}[scale= 0.8]
\gri{2}{1} \bpdcro{1}{1} \bpdhor{2}{1}
\end{tikzpicture}
\end{minipage}%
\begin{minipage}{0,11\textwidth}
\begin{tikzpicture}[scale= 0.8]
\gri{2}{1} \bpdcro{1}{1}
\end{tikzpicture}
\end{minipage}%
\begin{minipage}{0,11\textwidth}
\begin{tikzpicture}[scale= 0.8]
\gri{2}{1} \bpdcro{1}{1} \bpdtwo{2}{1}
\end{tikzpicture}
\end{minipage}%
\begin{minipage}{0,11\textwidth}
\begin{tikzpicture}[scale= 0.8]
\gri{2}{1} \bpdcro{2}{1}
\end{tikzpicture}
\end{minipage}%
\begin{minipage}{0,11\textwidth}
\begin{tikzpicture}[scale= 0.8]
\gri{2}{1} \bpdone{1}{1} \bpdcro{2}{1}
\end{tikzpicture}
\end{minipage}%
\begin{minipage}{0,11\textwidth}
\begin{tikzpicture}[scale= 0.8]
\gri{2}{1} \bpdhor{1}{1} \bpdcro{2}{1}
\end{tikzpicture}
\end{minipage}%
\begin{minipage}{0,11\textwidth}
\begin{tikzpicture}[scale= 0.8]
\gri{2}{1} \bpdone{1}{1} \bpdver{2}{1}
\end{tikzpicture}
\end{minipage}%
\begin{minipage}{0,11\textwidth}
\begin{tikzpicture}[scale= 0.8]
\gri{2}{1} \bpdone{1}{1} \bpdone{2}{1}
\end{tikzpicture}
\end{minipage}%
\begin{minipage}{0,11\textwidth}
\begin{tikzpicture}[scale= 0.8]
\gri{2}{1}  \bpdtwo{1}{1} \bpdhor{2}{1}
\end{tikzpicture}
\end{minipage}
\\ \\
\begin{minipage}{0,11\textwidth}
\begin{tikzpicture}[scale= 0.8]
\gri{2}{1} \bpdone{2}{1}
\end{tikzpicture}
\end{minipage}%
\begin{minipage}{0,11\textwidth}
\begin{tikzpicture}[scale= 0.8]
\gri{2}{1} \bpdtwo{1}{1}
\end{tikzpicture}
\end{minipage}%
\begin{minipage}{0,11\textwidth}
\begin{tikzpicture}[scale= 0.8]
\gri{2}{1} \bpdtwo{1}{1} \bpdtwo{2}{1}
\end{tikzpicture}
\end{minipage}%
\begin{minipage}{0,11\textwidth}
\begin{tikzpicture}[scale= 0.8]
\gri{2}{1} \bpdone{2}{1} \bpdhor{1}{1}
\end{tikzpicture}
\end{minipage}%
\begin{minipage}{0,11\textwidth}
\begin{tikzpicture}[scale= 0.8]
\gri{2}{1} \bpdver{1}{1} \bpdtwo{2}{1}
\end{tikzpicture}
\end{minipage}%
\begin{minipage}{0,11\textwidth}
\begin{tikzpicture}[scale= 0.8]
\gri{2}{1} \bpdver{1}{1} \bpdhor{2}{1}
\end{tikzpicture}
\end{minipage}%
\begin{minipage}{0,11\textwidth}
\begin{tikzpicture}[scale= 0.8]
\gri{2}{1} \bpdver{1}{1}
\end{tikzpicture}
\end{minipage}%
\begin{minipage}{0,11\textwidth}
\begin{tikzpicture}[scale= 0.8]
\gri{2}{1} \bpdver{2}{1}
\end{tikzpicture}
\end{minipage}%
\begin{minipage}{0,11\textwidth}
\begin{tikzpicture}[scale= 0.8]
\gri{2}{1} \bpdhor{1}{1} \bpdver{2}{1}
\end{tikzpicture}
\end{minipage} \\
\end{itemize}
\end{definition}
Note this presentation of bumpless pipedreams differs from the one in \cite{heck2019duplicitous} in that the direction of the strings goes from the left edge to the top one, and not from the right to the bottom. Note also that there are some similarities between bumpless pipedreams and Drinfeld--Yetter mosaics. \\
Pipedreams are relevant in may areas of combinatorics, such as calculation of Schubert polynomials (see \cite{millersturmfels}), permutation words (see \cite{marcott}), and maximal $0$-$1$--fillings of moon polynomials (see \cite{rubey}). \\
There is a canonical map $s_n:\mathcal{BPD}_{n} \to \mathfrak{S}_n$ associating to any $n \times n$ bumpless pipedream a permutation of the symmetric group $\mathfrak{S}_n$; conversely, there is a canonical map $R : \mathfrak{S}_n \to \mathcal{BPD}_n$ associating to any permutation $\sigma$ a specific bumpless pipedream, called the Rothe bumpless pipedream of $\sigma$, see \cite[\S 2]{heck2019duplicitous} for more details.  \\
We now define a map $f: \loom{n}{m} \to \mathcal{BPD}_{n+m}$ with the property that $s_{n+m} (f (L)) = \gamma_{n,m}(L)$. For any $L \in \loom{n}{m}$, consider the element $f(L) \in \mathcal{BPD}_{n+m}$ constructed according to the following steps:
\begin{itemize}
\item[(1)] \emph{Stretch} $L$ horizontally and vertically in such a way every left (resp. top) tile of the first column (resp. row) has only one string. The result of this process gives a tiling of $\grid{\Lambda}{\Omega}$ (where $\Lambda = \sum_{i=1}^n l_{i1}$ and $\Omega = \sum_{j=1}^m t_{1j}$) with elements of $\T_{\mathcal{BPD}}$. However, this in general will not be a bumpless pipedream.
\item[(2)] If $\Lambda \neq n+m$, add a row for any string occurring in the bottom edge of the last row. Then, for any tile of the $\Lambda$--th row having a string in the bottom edge, attach the tile \begin{tikzpicture}[scale= 0.4] \gri{1}{1} \bpdone{1}{1}
\end{tikzpicture} to its bottom edge and fill all the left--side tiles of such a row with the tile \begin{tikzpicture}[scale= 0.4] \gri{1}{1} \bpdhor{1}{1}
\end{tikzpicture}.
\item[(3)] If $\Omega \neq n+m$, add a column for any string occurring in the left edge of the last column. Then, for any tile of the $\Omega$--th column having a string in the left edge, attach the tile \begin{tikzpicture}[scale= 0.4] \gri{1}{1} \bpdone{1}{1}
\end{tikzpicture} to its left edge and fill all the top--side tiles of such a column with the tile \begin{tikzpicture}[scale= 0.4] \gri{1}{1} \bpdver{1}{1}
\end{tikzpicture}.
\end{itemize} 
\begin{example}
Consider the Drinfeld--Yetter loom $L \in \loom{2}{2}$ of Example \ref{example-permutation-ass-to-a-loom}. Then the procedure described above gives
\begin{center}
\begin{tikzpicture}[scale=0.7]
\gri{2}{2} \cro{1}{1} \cro{2}{2} \ver{1}{2} \hor{2}{1}
\draw[red, very thick] (2-1,-1+0.5) arc (-75:0:0.5); 
\draw[red, very thick] (1-1,-2+0.25) arc (-90:0:0.5); 
\draw[red, very thick] (0.5,0)--(0.5,-1.25);
\node at (3,-1) {$\overset{(1)}{\longrightarrow}$};
\begin{scope}[shift={(4,0.5)}]
\gri{3}{3} \bpdcro{1}{1} \bpdone{1}{2} \bpdver{1}{3} \bpdcro{2}{1} \bpdhor{2}{2} \bpdcro{2}{3} \bpdone{3}{1} \bpdver{3}{3}
\end{scope}
\node at (8,-1) {$\overset{(2)}{\longrightarrow}$};
\begin{scope}[shift={(9,1)}]
\gri{4}{3} \bpdcro{1}{1} \bpdone{1}{2} \bpdver{1}{3} \bpdcro{2}{1} \bpdhor{2}{2} \bpdcro{2}{3} \bpdone{3}{1} \bpdver{3}{3} \bpdhor{4}{1}
\bpdhor{4}{2} \bpdone{4}{3}
\end{scope}
\node at (13,-1) {$\overset{(3)}{\longrightarrow}$};
\begin{scope}[shift={(14,1)}]
\gri{4}{4} \bpdcro{1}{1} \bpdone{1}{2} \bpdver{1}{3} \bpdcro{2}{1} \bpdhor{2}{2} \bpdcro{2}{3} \bpdone{3}{1} \bpdver{3}{3} \bpdhor{4}{1}
\bpdhor{4}{2} \bpdone{4}{3} \bpdver{1}{4} \bpdone{2}{4}
\end{scope}
\end{tikzpicture}
\end{center}
and it is easy to see that $\gamma_{2,2}(L) = s_{4}(f(L)) = (1243) \in \mathfrak{S}_4$.
\begin{remark}
The reasoning above shows that the set of $\loom{n}{m}$ of $n \times m$ Drinfeld--Yetter looms maps into the set $\mathcal{BPD}_{n+m}$  of $(n + m) \times (n + m)$ bumpless pipedreams, and so in the symmetric group $\mathfrak{S}_{n+m}$. However, this association is not injective nor surjective. Hence $\loom{n}{m}$ gives a refinement only of the subset of $\mathcal{BPD}_{n+m}$ given by the Rothe bumpless pipedreams.
\end{remark}
\end{example}

\end{document}